\newtheorem{problem}[defi]{Problem}
\newtheorem*{induction}{Induction hypothesis}
\newtheorem*{notation}{Notation}
\newcommand{\N}{\mathbb{N}}
\newcommand{\R}{\mathbb{R}}
\newcommand{\Z}{\mathbb{Z}}
\newcommand{\bbS}{\mathbf{S}}
\newcommand{\T}{\mathbf{T}}
\newcommand{\bW}{\mathbf{W}}
\newcommand{\cA}{\mathcal{A}}
\newcommand{\cB}{\mathcal{B}}
\newcommand{\cI}{\mathcal{I}}
\newcommand{\cL}{\mathcal{L}}
\newcommand{\cP}{\mathcal{P}}
\newcommand{\cQ}{\mathcal{Q}}
\newcommand{\cS}{\mathcal{S}}
\newcommand{\cT}{\mathcal{T}}
\newcommand{\ud}{\mathrm{d}}
\newcommand{\bC}{\mathbf{C}_{\varepsilon}}
\def\inn#1#2{\langle#1,#2\rangle}
\newcommand{\rad}{\mathrm{rad}}
\newcommand{\supp}{\mathrm{supp}\,}
\newcommand{\dist}{\mathrm{dist}}
\renewcommand{\bibnamedash}{\leavevmode\raise3pt\hbox to3em{\hrulefill}\space}
\date{Avril 2023}
\title{Pointwise convergence for the Schr\"odinger equation}
\author{Jonathan Hickman}
\address{School of Mathematics, \\ James Clerk Maxwell Building, \\ The King's Buildings,\\ Peter Guthrie Tait Road,\\ Edinburgh,\\ EH9 3FD, UK.}
\email{jonathan.hickman@ed.ac.uk}
\begin{document}

\maketitle




\section{Introduction: the Carleson problem}




\subsection{Solutions to the Schr\"odinger equation} Suitably normalised, the free Schr\"odinger equation on $\R^n$ is the second order partial differential equation
\begin{equation}\label{eq: Schrodinger}
i u_t - \Delta_x u = 0.
\end{equation}
Here $u$ is a complex-valued function of the space-time variables $(x,t) \in \R^n \times \R$, whilst $u_t$ and $\Delta_x u$ denote the first order time derivative and spatial Laplacian, respectively. We are interested in the Cauchy problem for this equation, whereby we specify an initial datum $f$ and wish to solve
\begin{equation}\label{eq: Cauchy}
\left\{
\begin{array}{l}
i u_t - \Delta_x u = 0, \\
u(x,0) = f(x)
\end{array}
\right. \qquad (x,t) \in \R^n \times \R. 
\end{equation}

Depending on our hypotheses on $f$, what it means for $u$ to be a `solution' to the equation \eqref{eq: Cauchy} varies. Here we consider two examples: \medskip

\noindent \textit{Classical solution}. If $f$ is sufficiently regular, then elementary Fourier transform methods show that \eqref{eq: Cauchy} has a unique solution in the classical sense.\footnote{In particular, the derivatives $u_t$ and $\Delta_x u$ are all well-defined in the usual sense from calculus, and the identities in \eqref{eq: Cauchy} hold pointwise.} For instance, if we assume $f \in \cS(\R^n)$, the Schwartz space, then the unique solution is given by 
\begin{equation*}
   u(x,t) := e^{it \Delta}f(x) 
\end{equation*}
where $e^{it\Delta}$ is the \textit{Schr\"odinger propagator}
\begin{equation}\label{eq: propagator} 
    e^{it \Delta}f(x) := \frac{1}{(2\pi)^n} \int_{\widehat{\R}^n} e^{i(x \cdot \xi + t|\xi|^2)} \hat{f}(\xi)\,\ud \xi. 
\end{equation}
Note that the regularity -- or smoothness -- of the initial datum $f$ is crucial to these observations. Indeed, the smoothness of $f$ directly translates into the decay of the Fourier transform $\hat{f}(\xi)$ as $|\xi| \to \infty$. This decay ensures the integral in \eqref{eq: propagator} is well-defined and also allows one to pass the derivatives inside the integral in order to verify~\eqref{eq: Schrodinger}.\medskip

\noindent \textit{$L^2$ solution}. Now suppose $f \in L^2(\R^n)$, without any additional regularity assumptions. In this case, Plancherel's theorem allows us to define the Fourier transform $\hat{f}$ as a function in $L^2(\widehat{\R}^n)$, but in general we cannot conclude that $\hat{f}$ is integrable. Consequently, the integral formula \eqref{eq: propagator} is not well-defined in the classical sense. 

To circumvent these issues, we further appeal to the $L^2$ theory of Fourier transform. Note that the propagator $e^{it\Delta}$ introduced above can be interpreted as a linear operator on $\cS(\R^n)$ which, given an initial datum $f$, outputs the solution at time $t$. Using Plancherel's theorem, we can extend $e^{it\Delta}$ to a Fourier multiplier operator acting on the whole of $L^2(\R^n)$. In particular, we define
\begin{equation*}
    e^{i t \Delta}f := \mathcal{F}^{-1} \big( e^{it|\,\cdot\,|^2} \cdot \mathcal{F} f \big) \qquad \textrm{for $f \in L^2(\R^n)$,}
\end{equation*}
where here $\mathcal{F}$ denotes the Fourier transform acting on $L^2(\R^n)$. Furthermore, this operator is an isometry of the $L^2$ space, in the sense that 
\begin{equation}\label{eq: conserved energy}
    \|e^{it \Delta}f\|_{L^2(\R^n)} = \|f\|_{L^2(\R^n)} \qquad \textrm{for all $f \in L^2(\R^n)$ and all $t \in \R$;}
\end{equation}
this identity is typically referred to as \textit{conservation of energy}.

As before, we may define
\begin{equation*}
   u(x,t) := e^{it \Delta}f(x),
\end{equation*}
 but in general this is no longer a classical solution to the Schr\"odinger
 equation: for instance, for a fixed time~$t$, the best we can say about
 $u(\,\cdot\,,t)$ is that it belongs to $L^2(\R^n)$ and so the Laplacian
 $\Delta_x u$ is not defined in the classical sense. However, we can
 interpret~$u$ as a solution to \eqref{eq: Schrodinger} in the sense of
 distributions. Indeed, using \eqref{eq: conserved energy} it is not difficult
 to show $u$~defines a distribution in $\cS'(\R^{n+1})$ and so $\partial_t u$ and $\Delta_x u$ can be understood in the distributional sense. Furthermore, a simple Fourier analytic argument shows $\inn{i\partial_t u - \Delta_x u}{\phi} = 0$ for all test functions $\phi \in \cS(\R^{n+1})$.
 
 \medskip




\subsection{The Carleson problem} 

Once a solution $u$ to \eqref{eq: Cauchy} has been constructed, it is natural to investigate the behaviour of $u$ and how it relates to the initial datum $f$. There is a huge variety of different questions one can ask in this direction. Here we are interested in the classical \textit{Carleson problem}, which aims to understand whether the initial datum can be recovered as a pointwise limit of the solution.\smallskip

First consider the case where $f \in \cS(\R^n)$, so that the solution $u(x,t) := e^{it\Delta}f(x)$ is classically defined. By definition, we know the solution $u$ satisfies $u(x,0) = f(x)$ and is differentiable, and therefore continuous, with respect to $t$. In particular,
\begin{equation}\label{eq: pointwise convergence}
    \lim_{t \to 0_+} e^{it\Delta}f(x) = f(x) \qquad \textrm{for all $x \in \R^n$.}
\end{equation}
The Carleson problem asks to what extent this elementary limit identity continues to hold when we consider more general $L^2$ solutions to the Schr\"odinger equation. \smallskip

Since an $L^2$ function is only defined almost everywhere, in order to make sense of the problem for  general initial data in $L^2(\R^n)$ it is necessary to weaken the requirement that convergence holds for \textit{all} $x \in \R^n$ in \eqref{eq: pointwise convergence} to \textit{almost all} $x \in \R^n$. That is, given $f \in L^2(\R^n)$ we wish to determine whether
\begin{equation}\label{eq: ae convergence}
    \lim_{t \to 0_+} e^{it\Delta}f(x) = f(x) \qquad \textrm{for almost every $x \in \R^n$.}
\end{equation}
Nevertheless, it is still unclear how to precisely interpret the above limit, since for every time slice $t$ (belonging to the \textit{continuum} $[0,1]$, say) we have a choice of representation for $e^{it\Delta}f$. We shall gloss over these technicalities for now and return to them in \S\ref{sec: maximal estimates} below.

It is not difficult to show that the limit holds in the $L^2$-sense: that is, given $f \in L^2(\R^n)$ we have
\begin{equation}\label{eq: norm convergence}
\lim_{t \to 0_+} \|e^{it\Delta}f - f\|_{L^2(\R^n)} = 0.
\end{equation}
Indeed, this can be easily verified for $f \in \cS(\R^n)$ using the integral formula \eqref{eq: propagator} for the propagator and the dominated convergence theorem. One can then pass to general $f \in L^2(\R^n)$ via density, using the conservation of energy identity \eqref{eq: conserved energy}. \smallskip

On the other hand, there are examples of $f \in L^2(\R^n)$ for which \eqref{eq: ae convergence} in fact \textbf{fails} (see \S\ref{sec: key results} below). Thus, we are interested in determining an additional hypothesis on $f$ under which the above norm convergence \eqref{eq: norm convergence} can be `upgraded' to almost everywhere convergence. Contrasting the situation for $f \in \cS(\R^n)$ with that for general $f \in L^2(\R^n)$, it is natural that the additional hypothesis should enforce some degree of regularity on the initial datum.\smallskip

The above considerations lead us to consider the Sobolev spaces $H^s(\R^n)$. Roughly speaking, $H^s(\R^n)$ consists of all $f \in L^2(\R^n)$ with derivatives up to order $s$ lying also in $L^2(\R^n)$. More precisely,
\begin{equation*}
  H^s(\R^n) :=  \big\{ f \in L^2(\R^n): (1 - \Delta_x)^{s/2}f \in L^2(\R^n) \big\}, \qquad s \geq 0, 
\end{equation*}
where $(1 - \Delta)^{s/2}$ denotes the fractional differential operator, defined in terms of the Fourier transform $\mathcal{F}$ now acting on the space of distributions $\mathcal{S}'(\R^n)$ by
\begin{equation*}
   (1 - \Delta_x)^{s/2} f := \mathcal{F}^{-1} \big((1 + |\,\cdot\,|^2)^{s/2} \cdot \mathcal{F}f \big). 
\end{equation*}
In particular, given $f \in L^2(\R^n)$, we can always make sense of the fractional derivative $(1 - \Delta_x)^{s/2} f$ as a distribution, and $f \in H^s(\R^n)$ if this distribution coincides with an $L^2$ function. It is clear from the definitions that
\begin{equation*}
    H^0(\R^n) = L^2(\R^n) \quad \textrm{and} \quad H^{s_1}(\R^n) \supseteq H^{s_2}(\R^n) \quad \textrm{for $0 \leq s_1 \leq s_2$.} 
\end{equation*}

Sobolev spaces provide a natural framework in which to formalise the Carleson problem.  

\begin{problem}[\cite{C1980}]\label{prob: Carleson} Determine the values of $s \geq 0$ such that 
\begin{equation}\label{eq: Carleson}
   \textrm{if $f \in H^s(\R^n)$,} \quad \textrm{then} \quad  \lim_{t \to 0} e^{it\Delta}f(x) = f(x) \quad \textrm{for almost every $x \in \R^n$.}
\end{equation}
\end{problem}

That is, we wish to determine the minimal degree of regularity (measured in terms of the Sobolev space index $s$) for which almost everywhere convergence is guaranteed to hold.

Aside from its intrinsic appeal, Problem~\ref{prob: Carleson} is intimately related to important questions regarding the distribution of the solution $e^{it\Delta}f(x)$ in space-time. Pointwise convergence is typically proved via analysis of the \textit{Schr\"odinger maximal operator}, an object of interest in its own right. The maximal operator can in turn be studied using \textit{fractal energy estimates} for the Schr\"odinger solutions. We introduce these concepts in \S\ref{sec: maximal estimates} and \S\ref{sec: fractal} below. Through these connections, progress on Problem~\ref{prob: Carleson} has led to new developments on a surprising array of different problems, such as the Falconer distance problem (see, for instance, \cite{GIOW2020, DZ2019}) and the Fourier restriction conjecture (see \cite{WW}). 




\subsection{A resolution of the Carleson problem: introducing the key results}\label{sec: key results}

Problem~\ref{prob: Carleson} has a rich history, paralleling many important developments in harmonic analysis over the last 40 years. We do not intend to give a complete survey of the relevant literature, but focus on definitive results and recent highlights. 

Whilst the $n=1$ case of Problem~\ref{prob: Carleson} was fully understood by the early 1980s through the works of \textcite{C1980,DK1982}, in higher dimensions the situation is much more nuanced. Nevertheless, a recent series of dramatic developments brought about an almost complete resolution. 

\subsubsection*{Necessary conditions} Problem~\ref{prob: Carleson} splits into two parts: finding necessary conditions for the index $s$ for \eqref{eq: Carleson} to hold and finding sufficient conditions. Both parts are difficult. The recent spate of activity on the Carleson problem was initiated by the surprising discovery of a new necessary condition on $s$. 

\begin{theo}[\cite{B2016}]\label{thm: Bourgain counterexample} For all $s < \frac{n}{2(n+1)}$, there exists some $f \in H^s(\R^n)$ such that \eqref{eq: ae convergence} fails.  
\end{theo}

Theorem~\ref{thm: Bourgain counterexample} relies on the construction of an explicit\footnote{Strictly speaking, the proof of Theorem~\ref{thm: Bourgain counterexample} proceeds by constructing a counterexample to the $H^s(\R^n) \to L^1(\R^n)$ boundedness of the \textit{Schr\"odinger maximal operator}. This in turn implies the existence of a counterexample to \eqref{eq: Carleson} through a variant of Stein's maximal principle. See \S\ref{sec: maximal estimates}.} initial datum $f$; the proof is intricate, involving number theoretic considerations. Prior to \textcite{B2016}, weaker necessary conditions were established in \textcite{DK1982, B2013,LR2017}.

We shall not discuss the proof of Theorem~\ref{thm: Bourgain counterexample} here, but instead refer the reader to the detailed exposition in \textcite{P2020}. An alternative argument, based on ergodic arguments rather than number theory, can also be found in \textcite{LR2019}. 

\subsubsection*{Sufficient conditions} We now turn to positive results, which form the focus of this article. In the wake of Bourgain's counterexample, there was a flurry of activity on the Carleson problem. In a major advance, the $n=2$ case was completely settled through work of \textcite{DGL2017}. The higher dimensional case later followed in a landmark paper of \textcite{DZ2019}. 

\begin{theo}[\cite{DZ2019}\footnote{The $n=1$ and $n=2$ cases of Theorem~\ref{thm: DZ pointwise} were established earlier in \textcite{C1980,DGL2017}, respectively.}]\label{thm: DZ pointwise} If $f \in H^s(\R^n)$ for some $s > \frac{n}{2(n+1)}$, then
\begin{equation*}
    \lim_{t \to 0_+} e^{it\Delta}f(x) = f(x) \qquad \textrm{holds for almost every $x \in \R^n$.}
\end{equation*}
\end{theo}

\begin{rema}\label{rmk: technical annoyance}
    As previously noted, there are measure-theoretic technicalities regarding the meaning of the above statement, since the limit is taken over a continuum. We address this in \S\ref{sec: maximal estimates} below.
\end{rema}

Together, Theorem~\ref{thm: Bourgain counterexample} and Theorem~\ref{thm: DZ pointwise} give an almost complete\footnote{That is, except for the question of behaviour at the endpoint exponent $s = n/(2(n+1))$.} answer to the Carleson problem and constitute a major milestone in harmonic analysis and PDE. Furthermore, Theorem~\ref{thm: DZ pointwise} is in fact a special case of a significantly more general result proved in \textcite{DZ2019}, which has a variety of additional applications: see \S\ref{sec: fractal} below.

The proof of Theorem~\ref{thm: DZ pointwise} builds on many important developments in harmonic analysis and previous works on the Carleson problem in particular. For the purpose of this article, we shall roughly divide the recent history of the problem into two epochs.

\begin{enumerate}
   \item \textbf{Multilinear theory / broad-narrow analysis}. A key development in modern harmonic analysis was the introduction of multilinear Strichartz estimates for the Schr\"odinger equation in \textcite{BCT2006} (see Theorem~\ref{thm: BCT} below). These estimates were applied to the study of (linear) oscillatory integral operators by \textcite{BG2011}, where an important mechanism was introduced for estimating linear operators via their multilinear counterparts. The technique of \textcite{BG2011} has since become known as \textit{broad-narrow analysis}; the relevant ideas are discussed in detail in \S\ref{sec: Broad-narrow} below. The multilinear technology was applied to the study of Problem~\ref{prob: Carleson} in \textcite{B2013}, where Theorem~\ref{thm: DZ pointwise} was shown to hold for the more restrictive range $s > \tfrac{2n-1}{4n}$ (this result was previously established for $n=2$ in \textcite{L2006} using bilinear methods). 

    \item \textbf{Refined Strichartz estimates.} A landmark paper of \textcite{DGL2017} established the $n=2$ case of Theorem~\ref{thm: DZ pointwise}. Their work relied on important advances in harmonic analysis such as the $\ell^2$ decoupling theorem of \textcite{BD2015} (see Theorem~\ref{thm: decoupling} below) and polynomial partitioning techniques introduced in \textcite{GK2015,G2016}. Moreover, \textcite{DGL2017} introduced \textit{refined Strichartz estimates}, which were later developed to attack the Carleson problem in higher dimensions in \textcite{DGLZ2018} and have been found to have an array of additional applications (see, for instance, \cite{GIOW2020, WW}).
\end{enumerate}

The argument of \textcite{DZ2019} incorporates many of the tools and ideas mentioned above: in particular, the broad-narrow analysis of \textcite{BG2011}; the multilinear Strichartz inequalities of \textcite{BCT2006} and the decoupling estimates of \textcite{BD2015}. We shall discuss these ingredients in detail in \S\ref{sec: Broad-narrow} below. On the other hand, the methods of \textcite{DZ2019} are in many respects quite different from those used in \textcite{DGL2017} to settle the $n=2$ case. Here no polynomial partitioning is used and the refined Strichartz estimates are not necessary to the argument.\footnote{In \textcite{DZ2019} multilinear refined Strichartz estimates from \textcite{DGLZ2018} are applied but, as noted in the paper, the more elementary multilinear Strichartz estimates of \textcite{BCT2006} suffice for the proof of Theorem~\ref{thm: DZ pointwise}.} Nevertheless, the main novel ingredient in \textcite{DZ2019} is an ingenious induction-on-scale method which has its roots in the proof of the refined Strichartz estimates from \textcite{DGL2017, DGLZ2018}. We shall discuss these techniques in \S\ref{sec: main proof} below.  
    



\subsection{About this article}

What follows is an exposition of the proof of Theorem~\ref{thm: DZ pointwise}, following the methods of \textcite{DZ2019}. As described above, the proof combines sophisticated modern machinery from harmonic analysis and, in particular, the multilinear Strichartz estimates of \textcite{BCT2006} and the $\ell^2$ decoupling theory of \textcite{BD2015}. We shall introduce these two key ingredients in Theorem~\ref{thm: BCT} and Theorem~\ref{thm: decoupling} below, but we shall not provide proofs. The rest of the article is self-contained. Equally important to the argument are a variety of elementary guiding principles, rooted in Fourier analysis, which govern the behaviour of solutions to the Schr\"odinger equation. We shall spend some time in \S\ref{sec: Basic tools} discussing these principles, and as such this article could serve as an accessible introduction to this highly active area of harmonic analysis and PDE.




\subsection{Acknowledgement} The author wishes to thank Marco Vitturi, Bernat Ramis Vich and an anonymous referee for many tremendously helpful comments which improved the exposition. He also wishes to thank Anthony Carbery and Andreas Seeger for some interesting conversations concerning aspects of the theory.




\section{Notation}

Throughout this article, we work either in the \textit{space-time domain} $\R^{n+1}$ or \textit{spatial domain} $\R^n$. The latter is endowed with the product metric
\begin{equation}\label{eq: space-time metric}
    |z - \bar{z}| = \max\{|x - \bar{x}|, |t-\bar{t}|\} \qquad \textrm{for $z = (x,t), \bar{z} = (\bar{x}, \bar{t}) \in \R^{n+1}$},  
\end{equation}
where the norms $|\,\cdot\,|$ appearing on the right-hand side are the usual Euclidean norms on $\R^d$ for $d = n$ and $d=1$, respectively. The \textit{spatial domain}, on the other hand, is endowed with the usual Euclidean metric. We write $B^{n+1}(z, R)$ for the space-time ball centred at $z \in \R^{n+1}$ of radius $R$, defined with respect to \eqref{eq: space-time metric}, and $B^n(x,R)$ for the usual Euclidean ball centred at $x \in \R$ of radius $R$. This gives rise to a slight ambiguity in the notation, but the meaning should always be clear from the context. In some cases we will write $B^{n+1}_R$ or $B^n_R$ for $B^{n+1}(0,R)$ and $B^n(0,R)$, respectively. 

Given functions $f \in L^1(\R^n)$ and $g \in L^1(\widehat{\R}^n)$, we define the Fourier transform of $f$ and the inverse Fourier transform of $g$ by the integral formul\ae 
\begin{equation*}
    \hat{f}(\xi) := \int_{\R^n} e^{-i x \cdot \xi} f(x)\,\ud x \quad \textrm{and} \quad \check{g}(x) := \frac{1}{(2 \pi)^n} \int_{\widehat{\R}^n} e^{i x \cdot \xi} g(\xi)\,\ud \xi.
\end{equation*}
Note that we distinguish between the spatial domain $\R^n$ and the \textit{frequency domain} $\widehat{\R}^n$; this is simply for notational purposes and $\widehat{\R}^n$ can be thought of as a copy of $\R^n$.

Given a set $E \subseteq \R^d$, we let $\chi_E \colon \R^d \to \R$ denote its characteristic function, so that $\chi_E(x) = 1$ if $x \in E$ and $\chi_E(x) = 0$ otherwise. If $E$ is Lebesgue measurable, we let $|E|$ denote its Lebesgue measure.

An \textit{$r$-cube} (or simply a \textit{cube}) $Q \subset \R^n$ is a set of the form
\begin{equation*}
    Q := x + [-r/2, r/2]^n \qquad \textrm{for some $x \in \R^n$ and $r > 0$};
\end{equation*}
in this case $x$ is referred to as the \textit{centre} of the cube and $r$ the \textit{side-length}. Note that, for the purposes of this article, all cubes have faces parallel to the coordinate axes. We say $Q \subseteq \R^{n+1}$ is a \textit{lattice $r$-cube} for some $r > 0$ if it is an $r$-cube centred at a point on the integer lattice $r\Z^{n+1}$. In the case $r = 1$, we will also refer to $Q$ as a lattice unit-cube. Given a cube $Q$ and $M > 0$, we let $M \cdot Q$ denote the cube concentric to $Q$ but side-length scaled by a factor of~$M$.

Given a list of objects $L$ and real numbers $A$, $B \geq 0$, we write $A \lesssim_L B$ or $B \gtrsim_L A$ to indicate $A \leq C_L B$ for some constant $C_L$ which depends only items in the list $L$ and perhaps other admissible parameters such as the dimension $n$ or Lebesgue exponents $p$. We write $A \sim_L B$ to indicate $A \lesssim_L B$ and $B \lesssim_L A$.




\section{Standard reductions and reformulations}\label{sec: Standard reductions}

In this section we perform a series of arguments to reduce Theorem~\ref{thm: DZ pointwise} to the \textit{fractal energy estimate} stated in Theorem~\ref{thm: fractal L2} below. Whilst these arguments are standard (use of maximal estimates, linearisation, discretisation), the final form of the fractal energy estimate in Theorem~\ref{thm: fractal L2} is an interesting aspect of the approach.




\subsection{Elementary symmetries}
For fixed time $t \in \R$, the operator $e^{it \Delta}$ is a Fourier multiplier, and therefore automatically enjoys a number of special symmetries.\footnote{We will discuss additional special symmetries particular to $e^{it\Delta}$ in \S\ref{sec: Parabolic geometry} below.} In particular, $e^{it \Delta}$ commutes with any other multiplier and, therefore, with (spatial) translations. We call this property \textit{spatial translation invariance}. With respect to dilations, the operators satisfy
\begin{equation*}
   e^{it \Delta} (\delta_{R}f) =  \delta_{R}(e^{iR^2t \Delta} f) \qquad \textrm{where} \qquad \delta_R f (x) := f(Rx),
\end{equation*}
and so are dilation invariant up to a scaling of the temporal parameter. Finally, the operators $e^{it\Delta}$ form a semigroup, which leads to temporal translation invariance properties, at least at the level of $L^2$ norms. 




\subsection{Schr\"odinger maximal estimates}\label{sec: maximal estimates} 

The first step is to apply a standard argument to reduce the pointwise convergence problem to proving an \textit{a priori} bound for a maximal operator.  

\begin{theo}[\cite{DZ2019}]\label{thm: DZ max} For all $s > \frac{n}{2(n+1)}$, we have a maximal estimate
\begin{equation}\label{eq: max est}
\|\sup_{0 \leq t \leq 1}|e^{it\Delta}f| \|_{L^2(B^n(0,1))} \lesssim \|f\|_{H^s(\R^n)} \qquad \textrm{for all $f \in \cS(\R^n)$.}
\end{equation}
\end{theo}

Here, given $f \in H^s(\R^n)$, the \textit{$H^s$-norm} is defined by
\begin{equation*}
    \|f\|_{H^s(\R^n)} := \|(1 - \Delta_x)^{s/2} f\|_{L^2(\R^n)}.
\end{equation*}
Note that we only ask for \eqref{eq: max est} to hold for $f \in \cS(\R^n)$; in this case the solution $u(x,t) := e^{it\Delta}f$ is continuous in $(x,t)$ and so the expression appearing on the left-hand side of \eqref{eq: max est} is certainly well-defined. We refer to the operation $f \mapsto \sup_{0 \leq t \leq 1}|e^{it\Delta}f|$ as the \textit{Schr\"odinger maximal operator}.

By a standard argument, Theorem~\ref{thm: DZ max} implies Theorem~\ref{thm: DZ pointwise}. For completeness, the details of this implication are given below. We remark that the choice of $L^2$ space here is likely sub-optimal and \eqref{eq: max est} may hold with the left-hand $L^2$-norm replaced with an $L^p$-norm for larger $p$. Indeed, for $n=1$ it was shown in \textcite{KPV1991} that one may take $p = 4$ and for $n=2$ it was shown in \textcite{DGLZ2018} that one may take $p=3$. These exponents are sharp. In general, an obvious necessary condition is $p \leq 2 \cdot \frac{n+1}{n}$ which arises through Sobolev embedding. Indeed, since we know $\lim_{t \to 0} e^{it\Delta}f$ for $f \in \cS(\R^n)$, we can only hope to have an $H^s(\R^n) \to L^p(\R^n)$ maximal estimate if $H^s(\R^n)$ embeds into $L^p(\R^n)$. Surprisingly, a more restrictive necessary condition on $p$ was found in \textcite{DKWZ2020} by adapting Bourgain's counterexample from \textcite{B2016}. 

Theorem~\ref{thm: DZ max} has the following consequence, which formalises the meaning of the limit identity in Theorem~\ref{thm: DZ pointwise} (\textit{c.f.} Remark~\ref{rmk: technical annoyance} and the discussion following \eqref{eq: ae convergence}).  

\begin{coro}\label{cor: technical measure theory} Let $s > \frac{n}{2(n+1)}$ and $f \in H^s(\R^n)$. Then for each $0 \leq t \leq 1$ there is a choice of representative of the $L^2$-function $e^{it\Delta}f$ such that for almost every $x \in \R^n$ the map $t \mapsto e^{it\Delta}f(x)$ is continuous. 
\end{coro}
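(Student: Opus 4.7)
The plan is to deduce Corollary~\ref{cor: technical measure theory} from the maximal estimate of Theorem~\ref{thm: DZ max} via the standard transference argument: approximate $f \in H^s(\R^n)$ by Schwartz data and upgrade $L^2$-norm convergence to uniform-in-$t$ pointwise almost everywhere convergence.

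First I would choose $f_k \in \cS(\R^n)$ with $\|f - f_k\|_{H^s(\R^n)} \leq 2^{-k}$ and set $u_k(x,t) := e^{it\Delta}f_k(x)$. Since $f_k$ is Schwartz, each $u_k$ is jointly continuous on $\R^n \times [0,1]$. Applying Theorem~\ref{thm: DZ max} to the differences $f_{k+1} - f_k$ together with Chebyshev's inequality yields
\begin{equation*}
   \Big|\Big\{ x \in B^n(0,1) : \sup_{0 \leq t \leq 1}|u_{k+1}(x,t) - u_k(x,t)| > 2^{-k/4}\Big\}\Big| \lesssim 2^{-k/2},
\end{equation*}
which is summable in $k$. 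By the Borel--Cantelli lemma there is a full-measure set $E_0 \subseteq B^n(0,1)$ on which the sequence $(u_k(x,\,\cdot\,))$ is Cauchy in $C([0,1])$; its uniform limit $v(x,\,\cdot\,)$ is therefore continuous on $[0,1]$.

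Next I would upgrade this to a statement on all of $\R^n$. By spatial translation invariance of the propagator, the maximal estimate holds on every unit ball $B^n(x_0,1)$ with the same implicit constant. Running the preceding argument on each ball in a countable cover of $\R^n$, and taking the union of the negligible exceptional sets, yields a full-measure set $E \subseteq \R^n$ and a function $v \colon E \times [0,1] \to \C$ with $t \mapsto v(x,t)$ continuous for every $x \in E$. Define $\tilde{u}(x,t) := v(x,t)$ for $x \in E$ and $\tilde{u}(x,t) := 0$ elsewhere. To verify that $\tilde{u}(\,\cdot\,,t)$ is a valid representative of $e^{it\Delta}f$ for each fixed $t$, observe that conservation of energy \eqref{eq: conserved energy} together with the embedding $H^s(\R^n) \hookrightarrow L^2(\R^n)$ give
\begin{equation*}
    \|u_k(\,\cdot\,,t) - e^{it\Delta}f\|_{L^2(\R^n)} = \|f_k - f\|_{L^2(\R^n)} \to 0;
\end{equation*}
passing to a subsequence converging pointwise almost everywhere, and comparing with our pointwise limit $v(\,\cdot\,,t)$, forces $\tilde{u}(\,\cdot\,,t) = e^{it\Delta}f$ almost everywhere.

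The argument is essentially routine; the main subtlety -- and really the only place one must pause -- is that Theorem~\ref{thm: DZ max} is a \emph{local} estimate on $B^n(0,1)$, whereas the conclusion concerns all of $\R^n$. This gap is bridged by spatial translation invariance combined with a countable covering, the exceptional sets remaining negligible under countable union.
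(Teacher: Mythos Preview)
Your proof is correct and follows essentially the same approach as the paper: approximate by Schwartz data, use the maximal estimate plus Chebyshev and Borel--Cantelli to obtain uniform-in-$t$ pointwise convergence on a full-measure set, then identify the limit via conservation of energy and an a.e.-convergent subsequence. The paper invokes translation invariance at the outset to reduce to $B^n(0,1)$, whereas you run the argument on a single ball and then cover $\R^n$ countably, but this is only a cosmetic difference.
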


\begin{proof} By translation invariance, it suffices to consider only $x \in B^n(0,1)$. Fixing $f$ as in the statement of the corollary, for each $k \in \N$ we can find some $f_k \in \cS(\R^n)$ such that $\|f - f_k\|_{H^s(\R^n)} \leq 2^{-2k}$. We claim that the set 
\begin{equation*}
    E := \big\{x \in B^n(0,1) : \limsup_{k \to \infty} 2^k \sup_{0 \leq t \leq 1} |e^{it\Delta}(f_k - f_{k-1})(x)| > 1 \big\}
\end{equation*}
is of Lebesgue measure zero. Temporarily assuming this is true, we see that the sequence $e^{it\Delta}f_k(x)$ of functions is Cauchy over $(x,t) \in B^n(0,1)\setminus E \times [0,1]$, uniformly in $t$. The sequence therefore converges pointwise in $x$ and uniformly in $t$ on this set to some limit function $L(x,t)$, which is continuous in $t$. On the other hand, for fixed $0 \leq t \leq 1$, by the conservation of energy identity we have
\begin{equation*}
\|e^{it\Delta}f_k - e^{it\Delta}f\|_{L^2(\R^n)} = \|f_k - f\|_{L^2(\R^n)} \leq\|f_k - f\|_{H^s(\R^n)} \leq 2^{-2k},
\end{equation*}
and so $e^{it\Delta}f_k$ converges to $e^{it\Delta}f$ in the $L^2(\R^n)$ sense. Since convergence in $L^2$ implies almost everywhere convergence along a subsequence, it follows that for each $0 \leq t \leq 1$, the function $L(x,t)$ must agree with $e^{it\Delta}f(x)$ for almost every $x \in B^n(0,1)$. This is precisely the desired conclusion. 

It remains to show $E$ is of Lebesgue measure zero, which is achieved using a Borel--Cantelli argument. Note that
\begin{equation*}
   E \subseteq \bigcap_{K \in \N} \bigcup_{k \geq K} E_k 
\end{equation*}
where 
\begin{equation*}
E_k := \big\{x \in B^n(0,1) :  \sup_{0 \leq t \leq 1} |e^{it\Delta}(f_k - f_{k-1})(x)| > 2^{-k}\big\}.
\end{equation*}
By Tchebyshev's inequality and the maximal estimate,
\begin{equation*}
  |E| \leq \sum_{k \geq K}  |E_k| \leq  \sum_{k \geq K}  2^{2k} \big\|\sup_{0 \leq t \leq 1} |e^{it\Delta}(f_k-f_{k-1})|\big\|_{L^2(\R^n)}^2 \lesssim_s   \sum_{k \geq K} 2^{-2k} \lesssim 2^{-2K}
\end{equation*}
for all $K \in \N$. Thus, $|E| = 0$, as required. 

\end{proof}

For $s > \frac{n}{2(n+1)}$ and $f \in H^s(\R^n)$, we can use Corollary~\ref{cor: technical measure theory} to clarify the meaning of our limits. We choose representatives of the functions $e^{it\Delta}f$ for $0 \leq t \leq  1$ to guarantee the conclusion of Corollary~\ref{cor: technical measure theory}. Since $e^{it\Delta}f(x)|_{t = 0} = f(x)$ for almost every $x \in \R^n$, with this choice of representatives we do indeed have
\begin{equation*}
    \lim_{t \to 0_+} e^{it\Delta}f(x) = f(x) \qquad \textrm{for almost every $x \in \R^n$.}
\end{equation*}
Thus, Corollary~\ref{cor: technical measure theory} can be interpreted as the precise formulation of Theorem~\ref{thm: DZ pointwise}.




\subsection{Littlewood--Paley decomposition}\label{sec:  Littlewood--Paley decomposition}
The next step is to recast the $H^s(\R^n) \to L^2(\R^n)$ maximal bound in Theorem~\ref{thm: DZ max} as an $L^2(\R^n) \to L^2(\R^n)$ estimate for frequency localised data. We are easily able to do this since Theorem~\ref{thm: DZ max} involves an open range of $s$. 

For $R \geq 1$, let $\chi_{A(R)}$ denotes the characteristic function of the frequency annulus
\begin{equation*}
   A(R) := \{ \xi \in \widehat{\R}^n : R/2 \leq |\xi| < R\}
\end{equation*}
and let $\chi_{A(R)}(D)$ denote the Fourier multiplier operator defined by
\begin{equation*}
\chi_{A(R)}(D)f := \mathcal{F}^{-1} \big( \chi_{A(R)} \cdot \mathcal{F}f\big) \qquad \textrm{for all $f \in L^2(\R^n)$.}
\end{equation*}
Thus, $\chi_{A(R)}(D)$ corresponds to a frequency projection to $A(R)$, with a rough cutoff. As a simple consequence of Plancherel's theorem,
\begin{equation}\label{eq: LP characterisation}
    \|f\|_{H^s(\R^n)} \sim \Big( \sum_{k = 1}^{\infty} 2^{2ks} \|\chi_{A(2^k)}(D)f\|_{L^2(\R^n)}^2 \Big)^{1/2} + \|f\|_{L^2(\R^n)};
\end{equation}
this is known as the \textit{Littlewood--Paley characterisation} of the $H^s$-norm. 

In view of the characterisation \eqref{eq: LP characterisation}, to prove Theorem~\ref{thm: DZ max} it suffices to show that for all $\varepsilon >  0$ and all $R \geq 1$, the inequality
\begin{equation}\label{eq: band limited max}
\|\sup_{0 \leq t \leq 1}|e^{it\Delta}f| \|_{L^2(B^n(0,1))} \lesssim_{\varepsilon} R^{n/(2(n+1)) + \varepsilon} \|f\|_{L^2(\R^n)} 
\end{equation}
holds for all $f \in L^2(\R^n)$ with $\supp \hat{f} \subseteq A(R)$. By scaling and exploiting certain pseudo-local properties of the propagator, the problem is further reduced to the following proposition. 

\begin{prop}\label{prop: reduced max} For all $\varepsilon >  0$ and all $R \geq 1$ the inequality
\begin{equation}\label{eq: reduced max}
\|\sup_{0 \leq t \leq R}|e^{it\Delta}f| \|_{L^2(B^n(0,R))} \lesssim_{\varepsilon} R^{n/(2(n+1)) + \varepsilon} \|f\|_{L^2(\R^n)} 
\end{equation}
holds whenever $f \in L^2(\R^n)$ satisfies $\supp \hat{f} \subseteq A(1)$. 
\end{prop}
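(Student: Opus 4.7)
The plan is to reduce \eqref{eq: reduced max} to a discretised, linearised fractal $L^2$ estimate and then attack the latter via the broad-narrow/induction-on-scales machinery of \textcite{BG2011} and \textcite{DZ2019}. First I would use a standard measurable selection to replace the supremum in $t$ by a measurable time function $t \colon B^n(0,R) \to [0,R]$, reducing matters to the linear bound
\begin{equation*}
\|e^{it(\cdot)\Delta} f\|_{L^2(B^n(0,R))} \lesssim_\varepsilon R^{n/(2(n+1))+\varepsilon}\|f\|_{L^2(\R^n)}.
\end{equation*}
Since $\supp \hat f \subseteq A(1)$, the function $e^{it\Delta}f$ is essentially locally constant on unit balls in space-time, so the graph $\{(x,t(x)) : x \in B^n(0,R)\}$ can be discretised to a union $Y$ of lattice unit cubes in $B^{n+1}(0,R)$. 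Pigeonholing both the height $\lambda$ of $|e^{it\Delta}f|$ on these cubes and the minimal fractal dimension $\alpha$ of $Y$ (namely the best $\alpha$ such that every ball of radius $r\in[1,R]$ meets at most $(R/r)^\alpha$ cubes of $Y$) recasts \eqref{eq: reduced max} as a family of fractal $L^2$ estimates of the form alluded to in Theorem~\ref{thm: fractal L2}.

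To prove the fractal estimate I would apply a broad-narrow analysis: cover $A(1)$ by caps at smaller scale and, at each lattice cube of $Y$, classify the dominant caps as \emph{broad} (transversal in all $n$ directions) or \emph{narrow} (clustered near a lower-dimensional affine subspace). The broad contribution is controlled by the multilinear Strichartz estimate (Theorem~\ref{thm: BCT}), whose transversality hypothesis produces cancellation beyond the linear bound. The narrow contribution is controlled by the $\ell^2$ decoupling theorem (Theorem~\ref{thm: decoupling}), which trades a sum over caps of radius $\rho$ for a sum over caps of finer scale and effectively reduces the narrow piece to a Schr\"odinger problem on a lower-dimensional paraboloid.

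The main obstacle, and the heart of the argument, is closing the resulting recursion. A naive iteration of broad-narrow gives a bound strictly weaker than $R^{n/(2(n+1))+\varepsilon}$; the crucial new idea is to run the induction not only in the radius $R$ but simultaneously in the fractal parameter $\alpha$, exploiting the fact that the narrow case refines $Y$ into an object of smaller effective dimension at a smaller scale. One must balance the multilinear gain in the broad case, the decoupling gain in the narrow case, and the inductive hypothesis at the finer scale so that the exponents in $\alpha$, $\lambda$ and $R$ line up with the Sobolev threshold, while ensuring that the compounded $\varepsilon$-losses along the iteration sum to a single $R^\varepsilon$. Getting this bookkeeping right — in particular identifying the correct form of the fractal $L^2$ estimate to induct upon — is the principal difficulty, and the reason the full argument occupies the remainder of the paper.
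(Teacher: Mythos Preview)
Your high-level architecture---linearise the supremum, discretise the graph into unit cubes, then prove a fractal $L^2$ estimate via broad--narrow analysis and induction on scales---matches the paper exactly, and your identification of Theorem~\ref{thm: BCT} and Theorem~\ref{thm: decoupling} as the two workhorses is correct. However, you misdescribe the mechanism that actually closes the induction, and the description you give would not work.

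First, there is no pigeonholing in $\alpha$ and no induction in $\alpha$: for the maximal function the discretised graph satisfies the vertical line test, so one simply takes $\alpha = n$ with $\Delta_n(\cQ) \lesssim 1$, and $\alpha$ stays fixed throughout the argument. Second, and more seriously, the narrow case does \emph{not} refine the set $Y$ to smaller effective dimension. What happens instead is that decoupling breaks $f_V$ into cap pieces $f_\tau$, each of which (by the pseudo-local property) lives on strips $S$; parabolic rescaling then sends each strip to a ball of radius $\tilde R \sim R/K^2$ and the cubes in $\cQ \cap S$ to a new family $\tilde\cQ(S)$ of cubes with the \emph{same} $\alpha$. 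The innovation that makes the recursion close is the precise form of the inductive statement: one proves an $L^{q_n}$ bound with right-hand side $[\Delta_\alpha(\cQ)/\#\cQ]^{1/(n+1)} R^{\alpha/(2(n+1))+\varepsilon}$, and the factor $[\Delta_\alpha/\#\cQ]^{1/(n+1)}$ is what propagates correctly under rescaling. The heart of the matter is then an elementary but delicate double-counting argument relating the multiplicities $\#\bbS(Q)$, $\#\cQ(S)$ and the densities $\Delta_\alpha(\tilde\cQ(S))$, $\Delta_\alpha(\cQ)$ across scales (this is Lemma~\ref{lem: relating scales} and the surrounding pigeonholing). Without this specific bookkeeping device the narrow term does not gain enough, and your proposed ``dimension drops in the narrow case'' picture does not supply a substitute.
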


Scaling alone shows that \eqref{eq: band limited max} is equivalent to a variant of \eqref{eq: reduced max} in which the supremum is taken over the longer time interval $0 < t < R^2$. To pass to the shorter time interval, we use an argument of \textcite{L2006}, based on pseudo-local properties of the operator; we describe the details in \S\ref{sec: wave packet dec}.




\subsection{Linearising the maximal operator}\label{sec: Linearisation}

To prove Proposition~\ref{prop: reduced max}, we linearise the maximal operator using the Kolmogorov--Seliverstov--Plessner method. In particular, it suffices to show, for all $\varepsilon > 0$, $R \geq 1$ and all measurable functions $\mathbf{t} \colon B^n(0,R) \to (0,R)$, the inequality
\begin{equation}\label{eq: lin max}
    \Big(\int_{B^n(0,R)} |e^{i\mathbf{t}(x)\Delta}f(x)|^2 \,\ud x \Big)^{1/2} \lesssim_{\varepsilon} R^{n/(2(n+1)) + \varepsilon} \|f\|_{L^2(\R^n)} 
\end{equation}
holds for all $f \in L^2(\R^n)$ with $\supp \hat{f} \subseteq B^n(0,1)$. 

Let $U$ denote the operator defined initially on the Schwartz class by
\begin{equation*}
    Uf(x,t) := \frac{1}{(2\pi)^n}\int_{B^n(0,1)} e^{i (\inn{x}{\xi} + t|\xi|^2)} \hat{f}(\xi) \,\ud \xi.
\end{equation*}
Thus, $Uf(\,\cdot\,,t)$ corresponds to the composition of $e^{it\Delta}f$ with a rough frequency projection to the unit ball. We think of the estimate \eqref{eq: lin max} as bounding $Uf$ over the space-time graph $\Gamma_{\mathbf{t}} := \{(x, \mathbf{t}(x)) : x \in B^n(0,R)\}$ of the measurable function $\mathbf{t}$. This perspective turns out to be useful and we shall formulate all our key estimates over the space-time domain. 

Since $Uf$ is localised in frequency, we do not expect the values $|Uf(z)|$ to vary greatly at small scales. This is a manifestation of the \textit{uncertainty principle}, which is discussed in detail in \S\ref{sec: uncertainty} below. These observations allow us to discretise our setup. 

\begin{defi} Let $\cQ$ be a family of lattice unit cubes in $\R^{n+1}$.
\begin{enumerate}[i)]
    \item We let $Z_{\cQ}$ denote the union $\bigcup_{Q \in \cQ} Q$.
    \item We say $\cQ$ satisfies the \emph{vertical line test} if for almost every $x \in \R^n$, at most one cube from $Q$ intersects the line $\{(x,t) : t \in \R\}$.
\end{enumerate}
    
\end{defi}

A lattice unit cube $Q$ should be thought of as a `discretised point'  and $Z_{\cQ}$ for a family $\cQ$ satisfying the vertical line test as a `discretised graph'. With these definitions, Proposition~\ref{prop: reduced max} is a consequence of the following bound.

\begin{prop}[\cite{DZ2019}]\label{prop: lin max op}  For all $\varepsilon >  0$ and all $R \geq 1$, the inequality
\begin{equation*}
    \|Uf\|_{L^2(Z_{\cQ})} \lesssim_{\varepsilon} R^{n/(2(n+1)) + \varepsilon} \|f\|_{L^2(\R^n)} 
\end{equation*}
holds whenever $f \in L^2(\R^n)$ and $\cQ$ is a collection of unit lattice cubes lying in $B^{n+1}(0,R)$ which satisfy the vertical line test. 
\end{prop}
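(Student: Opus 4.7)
The plan is to prove Proposition~\ref{prop: lin max op} by an induction on the scale $R$, following the broad--narrow scheme introduced by Bourgain--Guth and adapted to the Carleson problem by Du--Zhang. The inductive statement is precisely the proposition itself (or rather, a slightly strengthened version with an explicit $R^\varepsilon$ loss that can be iterated): assume the bound holds for all radii $R' \leq R/2$, and deduce it at scale $R$.

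The first step is to fix an intermediate scale parameter $K$ (typically $K = R^\delta$ for some small $\delta = \delta(\varepsilon) > 0$) and decompose the Fourier support $B^n(0,1)$ of $f$ into essentially disjoint caps $\{\tau\}$ of diameter $K^{-1}$, writing $f = \sum_\tau f_\tau$ with $\supp \hat{f}_\tau \subseteq \tau$. The uncertainty principle tells us that each $Uf_\tau$ is essentially constant on parallelepipeds of dimensions $K \times \cdots \times K \times K^2$ oriented along the normal to the paraboloid above $\tau$. At each cube $Q \in \cQ$, I would then apply the broad--narrow dichotomy of \textcite{BG2011}: either $|Uf(Q)|$ is dominated by caps $\tau_1,\dots,\tau_n$ whose associated normals are quantitatively transverse (the \emph{broad} case), or the significant caps all cluster near an $(n-1)$-dimensional affine subvariety (the \emph{narrow} case).

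In the broad case, I would apply the multilinear Strichartz inequality of \textcite{BCT2006} (Theorem~\ref{thm: BCT}) on balls of radius $R$. A straightforward calculation comparing the multilinear bound against the trivial $L^2$ estimate for a single cap produces the desired exponent $n/(2(n+1)) + \varepsilon$ directly, without any appeal to the induction hypothesis. Here one uses crucially that $|Z_{\cQ}|$ is controlled: the vertical line test gives $|Z_{\cQ}| \lesssim R^n$ since the spatial projection of $Z_{\cQ}$ is contained in $B^n(0,R)$, and this is the sparsity hypothesis that makes the bookkeeping consistent with the exponent $n/(2(n+1))$.

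In the narrow case, I would apply the $\ell^2$ decoupling theorem of \textcite{BD2015} (Theorem~\ref{thm: decoupling}) at scale $K^{-1}$ on each ball of radius $K^2$ inside $B^{n+1}(0,R)$ to pass from the $L^2$ norm of $Uf$ to the $\ell^2$ sum of the individual cap contributions $\|Uf_\tau\|_{L^2}^2$. For each $\tau$, a parabolic rescaling $\xi \mapsto K(\xi-\xi_\tau)$ transforms $f_\tau$ into a function with Fourier support in $B^n(0,1)$, while $B^{n+1}(0,R)$ becomes contained in a ball of radius $R/K^2$; crucially, the rescaled collection $\cQ'$ of $K \times \cdots \times K \times K^2$ boxes still satisfies an appropriate vertical-line-test condition (after grouping into lattice unit cubes and discarding negligible overlaps), because $K^2 \geq K$ in the time direction. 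I would then invoke the induction hypothesis at the smaller scale $R/K^2$ and reassemble the narrow bound via the $\ell^2$ sum over $\tau$.

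The main obstacle will be closing the induction cleanly. Combining the two cases, one obtains a bound of the form $R^\varepsilon \cdot (\text{broad constant}) + K^{\alpha} \cdot (R/K^2)^{n/(2(n+1))+\varepsilon}$ for the constant at scale $R$; the exponent $n/(2(n+1))$ is precisely the unique value for which, after optimising in $K$, the $K$-dependent losses absorb into the $R^\varepsilon$ factor. Making this accounting rigorous requires a careful formulation of the induction statement (likely bundling together a whole family of fractal $L^2$ estimates parametrised by a lower bound on $|Z_{\cQ}|$, or introducing an auxiliary $\alpha$-dimensional parameter), and also handling two technical points: the wave packet decomposition must genuinely respect the combinatorial structure of $\cQ$ after rescaling, and the narrow case must be iterated the correct number of times so that the broad term ultimately dominates.
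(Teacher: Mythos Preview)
Your broad outline---induction on scale, broad--narrow dichotomy, multilinear Strichartz for the broad part, decoupling plus parabolic rescaling plus induction for the narrow part---is exactly the architecture the paper uses. But there are two related gaps, one of which you partly anticipated and one of which you did not.

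The first, which you flag at the end, is that the vertical line test is \emph{not} preserved under parabolic rescaling. The map $\cA_S \colon (x,t) \mapsto (K^{-1}(x - x(S) - tv(S)), K^{-2}t)$ sends vertical lines to lines with direction $(-K^{-1}v(S), K^{-2})$, so the rescaled cubes $\cQ'$ need not satisfy any vertical-line condition. The paper handles this exactly as you guessed: it replaces the vertical line test by the fractal condition $\Delta_\alpha(\cQ) \lesssim 1$ (the vertical line test gives $\Delta_n(\cQ) \lesssim 1$), proves the more general Theorem~\ref{thm: fractal L2} with a $\Delta_\alpha(\cQ)^{1/(n+1)}$ factor, and then uses a simple lemma (Lemma~\ref{lem: relating scales}) to compare $\Delta_\alpha(\tilde{\cQ}(S))$ at the rescaled scale to $\Delta_\alpha(\cQ)$ at the original scale.

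The second gap is more serious and concerns your choice to run everything in $L^2$. At $q=2$ the decoupling inequality is just Plancherel and gains nothing; the whole point of Theorem~\ref{thm: decoupling} is the range up to $q_n = 2\cdot\tfrac{n+1}{n-1}$. The paper's actual inductive statement (Proposition~\ref{prop: final reduction}) is an $L^{q_n}$ bound of the form
\[
\|Uf\|_{L^{q_n}(Z_{\cQ})} \lesssim_\varepsilon \Big[\frac{\Delta_\alpha(\cQ)}{\#\cQ}\Big]^{1/(n+1)} R^{\alpha/(2(n+1))+\varepsilon}\|f\|_{L^2},
\]
together with a dyadic-constancy hypothesis on $\|Uf\|_{L^{q_n}(Q)}$ over $Q \in \cQ$. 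The $L^{q_n}$ exponent matches the critical exponent for $n$-dimensional decoupling (used in the narrow case), while the $(\#\cQ)^{-1/(n+1)}$ factor and dyadic constancy let you pass to $L^{p_n}$, $p_n = 2\cdot\tfrac{n+1}{n}$, via reverse H\"older in the broad case, matching the multilinear Strichartz exponent. This simultaneous access to two different critical exponents is the heart of the Du--Zhang argument, and it is invisible if you stay in $L^2$. Without this reformulation your induction will not close: the narrow bound you write down, $K^\alpha (R/K^2)^{n/(2(n+1))+\varepsilon}$, only balances if the $K^\alpha$ loss from decoupling is absent, and at $L^2$ there is no decoupling gain to offset the H\"older loss incurred when passing from $\ell^2$ over caps to the form needed for rescaling.
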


We will show in detail why the discretised bound in Proposition~\ref{prop: lin max op} implies the linearised maximal estimate \eqref{eq: lin max} in \S\ref{sec: uncertainty} below. 




\subsection{Fractal energy estimates}\label{sec: fractal}

Proposition~\ref{prop: lin max op} is in fact a special case of a significantly more general theorem proved in \textcite{DZ2019}. Here we describe the general framework, which we adopt for the remainder of the exposition. 

\begin{defi} Let $M  > 0$ and $\cQ$ be a family of lattice $M$-cubes in $\R^{n+1}$. 
\begin{enumerate}[i)]
    \item For $1 \leq \alpha \leq n+1$ and a space-time ball $B \subseteq \R^{n+1}$, we define 
\begin{equation*}
    \Delta_{\alpha}(\cQ, B) := \frac{\#\{ Q \in \cQ : Q \subset B\}}{\rad(B)^{\alpha}},
\end{equation*}
where $\rad(B)$ denotes the radius of $B$. 
    \item Furthermore, let
\begin{equation*}
    \Delta_{\alpha}(\cQ) := \sup_B \Delta_{\alpha}(\cQ, B)
\end{equation*}
where the supremum is taken over all space-time balls $B = B^{n+1}(z,r) \subseteq \R^{n+1}$. 
\end{enumerate}
\end{defi} 

If $1 \leq M \leq R$ and $\cQ$ is a non-empty collection of lattice $M$-cubes contained in $B^{n+1}(0,R)$, then it follows from the definition that
\begin{equation*}
    M^{-\alpha} \lesssim \Delta_{\alpha}(\cQ) \qquad \textrm{and} \qquad \#\cQ \lesssim \Delta_{\alpha}(\cQ) R^{\alpha}.
\end{equation*}

We may now (finally) state the main result of \textcite{DZ2019}.\footnote{In fact, \textcite{DZ2019} prove a further strengthening of Theorem~\ref{thm: fractal L2} involving an additional parameter $\lambda$: see \textcite[Theorem 1.6]{DZ2019}. However, this strengthened result is unnecessary for typical applications and so here we stick to the simpler statement in Theorem~\ref{thm: fractal L2}.}

\begin{theo}[\cite{DZ2019}]\label{thm: fractal L2} 
For all $\varepsilon >  0$ and all $R \geq 1$, $1 \leq \alpha \leq n+1$, the inequality
\begin{equation}\label{eq: fractal L2}
\|Uf\|_{L^2(Z_{\cQ})} \lesssim_{\varepsilon} \Delta_{\alpha}(\cQ)^{1/(n+1)}R^{\alpha/(2(n+1)) + \varepsilon} \|f\|_{L^2(\R^n)} 
\end{equation}
holds whenever $f \in L^2(\R^n)$ and $\cQ$ is a family of lattice unit cubes in $B^{n+1}(0,R)$. 
\end{theo}

If $\cQ$ satisfies the vertical line test, then it is not difficult to see $\Delta_n(\cQ) \lesssim 1$. Consequently, Theorem~\ref{thm: fractal L2} implies Proposition~\ref{prop: lin max op} and therefore, by the preceding reductions, also pointwise convergence result in Theorem~\ref{thm: DZ pointwise}. 

We refer to the estimate \eqref{eq: fractal L2} as a \textit{fractal energy estimate}. This terminology is partly motivated by the conservation of energy identity \eqref{eq: conserved energy}. Indeed, from \eqref{eq: conserved energy} we have 
\begin{equation}\label{eq: space-time energy est}
    \|Uf\|_{L^2(Z_{\cQ})} \leq \Big(\int_{-R}^R \|e^{it\Delta}f\|_{L^2(\R^n)}^2\,\ud t\Big)^{1/2} \lesssim R^{1/2} \|f\|_{L^2(\R^n)}, 
\end{equation}
which directly implies the $\alpha = n+1$ case of Theorem~\ref{thm: fractal L2}. For general exponents $1 \leq \alpha \leq n+1$, it is useful to think of a family of cubes $\cQ$ satisfying $\Delta_{\alpha}(\cQ) \lesssim 1$ as a discretised version of an $\alpha$-dimensional `fractal' set. 

There are two main advantages in working with the general framework of fractal energy estimates:
\begin{enumerate}[1)]
\item Theorem~\ref{thm: fractal L2} has an array of additional applications beyond the pointwise convergence problem for the Schr\"odinger maximal function. These include estimates for the dimension of the divergence set in the Carleson problem and partial results towards the Falconer distance conjecture. We refer to \textcite[\S2]{DZ2019} for further details. 
\item The form of the estimate in \eqref{eq: fractal L2} is useful when it comes to the proof. In particular, the arguments involve an induction scheme and the inclusion of the $\Delta_{\alpha}(\cQ)$ factor allows greater leverage from the induction hypothesis.\footnote{ The induction argument is not applied to the statement in Theorem~\ref{thm: fractal L2} itself, but a variant described in Proposition~\ref{prop: final reduction}.}
\end{enumerate} 

For the remainder of this article we shall discuss the proof of Theorem~\ref{thm: fractal L2}. We start with some basic background in harmonic analysis and dispersive PDEs in \S\ref{sec: Basic tools}, before moving to more advanced topics in \S\S\ref{sec: mutlilinear harmonic analysis}--\ref{sec: main proof}.




\section{Basic tools from harmonic analysis}\label{sec: Basic tools}




\subsection{The uncertainty principle}\label{sec: uncertainty}

We begin with a discussion of the uncertainty principle for the Fourier transform. This is a set of heuristics which roughly state:
\begin{quote}
 If $\hat{F}$ is localised at scale $R^{-1}$, then $|F|$ should be locally constant at scale $R$.
\end{quote}
The following lemma provides a rigorous interpretation of this principle at the unit scale. 

\begin{lemm}[Locally constant property]\label{lem: loc const} There exists a continuous function $\eta \colon \R^d \to [0,\infty)$ satisfying the following:
\begin{enumerate}[i)]
    \item If $F \in \cS(\R^d)$ satisfies $\supp \hat{F} \subseteq Q_0 := [-1/2, 1/2]^d$, then
\begin{equation}\label{eq: loc const}
    |F(z)| \leq \sum_{Q \in \cQ_{\mathrm{all}}} a_Q\chi_Q(z) \lesssim |F| \ast \eta(z) \qquad \textrm{for all $z \in \R^d$}
\end{equation}
where here $\cQ_{\mathrm{all}}$ is the collection of all lattice unit cubes in $\R^d$ and 
\begin{equation*}
    a_Q := \sup_{z \in Q} |F(z)| \qquad \textrm{for all $Q \in \cQ_{\mathrm{all}}$.}
\end{equation*}
\item The function $\eta$ is $L^1$-normalised and rapidly decaying away from $Q_0$ in the sense that
\begin{equation*}
    \eta(z) \lesssim_N (1 + 2|z|_{\infty})^{-N} \qquad \textrm{for all $N \in \N$.}
\end{equation*}
\end{enumerate}
\end{lemm}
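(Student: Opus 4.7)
The plan is to realise $\eta$ as a suitably dilated and translated envelope of the inverse Fourier transform of a smooth bump that equals one on $Q_0$, and to obtain the pointwise bound in two independent steps.

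First I would set up the basic reproducing formula. Choose a Schwartz function $\psi \in \mathcal{S}(\widehat{\R}^d)$ with $\psi \equiv 1$ on $Q_0$ and, say, $\supp \psi \subseteq 2Q_0$. Since $\supp \hat{F} \subseteq Q_0$, we have $\hat{F} = \psi \cdot \hat{F}$, and taking inverse Fourier transforms gives the convolution identity $F = \check{\psi} \ast F$. Hence
\begin{equation*}
    |F(z)| \leq (|F| \ast |\check{\psi}|)(z) \qquad \textrm{for all $z \in \R^d$,}
\end{equation*}
and since $\check{\psi} \in \mathcal{S}(\R^d)$, the kernel $|\check{\psi}|$ is rapidly decaying, say $|\check{\psi}(y)| \lesssim_N (1+|y|_\infty)^{-N}$ for every $N \in \N$.

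Next I would prove the left-hand inequality in \eqref{eq: loc const}, which is essentially a tautology: any $z \in \R^d$ lies in (at least one) lattice unit cube $Q$, and by definition $|F(z)| \leq a_Q \leq \sum_{Q' \in \cQ_{\mathrm{all}}} a_{Q'} \chi_{Q'}(z)$. For the right-hand inequality I would show that $a_Q$ itself is controlled by $(|F| \ast \eta)(z)$ uniformly over $z \in Q$, which together with the fact that at most a bounded number of lattice unit cubes can contain any given $z$ then yields the full bound. For this, if $z, z' \in Q$ then $|z - z'|_\infty \leq 1$, so for every $w \in \R^d$,
\begin{equation*}
    |\check{\psi}(z' - w)| \leq \eta_0(z - w), \qquad \textrm{where} \qquad \eta_0(y) := \sup_{|u|_\infty \leq 1} |\check{\psi}(y+u)|.
\end{equation*}
Integrating against $|F(w)|$ yields $(|F| \ast |\check{\psi}|)(z') \leq (|F| \ast \eta_0)(z)$. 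Combined with the first step this gives $|F(z')| \leq (|F| \ast \eta_0)(z)$, and taking the supremum over $z' \in Q$ produces $a_Q \leq (|F| \ast \eta_0)(z)$ for every $z \in Q$, as required.

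Finally I would verify the decay and normalisation of $\eta$. The function $\eta_0$ inherits the Schwartz-type decay of $\check{\psi}$, since enlarging the argument by an $\ell^\infty$-unit cube only changes $(1+|y|_\infty)^{-N}$ by a harmless constant, giving $\eta_0(y) \lesssim_N (1+2|y|_\infty)^{-N}$. In particular $\eta_0 \in L^1(\R^d)$, so setting $\eta := c\,\eta_0$ for $c = \|\eta_0\|_{L^1}^{-1}$ produces the desired $L^1$-normalised, rapidly decaying, non-negative, continuous function; the implicit constants in \eqref{eq: loc const} absorb this factor of $c^{-1}$ and the bounded multiplicity of the covering by lattice unit cubes. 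The only mild subtlety is keeping track of these constants and of the $\ell^\infty$-enlargement in the definition of $\eta_0$, but no genuine obstacle arises: the whole statement is really a packaging of the standard heuristic that Fourier support in $Q_0$ forces $F$ to be locally constant at the unit scale.
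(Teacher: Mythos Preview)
Your proof is correct and essentially identical to the paper's: both fix a Schwartz function whose Fourier transform equals $1$ on $Q_0$, use the reproducing formula $F = F \ast \check{\psi}$, and then define $\eta$ as the $\ell^\infty$-unit-cube envelope of $|\check{\psi}|$ to pass from the point $z_Q$ realising $a_Q$ to an arbitrary $z \in Q$. The only cosmetic difference is your explicit renormalisation to force $\|\eta\|_{L^1}=1$, which the paper leaves implicit.
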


\begin{proof} Fix $\eta_0 \in \cS(\R^d)$ satisfying $\hat{\eta}_0(\xi) = 1$ for all $\xi \in [-1,1]^d$. Thus, if $F \in \cS(\R^d)$ satisfies the hypothesis of part i), we have the reproducing formula $F = F \ast \eta_0$. In particular, given $Q \in \cQ_{\mathrm{all}}$ and $z_Q \in Q$ chosen to satisfy $|F(z_Q)| = a_Q$, it follows that
\begin{equation*}
 a_Q = |F(z_Q)| \leq \int_{\R^d} |F(y)| |\eta_0(z_Q - y)| \,\ud y. 
\end{equation*}

Now define $\eta \colon \R^d \to [0,\infty)$ by 
\begin{equation*}
    \eta(z) := \sup_{|w - z|_{\infty} \leq 1} |\eta_0(w)|. 
\end{equation*}
The rapid decay of the Schwartz function $\eta_0$ then ensures $\eta$ is rapidly decaying away from $[-1,1]^d$. It therefore only remains to show that \eqref{eq: loc const} holds. 

If $z \in Q$ is an arbitrary element, then 
\begin{equation*}
    |(z_Q - y) - (z - y)|_{\infty} \leq 1 \qquad \textrm{for all $y \in \R^d$.}
\end{equation*}
Consequently, $|\eta_0(z_Q - y)| \leq \eta(z - y)$ for all $y \in \R^d$, and so
\begin{equation*}
    a_Q \leq |F| \ast \eta(z) \qquad \textrm{for all $z \in Q$,}
\end{equation*}
which immediately implies the second inequality in \eqref{eq: loc const}. On the other hand, the first inequality in \eqref{eq: loc const} is a trivial consequence of the definitions. 
\end{proof}

We may use the rigorous formulations of the uncertainty principle introduced above to justify the discretisation procedure described in \S\ref{sec: Linearisation}. 

\begin{proof}[Proof (Proposition~\ref{prop: lin max op} $\Rightarrow$ Proposition~\ref{prop: reduced max})] Assume Proposition~\ref{prop: lin max op} holds. Recall that it suffices to show the linearised maximal estimate \eqref{eq: lin max}. 

Let $\tilde{\chi} \in \cS(\R^{n+1})$ satisfy $|\tilde{\chi}(z)| \gtrsim 1$ for all $|z| \leq 2$ and $\supp \mathcal{F}\tilde{\chi} \subseteq B^{n+1}(0,1)$. Defining
\begin{equation*}
    F(z) := Uf(z) \cdot \tilde{\chi}(R^{-1} z),
\end{equation*}
then it follows that $|Uf(z)| \lesssim |F(z)|$ for all $z \in B^{n+1}(0,2R)$ and $\supp \hat{F} \subseteq B^{n+1}(0,2)$. The proof will follow from the resulting locally constant properties of the function $F$. 

Fix $\varepsilon > 0$ and a measurable function $\mathbf{t} \colon B^n(0,R) \to (0,1]$ and let $\mathfrak{q}_R$ denote the collection of lattice unit cubes in $\R^n$ which intersect $B^n(0,R)$. For each $q \in \mathfrak{q}_R$ there exists some choice of $x_q \in q$ such that
\begin{equation*}
    \sup_{x \in q} |e^{i\mathbf{t}(x)\Delta}f(x)| \leq 2 |e^{i\mathbf{t}(x_q)\Delta}f(x_q)|.
\end{equation*}
If we define $z_q := (x_q, \mathbf{t}(x_q))$ for each $q \in \mathfrak{q}_R$, then it follows that
\begin{equation*}
    \Big(\int_{B^n(0,R)} |e^{i\mathbf{t}(x)\Delta}f(x)|^2 \, \ud x \Big)^{1/2} \lesssim \Big(\sum_{q \in q_R} |F(z_q)|^2 \Big)^{1/2}.
\end{equation*}

For each $q \in \mathfrak{q}_R$ let $I_q \subseteq \R$ denote a choice of lattice unit interval containing $\mathbf{t}(x_q)$ and define $Q_q := q \times I_q$. By Lemma~\ref{lem: loc const}, we have 
\begin{equation*}
    |F(z_q)| \lesssim \||F| \ast \eta \|_{L^2(Q_q)}
\end{equation*}
where $\eta$ is rapidly decaying away from the unit cube in $\R^{n+1}$ centred at the origin. In particular, if we let $\delta := \varepsilon /(2n)$ and define the enlarged cube $Q_q^{(\delta)} := R^{\delta} \cdot Q_q$, then
\begin{equation*}
   |F(z_q)| \lesssim_{N, \varepsilon} \|F\|_{L^2(Q_q^{(\delta)})} + R^{-N}\|F\|_{L^2(\R^{n+1})} \qquad \textrm{for all $N \in \N_0$.}
\end{equation*}
Thus, if we define the family of space-time unit cubes $\cQ^{(\delta)} := \{Q_q^{(\delta)} : q \in \mathfrak{q}_R\}$, then
\begin{equation}\label{eq: lin red 1}
    \Big(\int_{B^n(0,R)} |e^{i\mathbf{t}(x)\Delta}f(x)|^2 \, \ud x \Big)^{1/2} \lesssim \|Uf\|_{L^2(Z_{\cQ^{(\delta)}})} + R^{-100n} \|Uf\|_{L^2(w_{B^{n+1}_R})},
\end{equation}
for $w_{B^{n+1}_R}$ a weight adapted to $B_R^{n+1}$. The rapidly decaying error term is easily bounded using the conservation of energy identity. In particular, it follows from the rapid decay of the weight and translation invariance properties of the operator that
\begin{equation}\label{eq: lin red 2}
    \|Uf\|_{L^2(w_{B^{n+1}_R})} \lesssim R^{1/2}\|f\|_{L^2(\R^n)}.
\end{equation}

On the other hand, the set $Z_{\cQ^{(\delta)}}$ can be covered by $O(R^{(n+1)\delta})$ sets of the form $Z_{\cQ}$ where $\cQ$ is a family of unit lattice cubes satisfying the hypotheses of Proposition~\ref{prop: lin max op} (and, in particular, the vertical line test). For any such $\cQ$, we may apply Proposition~\ref{prop: lin max op} to deduce that
\begin{equation*}
    \|Uf\|_{L^2(Z_{\cQ})} \lesssim_{\varepsilon} R^{n/(2(n+1)) + \delta} \|f\|_{L^2(\R^n)}.
\end{equation*}
Summing together these contributions, we obtain an estimate for $\|Uf\|_{L^2(Z_{\cQ^{(\delta)}})}$. This can be combined with \eqref{eq: lin red 1} and \eqref{eq: lin red 2} to deduce the desired bound \eqref{eq: lin max}. 
\end{proof}

We now discuss some further manifestations of the uncertainty principle which are of use in later arguments. 

By the basic scaling properties of the Fourier transform, Lemma~\ref{lem: loc const}  implies a generalisation of itself. We define a \textit{parallelepiped} to be set $\pi \subseteq \R^d$ given by the image of $Q_0 := [-1/2,1/2]^d$ under an affine transformation. In particular, $\pi = A(Q_0) + a$ for some $A \in \mathrm{GL}(d, \R)$ and $a \in \R^d$. Given such a parallelepiped, we define the \textit{dual parallelepiped} $\pi^* := A^{-\top}(Q_0)$, where $A^{-\top}$ is the inverse transpose of $A$, and 
\begin{equation*}
    \cP_{\mathrm{all}}(\pi) := \{ A^{-\top}Q : Q \in \cQ_{\mathrm{all}} \}.
\end{equation*}
Thus, $\cP_{\mathrm{all}}(\pi)$ is a family of translates of $\pi^*$ which tile the whole of $\R^d$. 

As a direct consequence of Lemma~\ref{lem: loc const} and an obvious change of variables, we obtain the following.

\begin{coro}\label{cor: loc const general} Let $\pi \subseteq \widehat{\R}^d$ be a parallelepiped. There exists a function $\eta_{\pi} \colon \R^n \to [0,\infty)$ satisfying the following:
\begin{enumerate}[i)]
    \item If $F \in \cS(\R^d)$ satisfies $\supp \hat{F} \subseteq \pi$, then
\begin{equation}\label{eq: loc const general}
    |F(z)| \leq \sum_{P \in \cP_{\mathrm{all}}(\pi)} a_P\chi_P \lesssim |F| \ast \eta_{\pi}(z) \qquad \textrm{for all $z \in \R^d$}
\end{equation}
where $\cP_{\mathrm{all}}(\pi)$ is as defined above and 
\begin{equation*}
    a_P := \sup_{z \in P} |F(z)| \qquad \textrm{for all $P \in \cP_{\mathrm{all}}(\pi)$.}
\end{equation*}
\item The function $\eta_{\pi}$ is $L^1$-normalised and rapidly decaying away from $\pi^*$ in the sense that
\begin{equation*}
    \eta(z) \lesssim_N  |\pi^*|^{-1} (1 + |z|_{\pi^*})^{-N} \qquad \textrm{for all $N \in \N$.}
\end{equation*}
Here $|\,\cdot\,|_{\pi^*}$ is the norm given by the Minkowski function of $\pi^*$. 
\end{enumerate}
\end{coro}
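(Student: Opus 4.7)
The plan is to reduce the general statement to Lemma~\ref{lem: loc const} by an affine change of variables that maps $\pi$ onto the unit cube $Q_0$ (up to translation). Parameterise $\pi = A(Q_0) + a$ for some $A \in \mathrm{GL}(d, \R)$ and $a \in \widehat{\R}^d$. Given $F \in \cS(\R^d)$ with $\supp \hat{F} \subseteq \pi$, I would introduce the modulated pullback
\begin{equation*}
G(z) := e^{-i a \cdot A^{-\top} z} F(A^{-\top} z).
\end{equation*}
A short Fourier computation yields $\hat{G}(\xi) = |\det A|\, \hat{F}(A\xi + a)$, so $\supp \hat{G} \subseteq Q_0$ and Lemma~\ref{lem: loc const} applies to $G$, supplying the function $\eta$ from that lemma.

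Next I would pull back the conclusion from $G$ to $F$. The linear map $z \mapsto A^{-\top} z$ carries the lattice of unit cubes $\cQ_{\mathrm{all}}$ bijectively onto the tiling $\cP_{\mathrm{all}}(\pi) = \{A^{-\top} Q : Q \in \cQ_{\mathrm{all}}\}$ of $\R^d$ by translates of $\pi^*$. Since $|G(z)| = |F(A^{-\top} z)|$, the suprema $\sup_{z \in Q}|G(z)|$ coincide with the quantities $a_P = \sup_{w \in P}|F(w)|$ for $P = A^{-\top} Q$, so the first (pointwise) inequality in \eqref{eq: loc const general} transfers immediately. For the convolution bound, performing the substitution $y = A^\top u$ inside $|G| \ast \eta$ produces an expression of the form $|F| \ast \eta_\pi$ with the explicit choice
\begin{equation*}
\eta_\pi(u) := |\det A|\, \eta(A^\top u).
\end{equation*}

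Finally, I would verify that $\eta_\pi$ is $L^1$-normalised and has the stated decay. Normalisation reduces to $\int \eta = 1$ by the same substitution. For the decay, the key identity is $|u|_{\pi^*} = 2|A^\top u|_\infty$, which follows directly from the definition of the Minkowski functional since $\pi^* = A^{-\top}(Q_0)$; combined with $|\det A| = |\pi^*|^{-1}$ (from $|\pi^*| = |\det A|^{-1}|Q_0|$), the rapid decay of $\eta$ from Lemma~\ref{lem: loc const} transfers at once to
\begin{equation*}
\eta_\pi(u) \lesssim_N |\pi^*|^{-1}(1 + |u|_{\pi^*})^{-N}.
\end{equation*}
The whole argument is essentially bookkeeping; the only genuine decision is selecting the pullback by $A^{-\top}$ (rather than $A$), so that $\cP_{\mathrm{all}}(\pi)$ arises naturally and the Minkowski functional enters with the correct normalisation. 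Beyond that I do not anticipate any real obstacle.
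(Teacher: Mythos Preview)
Your proposal is correct and is precisely the ``obvious change of variables'' the paper invokes: pull back by $A^{-\top}$ (with the modulation to recentre the frequency support at the origin), apply Lemma~\ref{lem: loc const}, and push forward. The identifications $|\det A| = |\pi^*|^{-1}$ and $|u|_{\pi^*} = 2|A^\top u|_\infty$ are exactly what is needed, and your bookkeeping checks out.
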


Corollary~\ref{cor: loc const general} realises the uncertainty principle by allowing us to pass from $F$ to its discretisation over $\cP_{\mathrm{all}}(\pi)$ and back again. However, the appearance of the mollifier $\eta_{\pi}$ on the right-hand side of \eqref{eq: loc const general} is a source of minor annoyance. It can be removed at the level of $L^q(\R^d)$ norm estimates: for instance, combining \eqref{eq: loc const} with Young's inequality we deduce that
\begin{equation*}
    \|F\|_{L^q(\R^d)} \leq \Big(\sum_{P \in \cP_{\mathrm{all}}(\pi)} |a_P|^q|P|\Big)^{1/q} \lesssim  \|F\|_{L^q(\R^d)}
\end{equation*}
for all $1 \leq q < \infty$. Generalising this argument, we arrive at the following fundamental estimate.

\begin{lemm}[Bernstein inequality]\label{lem: Bernstein} Let $1 \leq p \leq q \leq \infty$ and suppose $F \in \cS(\R^d)$ satisfies $\supp \hat{F} \subseteq \pi$ for some parallelepiped $\pi \subseteq \widehat{\R}^d$. Then 
\begin{equation*}
    \|F\|_{L^q(\R^d)} \lesssim |\pi|^{1/p - 1/q} \|F\|_{L^p(\R^d)}.
\end{equation*}
\end{lemm}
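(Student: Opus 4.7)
The plan is to derive Bernstein's inequality directly from the locally-constant property stated in Corollary~\ref{cor: loc const general} combined with Young's convolution inequality, with the power of $|\pi|$ coming out naturally from the scaling of the mollifier $\eta_{\pi}$.

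First I would invoke Corollary~\ref{cor: loc const general} to obtain, under the Fourier support hypothesis, the pointwise bound
\begin{equation*}
    |F(z)| \lesssim |F| \ast \eta_{\pi}(z) \qquad \textrm{for all $z \in \R^d$,}
\end{equation*}
where $\eta_{\pi}$ is the mollifier provided by the corollary. Taking $L^q$ norms and applying Young's convolution inequality with exponents $1 + 1/q = 1/p + 1/r$ (i.e.\ $1/r = 1 - (1/p - 1/q) \in [0,1]$ since $p \leq q$), this yields
\begin{equation*}
    \|F\|_{L^q(\R^d)} \lesssim \|\eta_{\pi}\|_{L^r(\R^d)} \|F\|_{L^p(\R^d)}.
\end{equation*}

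The main remaining task is to compute (or rather, estimate) $\|\eta_{\pi}\|_{L^r(\R^d)}$. By the rapid decay stated in part ii) of the corollary, $\eta_{\pi}(z) \lesssim_N |\pi^*|^{-1}(1 + |z|_{\pi^*})^{-N}$. Picking $N$ sufficiently large (depending only on $d$), I would split the integral of $\eta_{\pi}^r$ into the region $|z|_{\pi^*} \leq 1$, which contributes $|\pi^*| \cdot (|\pi^*|^{-1})^r = |\pi^*|^{1-r}$, and the complementary region, whose contribution is controlled by $|\pi^*|^{1-r}$ as well by integrating the rapidly decaying tail in the Minkowski-norm coordinates. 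Hence
\begin{equation*}
    \|\eta_{\pi}\|_{L^r(\R^d)} \lesssim |\pi^*|^{1/r - 1} = |\pi|^{1 - 1/r} = |\pi|^{1/p - 1/q},
\end{equation*}
using the fact that $|\pi^*| = |\pi|^{-1}$ (which follows from $\pi = A(Q_0)+a$ and $\pi^* = A^{-\top}(Q_0)$, so $|\pi^*| = |\det A|^{-1}$). Substituting back gives the claim.

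I do not expect any serious obstacle here: the uncertainty principle has already been packaged in the required form by Corollary~\ref{cor: loc const general}, and the remaining steps are elementary. The only point requiring any care is the bookkeeping of the factor $|\pi^*|^{1-r}$ versus $|\pi|^{1/p-1/q}$ and the verification that $1/r \in [0,1]$ so that Young's inequality is legitimately applicable; both are handled by the relation $|\pi^*||\pi| = 1$ and the hypothesis $p \leq q$, respectively.
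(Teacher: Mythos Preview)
Your proof is correct and close in spirit to the paper's, but the two arguments extract the factor $|\pi|^{1/p-1/q}$ from different places. The paper first discretises via the step-function bound $|F| \leq \sum_{P} a_P \chi_P$ from Corollary~\ref{cor: loc const general}, then uses the nesting of $\ell^p$ norms on the coefficients $(a_P)$ to gain $|\pi|^{1/p-1/q}$, and finally applies Young with the $L^1$-normalised $\eta_{\pi}$ to close up. You instead go straight from the convolution bound to Young's inequality with the intermediate exponent $r$, and the factor $|\pi|^{1/p-1/q}$ emerges from computing $\|\eta_{\pi}\|_{L^r}$. Your route is a touch more direct; the paper's route makes explicit the moral that $F$ behaves like a discrete sequence (and that discretisation viewpoint is reused later, e.g.\ in Lemma~\ref{lem: s loc const}).
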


The moral here is that, by the locally constant property, $F$ behaves like a discrete function and, consequently, the $L^p$ norms of $F$ satisfy a nesting property like the $\ell^p$ norms of a sequence. 

\begin{proof}[Proof (of Lemma~\ref{lem: Bernstein})] We assume $q < \infty$; the same proof goes through for $q = \infty$ \textit{mutatis mutandis}. Fix $F \in \cS(\R^d)$ satisfying the Fourier support hypothesis and apply Corollary~\ref{cor: loc const general} to bound 
\begin{equation*}
    |F| \leq \sum_{P \in \cP_{\mathrm{all}}(\pi)} a_P \chi_P.
\end{equation*}
Since $|P| = |\pi|^{-1}$ for any $P \in \cP_{\mathrm{all}}(\pi)$, by the nesting of $\ell^p$ norms,
\begin{equation*}
    \|F\|_{L^q(\R^d)}  \leq \Big(\sum_{P \in \cP_{\mathrm{all}}(\pi)} |a_P|^q |P|\Big)^{1/q} \leq |\pi|^{1/p - 1/q} \Big(\sum_{P \in \cP_{\mathrm{all}}(\pi)}  |a_P|^p |P|\Big)^{1/p}.
\end{equation*}
Applying Corollary~\ref{cor: loc const general}, we have
\begin{equation*}
   \Big(\sum_{P \in \cP_{\mathrm{all}}(\pi)}  |a_P|^p |P|\Big)^{1/p} =  \Big\|\sum_{P \in \cP_{\mathrm{all}}(\pi)}  |a_P|\chi_P \Big\|_{L^p(\R^d)}  \lesssim \||F| \ast \eta_{\pi} \|_{L^p(\R^d)}. 
\end{equation*}
The desired result now follows by Young's convolution inequality. 
\end{proof}

 The Bernstein inequality can be localised in space, provided this localisation occurs at a scale which is coarse enough to respect the uncertainty principle. Before we state this local version, we introduce the following definition which plays a somewhat technical r\^ole in our arguments. 

\begin{defi}\label{def: adapted weight} Let $S_0 \subseteq \R^d$ be a symmetric convex body\footnote{In practice, we will only consider simple examples such as euclidean balls, cubes and cartesian products of these sets.} and $S := S_0 + z_0$ some translate of $S$. We say a function $w_{S} \colon \R^d \to [0,\infty)$ is a \emph{(rapidly decaying) weight adapted to $S$} if $w_S$ is continuous and satisfies
\begin{equation*}
    w_S(z) \lesssim_N (1 + |z-z_0|_{S_0})^{-N} \qquad \textrm{for all $N \in \N$,}
\end{equation*}
where here $|\,\cdot\,|_{S_0}$ is the norm given by the Minkowski function of $S_0$. 
\end{defi}

Such weight functions are used to account for `Schwartz tails errors' which arise when attempting to localise a function simultaneously in the physical and the frequency domain. 
 
\begin{coro}[Local Bernstein inequality]\label{cor: local Bernstein} Let $1 \leq p \leq q \leq \infty$, and $\pi \subseteq \widehat{\R}^d$ be a parallelepiped. For every $P \in \cP_{\mathrm{all}}(\pi)$ there exists a rapidly decaying weight $w_P$ adapted to $P$ such that the following holds. If $F \in \cS(\R^d)$ satisfies $\supp \hat{F} \subseteq \pi$, then 
\begin{equation*}
    \|F\|_{L^q(P)} \lesssim |\pi|^{1/p - 1/q} \|F\|_{L^p(w_P)}.
\end{equation*}
\end{coro}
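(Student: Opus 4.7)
The strategy is to reduce to the global Bernstein inequality (Lemma~\ref{lem: Bernstein}) by multiplying $F$ by a smooth Schwartz cutoff adapted to $P$ whose Fourier transform lives in a small dilate of $\pi$, so that the Fourier support of the product stays essentially inside $\pi$. Assuming, by modulation (which leaves $|F|$ unchanged), that $\pi$ is centred at the origin, write $\pi = A(Q_0)$ for $A \in \mathrm{GL}(d,\R)$, so that $\pi^* = A^{-\top}(Q_0)$ and any $P \in \cP_{\mathrm{all}}(\pi)$ is of the form $P = \pi^* + a$ for some $a \in \R^d$.

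First I would fix, once and for all, a \emph{template} $\varphi \in \cS(\R^d)$ with $\supp \hat{\varphi} \subseteq \tfrac{1}{2}Q_0$ and $|\varphi(w)| \geq 1$ on $Q_0$. Such a $\varphi$ exists by standard harmonic analysis: start from a smooth bump $\rho$ supported in $\tfrac{1}{2}Q_0$ with $\rho(0) > 0$, take $\varphi_0 = \check{\rho}$, and then rescale/normalise so that $\varphi_0$ is bounded below by $1$ on $Q_0$ (using continuity and $\varphi_0(0) > 0$, combined with an averaging trick over a finite set of translates to cover $Q_0$). Next, transplant to $P$ by setting
\begin{equation*}
\varphi_P(z) := \varphi\bigl(A^{\top}(z-a)\bigr).
\end{equation*}
By construction, $|\varphi_P| \geq 1$ on $P$, the Fourier transform $\hat{\varphi}_P$ is supported in $\tfrac{1}{2}\pi$, and $\varphi_P$ is rapidly decaying away from $P$ at the scale of $P$, namely $|\varphi_P(z)| \lesssim_N (1 + |z-a|_{\pi^*})^{-N}$ for every $N \in \N$.

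Now I would apply the global Bernstein inequality to $F\varphi_P$. Since $\supp \hat{F} \subseteq \pi$ and $\supp \hat{\varphi}_P \subseteq \tfrac{1}{2}\pi$, the convolution theorem yields $\supp \widehat{F\varphi_P} \subseteq \pi + \tfrac12\pi \subseteq \tfrac{3}{2}\pi$, a parallelepiped of measure $\lesssim |\pi|$. Lemma~\ref{lem: Bernstein} therefore gives
\begin{equation*}
\|F\varphi_P\|_{L^q(\R^d)} \lesssim |\pi|^{1/p - 1/q}\,\|F\varphi_P\|_{L^p(\R^d)}.
\end{equation*}
Since $|\varphi_P| \geq 1$ on $P$, the left-hand side dominates $\|F\|_{L^q(P)}$. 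Setting $w_P := |\varphi_P|^p$ (which, by the rapid decay of $\varphi_P$, is a rapidly decaying weight adapted to $P$ in the sense of Definition~\ref{def: adapted weight}), the right-hand side equals $|\pi|^{1/p-1/q}\|F\|_{L^p(w_P)}$, yielding the claim.

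The only genuine subtlety, which I view as the main obstacle, is constructing the template $\varphi$: one needs a Schwartz function whose Fourier transform is compactly supported in an arbitrarily small neighbourhood of the origin while still satisfying a uniform lower bound on $Q_0$. This is classical, but it must be done before scaling to $\pi$ so that the constants are independent of $A$. Everything else is routine bookkeeping with Minkowski sums of supports and the affine equivariance of the Fourier transform.
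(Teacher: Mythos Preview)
Your argument is correct and follows essentially the same route as the paper's proof: multiply $F$ by a Schwartz bump $\beta_P$ (your $\varphi_P$) that is bounded below on $P$, has Fourier support in a bounded dilate of $\pi$, and decays rapidly away from $P$; then apply the global Bernstein inequality (Lemma~\ref{lem: Bernstein}) to the product and set $w_P := |\beta_P|^p$. The only cosmetic differences are that the paper takes $\supp\hat{\beta}_P \subseteq \pi$ (so the product has Fourier support in $\pi+\pi$) rather than $\tfrac12\pi$, and disposes of the case $p=\infty$ at the outset, while you spell out in more detail the construction of the template $\varphi$ that the paper leaves as a ``simple exercise''.
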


\begin{proof} We may assume without loss of generality that $p < \infty$. It is a simple exercise to show that there exists some $\beta_P \in \cS(\R^d)$ with $\supp \hat{\beta}_P \subseteq \pi$ satisfying 
\begin{equation*}
    1 \lesssim |\beta_P(z)| \quad \textrm{for all $z \in P$} 
\end{equation*}
and such that $|\beta_P(z)|$ is a rapidly decaying weight adapted to $P$. The function $G := F\cdot \beta_P$ has Fourier support in the Minkowski sum $\pi + \pi$ and satisfies $|F(z)| \lesssim |G(z)|$ for all $z \in P$. Applying Bernstein's inequality to $G$, the desired result follows with $w_P := |\beta_P|^p$.
\end{proof}




\subsection{Parabolic geometry}\label{sec: Parabolic geometry} Given $f \in \cS(\R^n)$ with $\supp \hat{f} \subseteq B^n(0,1)$, we recall the integral formula for the solution
\begin{equation*}
    Uf(x,t) = \frac{1}{(2\pi)^n} \int_{B^n(0,1)} e^{i \phi(x,t;\xi)} \hat{f}(\xi)\,\ud \xi,
\end{equation*}
where the \textit{phase function} $\phi$ is given by
\begin{equation*}
    \phi(x,t;\xi) := \inn{x}{\xi} + t |\xi|^2.
\end{equation*}
The phase $\phi$ can be interpreted as the inner product of the space-time vector $(x,t) \in \R^{n+1}$ with a point lying on the bounded piece of the paraboloid
\begin{equation}\label{eq: bdd paraboloid}
    \Sigma_0 := \big\{ \Sigma(\xi)  : \xi \in B(0,1) \big\} \subset \Sigma := \big\{ \Sigma(\xi)  : \xi \in \widehat{\R}^n \big\}
\end{equation}
where $\Sigma(\xi) :=  (\xi, |\xi|^2)$. It follows that the (distributional) spatio-temporal Fourier transform of $Uf$ is supported on $\Sigma_0$. 

The geometry of $\Sigma$ underpins our entire analysis of the propagator $e^{it\Delta}$. As a first example of this, we observe certain symmetries of the paraboloid, which translate into symmetries of the propagator. 

Any ball $\theta \subset \widehat{\R}^n$ of radius $r$ corresponds to a \textit{cap}, or \textit{$r$-cap}, on the paraboloid, given by
\begin{equation*}
    \Sigma_{\theta} := \big\{ \Sigma(\xi)  : \xi \in \theta \big\}.
\end{equation*}
In view of this, we shall often refer to balls $\theta \subset \widehat{\R}^n$ themselves as `caps'. Note that the bounded piece of the paraboloid $\Sigma_0$ featured in \eqref{eq: bdd paraboloid} corresponds to the cap formed over the unit ball centred at the origin. We observe an important self-similarity property of the paraboloid, relating every cap to $\Sigma_0$.

\begin{lemm}[Parabolic rescaling: geometric version]\label{lem: parabolic resc} Given a cap $\Sigma_{\theta}$, corresponding to a ball $\theta \subseteq \widehat{\R}^n$, there exists an affine transformation $A_{\theta}$ of the ambient space $\widehat{\R}^{n+1}$ which restricts to a bijection from $\Sigma_0$ to $\Sigma_{\theta}$.
\end{lemm}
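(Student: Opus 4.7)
The plan is to write down the candidate affine transformation explicitly and then verify the bijection by direct computation. The key observation is that the paraboloid carries two obvious families of self-symmetries: the isotropic parabolic scaling $(\xi, \tau) \mapsto (r\xi, r^2 \tau)$ (which preserves $\Sigma$), and the horizontal translation $\xi \mapsto \xi + \xi_0$, which on the paraboloid must be accompanied by the shearing that comes from expanding $|\xi + \xi_0|^2$. Composing these two yields the desired map.

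More precisely, I would write $\theta = B^n(\xi_0, r)$ for some centre $\xi_0 \in \widehat{\R}^n$ and radius $r > 0$, and parametrise a generic point of $\Sigma_\theta$ as $(\xi_0 + r\eta, |\xi_0 + r\eta|^2)$ with $\eta \in B^n(0,1)$. Expanding
\begin{equation*}
    |\xi_0 + r\eta|^2 = |\xi_0|^2 + 2r\,\xi_0 \cdot \eta + r^2 |\eta|^2
\end{equation*}
shows that the $(n+1)$-th coordinate of a point of $\Sigma_\theta$ is an affine function of the coordinates $(\eta, |\eta|^2)$ of the corresponding point on $\Sigma_0$. This motivates defining
\begin{equation*}
    A_\theta(v, s) := \bigl(\,rv + \xi_0,\; r^2 s + 2r\,\xi_0 \cdot v + |\xi_0|^2\,\bigr), \qquad (v,s) \in \widehat{\R}^n \times \widehat{\R}.
\end{equation*}
This is manifestly affine, with linear part having block form $\begin{pmatrix} rI_n & 0 \\ 2r\xi_0^\top & r^2 \end{pmatrix}$ and determinant $r^{n+2} \neq 0$, so $A_\theta$ is a genuine affine bijection of $\widehat{\R}^{n+1}$.

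It then remains to verify that $A_\theta$ restricts to a bijection $\Sigma_0 \to \Sigma_\theta$. On the one hand, plugging $(\eta, |\eta|^2)$ into $A_\theta$ and using the expansion above recovers $(\xi_0 + r\eta, |\xi_0 + r\eta|^2) \in \Sigma_\theta$, so $A_\theta(\Sigma_0) \subseteq \Sigma_\theta$. Surjectivity and injectivity onto $\Sigma_\theta$ are then immediate from the fact that $\eta \mapsto \xi_0 + r\eta$ is a bijection between $B^n(0,1)$ and $\theta$.

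There is no serious obstacle in this statement: the whole assertion collapses to a one-line computation, and the only mild subtlety is to guess the correct form of $A_\theta$ — in particular, the off-diagonal term $2r\,\xi_0 \cdot v$ in the last coordinate, which encodes the cross-term in $|\xi_0 + r\eta|^2$. The real content of the lemma will appear downstream, when this geometric statement is translated into the functional form of parabolic rescaling for the propagator $U$ (via pullback of the phase $\phi$).
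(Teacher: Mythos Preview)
Your proof is correct and is essentially identical to the paper's own argument: both expand $|\xi_0 + r\eta|^2$ to read off the affine map, and your explicit $A_\theta$ coincides with the paper's $A_\theta(\zeta) = (\cL_\theta)^\top \zeta + \Sigma(\xi_\theta)$ (the linear part $\begin{pmatrix} rI_n & 0 \\ 2r\xi_0^\top & r^2 \end{pmatrix}$ is exactly $(\cL_\theta)^\top = (D_\theta \circ M_\theta^\top)^\top = M_\theta \circ D_\theta$ in the paper's notation). The only cosmetic difference is that the paper packages the linear part as a composition of a shear $M_\theta$ and an anisotropic dilation $D_\theta$, which it then reuses downstream for the operator-level rescaling.
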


\begin{rema} By inverting and composing affine transformations, Lemma~\ref{lem: parabolic resc} further implies that given any two caps $\Sigma_{\theta_1}$, $\Sigma_{\theta_2}$, there exists an affine transformation $A_{\theta_2 \to \theta_1}$ of the ambient space $\widehat{\R}^{n+1}$ which restricts to a bijection from $\Sigma_{\theta_2}$ to $\Sigma_{\theta_1}$.  
\end{rema}

Before giving the (simple) proof of Lemma~\ref{lem: parabolic resc}, we introduce some notation and, in fact, give an explicit formula for $A_{\theta}$. Given a ball $\theta \subset \widehat{\R}^n$, let $\xi_{\theta}$ denote its centre and $\rad(\theta)$ its radius. We let $M_{\theta}$ be the shear transformation and $D_{\theta}$ the anisotropic scaling on $\widehat{\R}^{n+1}$ defined by 
\begin{equation*}
    M_{\theta} := \begin{pmatrix}
    I_n & 0 \\
    2 \xi_{\theta}^{\top} & 1
    \end{pmatrix}
    \qquad \textrm{and} \qquad
    D_{\theta} := \begin{pmatrix}
    \rad(\theta) I_n  & 0 \\
    0 & \rad(\theta)^2
    \end{pmatrix};
\end{equation*}
here $I_n$ denotes the $n \times n$ identity matrix. It will be shown in the proof below that the affine transformation $A_{\theta}$ in the statement of Lemma~\ref{lem: parabolic resc} can be taken to be 
\begin{equation}\label{eq: rescaling map}
  A_{\theta} \colon \zeta \mapsto  (\cL_{\theta})^{\top} \,\zeta + \Sigma(\xi_{\theta}) \qquad \textrm{where} \qquad \cL_{\theta} :=  D_{\theta}  \circ M_{\theta}^{\top}, 
\end{equation}
where here $\top$ is used to denote the matrix transpose. 
\begin{proof}[Proof (of Lemma~\ref{lem: parabolic resc})] Let $\theta \subseteq \widehat{\R}^n$ be a ball, so that the map
\begin{equation*}
    \eta \mapsto \xi_{\theta} + r_{\theta} \eta \quad \textrm{for} \quad \eta \in \widehat{\R}^n
\end{equation*} 
restricts to a bijection from $B(0,1)$ to $\theta$. Here $r_{\theta} := \rad(\theta)$. Moreover, if we fix $\xi \in \theta$ and write $\xi = \xi_{\theta} + r_{\theta} \eta$, then a simple computation shows that
\begin{equation}\label{eq: geom parabolic rescale}
    \Sigma(\xi) = \Sigma(\xi_{\theta}) + M_{\theta}  \circ D_{\theta} \Sigma(\eta)
\end{equation}
where $M_{\theta}$ and $D_{\theta}$ are as defined above. Indeed, \eqref{eq: geom parabolic rescale} follows directly from the expansion of the inner product
\begin{equation*}
    |\xi|^2 = |\xi_{\theta} + r_{\theta} \eta|^2 = |\xi_{\theta}|^2 + 2 r_{\theta}\, \inn{\xi_{\theta}}{\eta} + r_{\theta}^2 |\eta|^2, 
\end{equation*}
which may also be interpreted as a Taylor series expansion. Thus,  the map $A_{\theta}$ defined in \eqref{eq: rescaling map} satisfies the desired property. 
\end{proof}

We now relate the scaling property of the paraboloid to the solution operator $U$ via the formula $\phi(z; \xi) := \inn{z}{\Sigma(\xi)}$ for $z = (x,t) \in \R^{n+1}$. 

\begin{coro}[Parabolic rescaling]\label{cor: parabolic rescaling}  Let $\theta \subset \widehat{\R}^n$ be ball and suppose $f \in L^2(\R^n)$ satisfies $\supp \hat{f} \subseteq \theta \cap B^n(0,1)$. Then
\begin{equation}\label{eq: cor resc 1}
    |Uf(z)| = \rad(\theta)^{n/2}|U\tilde{f} \circ \cL_{\theta}(z)|
\end{equation}
for some function $\tilde{f} \in L^2(\R^n)$ satisfying
\begin{equation}\label{eq: cor resc 2}
\|\tilde{f}\|_{L^2(\R^n)} = \|f\|_{L^2(\R^n)} \quad \textrm{and} \quad \supp \mathcal{F}(\tilde{f}) \subseteq B^n(0,1).
\end{equation}
\end{coro}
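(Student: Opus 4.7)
The plan is to derive the corollary directly from the geometric parabolic rescaling lemma (Lemma~\ref{lem: parabolic resc}) by a change of variables in the Fourier integral defining $Uf$. Recall that
\begin{equation*}
    Uf(z) = \frac{1}{(2\pi)^n} \int_{B^n(0,1)} e^{i\phi(z;\xi)} \hat{f}(\xi)\,\ud \xi,
\end{equation*}
with $\phi(z;\xi) = \inn{z}{\Sigma(\xi)}$, and that the integration over $B^n(0,1)$ can be replaced by integration over $\theta$ in view of the hypothesis $\supp \hat{f} \subseteq \theta \cap B^n(0,1)$. I would change variables $\xi = \xi_{\theta} + r_{\theta} \eta$, where $r_\theta := \rad(\theta)$, so that $\eta$ ranges over $B^n(0,1)$ and $\ud \xi = r_{\theta}^n \ud \eta$.

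The key point is then to use the identity $\Sigma(\xi) = \Sigma(\xi_{\theta}) + M_{\theta} \circ D_{\theta} \Sigma(\eta)$ from \eqref{eq: geom parabolic rescale}, which yields
\begin{equation*}
    \phi(z;\xi) = \inn{z}{\Sigma(\xi_{\theta})} + \inn{z}{M_{\theta} D_{\theta} \Sigma(\eta)} = \inn{z}{\Sigma(\xi_{\theta})} + \inn{\cL_{\theta} z}{\Sigma(\eta)},
\end{equation*}
since $D_{\theta}^{\top} = D_{\theta}$ and $\cL_{\theta} = D_{\theta} \circ M_{\theta}^{\top}$. Substituting into the integral, the factor $e^{i\inn{z}{\Sigma(\xi_{\theta})}}$ comes out as a modulus-one constant (which will disappear upon taking absolute values), and the remaining integral takes the form of $U$ applied to a rescaled function evaluated at $\cL_{\theta} z$.

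Concretely, I would define $\tilde f$ via $\hat{\tilde f}(\eta) := r_{\theta}^{n/2}\, \hat f(\xi_{\theta} + r_{\theta} \eta)$. The factor $r_\theta^{n/2}$ is chosen so that the change-of-variables identity $\|\hat{\tilde f}\|_{L^2}^2 = r_\theta^n \cdot r_\theta^{-n}\|\hat f\|_{L^2}^2 = \|\hat f\|_{L^2}^2$ combined with Plancherel yields $\|\tilde f\|_{L^2(\R^n)} = \|f\|_{L^2(\R^n)}$; the Fourier support condition $\supp\hat{\tilde f} \subseteq B^n(0,1)$ is immediate from $\supp\hat f \subseteq \theta$. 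With this normalisation, the computation above gives $Uf(z) = e^{i\inn{z}{\Sigma(\xi_{\theta})}} r_{\theta}^{n/2}\, U\tilde f(\cL_{\theta} z)$, and taking absolute values yields \eqref{eq: cor resc 1}.

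There is no real obstacle here beyond bookkeeping: the substance of the statement is packaged entirely inside Lemma~\ref{lem: parabolic resc}, and the proof is a routine Fourier-integral manipulation. The only mildly delicate point is verifying that the prefactor $r_\theta^{n/2}$ yields simultaneously the correct Jacobian-induced scaling in \eqref{eq: cor resc 1} and the $L^2$-preservation in \eqref{eq: cor resc 2}; this is a small exercise in balancing two competing powers of $r_\theta$, handled by the choice above.
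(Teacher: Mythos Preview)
Your proposal is correct and follows essentially the same approach as the paper: the paper also defines $\tilde f$ via $\mathcal{F}(\tilde f)(\eta) = r_\theta^{n/2}\hat f(\xi_\theta + r_\theta\eta)$, performs the same change of variables, and derives the phase identity $\phi(z;\xi_\theta + r_\theta\eta) = \inn{z}{\Sigma(\xi_\theta)} + \inn{\cL_\theta z}{\Sigma(\eta)}$ from \eqref{eq: geom parabolic rescale} exactly as you do.
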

Here the linear rescaling $\cL_{\theta}$ is as defined in \eqref{eq: rescaling map}.

\begin{proof}[Proof (of Corollary~\ref{cor: parabolic rescaling})] Let $\xi_{\theta}$ denote the centre of $\theta$ and $r_{\theta} := \rad(\theta)$. We simply define $\tilde{f}$ via the Fourier transform by
\begin{equation*}
    \mathcal{F}(\tilde{f})(\eta) := r_{\theta}^{n/2}\hat{f}(\xi_{\theta} + r_{\theta} \eta),
\end{equation*}
so that \eqref{eq: cor resc 2} immediately holds. On the other hand, we apply a change of variables to write
\begin{equation*}
    Uf(z) = r_{\theta}^{n/2} \int_{\widehat{\R}^n} e^{i \phi(z; \xi_{\theta} + r_{\theta}\eta)} \mathcal{F}(\tilde{f})(\eta)\,\ud \eta. 
\end{equation*}
The remaining property \eqref{eq: cor resc 1} now follows from the identity
\begin{equation*}
  \phi(z; \xi_{\theta} + r_{\theta}\eta) =  \inn{z}{\Sigma(\xi_{\theta} + r_{\theta}\eta)} = \inn{z}{\Sigma(\xi_{\theta})} + \inn{\cL_{\theta}z}{\Sigma(\eta)},
\end{equation*}
which is itself a simple consequence of \eqref{eq: geom parabolic rescale} and the definition of $\cL_{\theta}$.
\end{proof}




 \subsection{Wave packets}\label{sec: wave packet examples} We now analyse solutions to the Schr\"odinger equation for a class of very simple, well-behaved initial data. We shall see that the behaviour of our solutions is closely related to the geometry of the paraboloid $\Sigma$.

\begin{exem}[Unit-scale localised datum]\label{ex: unit scale datum} Fix $\psi \in \cS(\R^n)$ with non-negative Fourier transform satisfying 
\begin{equation*}
\supp \hat{\psi} \subseteq B^n(0,4) \quad \textrm{and} \quad \hat{\psi}(\xi) = 1 \quad \textrm{for all $\xi \in B^n(0,2)$.}    
\end{equation*}
By a simple integration-by-parts argument, the propagator
\begin{equation*}
    e^{it\Delta}\psi(x) = \frac{1}{(2 \pi)^n}\int_{\widehat{\R}^n} e^{i \inn{x}{\xi}} \Big(e^{it|\xi|^2} \hat{\psi}(\xi) \Big) \,\ud \xi 
\end{equation*}
satisfies
\begin{equation}\label{eq: basic datum}
    |e^{it\Delta}\psi(x)| \lesssim_N (1 + |x|)^{-N} \qquad \textrm{for all $N \in \N_0$ whenever $|t| \leq 1$.}
\end{equation}
Indeed, the point here is that for $|t| \leq 1$, the functions $e^{it|\,\cdot\,|^2} \hat{\psi}$ form a uniformly bounded family in the Schwartz class.
\end{exem}

\begin{exem}[Wave packets]\label{ex: wave packets}  We now rescale and translate the unit-scale localised datum $\psi$ to create a rich family of examples. Let $\rho \geq 1$ and $\theta_T \subset B^n(0,1)$ be a ball of radius $\rho^{-1/2}$ centred at $\xi_T \in \widehat{\R}^n$. Consider the $L^2$-normalised function $\psi_{\theta_T}$ given by\footnote{The reason for the apparently superfluous subscript $T$ will be made clear below.}
\begin{equation}\label{eq: psi theta T}
    \hat{\psi}_{\theta_T}(\xi) := \rho^{n/4}\hat{\psi}\big(\rho^{1/2}(\xi - \xi_T)\big).
\end{equation}
 In addition, for $x_T \in \R^n$ we consider the translated datum 
 \begin{equation}\label{eq: wave packet}
     \psi_T(x) := \psi_{\theta_T}(x - x_T).
 \end{equation}
 Thus, $\psi_T$ corresponds to a modulated bump function spatially localised in a ball of radius $\rho^{1/2}$ centred at $x_T$ and has frequency support lying in $4 \cdot \theta_T$.\smallskip

It follows from parabolic rescaling in the form of Corollary~\ref{cor: parabolic rescaling}, translation invariance and \eqref{eq: basic datum} that 
\begin{equation*}
    |e^{it\Delta}\psi_T(x)| \lesssim_N (1 + \rho^{-1/2}|x - x_T + 2t \xi_T|)^{-N} \qquad \textrm{for all $N \in \N_0$ whenever $|t| \leq \rho$.}
\end{equation*}
Thus, during the time interval $|t| \leq \rho$, the solution at time $t$ is concentrated in the spatial ball $B(x_T-2t \xi_T, \rho^{1/2})$, in the sense that $e^{it\Delta}\psi_T$ rapidly decays away from this set. We illustrate this phenomenon in 1 spatial dimension in Figure~\ref{fig: fixed time Wavepacket}. This solution is an example of what is known as a \textit{wave packet}.
\end{exem}

\begin{figure}
     \centering
     \begin{subfigure}[b]{0.45\textwidth}
         \centering
\includegraphics{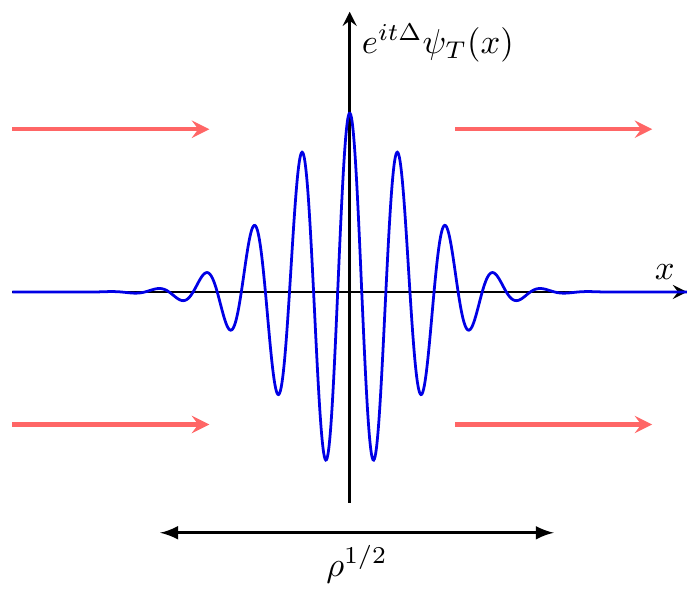}
         \caption{{\small A wave packet at time $t$. The wave packet is concentrated in an interval in the physical space and, as time evolves, travels with fixed velocity $v(T) = -2 \xi_T$ proportional to the frequency $\xi_T$.}}
         \label{fig: fixed time Wavepacket}
     \end{subfigure}
     \hfill
     \begin{subfigure}[b]{0.45\textwidth}
         \centering

\includegraphics{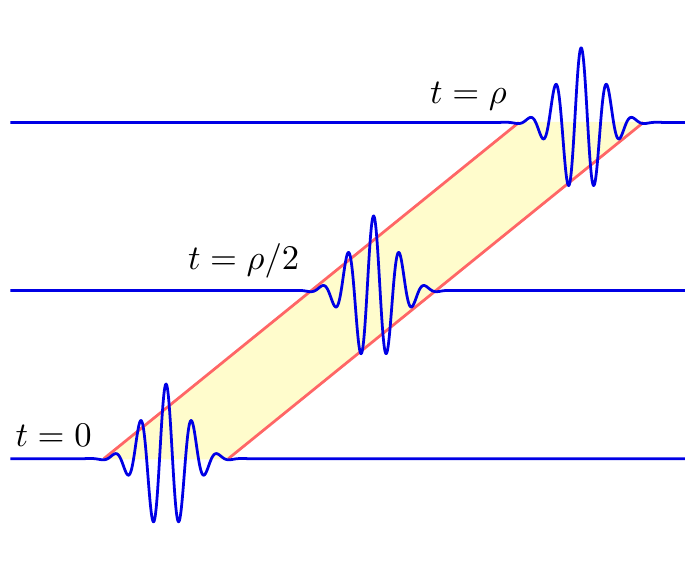}

        \caption{{\small A space-time tube $T$. The tube describes the spatial localistion of the solution $U\psi_T$ at each time slice. Three such time slices are illustrated in the figure.}}
         \label{fig: space time Wavepacket}
         \hfill
     \end{subfigure}

        \caption{Two perspectives on wave packets.}
    
\end{figure}

We highlight the basic properties of the wave packets from  Example~\ref{ex: wave packets}:
\begin{itemize}
\item The initial datum $\psi_T$ (and the solution $e^{it\Delta}\psi_T$ at each fixed time) is frequency supported in a ball centred at $\xi_T$ of radius $\sim \rho^{-1/2}$;
\item For each time $t$ satisfying $|t| \leq \rho$ the solution is concentrated in a spatial ball of radius $\rho^{1/2}$;
\item The spatial ball travels from the initial position $x_T$ at $t = 0$ along a linear trajectory with velocity $-2 \xi_T$. In particular, the velocity is determined by the frequency.
\end{itemize}

This illustrates the fundamental \textit{dispersion relation} between the velocity $v$ of a Schr\"odinger wave and its frequency $\xi$, summed up by the formula
\begin{equation*}
    v = -2\xi.
\end{equation*}
The fact that waves of different frequency travel with different velocities (and therefore disperse at large time scales) is the defining characteristic of \textit{dispersive PDE}.

\begin{rema} For our wave packets $\psi_T$, we have localised the frequency at scale $\rho^{-1/2}$ so that the distinct frequency modes making up the wave $e^{it\Delta}\psi_T$ travel at the same velocity up to an error of $O(\rho^{-1/2})$. Consequently, the wave packet remains `stable' for the time scale $|t| \leq \rho$. Beyond this time scale, the difference in velocities between the distinct modes causes the wave packet to disperse. 
\end{rema}

In view of what follows, it is useful to adopt a new perspective and visualise the wave packet $e^{it\Delta}\psi_T(x)$ as a function on the spatio-temporal domain $\R^{n+1}$. 

 \begin{defi}\label{def: space-time tube} For $\rho \geq 1$, a \emph{space-time $\rho$-tube}, or simply a \emph{$\rho$-tube}, is a set $T \subset \R^{n+1}$ of the form
\begin{equation*}
    T := \big\{ (x,t) \in \R^{n+1} : |x - x(T) - t v(T)| \leq \rho^{1/2} \quad \textrm{and} \quad |t| \leq \rho \big\}
\end{equation*}
for some $x(T)$, $v(T) \in \R^n$. In this case, we say $T$ has initial position $x(T)$, velocity $v(T)$, duration $\rho$ and spatial radius $\rho^{1/2}$. 
\end{defi}

As a function of $(x,t)$, we see that the wave packet\footnote{By the hypothesis $\theta \subseteq B^n(0,1)$, we have $e^{it\Delta} \psi_T(x) = U\psi_T(x,t)$.} $U\psi_T(x,t)$ is concentrated on the space-time tube $T$ centred at $x_T$ with velocity $v(T) = -2\xi_T$, duration $\rho$ and spatial radius $\rho^{1/2}$; see Figure~\ref{fig: space time Wavepacket}. In particular, if for $0 < \delta < 1$ we define the slightly enlarged space-time tube
\begin{equation}\label{eq: enlarged T}
    T^{(\delta)} := \big\{ (x,t) \in \R^{n+1} : |x - x(T) - t v(T)| \leq \rho^{1/2 + \delta} \quad \textrm{and} \quad |t| \leq \rho \big\},
\end{equation}
then
\begin{equation}\label{eq: wave packet local}
    |U\psi_T(x,t)| \lesssim_{\delta,N} \rho^{-N} \qquad \textrm{if $(x,t) \in B^{n+1}(0,\rho) \setminus T^{(\delta)}$.}
\end{equation}

We update our notation to accommodate our change in perspective. If $T \subset \R^{n+1}$ is a space-time $\rho$-tube as in Definition~\ref{def: space-time tube}, then we define $\theta_T \subset \widehat{\R}^n$ to be the ball centred at $\xi_T := -v(T)/2$ of side-length $\rho^{-1/2}$. Moreover, we let $\psi_T$ denote the function \eqref{eq: wave packet} from Example~\ref{ex: wave packets}.

Note that a $\rho$-tube $T$ is centred around its \textit{core line}
\begin{equation*}
    \ell_T := \{(x,t) \in \R^{n+1} : x = x_T - 2t \xi_T \}.
\end{equation*}
This has direction $G(\xi_T)$ where
\begin{equation}\label{eq: def Gauss map}
    G(\xi) := \frac{1}{\sqrt{1 + 4|\xi|^2}} 
    \begin{pmatrix}
        -2\xi \\
        1
    \end{pmatrix};
\end{equation}
we will also refer to $G(\xi_T)$ as the \textit{direction} of $T$. Note that $G$ is precisely the Gauss map associated to the paraboloid $\Sigma$. Thus, the direction of the tube $T$ corresponds to the normal vector to the paraboloid at the centre of the cap $\Sigma_T$ associated to $\theta_T$.\medskip

\begin{figure}
     \centering
     \begin{subfigure}[b]{0.45\textwidth}
         \centering

        \includegraphics[scale=0.75]{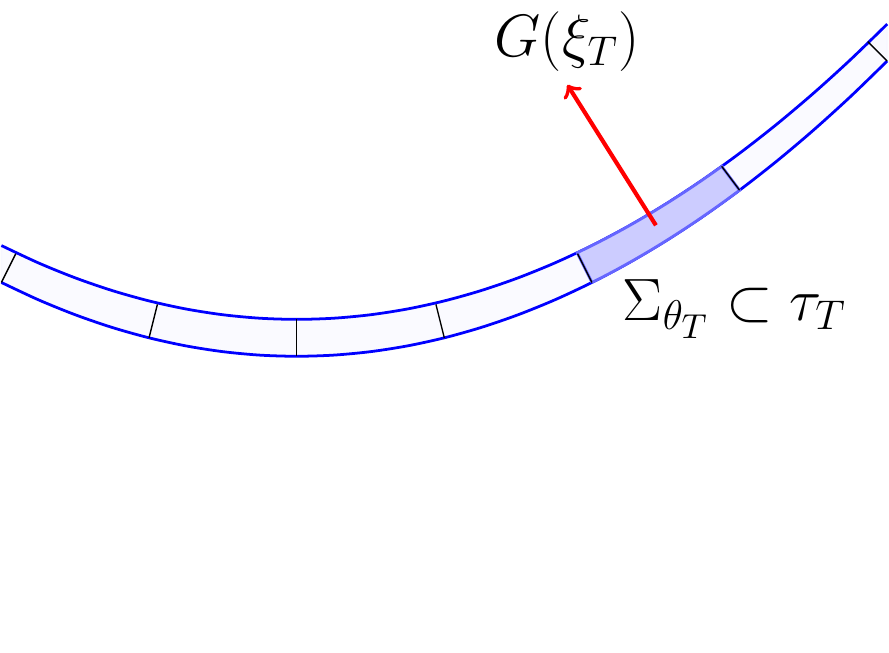}       
     \caption{\small{Each $\rho^{-1/2}$-cap $\theta_T$ is contained in a rectangle $\tau_T$ of dimension approximately $\rho^{-1/2} \times \cdots \times \rho^{-1/2} \times \rho^{-1}$, with short side in the direction of $G(\xi_T)$.}}
     \hfill
     \end{subfigure}
     \hfill
     \begin{subfigure}[b]{0.45\textwidth}
         \centering
       \includegraphics[scale=1.2]{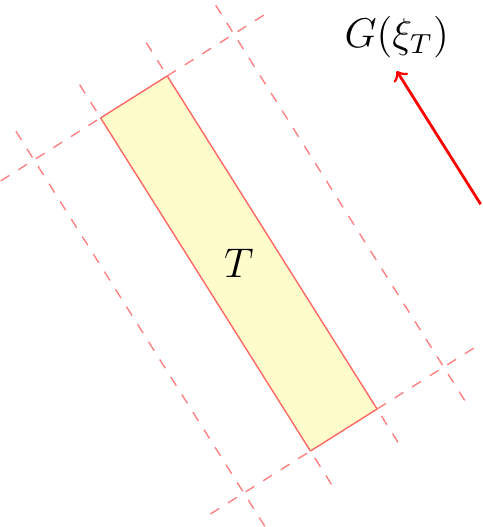}
       \vspace*{3mm} 
       \caption{\small{The space-time $\rho$-tube $T$ associated to the cap $\theta_T$. This tube has dimensions $\rho^{1/2} \times \cdots \times \rho^{1/2} \times \rho$, with long side in the direction of $G(\xi_T)$.}}
          \hfill
     \end{subfigure}
        \caption{The wave packets decomposition respects the uncertainty principle.}
        \label{fig: caps vs wavepackets}
\end{figure}

\begin{rema} These observations are consistent with the uncertainty principle and should be compared with the statement of Corollary~\ref{cor: loc const general}. Indeed, $e^{it\Delta}\psi_T (x)$ has spatio-temporal Fourier support in $\Sigma_{\theta_T}$. This cap is itself contained in a rectangle $\tau_T$ centred at $\xi_T$ with $n$ sides of length $O(\rho^{-1/2})$ lying in the tangent directions of $\Sigma$ at $\Sigma(\xi_T)$ and a remaining side of length $O(\rho^{-1})$ lying in the normal direction. The (essential) spatial-temporal support $T$ therefore corresponds to a translate of the dual $\tau_T^*$ of the frequency support $\tau_T$. See Figure~\ref{fig: caps vs wavepackets}.
\end{rema}




\subsection{Wave packet decomposition}\label{sec: wave packet dec} The examples considered of the previous section appear rather specialised. Nevertheless, using a Fourier series decomposition, \textit{any} initial datum $f \in L^2(\R^n)$ can be expressed as a superposition 
\begin{equation*}
    f = \sum_{T \in \T} a_T \psi_T
\end{equation*}
where $\T$ is a (possibly infinite) collection of space-time tubes; the $\psi_T$ are basic initial data introduced above and $(a_T)_{T \in \T}$ is a sequence of complex coefficients. Consequently, 
\begin{equation*}
    e^{it\Delta} f = \sum_{T \in \T} a_T e^{it\Delta} \psi_T
\end{equation*}
and so any solution can be realised as a superposition of wave packets. This observation is referred to as the \textit{wave packet decomposition}. It is essentially a consequence of the uncertainty principle and is closely related to the local constancy property described in Corollary~\ref{cor: loc const general}.




\subsubsection*{Definition and basic properties} Turning to the precise details, we first introduce some notation. 

\begin{defi} For $\rho \geq 1$, let $\T[\rho]$ denote the collection of all space-time $\rho$-tubes as in Definition~\ref{def: space-time tube} where
\begin{itemize}
    \item The initial position $x(T)$ is free to vary over the lattice $\rho^{1/2}\Z^n$;
    \item The velocity $v(T)$ is free to vary over the lattice $c_n \rho^{-1/2}\Z^n$.
\end{itemize}
Here  $c_n := 1/ 2n^{1/2}$ is a fixed constant which plays a minor technical r\^ole.
\end{defi}

The precise form of the wave packet decomposition is given by the following lemma. 

\begin{lemm}[Wave packet decomposition]\label{lem: wp dec} Given $f \in L^2(\R^n)$ with $\supp \hat{f} \subseteq B^n(0,1/2)$ and $\rho \geq 10$ we may write
\begin{equation}\label{eq: wp dec}
    f = \sum_{T \in \T[\rho]} f_T
\end{equation}
    where the functions $f_T \in \cS(\R^n)$ satisfy the following:
    \begin{enumerate}[i)]
    \item \textbf{Dispersion relation.} Let $T \in \T[\rho]$ and suppose $f_T$ is not identically $0$. Then 
    \begin{equation*}
        |v(T) + 2 \xi| \leq 2\rho^{-1/2} \qquad \textrm{for some $\xi \in \supp \hat{f}$.}
    \end{equation*}
        \item \textbf{Orthogonality.} For any collection $\bW \subseteq \T[\rho]$, we have
\begin{equation}\label{eq: wp orthogonality}
    \big\| \sum_{T \in \bW} f_T \big\|_{L^2(\R^n)}^2 \lesssim \sum_{T \in \bW} \|f_T\|_{L^2(\R^n)}^2 \lesssim \|f\|_{L^2(\R^n)}^2.
\end{equation} 
        \item \textbf{Spatio-temporal localisation.} If for $T \in \T[\rho]$ and $0 < \delta < 1$ we define the slightly enlarged space-time tube $T^{(\delta)}$ as in \eqref{eq: enlarged T}, then
\begin{equation}\label{eq: wp localisation}
    |Uf_T(x,t)| \lesssim_{\delta,N} \rho^{-N} \|f\|_{L^2(\R^n)} \qquad \textrm{if $(x,t) \in B^{n+1}(0,\rho) \setminus T^{(\delta)}$.}
\end{equation}
    \end{enumerate}
    We refer to \eqref{eq: wp dec} as the wave packet decomposition of $f$ at scale $\rho$.
\end{lemm}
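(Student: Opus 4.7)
The plan is to construct the wave packet decomposition via a double partition of unity, localising $f$ simultaneously in frequency at scale $\rho^{-1/2}$ and in physical space at scale $\rho^{1/2}$, as dictated by the uncertainty principle. Concretely, I would fix Schwartz bumps $\eta \in \cS(\widehat{\R}^n)$ and $\phi \in \cS(\R^n)$ so that
\begin{equation*}
\eta_{\theta_T}(\xi) := \eta\bigl(\rho^{1/2}(\xi - \xi_T)\bigr), \qquad \phi_{x_T}(x) := \phi\bigl(\rho^{-1/2}(x - x_T)\bigr)
\end{equation*}
form smooth partitions of unity as $\xi_T = -v(T)/2$ and $x_T$ range over their respective lattices, with $\supp \eta$ small enough that $\supp \eta_{\theta_T} \subseteq B(\xi_T, \rho^{-1/2})$. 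Then set
\begin{equation*}
f_T(x) := \phi_{x_T}(x)\cdot \bigl(\eta_{\theta_T}(D) f\bigr)(x),
\end{equation*}
so that $\sum_{T \in \T[\rho]} f_T = f$ directly from the two partitions of unity.

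Property (i) is then immediate: if $f_T \not\equiv 0$ then $\eta_{\theta_T}(D) f \not\equiv 0$, forcing $\supp \eta_{\theta_T}$ to meet $\supp \hat{f}$, which yields some $\xi \in \supp \hat{f}$ with $|\xi - \xi_T| \leq \rho^{-1/2}$, and hence $|v(T) + 2\xi| \leq 2\rho^{-1/2}$. For property (ii), the upper bound $\sum_T \|f_T\|_{L^2}^2 \lesssim \|f\|_{L^2}^2$ follows by first using the pointwise estimate $\sum_{x_T} |\phi_{x_T}|^2 \lesssim 1$ to reduce to $\sum_{\theta_T} \|\eta_{\theta_T}(D) f\|_{L^2}^2$, and then applying Plancherel together with $\sum_{\theta_T} |\eta_{\theta_T}|^2 \lesssim 1$. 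The Bessel-type inequality $\|\sum_{T \in \bW} f_T\|_{L^2}^2 \lesssim \sum_{T \in \bW} \|f_T\|_{L^2}^2$ reduces, after expanding, to controlling the pairwise inner products $\langle f_T, f_{T'}\rangle$: when $\theta_T \neq \theta_{T'}$ these are suppressed by frequency disjointness up to rapidly decaying Schwartz tails from $\widehat{\phi_{x_T}}$, while when $\theta_T = \theta_{T'}$ one uses the near-disjointness of $\phi_{x_T}$ and $\phi_{x_{T'}}$ in physical space. A Schur-test argument then closes the estimate.

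The bulk of the effort goes into property (iii), the rapid decay of $Uf_T$ off the enlarged tube $T^{(\delta)}$. My plan is to reduce to the model wave packet computation in Example~\ref{ex: wave packets} via Corollary~\ref{cor: parabolic rescaling}. Up to controllable errors, $f_T$ is an $L^2$-multiple of a wave packet of the form $a_T \psi_T$, with $\psi_T$ as in \eqref{eq: wave packet} and $|a_T| \lesssim \|f\|_{L^2}$, since it is physically concentrated near $x_T$ at scale $\rho^{1/2}$ and has Fourier support essentially in $\theta_T$. Parabolic rescaling centred at $\xi_T$ then transports the problem to the unit scale, where rapid decay of the propagator away from the rescaled tube follows by repeated integration by parts in $\xi$, exactly as in Example~\ref{ex: unit scale datum}. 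The main technical obstacle I expect is handling the Schwartz tails generated by the physical cut-off $\phi_{x_T}$: multiplication by $\phi_{x_T}$ smears $\hat{f}_T$ slightly beyond $\theta_T$, so one has to carefully track polynomial weights and exploit the $\rho^{\delta}$ margin built into \eqref{eq: enlarged T} in order to absorb the tail contributions into a harmless $\rho^{-N}\|f\|_{L^2}$ error.
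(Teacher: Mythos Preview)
Your approach is correct but differs from the paper's in one key respect. After the frequency decomposition $f = \sum_{\theta} f_\theta$, the paper does \emph{not} apply a physical partition of unity; instead it expands each $\hat{f}_\theta$ as a Fourier series on its supporting cube, which yields $f_T = \rho^{n/4} f_{\theta_T}(x(T))\,\psi_T$, an \emph{exact} scalar multiple of the model wave packet $\psi_T$ from Example~\ref{ex: wave packets}. This buys two simplifications. First, property~(iii) becomes a one-line consequence of \eqref{eq: wave packet local}: since $\hat{f}_T$ has genuinely compact support in the cap, there are no frequency tails to chase, and the ``technical obstacle'' you anticipate simply does not arise. Second, the orthogonality in~(ii) is cleaner, because for fixed $\theta$ the $f_T$ are (up to a common envelope $\hat{\psi}_\theta$) pure exponentials on a cube, so Plancherel on that cube gives exact $\ell^2$ identities rather than a Schur-test estimate. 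Your double-partition-of-unity construction is equally standard and certainly works, but the physical cut-off $\phi_{x_T}$ convolves $\hat{f}_T$ with a scale-$\rho^{-1/2}$ bump, forcing you to track Schwartz tails through the integration by parts and to argue almost-orthogonality by hand. The Fourier-series trick trades that bookkeeping for the (minor) cost of checking that $\rho^{n/4}|f_{\theta_T}(x(T))| \lesssim \|f\|_{L^2}$ via Bernstein.
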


The idea is to first decompose $f$ as a sum of pieces $f_{\theta}$, each  frequency supported in some cap $\theta$. The individual $f_{\theta}$ are then further decomposed using Fourier series. Before giving the details, we fix some notation. For $\rho \geq 1$ and let 
\begin{equation*}
    \Theta[\rho] := \big\{ B(\xi_T, \rho^{-1/2}) : \xi_T \in c_n \rho^{-1/2}\Z^n \cap B^n(0,2) \big\}
\end{equation*}
denote a covering of $B^n(0,2)$ by $\rho^{-1/2}$-caps. As at the end of \S\ref{sec: wave packet examples}, we associate to each $T \in \T[\rho]$ a cap $\theta_T \in \Theta[\rho]$ with centre $\xi_T$ satisfying the dispersion relation $v(T) = -2\xi_T$.

Fix $\phi \in \cS(\R^n)$ satisfying\footnote{To construct such a function, choose $\beta \in C^{\infty}_c(\R^n)$ such that $\supp \beta \subseteq Q_0 := [-1/2,1/2]^n$ and $\int_{\R} \beta = 1$. Since $\sum_{k\in \Z^n} \chi_{Q_0} \ast \beta(\,\cdot\, - k) \equiv 1$, we may take $\hat{\phi}$ to be a suitable scaling of $\chi_{Q_0} \ast \beta$.} 
\begin{equation*}
\supp \hat{\phi} \subseteq [-c_n, c_n]^n \quad \textrm{and} \quad \sum_{k \in \Z^n} \hat{\phi}(\xi - c_n k) = 1 \quad \textrm{for all $\xi \in \widehat{\R}^n$.}    
\end{equation*}
 Furthermore, let $\psi \in \cS(\R^n)$ be as in Example~\ref{ex: unit scale datum}, so that 
\begin{equation}\label{eq: self-reproducing}
    \hat{\phi}(\xi) = \hat{\phi}(\xi) \hat{\psi}(\xi) \qquad \textrm{for all $\xi \in \widehat{\R}^n$.}
\end{equation}

Given a cap $\theta_T \in \Theta[\rho]$ with centre $\xi_T$, define $\phi_{\theta_T} \in \cS(\R^n)$ by
\begin{equation*}
    \hat{\phi}_{\theta_T}(\xi) := \hat{\phi}(\rho^{1/2} (\xi - \xi_T)).
\end{equation*}
The family of functions $\hat{\phi}_{\theta_T}$ then forms a partition of unity subordinate to the covering $\Theta[\rho]$ of $B^n(0,2)$. Consequently, 
\begin{equation}\label{eq: f freq dec}
    f = \sum_{\theta_T \in \Theta[\rho]} f_{\theta_T} \qquad \textrm{where} \qquad f_{\theta_T} := \phi_{\theta_T} \ast f.
\end{equation}
With these definitions, we turn to the proof of the wave packet decomposition.

\begin{proof}[Proof (of Lemma~\ref{lem: wp dec})] For $f_{\theta_T}$ as defined in \eqref{eq: f freq dec}, it follows that $\hat{f}_{\theta_T}$ is compactly supported in a cube of side-length $\rho^{-1/2}$. Consequently, we can expand $\hat{f}_{\theta_T}$ as a (suitably scaled) Fourier series, giving
\begin{equation}\label{eq: wave packet dec 1} 
    \hat{f}_{\theta_T}(\xi) =  \sum_{k \in \rho^{1/2} \Z^n} \rho^{n/2} e^{-i \inn{\xi}{k}} f_{\theta_T}(k) = \sum_{k \in \rho^{1/2} \Z^n} \rho^{n/4} e^{-i \inn{\xi}{k}} f_{\theta_T}(k) \hat{\psi}_{\theta_T}(\xi).
\end{equation}
Here the convergence can be taken in the $L^2$ sense and $\psi_{\theta}$ is as defined in \eqref{eq: psi theta T}. For the second step in \eqref{eq: wave packet dec 1}  we have used \eqref{eq: self-reproducing}. Note, by Plancherel's theorem,
\begin{equation*}
\sum_{k \in \rho^{1/2} \Z^n} |f_{\theta_T}(k)|^2  = \rho^{-n/2}
 \|f_{\theta}\|_{L^2(\R^n)}^2. 
\end{equation*}

In light of the above, we may write
\begin{equation*}
    f = \sum_{T \in \T[\rho]} f_T \quad \textrm{where} \quad f_T(x) := \rho^{n/4} f_{\theta_T}(x(T)) \hat{\psi}_T(x)
\end{equation*}
for $\psi_T$ as defined in \eqref{eq: wave packet}. It remains to prove the functions $f_T$ have the desired properties.\smallskip

\noindent i) \textbf{Dispersion relation.} From the definitions, $\supp \hat{\phi}_{\theta_T}$ lies in the ball $\theta_T$ of radius $\rho^{-1/2}$. If $f_T$ is not identically zero, then $f_{\theta_T}$ is not identically zero. In this case, $\theta_T$ must intersect the frequency support of $f$, which immediately implies the desired property. \smallskip

\noindent ii) \textbf{Orthogonality.} For each $\theta \in \Theta[\rho]$, let $\T_{\theta}[\rho]$ denote the collection of all tubes $T \in \T[\rho]$ associated to $\theta$ and $\bW_{\theta} := \bW \cap \T_{\theta}[\rho]$. Since the functions $\sum_{T \in \mathbf{W_{\theta}}} f_T$ have finitely overlapping Fourier support,
\begin{equation}\label{eq: wave packet dec 3}
    \big\| \sum_{T \in \bW} f_T \big\|_{L^2(\R^n)}^2  = \big\| \sum_{\theta \in \Theta[\rho]} \sum_{T \in \bW_{\theta}} f_T \big\|_{L^2(\R^n)}^2 \lesssim  \sum_{\theta \in \Theta[\rho]}  \big\|\sum_{T \in \bW_{\theta}} f_T \big\|_{L^2(\R^n)}^2,
\end{equation}
 where in the final step we use Plancherel's theorem and the Cauchy--Schwarz inequality.

Fix $\theta \in \Theta[\rho]$ and note that $\supp \hat{\psi}_{\theta}$ lies in the cube $\tilde{\theta}$ concentric to $\theta$ with side-length $8\rho^{-1/2}$. In particular, by Plancherel's theorem (and periodicity), 
\begin{equation}\label{eq: wave packet dec 4}
    \big\|\sum_{T \in \bW_{\theta}} f_T \big\|_{L^2(\R^n)}^2 \lesssim \rho^{n} \big\|\sum_{T \in \bW_{\theta}} f_{\theta}(x(T)) e^{-i \inn{\xi}{x(T)}} \big\|_{L^2(\tilde{\theta})}^2 \sim \rho^{n/2}\sum_{T \in\bW_{\theta}} |f_{\theta}(x(T))|^2.
\end{equation}
However, from the definition of $f_T$ and the choice of normalisation of $\psi_T$, it follows that 
\begin{equation}\label{eq: wave packet dec 5}
   \rho^{n/2} |f_{\theta}(x(T))|^2 \sim \|f_T\|_{L^2(\R^n)}^2  \qquad \textrm{for any $T \in \T_{\theta}[\rho]$.}
\end{equation}
Combining \eqref{eq: wave packet dec 3}, \eqref{eq: wave packet dec 4} and \eqref{eq: wave packet dec 5}, we deduce
\begin{equation*}
     \big\| \sum_{T \in \bW} f_T \big\|_{L^2(\R^n)}^2 \lesssim  \sum_{T \in \bW} \|f_T\|_{L^2(\R^n)}^2,
\end{equation*}
which is the first of the desired inequalities.

On the other hand, in view of \eqref{eq: wave packet dec 1} and \eqref{eq: wave packet dec 5}, we have
\begin{equation*}
    \sum_{T \in \bW} \|f_T\|_{L^2(\R^n)}^2 \lesssim \rho^{n/2} \sum_{\theta \in \Theta[\rho]} \sum_{T \in \bW_{\theta}} |f_{\theta}(x(T))|^2 \lesssim \sum_{\theta \in \Theta[\rho]} \|f_{\theta}\|_{L^2(\R^n)}^2 \lesssim \|f\|_{L^2(\R^n)},
\end{equation*}
where the last inequality follows since the $f_{\theta}$ has finitely-overlapping frequency support.

\smallskip

\noindent iii) \textbf{Spatio-temporal locality.} It follows from \eqref{eq: wave packet local} that 
\begin{equation}\label{eq: spatial loc 1}
    |Uf_T(x,t)| \lesssim_N \rho^{-N} |f_{\theta_T}(x(T))|  \qquad \textrm{if $(x,t) \in B^{n+1}(0,\rho) \setminus T^{(\delta)}$.}
\end{equation}
   However, since $f_{\theta_T}$ has Fourier support in a small ball, it follows from Bernstein's inequality and the orthogonality properties of the wave packets that
   \begin{equation}\label{eq: spatial loc 2}
       |f_{\theta_T}(x(T))| \leq \|f_{\theta_T}\|_{L^{\infty}(\R^n)} \lesssim \|f_{\theta_T}\|_{L^2(\R^n)} \lesssim \|f\|_{L^2(\R^n)} .
   \end{equation}
Combining \eqref{eq: spatial loc 2} and \eqref{eq: spatial loc 1} gives the desired bound. 
\end{proof}

 We can use Lemma~\ref{lem: wp dec} to address a point left open from \S\ref{sec: Standard reductions}. The following argument is essentially taken from \textcite{L2006}. 

\begin{proof}[Proof (Proposition~\ref{prop: reduced max} $\Rightarrow$ Theorem~\ref{thm: DZ max})] Assume Proposition~\ref{prop: reduced max} holds. For all $\varepsilon > 0$ and $R \geq 1$, it suffices to show 
\begin{equation}\label{eq: pre reduced max}
    \|\sup_{0 < t < R^2}|e^{it\Delta}f| \|_{L^2(B^n(0,R))} \lesssim_{\varepsilon} R^{n/(2(n+1)) + \varepsilon} \|f\|_{L^2(\R^n)}
\end{equation}
for all $f \in L^2(\R^n)$ with $\supp \hat{f} \subseteq A(1)$. Indeed, as discussed in \S\ref{sec:  Littlewood--Paley decomposition}, the desired result then follows from a simple scaling argument and the Littlewood--Paley characterisation of $H^s(\R^n)$.

Fix $\varepsilon > 0$, $R \geq 1$ and $f \in L^2(\R^n)$ with $\supp \hat{f} \subseteq A(1)$. Let $\cI_R$ be the collection of all lattice $R$-intervals which intersect the long time interval $[0,R^2]$. Trivially, we have
\begin{align}
    \nonumber
     \|\sup_{0 < t \leq R^2}|e^{it\Delta}f| \|_{L^2(B^n(0,R))} &= \|\sup_{I \in \cI_R}\sup_{t \in I}|e^{it\Delta}f| \|_{L^2(B^n(0,R))} \\
     \label{eq: R2 red 1}
     &\leq \Big(\sum_{I \in \cI_R} \|\sup_{t \in I}|e^{it\Delta}f|\|_{L^2(B^n(0,R))}^2 \Big)^{1/2}.
\end{align}
On the other hand, by Proposition~\ref{prop: reduced max} and the invariance of the estimates under temporal translation,
\begin{equation}\label{eq: R2 red 2}
    \|\sup_{t \in I}|e^{it\Delta}g| \|_{L^2(B^n(0,R))} \lesssim_{\varepsilon} R^{n/(2(n+1)) + \varepsilon/2} \|g\|_{L^2(\R^n)} \qquad \textrm{for all $I \in \cI_R$}
\end{equation}
whenever $g \in L^2(\R^n)$ with $\supp \hat{g} \subseteq A(1)$.\smallskip

In order to sum the estimates from \eqref{eq: R2 red 2} we observe a certain orthogonality between the localised maximal operators associated to distinct time intervals $I$. We first discuss this orthogonality at a heuristic level. As above, let $\T[R]$ denote the collection of all space-time $R$-tubes. Since $\supp \hat{f} \subseteq A(1)$, given $T \in \T[R]$ the wave packet $e^{it\Delta}f_T$ has speed $|v(T)| \sim 1$. This means that the wave $e^{it\Delta}f_T$ spends roughly $R$ units of time in the spatial ball $B^n(0,R)$. Hence, for each $T \in \T[R]$ there is essentially a unique time interval $I \in \cI_R$ for which $\sup_{t \in I}|e^{it\Delta}f_T|$ is non-negligible. Thus, each wave packet contributes only to a single term in the sum on the right-hand side of \eqref{eq: R2 red 1}; since the functions $(f_T)_{T \in \T[R]}$ are themselves orthogonal, this leads to the desired orthogonality between the maximal operators.\smallskip

We now turn to the formal details of the proof. Fix $\delta := \varepsilon / 2$ and for each $I \in \cI_R$, let $Q_I := B^n(0,R) \times I$ and $Q_I^{(\delta)} := R^{\delta} \cdot Q_I$. Define
\begin{equation}\label{eq: R2 red 3}
    \T_I[R] := \big\{ T \in \T[R] : T \cap Q_I^{(\delta)} \neq \emptyset \textrm{ and } 1/4 \leq |v(T)| \leq 2 \big\}
\end{equation} 
so that, by frequency support hypotheses on $f$ and the spatio-temporal locality of the wave packets,
\begin{equation}\label{eq: R2 red 4}
    \|\sup_{t \in I}|e^{it\Delta}f| \|_{L^2(B^n(0,R))} \lesssim_{\varepsilon} \|\sup_{t \in I}|e^{it\Delta}f_I| \|_{L^2(B^n(0,R))} + R^{-100n} \|f\|_{L^2(\R^n)}
\end{equation}
where
\begin{equation*}
    f_I := \sum_{T \in \T_I[R]} f_T. 
\end{equation*}

The key observation is that the sets $\T_I[R]$ are essentially disjoint: more precisely, 
\begin{equation}\label{eq: R2 red 5}
    \max_{T \in \T[R]} \#\{ I \in \cI_R : T \in \T_I[R] \} \lesssim R^{\delta}. 
\end{equation}
Indeed, once we have \eqref{eq: R2 red 5}, we may combine \eqref{eq: R2 red 1}, \eqref{eq: R2 red 2} and \eqref{eq: R2 red 4} to deduce that
\begin{align*}
    \|\sup_{0 < t \leq R^2}|e^{it\Delta}f| \|_{L^2(B^n(0,R))} &\lesssim_{\varepsilon} R^{n/(2(n+1)) + \varepsilon/2} \Big(\sum_{I \in \cI_R} \|f_I\|_{L^2(\R^n)}^2 \Big)^{1/2}  + R^{-100n} \|f\|_{L^2(\R^n)} \\
    &\lesssim_{\varepsilon} R^{n/(2(n+1)) + \varepsilon} \Big(\sum_{T \in \T[R]} \|f_T\|_{L^2(\R^n)}^2 \Big)^{1/2}  + R^{-100n} \|f\|_{L^2(\R^n)}
\end{align*}
where the second step follows from orthogonality, interchanging the order of summation and \eqref{eq: R2 red 5}. The desired estimate \eqref{eq: pre reduced max} now follows by the orthogonality property of the wave packets.\smallskip

It remains to show \eqref{eq: R2 red 5}. Fix $T \in \T[R]$ with initial position $x(T)$ and velocity $v(T)$. Suppose $T \in \T_{I_1}[R] \cap \T_{I_2}[R]$ for a pair of time intervals $I_1$, $I_2 \in \cI_R$. It suffices to show $\dist(I_1, I_2) \leq 30 R^{1+\delta}$. Suppose this is not the case and let $\bar{t}_j$ denote the centre of $I_j$ for $j=1$, $2$. By the definition of the sets $\T_{I_j}[R]$, there exist $(x_j,t_j) \in \R^{n+1}$ such that
\begin{equation*}
    |x_j - x(T) - t_j v(T)| \leq R^{1/2}, \quad |x_j| \leq R^{1+\delta} \quad \textrm{and} \quad |t_j - \bar{t}_j| \leq R^{1+\delta}
\end{equation*}
for $j = 1$, $2$. By the triangle inequality,
\begin{equation*}
    |\bar{t}_1 - \bar{t}_2||v(T)| \leq |x_1 - x_2|  + \sum_{j=1}^2 |x_j - x(T) - t_jv(T)|  + |t_j - \bar{t}_j| \leq 6R^{1+\delta}.
\end{equation*}
Since, by hypothesis, $|\bar{t}_1 - \bar{t}_2| \geq 30 R^{1+\delta}$, it follows that $|v(T)|  \leq 1/10$. In view of the definition of $\T_{I_j}[R]$ from \eqref{eq: R2 red 3}, this is a contradiction. 
\end{proof}




\subsection{Pseudo-local properties}\label{sec: pseudo-local}

Throughout the following, we fix a large spatio-temporal scale $R \geq 1$. This scale plays a similar r\^ole to $R$ in the statement of Theorem~\ref{thm: fractal L2} and, in particular, we shall consider estimates for $Uf$ which are localised in space-time to $B^{n+1}(0,R)$. For this reason, it is useful to perform a wave packet decomposition at scale $R$, so that the corresponding space-time tubes have duration $R$.

We shall often work with an additional, much smaller, intermediate scale $1 \leq K \leq R^{1/2}$; this should be thought of as reasonably large, but still vastly smaller than $R$. For instance, in applications we typically either take $K$ to be a large dimensional constant or $K = R^{\delta}$ for some very small $\delta > 0$. 

Let $\cT_{K^{-1}}$ be a finitely-overlapping covering of $B^n(0,1)$ by $(2K)^{-1}$-caps with centres lying in $B^n(0,2)$. Fix $\tau \in \cT_{K^{-1}}$ with $\tau \subseteq B^n(0,1)$ and suppose $f_{\tau} \in L^2(\R^n)$ satisfies $\supp \hat{f}_{\tau} \subseteq \tau$.\footnote{The caps are chosen to have radius $(2K)^{-1}$ for technical reasons; one should think of $f_{\tau}$ as supported `well within' a cap of radius $K^{-1}$.} By applying a wave packet decomposition to $f_{\tau}$ at scale $R$, we have 
\begin{equation*}
 f_{\tau} = \sum_{T \in \T_{\tau}[R]} f_T \qquad \textrm{where} \qquad    \T_{\tau}[R] := \big\{ T \in \T[R] : |v(T) + 2 \xi_{\tau} | \leq 4K^{-1} \big\};
\end{equation*}
here the dispersion relation property from Lemma~\ref{lem: wp dec} is used to restrict the velocities. 

Each $T \in \T_{\tau}[R]$ is aligned in the direction $G(\xi_{\tau})$ up to an angular difference of $O(K^{-1})$. Consequently, we can group the wave packets together according to a partition of space-time into disjoint strips $S$ of dimension $R/K \times \cdots \times R/K \times R$: see Figure~\ref{fig: Pseudolocal_partition}. In view of this, the solution operator $U$ essentially acts independently between distinct strips. 

To describe these properties more precisely, let $\bbS_{\tau}[R]$ denote the collection of all strips
\begin{equation*}
    S = \big\{ (x,t) \in \R^{n+1} : |x - x(S) - t v(S)| \leq R/K \quad \textrm{and} \quad |t| \leq R \big\}
\end{equation*}
where $x(S) \in \R^n$ is a choice of lattice point in $R/K \cdot \Z^n$ and $v(S) := - 2 \xi_{\tau}$. Form a partition $\T_{S}[R]$ of $\T_{\tau}[R]$, parameterised by the the strips $S \in \bbS_{\tau}[R]$ such that 
\begin{equation*}
    \T_{S}[R] \subseteq \big\{ T \in \T_{\tau}[R] : T \cap S \neq \emptyset \big\}.
\end{equation*}
Then we may write
\begin{equation}\label{eq: pseudo loc 1}
     f_{\tau} = \sum_{S \in \bbS_{\tau}[R]} f_S \qquad \textrm{where} \qquad  f_S = \sum_{T \in \T_S[R]} f_T  \quad \textrm{for all $S \in \bbS_{\tau}[R]$.} 
\end{equation}
\begin{figure}
    \centering    \includegraphics{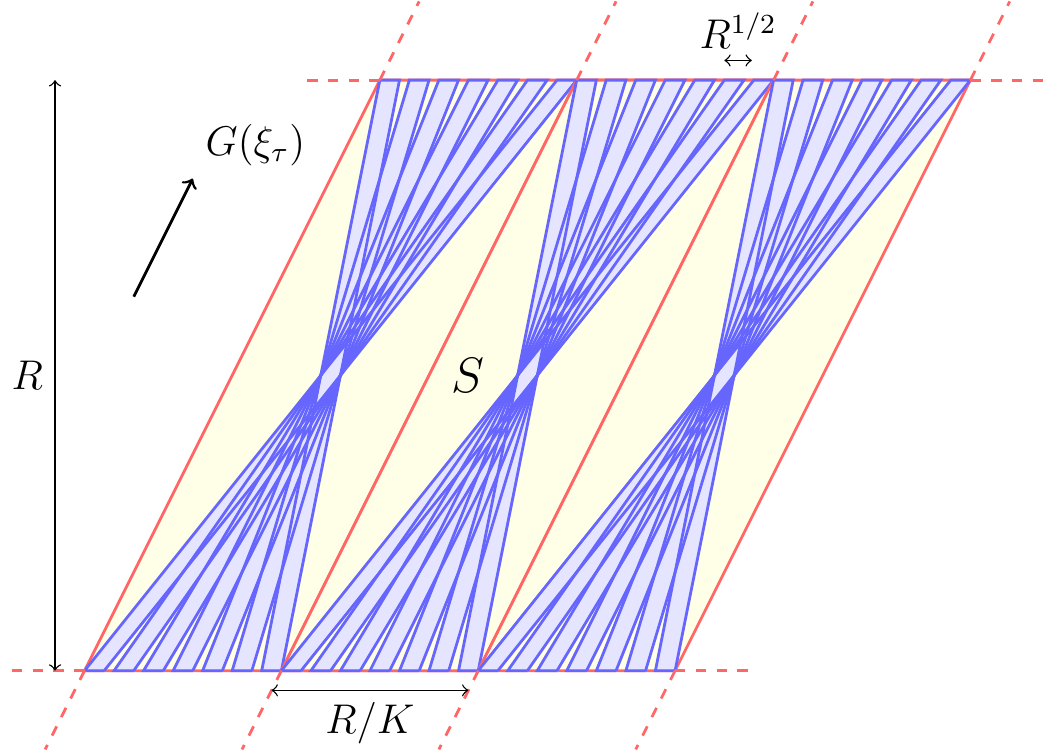}
    \caption{A schematic demonstrating the pseudolocal property. The tubes $T$ arising from the wave packet decomposition of $f_{\tau}$ have directions lying in a $K^{-1}$-neighbourhood of $G(\xi_{\tau})$. Consequently, they can (essentially) be partitioned into families which lie in disjoint strips $S$.}
    \label{fig: Pseudolocal_partition}
\end{figure}

Morally, if $S_1$, $S_2 \in \bbS_{\tau}[R]$ are distinct strips, then the waves $Uf_{S_1}$ and $Uf_{S_2}$ concentrate on $S_1$ and $S_2$, respectively, and therefore do not interact. To make this statement precise, we define the slightly enlarged strip
\begin{equation*}
    \bar{S} = \big\{ (x,t) \in \R^{n+1} : |x - x(S) - tv(S)| \leq 20 R/K \quad \textrm{and} \quad |t| \leq R \big\}.
\end{equation*}
Then we have the following lemma.  

\begin{lemm}\label{lem: pseudo loc} Suppose $0 < \delta < 1$ and $1 \leq K \leq R^{1/2 - \delta}$. With the above definitions,  for all $S \in \bbS_{\tau}[R]$ we have
\begin{equation*}
    |Uf_S(z)| \lesssim_{\delta, N} |U f_S(z)| \chi_{\bar{S}}(z) + R^{-N}\|f_{\tau}\|_{L^2(\R^n)} \qquad \textrm{for all $z \in B^{n+1}(0,R)$ and $N \in \N$.}
\end{equation*}
\end{lemm}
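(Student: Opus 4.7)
The statement is trivial for $z \in \bar{S}$, where $\chi_{\bar{S}}(z) = 1$ makes the two sides of the inequality equal. My plan is to handle $z \in B^{n+1}(0,R) \setminus \bar{S}$ by expanding $Uf_S = \sum_{T \in \T_S[R]} Uf_T$, applying the spatio-temporal localisation of wave packets to each $Uf_T$, and showing that every enlarged tube $T^{(\delta)}$ is swallowed by the enlarged strip $\bar{S}$. Once this containment is in place, $z \notin \bar{S}$ forces $z \notin T^{(\delta)}$ for every $T \in \T_S[R]$, and Lemma~\ref{lem: wp dec}(iii) (applied to the decomposition of $f_{\tau}$ at scale $R$) yields $|Uf_T(z)| \lesssim_{\delta, N} R^{-N}\|f_{\tau}\|_{L^2(\R^n)}$. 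Summing over $\T_S[R]$, whose cardinality is polynomial in $R$ by the lattice structure on initial positions and velocities, and then absorbing this polynomial factor by choosing $N$ larger gives the claim.

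The geometric containment $T^{(\delta)} \subseteq \bar{S}$ is the one substantive step, and it is really just a triangle inequality that leverages two facts: the velocity constraint $|v(T) - v(S)| = 2|\xi_T - \xi_\tau| \leq 4K^{-1}$ built into the definition of $\T_{\tau}[R]$, and the non-emptiness of $T \cap S$. Choosing $(x_0, t_0) \in T \cap S$, the tube/strip defining inequalities give $|x_0 - x(T) - t_0 v(T)| \leq R^{1/2}$ and $|x_0 - x(S) - t_0 v(S)| \leq R/K$. Combining these at the arbitrary time $|t| \leq R$ via
\begin{equation*}
    |(x(T) + tv(T)) - (x(S) + tv(S))| \leq R^{1/2} + R/K + (|t| + |t_0|)|v(T) - v(S)|
\end{equation*}
bounds the displacement between the two core lines by $R^{1/2} + 9R/K$. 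For $(x,t) \in T^{(\delta)}$, one further triangle inequality gives
\begin{equation*}
    |x - x(S) - tv(S)| \leq R^{1/2 + \delta} + R^{1/2} + 9R/K.
\end{equation*}
The hypothesis $K \leq R^{1/2 - \delta}$ is exactly what makes $R^{1/2+\delta} \leq R/K$, so the right-hand side is bounded by $11R/K \leq 20 R/K$, putting $(x,t) \in \bar{S}$.

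The main (mild) obstacle is tracking the constants carefully to ensure this geometric step genuinely uses $K \leq R^{1/2-\delta}$ at the stated threshold rather than at a slightly smaller power; there is no deep difficulty. An alternative, slightly cleaner presentation would choose $\delta' = \delta/2$ when widening the tubes, yielding a strict strict containment $T^{(\delta')} \subseteq \bar{S}$ with room to spare, at the cost of replacing $\delta$ with $\delta'$ in the constant $\lesssim_{\delta',N}$ — this is harmless since the exponent $N$ is arbitrary and the dependence on $\delta$ is only implicit through the wave packet decay. Everything else is a direct appeal to the already-established machinery in Lemma~\ref{lem: wp dec}.
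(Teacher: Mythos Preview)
Your proof is correct and matches the paper's approach almost exactly: both reduce to the geometric containment $T^{(\delta)} \subseteq \bar{S}$, establish it via the velocity constraint $|v(T)-v(S)| \leq 4K^{-1}$ together with a common point in $T \cap S$, and invoke $K \leq R^{1/2-\delta}$ to absorb $R^{1/2+\delta}$ into $R/K$. Your remark about the polynomial cardinality of $\T_S[R]$ is a detail the paper leaves implicit, but otherwise the arguments are the same.
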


\begin{proof} Let $S \in \bbS_{\tau}[R]$ and $T \in \T_S[R]$. By the localisation of the wave packets from \eqref{eq: wp localisation}, it suffices to show $T^{(\delta)} \subseteq \bar{S}$, where $T^{(\delta)}$ is the enlargement of $T$ defined in \eqref{eq: enlarged T}.

Let $z = (x,t) \in B^{n+1}(0,R) \setminus \bar{S}$, so that 
\begin{equation*}
    |x - x(S) - t v(S)| > 20 R/K.
\end{equation*}
Since $T \in \T_S[R]$, it follows that $|v(T) - v(S)| \leq 4K^{-1}$ and there exists some $(x',t') \in \R^{n+1}$ satisfying
\begin{equation*}
    |x' - x(T) - t' v(T)| \leq R^{1/2}, \quad |x' - x(S) - t' v(S)| \leq R/K \quad \textrm{and} \quad |t'| \leq R.
\end{equation*}
Therefore, by the triangle inequality, $|x(T) - x(S)| \leq 6 R/K$. Consequently,
\begin{equation*}
    |x - x(T) - t v(T)| \geq  R/K > R^{1/2 + \delta}
\end{equation*}
and so $(x,t) \notin T^{(\delta)}$, as required.  
\end{proof}

\begin{rema} Note that the time zero slice $B_S := S  \cap \{(x,0) : x \in \R^n\}$ is an $R/K$-ball. Another interpretation of the preceding observations is that $S \in \bbS_{\tau}[R]$ essentially corresponds to the domain of influence for the initial datum $f_{\tau}$ localised to $B_S$, over the time interval $[-R,R]$. 
\end{rema}

For $S \in \bbS_{\tau}[R]$ define the mapping 
\begin{equation*}
    \cA_S \colon (x,t) \mapsto (K^{-1}(x - x(S) - tv(S)), K^{-2}t). 
\end{equation*}
Thus, $\cA_S$ maps the strip $S$ to the ball $B^{n+1}(0, R/K^2)$; see Figure~\ref{fig: Pseudolocal_rescaling}. Furthermore, we may write $\cA_S(z) = \cL_{\tau}(z) - z(S)$ where $\cL_{\tau}$ is the scaling map associated to the cap $\tau$ as defined in \eqref{eq: rescaling map} and $z(S) := (x(S), 0)$. In light of these observations, we obtain the following $L^p$ variant of the parabolic rescaling from Corollary~\ref{cor: parabolic rescaling}. 

\begin{figure}
       \centering
\includegraphics{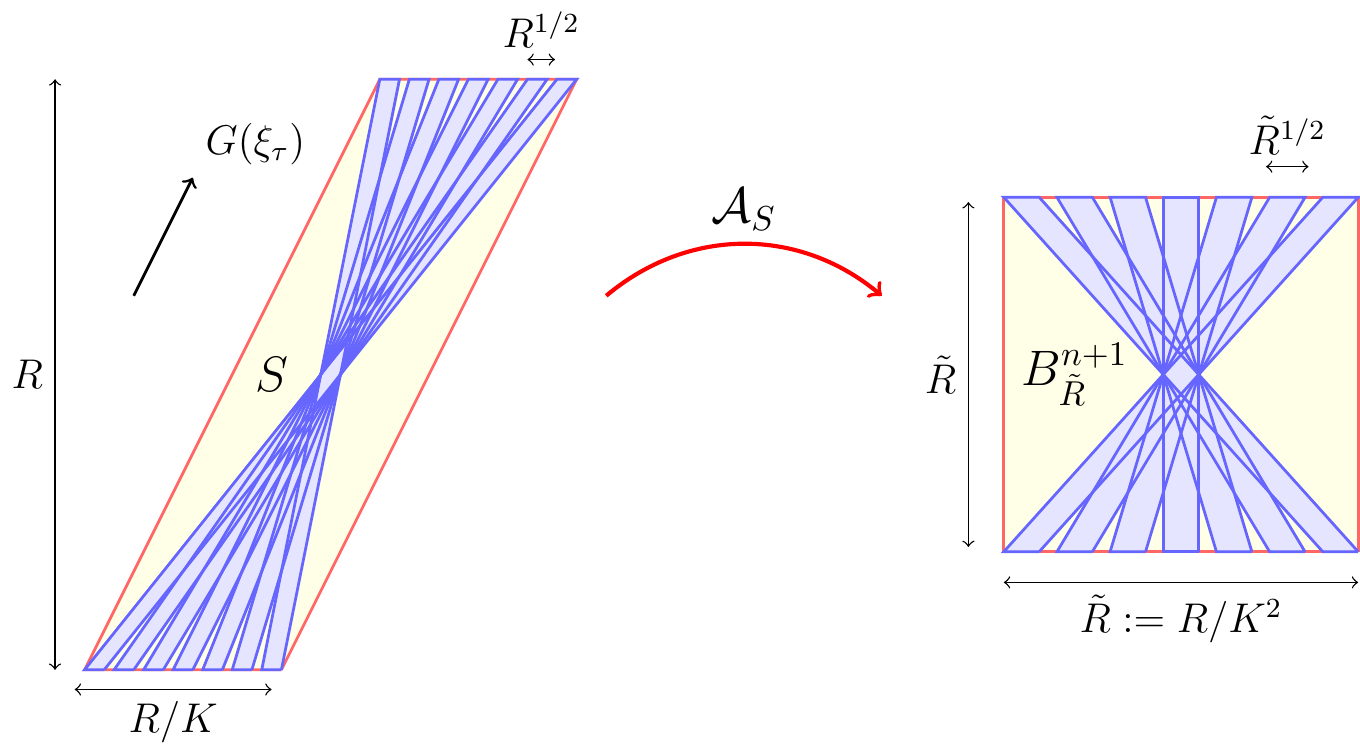}
    \caption{Rescaling in Lemma~\ref{lem: Lp rescaling} in the (physical) space-time domain. The strip $S$ is mapped to $B^{n+1}(0,\tilde{R})$ under the affine map $\cA_S$.}
    \label{fig: Pseudolocal_rescaling}
\end{figure}

 \begin{figure}
       \centering
       \includegraphics{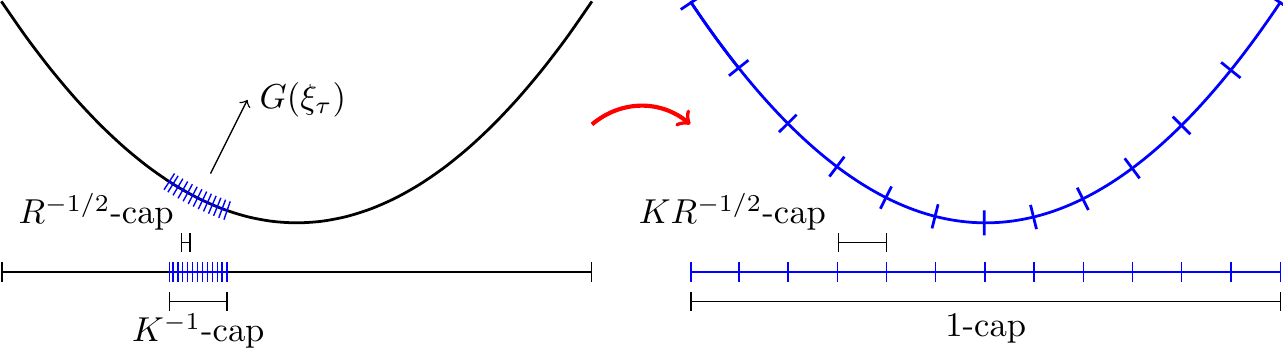}
    
    \caption{Rescaling in Lemma~\ref{lem: Lp rescaling} in the frequency domain. The $K^{-1}$-cap $\tau$ is mapped to the whole paraboloid. Note the relationship between the directions of the space-time tubes in Figure~\ref{fig: Pseudolocal_rescaling} and the underlying frequencies.}
    \label{fig: Pseudolocal_paraboloid}
\end{figure}

\begin{lemm}\label{lem: Lp rescaling} Let $1 \leq p \leq \infty$. With the above setup, 
\begin{equation*}
    \|Uf_S\|_{L^p(\bar{S})} \leq K^{(n+2)/p-n/2} \|U \tilde{f}_S\|_{L^p(B^{n+1}(0,\tilde{R}))}
\end{equation*}
for $\tilde{R} = 20 R/K^2$ and some function $\tilde{f}_S \in L^2(\R^n)$ satisfying
\begin{equation*}
\|\tilde{f}_S \|_{L^2(\R^n)} = \|f_S \|_{L^2(\R^n)} \quad \textrm{and} \quad \supp \mathcal{F}(\tilde{f}_S) \subseteq B^n(0,1).
\end{equation*}
\end{lemm}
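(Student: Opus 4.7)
The plan is to deduce the lemma by applying the parabolic rescaling of Corollary~\ref{cor: parabolic rescaling} to $f_S$ and then performing a linear change of variables on the space-time domain adapted to $\cA_S$.

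First I would verify that $f_S$ has spatial Fourier support in a ball $\theta$ of radius $\lesssim K^{-1}$ centred near $\xi_\tau$. By the dispersion relation in Lemma~\ref{lem: wp dec}, each wave packet $f_T$ with $T \in \T_S[R]$ has $\supp \hat{f}_T$ contained in a ball of radius $O(R^{-1/2})$ about $\xi_T$; the definition of $\T_S[R]$ ensures $|\xi_T - \xi_\tau| \leq 2K^{-1}$, and summing over $T$ (using $R^{-1/2} \leq K^{-1}$) gives the claimed support. Corollary~\ref{cor: parabolic rescaling}, applied with this cap $\theta$, then produces $\tilde f_S \in L^2(\R^n)$ satisfying the $L^2$-normalisation and Fourier-support conditions in the conclusion, together with the pointwise identity
\[
|Uf_S(z)| \lesssim K^{-n/2}\,|U\tilde f_S(\cL_\theta(z))|.
\]

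Comparing the explicit formula \eqref{eq: rescaling map} for $\cL_\theta$ with the definition of $\cA_S$ reveals that $\cL_\theta(z) = \cA_S(z) + z_0$ for a fixed $z_0 = (x_0, 0) \in \R^n \times \{0\}$: the two maps agree up to a purely spatial translation, arising because $\cA_S$ already incorporates a translation by $x(S)$ whereas $\cL_\theta$ does not. Spatial translation invariance, $U(g(\,\cdot\, + x_0))(x,t) = Ug(x+x_0,t)$, then allows me to replace $\tilde f_S$ by $\tilde f_S(\,\cdot\, + x_0)$ without altering either the $L^2$ norm or the Fourier support; after this replacement the pointwise inequality becomes $|Uf_S(z)| \lesssim K^{-n/2}\,|U\tilde f_S(\cA_S(z))|$.

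The remaining step is the change of variables $w = \cA_S(z)$ in $\|Uf_S\|_{L^p(\bar{S})}^p$: since $\cA_S$ scales $n$ spatial directions by $K^{-1}$ and the temporal direction by $K^{-2}$, its Jacobian determinant equals $K^{-(n+2)}$, and a direct inspection of the definitions gives $\cA_S(\bar S) \subseteq B^{n+1}(0, \tilde R)$ in the product metric \eqref{eq: space-time metric}. Thus
\[
\|Uf_S\|_{L^p(\bar S)}^p \lesssim K^{-np/2} \cdot K^{n+2}\,\|U\tilde f_S\|_{L^p(B^{n+1}(0,\tilde R))}^p,
\]
and taking $p$-th roots (with the obvious modifications when $p = \infty$) yields the claimed exponent $(n+2)/p - n/2$. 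The only subtle ingredient is correctly identifying the translation gap between $\cL_\theta$ and $\cA_S$ and absorbing it via spatial translation invariance; everything else is a routine volume computation.
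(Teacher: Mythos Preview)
Your approach is essentially the same as the paper's, which is itself only a two-line sketch: apply Corollary~\ref{cor: parabolic rescaling}, identify the rescaling map with $\cA_S$ up to spatial translation, and change variables to pick up the Jacobian factor $K^{n+2}$. The one point to tidy up is that your cap $\theta$ (chosen large enough to contain $\supp\hat f_S$) has radius a bounded multiple of $K^{-1}$ rather than exactly $K^{-1}$, so $\cL_\theta$ and $\cA_S$ differ not just by translation but also by an $O(1)$ dilation; this is precisely the ``minor technical complication'' the paper's footnote flags, and the factor of $20$ in $\tilde R$ is there to absorb it.
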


\begin{proof} This follows by applying parabolic rescaling in the form of Corollary~\ref{cor: parabolic rescaling}, exploiting the relationship between $\cL_{\tau}$ and $\cA_S$ and changing the spatio-temporal variables. The scaling is represented in the physical and frequency domains in Figure~\ref{fig: Pseudolocal_rescaling} and Figure~\ref{fig: Pseudolocal_paraboloid}, respectively.\footnote{There are some minor technical complications in the proof of Lemma~\ref{lem: Lp rescaling} owing to slightly larger frequency support of $f_S$ (compared with $f_{\tau}$) and the use of the enlarged strip $\bar{S}$ rather than $S$. This accounts for the factor of $20$ in the definition of $\tilde{R}$. }
\end{proof}




\subsection{Dyadic pigeonholing and reverse H\"older}\label{sec: pigeonholing}

We shall make extensive use of pigeonholing arguments in the proof of Theorem~\ref{thm: fractal L2} in~\S\ref{sec: main proof}. Although such arguments are entirely elementary, they are nevertheless surprisingly useful and it is worth discussing the \textit{dyadic pigeonholing} method in particular.

\begin{defi} We say $\cB \subseteq (0, \infty)$ is \emph{dyadically constant} if there exists some $j \in \Z$ such that $\cB \subseteq [2^j, 2^{j+1}]$. 
\end{defi}

\begin{rema}
    Typically, $\cB = \{H(a) : a \in \cA\}$ is a sequence of positive numbers indexed over some finite set $\cA$; in such cases we shall often write
\begin{equation*}
    H(a) \qquad \textrm{is dyadically constant over $a \in \cA$}
\end{equation*}
to mean $\cB$ is dyadically constant.
\end{rema}

Let $M > 0$, $R \geq 1$. By taking logarithms, we see that there are only $O(\log R)$ values of $j \in \Z$ such that $2^j \in [MR^{-1}, MR]$. It follows that any set $\cB \subseteq [MR^{-1}, MR]$ can be written as a union
\begin{equation}\label{eq: pigeon decomp}
    \cB = \bigcup_{j=1}^J \cB_j
\end{equation}
where each $\cB_j$ is dyadically constant and $J \lesssim  \log R$. Applying the pigeonhole principle to the sets $\cB_j$ arising from this decomposition, we deduce the following elementary (but surprisingly powerful) lemma.

\begin{lemm}[Dyadic pigeonholing]\label{lem: pigeonholing} Let $M > 0$, $R \geq 1$ and $\cB \subseteq [MR^{-1}, MR]$ be a finite set. 
\begin{enumerate}[i)]
\item There exists some $\cB' \subseteq \cB$ which is dyadically constant and satisfies
\begin{equation*}
    \#\cB' \gtrsim (\log R)^{-1} \# \cB. 
\end{equation*}
\item More generally, given $F \colon \cB \to (0,\infty)$, there exists some $\cB_F \subseteq \cB$ which is dyadically constant and satisfies
\begin{equation*}
    \sum_{a \in \cB} F(a) \lesssim  \log R \sum_{a \in \cB_F} F(a).
\end{equation*}
\end{enumerate}
\end{lemm}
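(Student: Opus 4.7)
The plan is to deduce both parts of the lemma from the decomposition \eqref{eq: pigeon decomp} combined with the classical pigeonhole principle. The key observation, already noted in the excerpt, is that the interval $[MR^{-1}, MR]$ contains only $J \lesssim \log R$ dyadic intervals $[2^j, 2^{j+1})$, so that every finite $\cB \subseteq [MR^{-1}, MR]$ partitions into at most $O(\log R)$ dyadically constant pieces $\cB_1, \dots, \cB_J$. The task is then reduced to choosing which piece to keep.

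For part (ii), which is the more general statement, I would proceed as follows. Given the partition $\cB = \bigcup_{j=1}^J \cB_j$ into dyadically constant sets, apply the pigeonhole principle to the sums: since
\begin{equation*}
    \sum_{a \in \cB} F(a) = \sum_{j=1}^J \sum_{a \in \cB_j} F(a),
\end{equation*}
there must exist some $j_0 \in \{1, \dots, J\}$ with
\begin{equation*}
    \sum_{a \in \cB_{j_0}} F(a) \geq \frac{1}{J} \sum_{a \in \cB} F(a).
\end{equation*}
Setting $\cB_F := \cB_{j_0}$ and using $J \lesssim \log R$ yields the desired inequality.

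For part (i), I would simply invoke part (ii) with the constant weight $F \equiv 1$, since in that case $\sum_{a \in \cB} F(a) = \#\cB$ and $\sum_{a \in \cB_F} F(a) = \#\cB_F$, so the conclusion of (ii) becomes exactly $\#\cB' \gtrsim (\log R)^{-1} \#\cB$ with $\cB' := \cB_F$. There is no real obstacle here: the decomposition \eqref{eq: pigeon decomp} does essentially all the work, and the remaining content is a one-line application of pigeonhole. The only minor point to verify is that the number of relevant dyadic scales is genuinely $O(\log R)$, which follows by taking logarithms of the endpoints $MR^{-1}$ and $MR$ and noting that the implicit constants absorb the dependence on $M$ (or, alternatively, that $M$ does not affect the count since only the ratio of endpoints matters).
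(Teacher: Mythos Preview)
Your proposal is correct and matches the paper's approach exactly: the paper does not even give a formal proof, but simply notes before the lemma that the decomposition \eqref{eq: pigeon decomp} into $J \lesssim \log R$ dyadically constant pieces, followed by the pigeonhole principle, yields the result. Your write-up fills in precisely those details.
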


A close relative of Lemma~\ref{lem: pigeonholing} is the following reverse form of H\"older's inequality. 

\begin{lemm}[Reverse H\"older]\label{lem: rev Holder} Let $M > 0$, $R \geq 1$ and $H \colon \cA \to [MR^{-1}, MR]$ be a function defined on a finite set $\cA$. Then we may write $\cA = \bigcup_{j=1}^J \cA_j$ where 
\begin{equation*}
    \Big(\sum_{a \in \cA_j} H(a)^p \Big)^{1/p} \lesssim \log R\, [\#\cA_j]^{-(1/p - 1/q)} \Big(\sum_{a \in \cA_j} H(a)^q \Big)^{1/q}
\end{equation*}
holds for all $1 \leq p \leq q < \infty$ and all $1 \leq j \leq J$ and $J \lesssim  \log R$.
\end{lemm}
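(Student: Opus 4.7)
The plan is to apply dyadic pigeonholing directly to the range of $H$, exploiting the fact that $[MR^{-1}, MR]$ spans only $O(\log R)$ dyadic scales, so that no subset selection is needed -- we obtain an honest partition of $\cA$.

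First, I partition $[MR^{-1}, MR]$ into dyadic intervals $I_j := [2^j, 2^{j+1})$ for those $j \in \Z$ with $I_j \cap [MR^{-1}, MR] \neq \emptyset$. Since $\log_2(MR) - \log_2(MR^{-1}) = 2\log_2 R$, the number of non-trivial indices satisfies $J \lesssim \log R$. Pulling back via $H$, I define $\cA_j := H^{-1}(I_j)$, yielding the partition $\cA = \bigcup_{j=1}^J \cA_j$ with the desired bound on $J$ and with the property that $H$ is dyadically constant on every $\cA_j$.

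On each piece $\cA_j$, set $h_j := 2^j$ and $N_j := \#\cA_j$, so that $h_j \leq H(a) \leq 2h_j$ for every $a \in \cA_j$. The crucial observation is that for any exponent $r \in [1,\infty)$ we have the two-sided estimate
\begin{equation*}
    h_j \cdot N_j^{1/r} \;\leq\; \Big(\sum_{a \in \cA_j} H(a)^r\Big)^{1/r} \;\leq\; 2 h_j \cdot N_j^{1/r}.
\end{equation*}
Applying this with $r = p$ and with $r = q$ and taking the ratio, the quantities $\big(\sum_{\cA_j} H^p\big)^{1/p}$ and $\big(\sum_{\cA_j} H^q\big)^{1/q}$ differ only by a factor comparable to $N_j^{1/p - 1/q}$. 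In particular, the reverse H\"older relation claimed in the lemma holds on each $\cA_j$ with an absolute constant; the $\log R$ factor stated in the conclusion is then a harmless overestimate (retained presumably for uniform book-keeping with Lemma~\ref{lem: pigeonholing}).

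There is no substantive obstacle here: everything is an immediate consequence of the trivial fact that dyadic constancy of $H$ forces its $\ell^p$-norms to saturate H\"older's inequality up to a multiplicative constant. The only point requiring attention is the counting argument showing $J \lesssim \log R$, which uses in an essential way the hypothesis that the range of $H$ is trapped in the interval $[MR^{-1}, MR]$ spanning only $O(\log R)$ dyadic scales.
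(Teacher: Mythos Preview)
Your proof is correct and follows exactly the paper's approach: partition the range into $O(\log R)$ dyadic scales, pull back via $H$ to obtain the $\cA_j$, and use dyadic constancy on each piece. Your added observation that the $\log R$ factor in the inequality itself is superfluous (only an absolute constant is needed on each $\cA_j$) is accurate and worth noting.
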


\begin{proof} Let $\cB := \{H(a) : a \in \cA\}$ and decompose this set into $O(\log R)$ disjoint dyadically constant pieces $\cB_j$, $1 \leq j \leq J$, as in \eqref{eq: pigeon decomp}. The result then follows by taking $\cA_j := H^{-1}\big(\cB_j\big)$ for $1 \leq j \leq J$. 
\end{proof}

In view of Lemma~\ref{lem: rev Holder}, the pigeonhole principle is useful for `real interpolation' arguments; that is, when one wishes to reconcile distinct estimates involving different $L^p$ norms. We shall see multiple instances of this later in \S\ref{sec: main proof}.




\section{Tools from mutlilinear harmonic analysis}\label{sec: mutlilinear harmonic analysis}




\subsection{Linear Strichartz estimates}\label{sec: strichartz}

Recall that our goal is to prove Theorem~\ref{thm: fractal L2}, which bounds the solution $Uf$ over the union $Z_{\cQ}$ of a family of space-time cubes. Here we consider a simpler class of estimates, which bound the solution over the whole space-time domain.

\begin{theo}[Strichartz estimate]\label{thm: Strichartz} Let $2 \cdot \frac{n+2}{n} \leq q \leq \infty$. The inequality
\begin{equation}\label{eq: linear Strichartz}
   \|Uf\|_{L^q(\R^{n+1})} \lesssim  \|f\|_{L^2(\R^n)}
\end{equation}
holds for all $f \in L^2(\R^n)$.
\end{theo}

This is a reinterpretation of the Stein--Tomas Fourier restriction theorem,
dating back to \textcite{Ste1984, T1975, Str1977}.

\begin{rema} Theorem~\ref{thm: Strichartz} is in fact a special case of the more general \textit{Strichartz estimates for Schr\"odinger equation}, which involve mixed norms in the spatio-temporal variables. 
\end{rema}

Theorem~\ref{thm: Strichartz} is clearly related to the theory of fractal energy estimates. Indeed, if $\cQ$ is a family of lattice unit cubes in $B^{n+1}(0,R)$ and $Z_{\cQ}$ denotes their union, then 
\begin{equation*}
    \|Uf\|_{L^2(Z_{\cQ})} \leq |Z_{\cQ}|^{1/(n+2)} \|Uf\|_{L^{2 \cdot \frac{n+2}{n}}(\R^{n+1})}
\end{equation*}
Applying the simple bound $|Z_{\cQ}| = \#\cQ \leq \Delta_{\alpha}(\cQ) R^{\alpha}$ and Theorem~\ref{thm: Strichartz}, we deduce that
\begin{equation*}
       \|Uf\|_{L^2(Z_{\cQ})} \lesssim \Delta_{\alpha}(\cQ)^{1/(n+2)} R^{\alpha/(n+2)} \|f\|_{L^2(\R^n)}.
\end{equation*}
Unfortunately, this is not strong enough to imply Theorem~\ref{thm: fractal L2}, since the $R$-exponent $\alpha/(n+2)$ is always larger than the required value $\alpha/(2(n+1))$.

\begin{rema}\label{rmk: sharp Stricharz} The range $q \geq 2 \cdot \frac{n+2}{n}$ in Theorem~\ref{thm: Strichartz} is sharp, so there is no hope of directly improving the Strichartz estimate to give the desired fractal energy estimate via the above argument. To see this, we fix a spatial scale $R \geq 1$ and a wave packet $\psi_T$ as in \S\ref{sec: wave packet examples} with $\rho = R$. Complementing the rapid decay property \eqref{eq: wave packet local}, it is not difficult to show
\begin{equation*}
    |U\psi_T(x,t)| \gtrsim R^{-n/4} \qquad \textrm{for all $(x,t) \in \tfrac{1}{100} \cdot T$.}
\end{equation*}
In particular, for any $1 \leq q \leq \infty$ we have
\begin{equation*}
\|U\psi_T\|_{L^q(B^{n+1}(0,R))} \gtrsim R^{-n/4 + (n+2)/(2q)} \qquad \textrm{and} \qquad \|\psi_T\|_{L^2(\R^n)} \lesssim 1.
\end{equation*}
Since these inequalities hold for all $R \geq 1$, it follows that \eqref{eq: linear Strichartz} fails whenever $q < 2 \cdot \frac{n+2}{n}$.
\end{rema}
 
Theorem~\ref{thm: Strichartz} is not used in the proof of Theorem~\ref{thm: fractal L2}; the linear Strichartz estimate is included here to motivate the multilinear Strichartz estimates introduced in the following section. For this reason, the full proof of Theorem~\ref{thm: Strichartz} is omitted.\footnote{See \S\ref{sec: Broad-narrow}, however, where we do prove slightly weakened version of the $n=1$ case.}




\subsection{Multilinear Strichartz estimates}

Suppose we have initial data $\psi_{T_1}$ and $\psi_{T_2}$ as in Example~\ref{ex: wave packets} which oscillate at well-separated frequencies $\xi_{T_1}$ and $\xi_{T_2}$, respectively. Since the waves $U\psi_{T_1}$ and $U\psi_{T_2}$ have distinct frequencies, we expect destructive interference between them; indeed, a rigorous manifestation of this is the orthogonality property \eqref{eq: wp orthogonality}. Furthermore, by the dispersion relation, the velocities $v(T_1)$ and $v(T_2)$ are also well-separated. Thus, $U\psi_{T_1}$ and $U\psi_{T_2}$ only interact for a short time interval. By these considerations, we expect that the product $U\psi_{T_1}U\psi_{T_2}$ is small, since it measures the interaction between the waves. 

These heuristics naturally lead to the study of multilinear Stichartz estimates for the Schr\"odinger equation. This multilinear theory is a central ingredient in the proof of Theorem~\ref{thm: fractal L2}. To introduce the main results, we first discuss the basic example of the bilinear estimate for $n=1$. 

As in \S\ref{sec: pseudo-local}, let $\cT_{K^{-1}}$ be a finitely-overlapping covering of $B^n(0,1)$ by $K^{-1}$-caps with centres lying in $B^n(0,2)$. Given $\tau \in \cT_{K^{-1}}$, we let $G(\tau) \subseteq S^n$ denote the image of $\tau$ under the Gauss map $G$ introduced in \eqref{eq: def Gauss map}. If $f_{\tau}$ is Fourier supported on $\tau$, then $G(\tau)$ is essentially the set of directions of the wavepackets in the wavepacket decomposition of $f_{\tau}$.

We consider well-separated pairs of caps $(\tau_1, \tau_2)$, corresponding to wavepackets with distinct frequencies. By the dispersion relation, we may equivalently consider pairs of caps $(\tau_1, \tau_2)$ corresponding to transverse tubes $(T_1,T_2)$. In particular, given $\xi_1$, $\xi_2 \in \widehat{\R}^n$, let $|G(\xi_1) \wedge G(\xi_2)|$ denote the absolute value of the determinant of the $2 \times 2$ matrix with $j$th column $G(\xi_j)$. We then define
\begin{equation}\label{eq: trans pairs def}
    \cT_{K^{-1}}^{\mathrm{trans}} := \big\{ (\tau_1, \tau_2) \in (\cT_{K^{-1}})^2 : |G(\tau_1) \wedge G(\tau_2)| \geq K^{-1} \big\},
\end{equation}
where 
\begin{equation*}
    |G(\tau_1) \wedge G(\tau_2)| := \inf\big\{|G(\xi_1) \wedge G(\xi_2)| : \xi_j \in \tau_j \textrm{ for $j=1,2$} \big\}.
\end{equation*}

Let $(\tau_1, \tau_2) \in \cT_{K^{-1}}^{\mathrm{trans}}$ and suppose $f_1$, $f_2 \in L^2(\R)$ satisfy $\supp \hat{f}_j \subseteq \tau_j$ for $j = 1, 2$. As in the above discussion, the functions $Uf_j$ oscillate at distinct frequencies and their constituent wave packets travel at distinct velocities. In view of this, we again expect the two waves to interact weakly. 

A rigorous manifestation of the above principle is the bilinear identity
\begin{equation}\label{eq: bilin id}
    \int_{\R^{1+1}} |Uf_1(z)Uf_2(z)|^2 \,\ud z = 2 \pi^2 \int_{\R^2} \frac{|\hat{f}_1(\xi_1)|^2|\hat{f}_2(\xi_2)|^2}{|\xi_1 - \xi_2|}\,\ud \xi_1\ud \xi_2,
\end{equation}
which holds whenever the $f_j$ satisfy the above hypothesis. This identity dates back to foundational work of \textcite{F1970}. The proof is simple: we write 
\begin{equation*}
 Uf_1(z)Uf_2(z) =   \int_{\R^2} e^{i x(\xi_1 + \xi_2) + it(\xi_1^2 + \xi_2^2)} \hat{f}_1(\xi_1) \hat{f}_2(\xi_2)\,\ud \xi_1 \ud \xi_2
\end{equation*}
and perform the change of variables $\eta_1 = \xi_1 + \xi_2$, $\eta_2 = \xi_1^2 + \xi_2^2$ to deduce 
\begin{equation*}
    Uf_1(z)Uf_2(z) = \check{F}(z) \qquad \textrm{where} \qquad F(\eta) := (2 \pi)^2 \cdot \frac{ \hat{f}_1\circ \xi_1(\eta) \hat{f}_2\circ \xi_2(\eta)}{2|\xi_1(\eta) - \xi_2(\eta)|}. 
\end{equation*}
The desired identity \eqref{eq: bilin id} now follows by an application of Plancherel's theorem and changing back the variables.

As an immediate consequence of \eqref{eq: bilin id} and interpolation with trivial $L^{\infty}$ bounds, we deduce the following \textit{bilinear} Strichartz inequality. 

\begin{prop}[1d Bilinear Strichartz]\label{prop: bilinear Strichartz} Let $4 \leq p \leq \infty$ and $(\tau_1, \tau_2) \in \cT_{K^{-1}}^{\mathrm{trans}}$. The inequality
\begin{equation}\label{eq: n=1 bilinear}
   \big\| \prod_{j=1}^2 |Uf_j|^{1/2}\big\|_{L^p(\R^2)} \lesssim K^{1/p} \prod_{j=1}^2 \|f_j\|_{L^2(\R)}^{1/2}
\end{equation}
holds whenever $f_j \in L^2(\R)$ satisfies $\supp \hat{f}_j \subseteq \tau_j$ for $j = 1$, $2$.
\end{prop}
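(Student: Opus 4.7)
The plan is to derive the $p=4$ endpoint directly from the bilinear identity \eqref{eq: bilin id} together with the transversality hypothesis, combine this with a crude $L^\infty$ bound, and then interpolate via the log-convexity of $L^p$ norms. All of the analytic content is already contained in \eqref{eq: bilin id}; what remains is essentially bookkeeping, so I do not expect any real obstacle.

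For the $p=4$ endpoint, I would first unpack the transversality condition $(\tau_1,\tau_2)\in\cT_{K^{-1}}^{\mathrm{trans}}$. A short computation with the explicit formula \eqref{eq: def Gauss map} for the Gauss map in $n=1$ gives
\begin{equation*}
|G(\xi_1) \wedge G(\xi_2)| = \frac{2\,|\xi_1 - \xi_2|}{\sqrt{(1+4\xi_1^2)(1+4\xi_2^2)}},
\end{equation*}
and since $\tau_1,\tau_2 \subseteq B^n(0,1)$, the denominator is $\sim 1$ on $\tau_1\times\tau_2$. Consequently the transversality hypothesis implies $|\xi_1 - \xi_2| \gtrsim K^{-1}$ uniformly for $(\xi_1,\xi_2)\in\tau_1\times\tau_2$. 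Substituting this lower bound into \eqref{eq: bilin id} and applying Plancherel yields
\begin{equation*}
\|Uf_1\cdot Uf_2\|_{L^2(\R^2)}^2 \lesssim K\int_{\R^2} |\hat f_1(\xi_1)|^2\,|\hat f_2(\xi_2)|^2\,\ud\xi_1\,\ud\xi_2 \lesssim K\,\|f_1\|_{L^2(\R)}^2\,\|f_2\|_{L^2(\R)}^2,
\end{equation*}
which, on taking square roots, is the $p=4$ case of \eqref{eq: n=1 bilinear}.

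For the $p=\infty$ endpoint, Cauchy--Schwarz in the integral defining $Uf_j$ together with the Fourier support hypothesis and the trivial bound $|\tau_j|\lesssim 1$ gives $\|Uf_j\|_{L^\infty(\R^2)} \lesssim \|\hat f_j\|_{L^1} \lesssim |\tau_j|^{1/2}\|f_j\|_{L^2(\R)} \lesssim \|f_j\|_{L^2(\R)}$, and hence
\begin{equation*}
\|Uf_1\cdot Uf_2\|_{L^\infty(\R^2)} \lesssim \|f_1\|_{L^2(\R)}\,\|f_2\|_{L^2(\R)}.
\end{equation*}
Log-convexity of $L^q$ norms between the $q=2$ and $q=\infty$ estimates (writing $q = p/2$ and $\theta = 4/p \in [0,1]$ for $4 \leq p \leq \infty$) gives
\begin{equation*}
\|Uf_1\cdot Uf_2\|_{L^{p/2}(\R^2)} \leq \|Uf_1\cdot Uf_2\|_{L^2}^{4/p}\,\|Uf_1\cdot Uf_2\|_{L^\infty}^{1-4/p} \lesssim K^{2/p}\,\|f_1\|_{L^2(\R)}\,\|f_2\|_{L^2(\R)}.
\end{equation*}
Taking square roots and using $\bigl\|\prod_{j=1}^2|Uf_j|^{1/2}\bigr\|_{L^p}^2 = \|Uf_1\cdot Uf_2\|_{L^{p/2}}$ then gives exactly \eqref{eq: n=1 bilinear}. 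The only genuinely non-trivial ingredient is the bilinear identity \eqref{eq: bilin id}, which is already established; the transversality-to-frequency-separation step is a one-line computation, and the interpolation is standard, so there is no serious obstacle to execute this plan.
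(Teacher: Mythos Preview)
Your proposal is correct and follows exactly the route the paper indicates: the paper simply states that the proposition is ``an immediate consequence of \eqref{eq: bilin id} and interpolation with trivial $L^{\infty}$ bounds,'' and you have supplied precisely those details. The Gauss-map computation translating transversality into the frequency separation $|\xi_1-\xi_2|\gtrsim K^{-1}$, the substitution into \eqref{eq: bilin id} for the $p=4$ endpoint, and the log-convexity interpolation are all correct.
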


In applications, we typically take $K = O(1)$ to be bounded (or to depend sub-polynomially on a scale parameter $R$) so that the additional $K^{1/p}$ factor is admissible. A key advantage of the bilinear setup of Proposition~\ref{prop: bilinear Strichartz}, as opposed to the $n=1$ case of Theorem~\ref{thm: Strichartz}, is that the estimate \eqref{eq: n=1 bilinear} is valid all the way down to the exponent $p = 4$. By contrast, the linear inequality is only true down to $p = 6$. The wider range of estimates in the bilinear setup reflects the principle that transverse wave packets interact weakly. In particular, the `critical behaviour' in Theorem~\ref{thm: Strichartz} arises from non-transverse interactions. 

Establishing satisfactory higher dimensional analogues of Proposition~\ref{prop: bilinear Strichartz} is a deep problem. For our purposes, we are interested in the following sharp $(n+1)$-linear Strichartz estimate in $\R^n$, which is a celebrated theorem of \textcite{BCT2006}. Generalising the definition of the set of transverse pairs form \eqref{eq: trans pairs def}, we consider the set of transverse $(n+1)$-tuples
\begin{equation*}
    \cT_{K^{-1}}^{\mathrm{trans}} := \big\{ (\tau_1, \dots,  \tau_{n+1}) \in (\cT_{K^{-1}})^{n+1} : |G(\tau_1) \wedge \cdots \wedge G(\tau_{n+1})| \geq K^{-n} \big\}.
\end{equation*}
Here $|G(\tau_1) \wedge \cdots \wedge G(\tau_{n+1})|$ denotes the infimum of $|G(\xi_1) \wedge \cdots \wedge G(\xi_{n+1})|$ over all $\xi_j \in \tau_j$, $1 \leq j \leq n+1$. The main result then reads as follows.

\begin{theo}[\cite{BCT2006}]\label{thm: BCT} Let $p_n := 2 \cdot \frac{n+1}{n}$ and $p_n \leq p \leq \infty$ and suppose $(\tau_1, \dots, \tau_{n+1}) \in \cT_{K^{-1}}^{\mathrm{trans}}$. For all $\varepsilon > 0$ and all $R \geq 1$, the inequality
\begin{equation}\label{eq: BCT}
   \big\| \prod_{j=1}^{n+1} |Uf_j|^{1/(n+1)}\big\|_{L^p(B_R^{n+1})} \lesssim_{\varepsilon} K^E R^{\varepsilon} \prod_{j=1}^{n+1} \|f_j\|_{L^2(\R^n)}^{1/(n+1)}
\end{equation}
holds whenever $f_j \in L^2(\R^n)$ satisfies $\supp \hat{f}_j \subseteq \tau_j$ for $1 \leq j \leq n+1$. Here $E$ is a dimensional constant.
\end{theo}

The technique used to prove the $n=1$ case in Proposition~\ref{prop: bilinear Strichartz} does not generalise to higher dimensions. The proof of Theorem~\ref{thm: BCT} lies beyond the scope of this exposition. A short argument, using ideas of a similar flavour to those explored in this article, is given in \textcite{G2015}.\footnote{More precisely, \textcite{G2015} establishes the (non-endpoint) \textit{multilinear Kakeya inequality}, which is equivalent to Theorem~\ref{thm: BCT} by an argument described in \textcite{BCT2006}. Alternatively, one can apply the argument of \textcite{G2015} to directly prove the multilinear Strichartz inequality by incorporating wave packet decomposition techniques.} We also refer the reader to \textcite{B2014} for an accessible introduction to this topic. 

As a direct consequence of Theorem~\ref{thm: BCT}, we have the following multilinear variant of the fractal energy estimate from Theorem~\ref{thm: DZ max}. 

\begin{coro}[Multilinear fractal energy estimate]\label{cor: multi L2 fractal}
For all $\varepsilon >  0$ and all $R \geq 1$, $1 \leq \alpha \leq n+1$ the inequality
\begin{equation}\label{eq: multi reduced max}
\big\| \prod_{j=1}^{n+1} |Uf_j|^{1/(n+1)}\big\|_{L^2(Z_{\cQ})} \lesssim_{\varepsilon} K^E\Delta_{\alpha}(\cQ)^{1/(2(n+1))}R^{\alpha/(2(n+1)) + \varepsilon} \prod_{j=1}^{n+1} \|f_j\|_{L^2(\R^n)}^{1/(n+1)}
\end{equation}
holds whenever $f_j \in L^2(\R^n)$ satisfy the hypotheses of Theorem~\ref{thm: BCT} and $\cQ$ is a family of unit lattice cubes in $B^{n+1}(0,R)$. 
\end{coro}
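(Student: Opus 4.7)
The plan is to deduce Corollary~\ref{cor: multi L2 fractal} directly from Theorem~\ref{thm: BCT} by combining an application of H\"older's inequality (to pass from the $L^2(Z_\cQ)$ norm to an $L^{p_n}$ norm) with the elementary volume bound $|Z_\cQ| \leq \Delta_\alpha(\cQ) R^\alpha$. This mirrors the strategy already illustrated in \S\ref{sec: strichartz}, where the linear Strichartz estimate was shown to give a (suboptimal) fractal energy estimate. The only qualitative difference is that here we invoke the multilinear endpoint, which will produce the right $R$-exponent rather than an inadmissible one.

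More precisely, I would begin by writing $G := \prod_{j=1}^{n+1}|Uf_j|^{1/(n+1)}$ and noting that $Z_\cQ \subseteq B^{n+1}(0,R)$ since $\cQ$ consists of unit lattice cubes contained in $B^{n+1}(0,R)$. Applying H\"older's inequality on the finite-measure set $Z_\cQ$ with exponents $2$ and $p_n = 2(n+1)/n$ gives
\begin{equation*}
    \|G\|_{L^2(Z_\cQ)} \leq |Z_\cQ|^{1/2 - 1/p_n}\,\|G\|_{L^{p_n}(Z_\cQ)} = |Z_\cQ|^{1/(2(n+1))}\,\|G\|_{L^{p_n}(Z_\cQ)},
\end{equation*}
since $1/2 - n/(2(n+1)) = 1/(2(n+1))$. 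Next, I would invoke the fractal bound stated just after the definition of $\Delta_\alpha(\cQ)$, namely $\#\cQ \lesssim \Delta_\alpha(\cQ) R^\alpha$, to conclude that
\begin{equation*}
    |Z_\cQ| \leq \#\cQ \lesssim \Delta_\alpha(\cQ) R^\alpha,
\end{equation*}
and hence $|Z_\cQ|^{1/(2(n+1))} \lesssim \Delta_\alpha(\cQ)^{1/(2(n+1))} R^{\alpha/(2(n+1))}$.

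Finally, since $Z_\cQ \subseteq B^{n+1}(0,R)$, extending the domain of integration and applying Theorem~\ref{thm: BCT} with $p = p_n$ and with the given transverse tuple $(\tau_1,\dots,\tau_{n+1})$ yields
\begin{equation*}
    \|G\|_{L^{p_n}(Z_\cQ)} \leq \|G\|_{L^{p_n}(B^{n+1}_R)} \lesssim_\varepsilon K^E R^\varepsilon \prod_{j=1}^{n+1} \|f_j\|_{L^2(\R^n)}^{1/(n+1)}.
\end{equation*}
Multiplying the two displayed estimates gives exactly the bound claimed in \eqref{eq: multi reduced max}.

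There is no real obstacle: the argument is a one-line H\"older combined with a volume count and a direct invocation of the multilinear Strichartz estimate. The only point that deserves a moment of verification is the bookkeeping of exponents, specifically that the choice $p_n = 2(n+1)/n$ makes the H\"older loss $|Z_\cQ|^{1/(2(n+1))}$ align perfectly with the factors $\Delta_\alpha(\cQ)^{1/(2(n+1))} R^{\alpha/(2(n+1))}$ appearing in the statement; this is precisely the reason that the multilinear endpoint exponent (as opposed to the linear Stein--Tomas exponent $2(n+2)/n$, which would instead produce the wrong power $\alpha/(n+2)$ observed in \S\ref{sec: strichartz}) suffices to match the target scaling.
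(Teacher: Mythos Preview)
Your proof is correct and is essentially identical to the paper's own argument: apply H\"older's inequality on $Z_{\cQ}$ to pass from $L^2$ to $L^{p_n}$, use $|Z_{\cQ}| = \#\cQ \leq \Delta_{\alpha}(\cQ)R^{\alpha}$, and invoke Theorem~\ref{thm: BCT} at the endpoint $p_n$. The exponent bookkeeping you carry out matches the paper's exactly.
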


\begin{proof} This is a repeat of the simple argument discussed in the linear setting in \S\ref{sec: strichartz}. We apply H\"older's inequality and Theorem~\ref{thm: BCT} to deduce that
\begin{align*}
\big\| \prod_{j=1}^{n+1} |Uf_j|^{1/(n+1)}\big\|_{L^2(Z_{\cQ})}  &\leq |Z_{\cQ}|^{1/(2(n+1))} \big\| \prod_{j=1}^{n+1} |Uf_j|^{1/(n+1)}\big\|_{L^{p_n}(B^{n+1}_R)} \\ 
&\lesssim_{\varepsilon} K^E \Delta_{\alpha}(\cQ)^{1/(2(n+1))} R^{\alpha/(2(n+1)) + \varepsilon} \prod_{j=1}^{n+1} \|f_j\|_{L^2(\R^n)}^{1/(n+1)}.
\end{align*}
Here, in the second step, we use the bound $|Z_{\cQ}| = \#\cQ \leq \Delta_{\alpha}(\cQ) R^{\alpha}$, which is a direct consequence of the definitions.
\end{proof}

 If we assume, say, $K = O_{\varepsilon}(1)$, then \eqref{eq: multi reduced max} is a multilinear analogue of the desired fractal energy estimate \eqref{eq: fractal L2} from Theorem~\ref{thm: fractal L2} (and, in fact, \eqref{eq: multi reduced max} has a better dependence on $\Delta_{\alpha}(\cQ)$). Note that transversality plays a crucial r\^ole in the multilinear Strichartz estimate underpinning these observations. Indeed, if we were free to drop the transversality hypothesis in Theorem~\ref{thm: BCT} and take $f = f_1 = \cdots = f_{n+1}$, then we would arrive at the linear estimate 
\begin{equation}\label{eq: false L2}
  \|Uf\|_{L^{p_n}(B(0,R))} \lesssim_{\varepsilon} R^{\varepsilon} \|f\|_{L^2(\R^n)};
\end{equation}
however, taking $f$ to be a wave packet as defined in Example~\ref{ex: wave packets}, it is easy to see that \eqref{eq: false L2} fails (see Remark~\ref{rmk: sharp Stricharz}). Thus, the key difficulty in proving Theorem~\ref{thm: fractal L2} is to control the non-transversal interactions.




\subsection{Multilinear Bernstein inequality}

In this subsection we address some slightly technical multilinear extensions of the results discussed in \S\ref{sec: uncertainty} which will be of use in later arguments.  

\begin{lemm}[Multilinear Bernstein inequality]\label{lem: multi Bernstein} Let $1 \leq p \leq q \leq \infty$ and suppose $F_j \in \cS(\R^d)$ satisfy $\supp \hat{F}_j \subseteq Q_0 := [-1/2,1/2]^d$ for $1 \leq j \leq d$. Then 
\begin{equation*}\label{eq: multi Bernstein}
    \big\|\prod_{j=1}^d |F_j|^{1/d}\big\|_{L^q(\R^d)} \lesssim \big\|\prod_{j=1}^d|F_j|^{1/d} \big\|_{L^p(\R^d)}.
\end{equation*}
\end{lemm}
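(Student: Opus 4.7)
My plan is to reduce the multilinear inequality to the single-function Bernstein inequality, Lemma~\ref{lem: Bernstein}, applied to the product $H := \prod_{j=1}^{d} F_j$. The key observation is that, since each $\hat F_j$ is supported in $Q_0$, the Fourier transform $\hat H = \hat F_1 \ast \cdots \ast \hat F_d$ is supported in the Minkowski sum $d \cdot Q_0 = [-d/2, d/2]^d$, a parallelepiped of volume $d^d$. A direct rewriting of $L^r$-norms using $|H| = \prod_j |F_j|$ yields the identity
\[
\Big\|\prod_{j=1}^d |F_j|^{1/d}\Big\|_{L^r(\R^d)} = \|H\|_{L^{r/d}(\R^d)}^{1/d} \qquad \textrm{for any } r > 0,
\]
so it suffices to prove $\|H\|_{L^{q/d}} \lesssim \|H\|_{L^{p/d}}$ and raise to the power $1/d$. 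The resulting factor $(d^d)^{d/p - d/q}$ depends only on the dimension and is absorbed into the implicit constant in $\lesssim$.

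When $p \geq d$, both exponents $\tilde p := p/d$ and $\tilde q := q/d$ satisfy $1 \leq \tilde p \leq \tilde q \leq \infty$, so Lemma~\ref{lem: Bernstein} applies directly to $H$ with the parallelepiped $\pi := d \cdot Q_0$, delivering the claim in this regime essentially for free.

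The main obstacle is the range $1 \leq p < d$, where $\tilde p = p/d < 1$ lies outside the applicability of Lemma~\ref{lem: Bernstein}---its proof invokes Young's convolution inequality, which fails for exponents below~$1$. To handle this case I would appeal to the Plancherel--Polya extension of Bernstein's inequality: for every bandlimited $H$ and every $0 < \tilde p \leq \tilde q \leq \infty$, the estimate $\|H\|_{L^{\tilde q}} \lesssim_{\tilde p} |\pi|^{1/\tilde p - 1/\tilde q}\|H\|_{L^{\tilde p}}$ continues to hold. This extension follows the same skeleton as Lemma~\ref{lem: Bernstein}: the pointwise decomposition $|H| \leq \sum_P a_P \chi_P$ from Corollary~\ref{cor: loc const general} remains in force, but the final Young-inequality step is replaced by a sub-mean value property $a_P^{\tilde p} \lesssim_{\tilde p} \int_{CP}|H|^{\tilde p}$ valid for bandlimited functions on parallelepipeds $P$ at the natural scale, derived by combining the locally constant property with a simple self-improvement argument. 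Summing over $P$ then yields $\sum_P a_P^{\tilde p}|P| \lesssim \|H\|_{L^{\tilde p}}^{\tilde p}$, and the rest of the proof closes exactly as in the case $\tilde p \geq 1$.
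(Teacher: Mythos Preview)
Your proposal is correct and follows essentially the same route as the paper: rewrite the inequality for the product $H = \prod_j F_j$, apply the linear Bernstein inequality directly when $p/d \geq 1$, and for $p/d < 1$ extend Bernstein to sub-unit exponents via a self-improvement argument on bandlimited functions. The paper packages that last step as a locally constant property for $|F|^s$ with $0 < s \leq 1$ (its Lemma~\ref{lem: s loc const}), proved by exactly the bootstrap you sketch---writing $\|G\|_{L^1} \leq \|G\|_{L^\infty}^{1-s}\|G\|_{L^s}^s$ and then using $\|G\|_{L^\infty} \lesssim \|G\|_{L^1}$ to rearrange---so your ``Plancherel--Polya extension'' and the paper's lemma are the same result under different names.
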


\begin{rema} By applying an affine scaling, Lemma~\ref{lem: multi Bernstein} immediately implies a generalisation of itself for functions Fourier supported in some fixed parallelepiped $\pi$. Since, for our purposes, we only require the result at unit scale, we omit the details.   
\end{rema}

There is, in fact, nothing particularly multilinear \textit{per se} about Lemma~\ref{lem: multi Bernstein}: it is a direct consequence of linear Bernstein inequalities. Indeed, under the hypothesis of Lemma~\ref{lem: multi Bernstein}, if we define $F := F_1 \cdots  F_d$, then we may equivalently express \eqref{eq: multi Bernstein} as
\begin{equation*}
    \|F\|_{L^{q/d}(\R^d)} \lesssim \|F\|_{L^{p/d}(\R^d)}.
\end{equation*}
Suppose for $p/d \geq 1$. Since $F$ has Fourier support in $Q_0 + \cdots + Q_0$, the $d$-fold Minkowski sum, in this case Lemma~\ref{lem: multi Bernstein} is a direct consequence of the linear Bernstein inequality applied to $F$. To deal with the remaining case $p/d < 1$, we establish a variant of Lemma~\ref{lem: loc const}.

\begin{lemm}\label{lem: s loc const} Let $0 < s \leq 1$ and $M \geq 1$. There exists a function $\eta_M \colon \R^d \to [0,\infty)$ satisfying the following:
\begin{enumerate}[i)]
    \item If $F \in \cS(\R^d)$ satisfies $\supp \hat{F} \subseteq Q_0 := [-1/2, 1/2]^d$, then
\begin{equation*}
    |F(z)|^s \leq \sum_{Q \in \cQ_{M, \mathrm{all}}} |a_Q|^s\chi_Q(z) \lesssim M^d |F|^s \ast \eta_M(z) \qquad \textrm{for all $z \in \R^d$}
\end{equation*}
where $\cQ_{M, \mathrm{all}}$ is the collection of all lattice $M$-cubes and 
\begin{equation*}
    a_Q := \sup_{z \in Q} |F(z)| \qquad \textrm{for all $Q \in \cQ_{M, \mathrm{all}}$.}
\end{equation*}

\item The function $\eta_M$ is $L^1$-normalised and rapidly decaying away from $[-M/2, M/2]^d$ in the sense that
\begin{equation*}
    \eta_M(z) \lesssim_{N,s} M^{-d} (1 + 2M^{-1} |z|_{\infty})^{-N} \qquad \textrm{for all $N \in \N$.}
\end{equation*}
\end{enumerate}
\end{lemm}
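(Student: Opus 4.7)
The plan is to reduce to a unit-scale estimate and then scale it up. Specifically, I would first prove a \emph{pointwise Plancherel--P\'olya inequality}: for all $F \in \cS(\R^d)$ with $\supp \hat F \subseteq Q_0$ and any $N \in \N$,
\begin{equation*}
    |F(z)|^s \lesssim_{s,N} \int_{\R^d} |F(y)|^s (1+|z-y|_\infty)^{-N} \, \ud y \qquad \textrm{for all $z \in \R^d$.}
\end{equation*}
The scaled statement then follows by evaluating the unit-scale bound at a point $z^* \in Q$ where $|F|$ attains its supremum on the $M$-cube $Q \in \cQ_{M,\mathrm{all}}$ containing $z$, and using $|z-z^*|_\infty \leq M/2$ to absorb the displacement into a rapidly decaying weight at scale $M$.

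The core technical step is a Nikol'ski\u{i}-type inequality: for any $G \in \cS(\R^d)$ with compactly supported $\hat G$ and any $0 < s \leq 1$,
\begin{equation*}
    \|G\|_{L^\infty(\R^d)}^s \lesssim_s \|G\|_{L^s(\R^d)}^s.
\end{equation*}
To prove this, I would normalise so that $\|G\|_\infty = 1$ and is attained at some $z_0$. Using the reproducing formula $G = G * \phi$ for a Schwartz $\phi$ with $\hat\phi \equiv 1$ on $\supp \hat G$, the triangle inequality gives $1 = |G(z_0)| \leq \int |G(y)||\phi(z_0-y)| \, \ud y$. Choose $R$ so large that $\int_{|w|_\infty > R}|\phi(w)| \, \ud w < 1/2$, so the exterior contribution (bounded by $\|G\|_\infty$ times this integral) is at most $1/2$; the interior portion is at most $\|\phi\|_\infty \int_{|z_0-y|_\infty \leq R} |G(y)| \, \ud y$ and must therefore exceed $1/2$. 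The key trick is then to split $|G(y)| = |G(y)|^s |G(y)|^{1-s}$ and bound the second factor by $\|G\|_\infty^{1-s} = 1$, yielding $\|G\|_\infty^s = 1 \lesssim \int |G|^s \, \ud y = \|G\|_{L^s}^s$. This splitting -- rather than an appeal to H\"older, whose dual exponent $1/s$ exceeds $1$ -- is the main obstacle and the essential reason the $s<1$ case does not follow from the $s=1$ version in Lemma~\ref{lem: loc const}.

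To upgrade the Nikol'ski\u{i} bound to a pointwise inequality, I would fix $z \in \R^d$ and apply Nikol'ski\u{i} to $G(y) := F(y) \beta(y-z)$, where $\beta \in \cS(\R^d)$ is a fixed Schwartz bump with $\beta(0) = 1$ and $\hat\beta$ compactly supported. Then $\hat G$ has compact support (the Minkowski sum of $Q_0$ with $\supp \hat\beta$), $|F(z)| = |G(z)| \leq \|G\|_\infty$, and
\begin{equation*}
    |F(z)|^s \lesssim_s \int |G(y)|^s \, \ud y = \int |F(y)|^s |\beta(y-z)|^s \, \ud y,
\end{equation*}
which yields the unit-scale estimate with weight $|\beta|^s$ (rapidly decaying and, after renormalisation, $L^1$-normalised).

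For the $M$-scale statement, fix $z \in \R^d$ and let $Q \in \cQ_{M,\mathrm{all}}$ be the unique $M$-cube containing it, so that $\sum_{Q' \in \cQ_{M,\mathrm{all}}} a_{Q'}^s \chi_{Q'}(z) = a_{Q}^s$. Pick $z^* \in Q$ with $|F(z^*)| = a_Q$ and apply the unit-scale estimate at $z^*$ to obtain $a_Q^s \lesssim_N \int |F(y)|^s (1+|z^*-y|_\infty)^{-N} \, \ud y$. Since $|z - z^*|_\infty \leq M/2$, a routine tail estimate (splitting into the regimes $|z-y|_\infty \leq M$ and $|z-y|_\infty \gg M$) gives $(1+|z^*-y|_\infty)^{-N} \lesssim (1 + 2M^{-1}|z-y|_\infty)^{-(N-d)}$ uniformly in $y$, for any $N > d$. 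Taking $N$ arbitrarily large and defining $\eta_M(x) := c_{N,d}\, M^{-d}(1 + 2M^{-1}|x|_\infty)^{-(N-d)}$ with $c_{N,d}$ chosen so that $\|\eta_M\|_{L^1} = 1$, we conclude $a_Q^s \lesssim M^d (|F|^s * \eta_M)(z)$; the first inequality in the lemma's conclusion is immediate from $|F(z)| \leq a_Q$.
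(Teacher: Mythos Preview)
Your proof is correct and follows essentially the same route as the paper: localise $F$ by a Schwartz bump to obtain a band-limited function $G$, use the splitting $|G| = |G|^s|G|^{1-s} \le \|G\|_\infty^{1-s}|G|^s$ to derive a Nikol'ski\u{\i}-type bound $\|G\|_\infty \lesssim \|G\|_{L^s}$, and then absorb the shift from the maximiser to a generic point in the $M$-cube into the weight. The paper's version is marginally more streamlined---it obtains the Nikol'ski\u{\i} inequality in one line by combining the splitting with the Bernstein bound $\|G\|_\infty \lesssim \|G\|_1$ (rather than your interior/exterior split), and it handles the shift by defining $\eta_M(z) := M^{-d}\sup_{|w-z|_\infty \le M}|\eta_0(w)|^s$ instead of an explicit tail estimate---but these are purely cosmetic differences.
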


Once Lemma~\ref{lem: s loc const} is proved, Lemma~\ref{lem: multi Bernstein} follows easily by adapting the argument used to prove the linear Bernstein inequality in Lemma~\ref{lem: Bernstein}. Note that we only need the case $M = 1$ of Lemma~\ref{lem: s loc const} for this purpose; it is useful, however, to have the result for general $M$ in view of later applications.  

\begin{proof}[Proof (of Lemma~\ref{lem: s loc const})] Fix $\eta_0 \in \cS(\R^d)$ satisfying $\supp \hat{\eta}_0 \subseteq [-2,2]^d$ and $\hat{\eta}_0(\xi) = 1$ for all $\xi \in [-1,1]^d$ . Let $F \in \cS(\R^d)$ satisfy the hypothesis of part i). Given $Q \in \cQ_{M, \mathrm{all}}$ and $z_Q \in Q$ such that $|F(z_Q)| = a_Q$, we have
\begin{equation}\label{eq: s loc const 1}
 a_Q = |F(z_Q)| \leq \int_{\R^d} |F(w)| |\eta_0(z_Q - w)| \,\ud w. 
\end{equation}
We interpret the right-hand side as the norm $\|G_R\|_{L^1(\R^d)}$, where
\begin{equation*}
    G_R(w) := F(w)\eta_0(z_Q - w).
\end{equation*}
It is not difficult to see $\hat{G}_R$ is supported in $[-10,10]^d$. We claim that, by Bernstein's inequality,
\begin{equation}\label{eq: loc const 2}
    \|G_R\|_{L^1(\R^d)} \lesssim \|G_R\|_{L^s(\R^d)}.
\end{equation}
The issue here is that $0 < s \leq 1$ and so we cannot appeal to Lemma~\ref{lem: Bernstein} directly. However, applying Bernstein's inequality with exponents $p = 1$ and $q = \infty$ gives
\begin{equation*}
    \|G_R\|_{L^1(\R^d)} \leq \|F\|_{L^{\infty}(\R^d)}^{1 - s} \|F\|_{L^s(\R^d)}^s \lesssim \|F\|_{L^1(\R^d)}^{1-s}\|F\|_{L^s(\R^d)}^s, 
\end{equation*}
which then rearranges to produce the desired bound \eqref{eq: loc const 2}. 

In light of the above, we can upgrade \eqref{eq: s loc const 1} to
\begin{equation*}
    |a_Q|^s \lesssim \int_{\R^d} |F(w)|^s |\eta_0(z_Q - w)|^s\,\ud w.
\end{equation*}
If we now define $\eta_M \colon \R^d \to [0,\infty)$ by 
\begin{equation*}
    \eta_M(z) :=M^{-d} \sup_{|w - z|_{\infty} \leq M} |\eta_0(w)|^s,
\end{equation*}
then the desired result follows as in the proof of Lemma~\ref{lem: loc const}.
\end{proof}

We may also spatially localise the multlinear Bernstein inequality, as in Corollary~\ref{cor: local Bernstein}.

\begin{coro}\label{cor: loc multi Bernstein} Under the hypotheses of Lemma~\ref{lem: multi Bernstein}, if $Q \subseteq \R^d$ is any cube of side-length at least $1$, then 
\begin{equation*}
    \big\|\prod_{j=1}^d |F_j|^{1/d}\big\|_{L^q(Q)} \lesssim \big\|\prod_{j=1}^d|F_j|^{1/d} \big\|_{L^p(w_Q)},
\end{equation*}
where $w_Q \colon \R^d \to [0,\infty)$ is a weight adapted to $Q$ (see Definition~\ref{def: adapted weight}).  
\end{coro}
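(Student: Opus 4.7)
The plan is to mimic the proof of the linear localisation (Corollary~\ref{cor: local Bernstein}) by multiplying through by a common bump function adapted to $Q$, reducing to the global multilinear Bernstein inequality of Lemma~\ref{lem: multi Bernstein}. Let $r \geq 1$ denote the side-length of $Q$ and $z_Q$ its centre. First, I would construct a single Schwartz function $\beta_Q$ satisfying $\supp \hat{\beta}_Q \subseteq r^{-1} Q_0 \subseteq Q_0$, $|\beta_Q(z)| \gtrsim 1$ for all $z \in Q$, and such that $|\beta_Q|$ is a rapidly decaying weight adapted to $Q$ in the sense of Definition~\ref{def: adapted weight}. This is standard: pick $\beta_0 \in \mathcal{S}(\mathbb{R}^d)$ with $\hat{\beta}_0$ supported in $Q_0$ and $\beta_0(0)=1$, then set $\beta_Q(z) := \beta_0(r^{-1}(z-z_Q))$.

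Next, for each $1 \leq j \leq d$, define $G_j := F_j \cdot \beta_Q$. The Fourier support of $G_j$ lies in the Minkowski sum $Q_0 + r^{-1} Q_0 \subseteq 2 Q_0$ (using $r \geq 1$), so the global multilinear Bernstein inequality, in its trivial rescaled form on the parallelepiped $2Q_0$ (see the remark following Lemma~\ref{lem: multi Bernstein}), applies and yields
\begin{equation*}
    \Big\| \prod_{j=1}^d |G_j|^{1/d} \Big\|_{L^q(\mathbb{R}^d)} \lesssim \Big\| \prod_{j=1}^d |G_j|^{1/d} \Big\|_{L^p(\mathbb{R}^d)}.
\end{equation*}

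Finally, I would read off the conclusion. On $Q$ the estimate $|\beta_Q| \gtrsim 1$ gives $|F_j(z)| \lesssim |G_j(z)|$ pointwise, so the left-hand side dominates $\|\prod_j |F_j|^{1/d}\|_{L^q(Q)}$. On the right-hand side, since $\prod_{j=1}^d |\beta_Q|^{1/d} = |\beta_Q|$, we may rewrite
\begin{equation*}
    \Big\| \prod_{j=1}^d |G_j|^{1/d} \Big\|_{L^p(\mathbb{R}^d)}
    = \Big\| \prod_{j=1}^d |F_j|^{1/d} \Big\|_{L^p(|\beta_Q|^p)},
\end{equation*}
and setting $w_Q := |\beta_Q|^p$ produces a weight still rapidly decaying and adapted to $Q$, completing the proof.

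There is no real obstacle here: the whole argument is just the multilinear analogue of the trick used in Corollary~\ref{cor: local Bernstein}, and the only mild subtlety is choosing $\supp \hat{\beta}_Q \subseteq r^{-1} Q_0$ (rather than the full $Q_0$) so that $\beta_Q$ genuinely decays at the physical scale $r$ of $Q$, which is harmless since $r \geq 1$ ensures $r^{-1} Q_0 \subseteq Q_0$ and the resulting enlargement of Fourier support from $Q_0$ to $2Q_0$ is absorbed by the (affinely rescaled) global inequality.
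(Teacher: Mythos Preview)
Your argument is correct and is precisely the approach the paper takes: the paper's proof of this corollary simply says ``This follows from Lemma~\ref{lem: multi Bernstein} via the same argument used to prove Corollary~\ref{cor: local Bernstein},'' which is exactly the bump-function multiplication trick you carry out, with the resulting weight $w_Q = |\beta_Q|^p$.
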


\begin{proof} This follows from Lemma~\ref{lem: multi Bernstein} via the same argument used to prove Corollary~\ref{cor: local Bernstein}.
\end{proof}




\section{Broad-narrow analysis}\label{sec: Broad-narrow}




\subsection{Motivation}

In the previous section we derived a multilinear variant of the fractal energy estimate, Corollary~\ref{cor: multi L2 fractal}, as a direct consequence of the multilinear Strichartz estimates of \textcite{BCT2006}. The problem now is to obtain \textit{bona fide} linear estimates from their multilinear counterparts. 

On the face of it, passing from multilinear to linear estimates appears challenging. Indeed, the proof of Corollary~\ref{cor: multi L2 fractal} crucially exploits the transversality hypothesis, required in order to invoke Theorem~\ref{thm: BCT}. Consequently, the methods of \S\ref{sec: mutlilinear harmonic analysis} are ill equipped to deal with interactions between resonant wave packets.

In this section we describe an ingenious method induced in \textcite{BG2011} which does in fact allow passage from multilinear to linear estimates. This is now commonly referred to as the \textit{Bourgain--Guth method} or \textit{broad-narrow analysis} and forms the backbone of many recent advances in harmonic analysis. 

Broad-narrow analysis was originally introduced to study the famous Fourier restriction conjecture, but was later adapted in \textcite{B2013} to make progress on the Carleson problem. It relies on decomposition, scaling and induction-on-scale techniques which have their roots in earlier works of \textcite{TVV1998,W2001}, amongst others.




\subsection{Broad-narrow analysis for $n=1$: an illustration}\label{sec: b/n n=1}

To begin, we illustrate the core ideas behind broad-narrow analysis in a very simple setting. In particular, we use the bilinear estimate from Proposition~\ref{prop: bilinear Strichartz} to prove the following (slightly weaker) variant of the $n=1$ case of Theorem~\ref{thm: Strichartz}. 

\begin{prop}\label{prop: 1d Strichartz} For all $\varepsilon > 0$ and all $R \geq 1$, the inequality
\begin{equation*}
   \|Uf\|_{L^6(B^{1+1}(0,R))} \lesssim_{\varepsilon}  R^{\varepsilon}\|f\|_{L^2(\R)}
\end{equation*}
holds whenever $f \in L^2(\R)$. 
\end{prop}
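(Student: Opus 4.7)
\medskip
\noindent \textbf{Proof proposal.} The plan is to run an induction-on-scale in $R$ via the Bourgain--Guth broad-narrow dichotomy, using the bilinear Strichartz inequality (Proposition~\ref{prop: bilinear Strichartz}) to handle the broad part and parabolic rescaling (Corollary~\ref{cor: parabolic rescaling}) to handle the narrow part. Concretely, let $Q(R)$ denote the best constant such that $\|Uf\|_{L^6(B^{1+1}(0,R'))} \leq Q(R)\|f\|_{L^2(\R)}$ for all $f$ with $\supp \hat f \subseteq B^1(0,1)$ and all $1 \leq R' \leq R$. The goal is to show $Q(R) \lesssim_\varepsilon R^\varepsilon$, and it suffices to prove a recursive inequality of the form $Q(R) \leq C_1 K^{A} + C_2 \, Q(R/K)$ for a large parameter $K$ to be chosen, with $C_1, C_2, A$ absolute constants. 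Taking $K$ a sufficiently large (but fixed) constant and iterating $O(\log_K R)$ times then yields $Q(R) \lesssim_K R^{\log_K C_2} \leq R^\varepsilon$.

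To set up the dichotomy, fix $K \geq 1$, cover $B^1(0,1)$ by the caps $\tau \in \cT_{K^{-1}}$, and decompose $f = \sum_{\tau} f_\tau$ so that $\supp \hat{f}_\tau \subseteq \tau$. For each $z \in B^{1+1}(0,R)$, let $\tau^*(z)$ maximise $|Uf_\tau(z)|$. Either every $\tau$ with $|G(\tau) \wedge G(\tau^*(z))| \geq K^{-1}$ satisfies $|Uf_\tau(z)| \leq K^{-1}|Uf_{\tau^*(z)}(z)|$ — the \emph{narrow} case — or there exists $\tau_1$ transverse to $\tau^*(z)$ with $|Uf_{\tau_1}(z)| \geq K^{-1}|Uf_{\tau^*(z)}(z)|$ — the \emph{broad} case. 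Since $\#\cT_{K^{-1}} \lesssim K$, in the narrow case $|Uf(z)| \lesssim |Uf_{\tau^*(z)}(z)|$, whereas in the broad case $|Uf(z)| \lesssim K^2 \max_{(\tau_1,\tau_2) \in \cT_{K^{-1}}^{\mathrm{trans}}} \prod_{j=1}^2 |Uf_{\tau_j}(z)|^{1/2}$. Hence pointwise
\begin{equation*}
    |Uf(z)|^6 \lesssim K^{12} \sum_{(\tau_1,\tau_2) \in \cT_{K^{-1}}^{\mathrm{trans}}} \prod_{j=1}^2 |Uf_{\tau_j}(z)|^3 + \sum_{\tau \in \cT_{K^{-1}}} |Uf_\tau(z)|^6.
\end{equation*}

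For the broad contribution, Proposition~\ref{prop: bilinear Strichartz} with $p=6$ gives $\|\prod_j |Uf_{\tau_j}|^{1/2}\|_{L^6(\R^{1+1})}^6 \lesssim K \prod_j \|f_{\tau_j}\|_{L^2}^3$. Summing over transverse pairs and applying the nesting $\|\cdot\|_{\ell^3} \leq \|\cdot\|_{\ell^2}$ together with the orthogonality $\sum_\tau \|f_\tau\|_{L^2}^2 \lesssim \|f\|_{L^2}^2$ bounds the broad part in $L^6$ by $K^{O(1)} \|f\|_{L^2}$. For the narrow contribution, parabolic rescaling converts $\|Uf_\tau\|_{L^6(B^{1+1}(0,R))}$ into $\|U\tilde{f}_\tau\|_{L^6(\mathcal{L}_\tau(B^{1+1}(0,R)))}$ with $\|\tilde f_\tau\|_{L^2} = \|f_\tau\|_{L^2}$, and $\mathcal{L}_\tau(B^{1+1}(0,R))$ fits inside a ball of radius $\lesssim R/K$ (using $|\xi_\tau| \leq 1$ to control the shear). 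By definition of $Q$ this is $\leq Q(CR/K)\|f_\tau\|_{L^2}$. Summing the $\sim K$ caps in $\ell^6$ and using $\sum_\tau \|f_\tau\|_{L^2}^6 \lesssim \|f\|_{L^2}^6$ gives a bound $\lesssim Q(CR/K)\|f\|_{L^2}$ for the narrow part.

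Combining these produces the recursion $Q(R) \leq C_1 K^{A} + C_2\, Q(CR/K)$ claimed above, closing the induction as described (and trivially $Q(R) \lesssim 1$ at the unit scale since one can bound $\|Uf\|_{L^\infty}$ by $\|f\|_{L^2}$ via Cauchy--Schwarz on a unit frequency ball). The main subtlety — and the essential point of the broad-narrow method — is the narrow case: a priori one fears losing a factor of $K$ per scale, but after parabolic rescaling the narrow part is controlled by the \emph{same} $L^6$ quantity at the strictly smaller scale $R/K$, allowing the induction to close with only a fixed, $K$-independent blow-up factor $C_2$. The bilinear gain encoded in the $K^{1/p}$ dependence of Proposition~\ref{prop: bilinear Strichartz} is what ensures that the broad part contributes only a constant loss, which is harmless once $K$ is chosen as a large constant depending on $\varepsilon$.
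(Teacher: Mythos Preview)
Your proposal is correct and follows essentially the same broad--narrow induction-on-scale strategy as the paper, with only cosmetic differences: the paper formulates the induction explicitly rather than via a best constant $Q(R)$, and in the narrow step it passes through the strip decomposition of \S\ref{sec: pseudo-local} before rescaling (reaching scale $R/K^2$), whereas you rescale $Uf_\tau$ directly via Corollary~\ref{cor: parabolic rescaling} (reaching scale $\sim R/K$). The paper itself notes in a footnote that your direct rescaling is a valid alternative for this proposition; the strip decomposition is included there only because it becomes essential later in the proof of Theorem~\ref{thm: fractal L2}.
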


The following proof of Proposition~\ref{prop: 1d Strichartz} is included for illustrative purposes only: we do not require the result later in the discussion. The proof does, however, provide a simple and effective introduction to broad-narrow methods. 

The first step is to relate the linear operator $Uf$ to bilinear $Uf_1 Uf_2$ expressions involving functions $f_1$, $f_2$ satisfying the transversality hypothesis of Proposition~\ref{prop: bilinear Strichartz}. For $n=1$, this is easily achieved via an elementary pointwise inequality. 

Let $f \in L^2(\R)$ and assume, without loss of generality, that $\supp \hat{f} \subseteq B^1(0,1/2)$. We decompose 
\begin{equation*}
    f = \sum_{\tau \in \cT_{K^{-1}}} f_{\tau},
\end{equation*}
similarly to the discussion in \S\ref{sec: wave packet dec}, so that each $f_{\tau} \in L^2(\R)$ satisfies $\supp \hat{f}_{\tau} \subseteq \tau$. 

\begin{lemm}[Broad-narrow decomposition, $n=1$]\label{lem: b/n dec n=1} For all $z = (x,t) \in \R^{1+1}$, we have
\begin{equation}\label{eq: n=1 b/n dec}
|Uf(z)| \lesssim \max_{\tau \in \cT_{K^{-1}}}|Uf_{\tau}(z)| + K \max_{(\tau_1, \tau_2) \in \cT_{K^{-1}}^{\mathrm{trans}}}  \prod_{j=1}^2 |Uf_{\tau_j}(z)|^{1/2}.
\end{equation}
\end{lemm}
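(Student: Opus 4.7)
The plan is to perform a dichotomy based on which cap attains the maximum of $|Uf_\tau(z)|$, exploiting a geometric fact specific to one dimension about the Gauss map.

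First, I would establish the following geometric observation: since $G(\xi) = (1+4\xi^2)^{-1/2}(-2\xi,1)^\top$ satisfies $|G(\xi_1)\wedge G(\xi_2)| \sim |\xi_1 - \xi_2|$ uniformly for $\xi_1,\xi_2 \in B^1(0,1)$, for any fixed $\tau^* \in \cT_{K^{-1}}$ the set of caps $\tau$ for which $(\tau^*,\tau) \notin \cT_{K^{-1}}^{\mathrm{trans}}$ has cardinality $O(1)$. Indeed, such $\tau$ must have center within distance $O(K^{-1})$ of the center of $\tau^*$, and since the caps in $\cT_{K^{-1}}$ partition $B^1(0,1)$ into intervals of length $\sim K^{-1}$, only $O(1)$ of them can be this close. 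This low-dimensional bookkeeping is what makes the $n=1$ case clean.

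Next, fix $z = (x,t) \in \R^{1+1}$, let $\tau_1^* = \tau_1^*(z) \in \cT_{K^{-1}}$ maximize $|Uf_\tau(z)|$, and decompose
$$
|Uf(z)| \;\leq\; \sum_{\tau:\, (\tau_1^*,\tau)\notin\cT_{K^{-1}}^{\mathrm{trans}}} |Uf_\tau(z)| \;+\; \sum_{\tau:\,(\tau_1^*,\tau)\in\cT_{K^{-1}}^{\mathrm{trans}}}|Uf_\tau(z)| \;=:\; S_{\mathrm{near}} + S_{\mathrm{far}}.
$$
By the geometric observation, $S_{\mathrm{near}}$ consists of $O(1)$ terms each bounded by $\max_\tau|Uf_\tau(z)|$, and therefore contributes to the narrow term on the right of \eqref{eq: n=1 b/n dec}.

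For $S_{\mathrm{far}}$ I would run a further dichotomy controlled by a threshold $\lambda := c\,|Uf(z)|/K$ for a small absolute constant $c > 0$. If every transverse $\tau$ satisfies $|Uf_\tau(z)| \leq \lambda$, then since $\#\cT_{K^{-1}} \lesssim K$ we get $S_{\mathrm{far}} \leq c\,|Uf(z)|$, which for $c$ small enough may be absorbed into the left-hand side once combined with $S_{\mathrm{near}}$. Otherwise, there exists $\tau_2^*$ with $(\tau_1^*,\tau_2^*) \in \cT_{K^{-1}}^{\mathrm{trans}}$ and $|Uf_{\tau_2^*}(z)| > \lambda \gtrsim |Uf(z)|/K$. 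Combining this with the maximality estimate $|Uf_{\tau_1^*}(z)| \gtrsim |Uf(z)|/K$ (itself immediate from the triangle inequality and $\#\cT_{K^{-1}}\lesssim K$) and taking a geometric mean,
$$
|Uf(z)| \;\lesssim\; K\,\bigl(|Uf_{\tau_1^*}(z)|\,|Uf_{\tau_2^*}(z)|\bigr)^{1/2},
$$
which is exactly the bilinear term in \eqref{eq: n=1 b/n dec}.

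\textbf{Main obstacle.} Honestly, there is no serious obstacle in one dimension; the whole argument is a clean combinatorial dichotomy. The only point that requires mild care is calibrating the threshold constant $c$ so that the absorption step closes, and verifying the geometric claim that the number of non-transverse caps is $O(1)$. In higher dimensions the analogous broad-narrow decomposition becomes substantially more delicate, because the non-transverse caps can form an entire $(n-1)$-dimensional neighborhood of a hyperplane on the paraboloid rather than an $O(1)$ set — this is precisely the subtlety that motivates the Bourgain--Guth scheme treated in the subsequent sections.
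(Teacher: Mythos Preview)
Your proof is correct and follows the same overall architecture as the paper's: choose the maximizing cap $\tau_1^* = \tau_z$, split the remaining caps into those near $\tau_z$ (controlled by the narrow term since there are $O(1)$ of them) and those transverse to $\tau_z$ (contributing to the broad term). The geometric observation about $|G(\xi_1)\wedge G(\xi_2)|\sim|\xi_1-\xi_2|$ is exactly what the paper uses implicitly.

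The only genuine difference is in how the far/broad sum is handled. You run a threshold dichotomy at level $\lambda = c|Uf(z)|/K$: either all broad terms are small and the whole sum absorbs, or some $\tau_2^*$ exceeds the threshold and you take the geometric mean of $|Uf_{\tau_1^*}(z)|$ and $|Uf_{\tau_2^*}(z)|$. The paper instead observes directly that for every broad $\tau$ the pair $(\tau,\tau_z)$ is transverse, and since $\tau_z$ is the maximizer one has $|Uf_\tau(z)| \le |Uf_\tau(z)|^{1/2}|Uf_{\tau_z}(z)|^{1/2}$; summing the at most $K$ such terms gives the broad bound in one line, with no absorption or threshold needed. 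The paper's route is slightly cleaner and avoids the calibration of $c$, but both arguments are equally valid and equally elementary.
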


Here $\cT_{K^{-1}}^{\mathrm{trans}}$ denotes the collection of all $K^{-1}$-transverse pairs of caps in $\cT_{K^{-1}}$, as defined in \eqref{eq: trans pairs def}. The first term on the right-hand side of \eqref{eq: n=1 b/n dec} is referred to as the \textit{narrow} term, whilst the second is referred to as the \textit{broad} term. 

\begin{proof}[Proof (of Lemma~\ref{lem: b/n dec n=1})] Given $z = (x,t) \in \R^{1+1}$, we let $\tau_z \in \cT_{K^{-1}}$ be a choice of cap satisfying
\begin{equation*}
    |Uf_{\tau_z}(z)| = \max_{\tau \in \cT_{K^{-1}}} |Uf_{\tau}(z)|.
\end{equation*}
We define the set of \textit{narrow} and \textit{broad} caps (for $Uf$ at $z$) by
\begin{equation*}
    \mathcal{N}_z := \big\{\tau \in \cT_{K^{-1}} : |G(\tau) \wedge G(\tau_z)| < K^{-1} \big\} \quad \textrm{and} \quad \mathcal{B}_z := \cT_{K^{-1}} \setminus \mathcal{N}_z, 
\end{equation*}
respectively. Note that $\mathcal{N}_z$ simply consists of the cap $\tau_z$ and some of its neighbours.

The decomposition of $\cT_{K^{-1}}$ into broad and narrow caps induces a decomposition of the operator
\begin{equation}\label{eq: b/n 1}
|Uf(z)| = \big|\sum_{\tau \in \cT_{K^{-1}}} Uf_{\tau}(z)\big| \leq \big|\sum_{\tau \in \mathcal{N}_z} Uf_{\tau}(z)\big| +\sum_{\tau \in \mathcal{B}_z}  |Uf_{\tau}(z)|.
\end{equation}

 For the term involving narrow caps, we simply note that $\# \mathcal{N}_z\lesssim 1$ and so 
\begin{equation}\label{eq: b/n 2}
    \big|\sum_{\tau \in \mathcal{N}_z} Uf_{\tau}(z)\big| \lesssim \max_{\tau \in \cT_{K^{-1}}}|Uf_{\tau}(z)|.
\end{equation}
On the other hand, clearly $(\tau, \tau_z) \in \cT_{K^{-1}}^{\mathrm{trans}}$ for all $\tau \in \cB_z$ and so 
\begin{align}
\nonumber
   \sum_{\tau \in \mathcal{B}_z}  |Uf_{\tau}(z)| &\leq \sum_{\tau \in \mathcal{B}_z}  |Uf_{\tau}(z)|^{1/2} |Uf_{\tau_z}(z)|^{1/2} \\
\label{eq: b/n 3}
   &\lesssim K \max_{(\tau_1, \tau_2) \in \cT_{K^{-1}}^{\mathrm{trans}} } \prod_{j=1}^2 |Uf_{\tau_j}(z)|^{1/2}.
\end{align}
Plugging \eqref{eq: b/n 2} and \eqref{eq: b/n 3} into \eqref{eq: b/n 1}, we deduce the desired estimate. 
\end{proof}

As an immediate consequence of the pointwise bound in Lemma~\ref{lem: b/n dec n=1}, we deduce the following norm bound. 

\begin{coro}[$L^q$ broad-narrow decomposition, $n=1$]\label{cor: b/n dec} For all $1 \leq q \leq \infty$ and all $R \geq 1$, we have
\begin{equation}\label{eq: b/n n=1 norm}
    \|Uf\|_{L^q(B^{1+1}_R)} \lesssim \Big(\sum_{\tau \in \cT_{K^{-1}}} \|Uf_{\tau}\|_{L^q(B^{1+1}_R)}^q \Big)^{1/q} + K^3 \max_{(\tau_1, \tau_2) \in \cT_{K^{-1}}^{\mathrm{trans}} }  \Big\| \prod_{j=1}^2 |Uf_{\tau_j}|^{1/2} \Big\|_{L^q(B^{1+1}_R)}.
\end{equation}
For $q = \infty$ the $\ell^q$ expression is understood as a maximum. 
\end{coro}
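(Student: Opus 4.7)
The plan is to lift the pointwise bound from Lemma~\ref{lem: b/n dec n=1} to an $L^q$ norm estimate by treating the narrow and broad contributions separately, then combining them via the triangle inequality in $L^q(B^{1+1}_R)$.

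For the narrow contribution $\max_{\tau \in \cT_{K^{-1}}}|Uf_\tau(z)|$, the key observation is the elementary pointwise inequality $\max_\tau a_\tau \leq \big(\sum_\tau a_\tau^q\big)^{1/q}$, valid for any nonnegative sequence indexed over a finite set (with the usual convention that the right-hand side is the maximum when $q = \infty$). Applying this with $a_\tau = |Uf_\tau(z)|$, raising to the $q$-th power and exchanging sum and integral by Fubini's theorem recovers precisely the first term on the right-hand side of \eqref{eq: b/n n=1 norm}, with no $K$-dependent factor.

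For the broad contribution, the same $\ell^q$ comparison applies to the maximum over pairs in $\cT_{K^{-1}}^{\mathrm{trans}}$:
\begin{equation*}
\max_{(\tau_1, \tau_2) \in \cT_{K^{-1}}^{\mathrm{trans}}} \prod_{j=1}^2 |Uf_{\tau_j}(z)|^{1/2} \leq \Big(\sum_{(\tau_1, \tau_2) \in \cT_{K^{-1}}^{\mathrm{trans}}} \prod_{j=1}^2 |Uf_{\tau_j}(z)|^{q/2}\Big)^{1/q}.
\end{equation*}
After taking the $L^q$ norm and using Fubini, I would bound the resulting sum over transverse pairs by its cardinality (which is at most $K^2$, since $\#\cT_{K^{-1}} \lesssim K$) times the supremum over such pairs. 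Combined with the factor of $K$ already present in the pointwise bound, this produces an overall constant of $K \cdot K^{2/q} \leq K^3$, where the final inequality uses the assumption $q \geq 1$; this matches precisely the constant appearing in the target inequality.

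Assembling the two bounds via the triangle inequality in $L^q$ yields \eqref{eq: b/n n=1 norm}. The case $q = \infty$ is handled identically, with every $\ell^q$-norm replaced by a maximum throughout. I expect no substantive obstacle: the conceptual work has already been done in Lemma~\ref{lem: b/n dec n=1}, and what remains is essentially bookkeeping, namely the careful tracking of powers of $K$ under the integration.
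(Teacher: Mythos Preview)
Your proposal is correct and follows essentially the same route as the paper's proof: dominate each pointwise maximum in Lemma~\ref{lem: b/n dec n=1} by the corresponding $\ell^q$ sum, take the $L^q(B^{1+1}_R)$-norm, and interchange $\ell^q$ and $L^q$ via Fubini. Your explicit accounting for the $K$-powers (the factor $K$ from the lemma together with $K^{2/q}\leq K^2$ arising from $\#\cT_{K^{-1}}^{\mathrm{trans}}\lesssim K^2$) is in fact more detailed than what the paper records.
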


\begin{proof} The $q = \infty$ case is immediate. Fixing $1 \leq q < \infty$, dominate the maxima in \eqref{eq: n=1 b/n dec} by the corresponding $\ell^q$ sum and take the $L^q$-norm in $z$ of both sides of the resulting expression. The desired result then follows by interchanging $\ell^q$ and $L^q$ norms. 
\end{proof}

To motivate what follows, we pause to consider the terms appearing on the right-hand side of \eqref{eq: b/n n=1 norm}. 
\begin{itemize}
\item The broad term involves a bilinear expression and, for appropriate $q$, can be estimated using the bilinear Strichartz estimate from Proposition~\ref{prop: bilinear Strichartz}. 
\item To estimate the narrow term, the key observation is that the expressions $\|Uf_{\tau}\|_{L^q(B^{1+1}_R)}$ appearing on the right-hand side are of the same form as the expression $\|Uf\|_{L^q(B^{1+1}_R)}$ appearing on the left-hand side. This is a symmetry (or self-similarity) of the inequality. Moreover, the functions $f_{\tau}$ appearing on the right are frequency localised versions of the function $f$ on the left; in this sense the $f_{\tau}$ are simpler objects than $f$. These considerations naturally lead to an inductive argument.  
\end{itemize}

\begin{proof}[Proof (of Proposition~\ref{prop: 1d Strichartz})]
Fix $\varepsilon > 0$. We let $\bC \geq 1$ and $K \geq 2$ denote fixed constants, depending only on the admissible parameter $\varepsilon$, which are chosen large enough to satisfy the forthcoming requirements of the proof. 

We argue by induction on the scale parameter $R$. As a simple consequence of the Cauchy--Schwarz inequality and Plancherel's theorem, we have 
\begin{equation*}
    \|Uf\|_{L^6(B^{1+1}(0,R))} \lesssim R^{1/3} \|Uf\|_{L^{\infty}(B^{1+1}(0,R))} \lesssim R^{1/3}\|\hat{f}\|_{L^2(\R)} = R^{1/3}\|f\|_{L^2(\R)}.
\end{equation*}
 Thus, Proposition~\ref{prop: 1d Strichartz} holds trivially for small scales, say $R \leq 100$; this serves as a base case for the induction. 

Fix $R \geq 100$ and $f \in L^2(\R)$ and assume, without loss of generality, that $\supp \hat{f} \subseteq B^1(0,1/2)$. We further assume the following holds. 

\begin{induction} For all $1 \leq \tilde{R} \leq R/2$, the inequality 
\begin{equation*}
\|Ug\|_{L^6(B^{1+1}(0, \tilde{R}))} \leq \bC \tilde{R}^{\varepsilon} \|g\|_{L^2(\R)} 
\end{equation*}
holds whenever $g \in L^2(\R)$. 
\end{induction}

 For $2 \leq q < \infty$ we apply the broad-narrow decomposition from Corollary~\ref{cor: b/n dec} to deduce that\footnote{Later we fix $q = 6$, but it is useful to keep the parameter $q$ free for the time being to see how the numerology works out in general.}
\begin{equation}\label{eq: Strichartz 0}
    \|Uf\|_{L^q(B^{1+1}_R)} \lesssim \Big(\sum_{\tau \in \cT_{K^{-1}}} \|Uf_{\tau}\|_{L^q(B^{1+1}_R)}^2 \Big)^{1/2} + K^3 \max_{(\tau_1, \tau_2) \in \cT_{K^{-1}}^{\mathrm{trans}} }  \Big\| \prod_{j=1}^2 |Uf_{\tau_j}|^{1/2} \Big\|_{L^q(B^{1+1}_R)}.
\end{equation}
Here we have weakened the $\ell^q$ sum to an $\ell^2$ sum in the narrow term, using the nesting of $\ell^q$ norms.\smallskip

\noindent \textit{The broad term.} We begin by estimating the broad contribution: that is, the second term on the right-hand side of \eqref{eq: Strichartz 0}. This is achieved by direct application of the bilinear Strichartz estimate from Proposition~\ref{prop: bilinear Strichartz}. In particular, by \eqref{eq: n=1 bilinear}, for $4 \leq q < \infty$ we have 
\begin{equation}\label{eq: Strichartz 1}
    \|Uf\|_{L^q(B^{1+1}_R)} \lesssim \Big(\sum_{\tau \in \cT_{K^{-1}}} \|Uf_{\tau}\|_{L^q(B^{1+1}_R)}^2 \Big)^{1/2} + K^4 \|f\|_{L^2(\R)}.
\end{equation}

\noindent \textit{The narrow term.} It remains to estimate the narrow contribution, corresponding to the first term of the right-hand side of \eqref{eq: Strichartz 0}. This is achieved via a combination of parabolic rescaling and appeal to the induction hypothesis. 

For each $\tau \in \cT_{K^{-1}}$, as in \eqref{eq: pseudo loc 1} we decompose\footnote{The decomposition according to the strips $S$ is somewhat overkill for the purposes of this argument, but is an important feature when adapting these methods to prove Theorem~\ref{thm: fractal L2} in \S\ref{sec: main proof}. An alternative approach to the current proof is to rescale $Uf_{\tau}$ directly using Corollary~\ref{cor: parabolic rescaling}.} 
\begin{equation}\label{eq: pseudo loc 1 again}
f_{\tau} = \sum_{S \in \bbS_{\tau}[R]} f_S \qquad \textrm{where} \qquad f_S := \sum_{T \in \T_S[R]} f_T.
\end{equation}
By  Lemma~\ref{lem: pseudo loc}, we have
\begin{equation}\label{eq: Strichartz 2}
    \|Uf_{\tau}\|_{L^q(B^{1+1}_R)}^q \lesssim \sum_{S \in \bbS_{\tau}[R]} \|Uf_S\|_{L^q(\bar{S})}^q + R^{-100nq} \|f\|_{L^2(\R)}^q.
\end{equation}

We now invoke parabolic rescaling in the form of Lemma~\ref{lem: Lp rescaling}. In particular, for each $S \in \bbS_{\tau}[R]$ there exists a function $\tilde{f}_{S} \in L^2(\R)$ which is Fourier supported in $B^1(0,1)$ and satisfies
\begin{equation}\label{eq: Strichartz 3}
    \|Uf_S\|_{L^q(\bar{S})} \leq K^{3/q-1/2}\|U\tilde{f}_S\|_{L^q(B^{1+1}(0,\tilde{R}))} \qquad \textrm{and} \qquad \|\tilde{f}_S\|_{L^2(\R)} = \|f_S\|_{L^2(\R)},
\end{equation}
where  $\tilde{R} := 20 R/K^2$. Provided $K$ is chosen sufficiently large, $\tilde{R} \leq R/2$, and so we may apply the induction hypothesis to conclude that
\begin{equation}\label{eq: Strichartz 4}
    \|U\tilde{f}_S\|_{L^q(B^{1+1}(0,\tilde{R}))} \leq \bC \tilde{R}^{\varepsilon} \|\tilde{f}_S\|_{L^2(\R)} \lesssim \bC K^{-2\varepsilon} R^{\varepsilon} \|f_S\|_{L^2(\R)}. 
\end{equation}
Combining \eqref{eq: Strichartz 2}, \eqref{eq: Strichartz 3} and \eqref{eq: Strichartz 4}, we therefore deduce that
\begin{equation}\label{eq: Strichartz 5}
    \Big(\sum_{\tau \in \cT_{K^{-1}}} \|Uf_{\tau}\|_{L^q(B^{1+1}_R)}^2 \Big)^{1/2} \lesssim \bC K^{3/q-1/2 -2\varepsilon} R^{\varepsilon} \Big(\sum_{S \in \bbS[R]} \|f_S\|_{L^2(\R)}^2 \Big)^{1/2} + R^{-10n} \|f\|_{L^2(\R)},
\end{equation}
where $\bbS[R]$ denotes the union of all the sets $\bbS_{\tau}[R]$ over all $\tau \in \cT_{K^{-1}}$. The families of wave packets $\T_S[R]$ appearing in the definition \eqref{eq: pseudo loc 1 again} are essentially disjoint as $S$ varies over $\bbS[R]$. Thus, by the orthogonality properties of the wave packets,
\begin{equation*}
    \Big(\sum_{S \in \bbS[R]} \|f_S\|_{L^2(\R)}^2 \Big)^{1/2} \lesssim \|f\|_{L^2(\R)}.
\end{equation*}
Applying this bound to \eqref{eq: Strichartz 5}, we deduce that
\begin{equation}\label{eq: Strichartz 6}
    \Big(\sum_{\tau \in \cT_{K^{-1}}} \|Uf_{\tau}\|_{L^q(B^{1+1}_R)}^2 \Big)^{1/2} \lesssim \bC K^{3/q-1/2 -2\varepsilon} R^{\varepsilon}\|f\|_{L^2(\R)} ,
\end{equation}
which is the final estimate for the narrow term.

To conclude the proof, we combine \eqref{eq: Strichartz 1} and \eqref{eq: Strichartz 6}; thus, for $4 \leq q \leq \infty$, we have
\begin{equation*}
    \|Uf\|_{L^q(B^{1+1}_R)} \leq C \big( \bC K^{3/q-1/2}K^{-2\varepsilon} R^{\varepsilon} + K^4 \big) \|f\|_{L^2(\R)}, 
\end{equation*}
where $C$ is suitable a choice of absolute constant, which accounts for all the factors arising from the implicit constants in the above argument. Note that the exponent $3/q - 1/2$ is non-positive precisely when $q \geq 6$. In particular, specialising to the case $q = 6$, we have
\begin{equation}\label{eq: Strichartz 7}
    \|Uf\|_{L^6(B^{1+1}_R)} \leq C \big( \bC K^{-2\varepsilon}  + K^4 \big)R^{\varepsilon} \|f\|_{L^2(\R)}.
\end{equation}

The estimate \eqref{eq: Strichartz 7} involves two free parameters (both of which must be chosen independently of $f$ and $R$):
\begin{itemize}
    \item The constant $\bC$ appearing in the induction hypothesis;
    \item The intermediate scale $K$.
\end{itemize}
We fine tune these parameters to ensure that the induction closes. We first choose $\bC$ in terms of $K$ so as to satisfy $\bC = 2 C K^4$. Thus, \eqref{eq: Strichartz 6} becomes 
\begin{equation*}
    \|Uf\|_{L^6(B^{1+1}_R)} \leq \Big( C \bC K^{-2\varepsilon} R^{\varepsilon} +  \frac{\bC}{2}\Big) \|f\|_{L^2(\R)}.
\end{equation*}
Finally, we choose $K$, depending only on $\varepsilon$, so that $C K^{-2\varepsilon} \leq 1/2$. With this choice, we conclude 
\begin{equation*}
    \|Uf\|_{L^6(B^{1+1}_R)} \leq \bC R^{\varepsilon} \|f\|_{L^2(\R)},
\end{equation*}
which closes the induction and completes the proof. 
\end{proof}

The simple argument presented above is particular to the $n=1$ case. This is due to the innocuous nature of the narrow term when $n = 1$; in higher dimensions we shall see that the narrow term is significantly more complex. 




\subsection{What is going on here?} The induction argument used in the proof of Proposition~\ref{prop: 1d Strichartz} is very neat, but perhaps obscures the mechanics of what is happening in the proof. Indeed, arguments like this can sometimes seem a little magical. To get a better sense of what is going on, it is helpful to `unpack' the induction argument and think of it as a recursive process. This tends to be messier, but can  give a better sense of how the argument works. Here we give an informal sketch of this recursive process, reinterpreting the proof from the previous subsection. 

As the process progresses, we pass through a decreasing chain of spatial scales
\begin{equation}\label{eq: scales}
    R \to R/K^2 \to R/K^4 \to \cdots \to R/K^{2N},
\end{equation}
where $N$ corresponds to the total number of steps in the recursion. At the terminal step (corresponding to the base case in the proof of Proposition~\ref{prop: 1d Strichartz}), we have reached the unit scale, and so we have, say, $R/K^{2N} \leq 100$. From this, we see that the total number of steps is roughly
\begin{equation*}
    N \sim \frac{\log R}{\log K}. 
\end{equation*}

At each step we use Corollary~\ref{cor: b/n dec} to split the norm into two parts: the broad term, which is analysed directly, and the narrow term which we continue to decompose. Thus, at each step we gain one additional piece in the decomposition so that throughout the whole process we split the norm into $N+1$ pieces. In particular, we have:
\begin{itemize}
    \item A broad term for each step of the process;
    \item A narrow term for the terminal step only. 
\end{itemize}
Since $N + 1 \lesssim \log R \lesssim R^{\varepsilon/2}$, it suffices to show that the contribution from each piece of this decomposition is $O_{\varepsilon}(R^{\varepsilon/2}\|f\|_{L^2(\R)})$.

The remaining narrow term from the terminal step is controlled using the trivial energy estimate, corresponding to the analysis of the base case in the inductive setup. On the other hand, each of the broad terms is estimated using the bilinear Strichartz inequality from Proposition~\ref{prop: bilinear Strichartz}. By applying parabolic rescaling, we can always separate the pairs of caps appearing in the broad term to ensure they are $K^{-1}$-transverse. This means that the constant arising from the bilinear estimate is uniform over all steps of the recursion.\footnote{We did not apply parabolic rescaling to the broad term directly in the inductive proof of Proposition~\ref{prop: 1d Strichartz}. However, by applying parabolic rescaling to the narrow term at step $k$ of the process, we effectively rescale the broad term at step $k+1$ (which is formed from decomposing the step $k$ narrow term). Thus, the above is indeed an accurate representation of the proof of Proposition~\ref{prop: 1d Strichartz}.}

There is one final subtlety. When carrying out the broad-narrow decomposition at each step, we incur some additional absolute constant $C_{\circ}$ in our inequality, say $C_{\circ} = 10$. On their own, these constants are harmless, but as we iterate the procedure they accumulate as powers $C_{\circ}^k$. At the terminal stage, we will have gained an additional factor of $C_{\circ}^N$: since $N$ is logarithmic in $R$, this could be catastrophic. The parameter $K$ is used to deal with this issue. In particular, by choosing $K$ sufficiently large, depending only on $\varepsilon$, we can ensure
\begin{equation*}
    C_{\circ}^N = R^{C \log C_{\circ}/\log K} \leq R^{\varepsilon/2},
\end{equation*}
which is admissible for our purposes. In short, the larger $K$, the larger the jumps between scales in \eqref{eq: scales} and, consequently, the fewer the number of steps $N$ in the recursive process. By choosing $K$ sufficiently large, we can favourably control the constant $C_{\circ}^N$ which arises through the recursive procedure.




\subsection{Pointwise broad-narrow decomposition in higher dimensions}

In the remainder of this section we explore extensions of the simple ideas introduced in \S\ref{sec: b/n n=1} to higher spatial dimensions. This provides the framework for the proof of the fractal energy estimate (Theorem~\ref{thm: fractal L2}) in the next section.\footnote{In particular, the goal is \textbf{not} to extend the proof of Proposition~\ref{prop: 1d Strichartz} (which is included for illustrative purposes only) to higher dimensions. Whilst broad-narrow analysis can be used to prove Strichartz estimates for $n \geq 2$, this approach is cumbersome in the extreme compared with, say, the original proof of Theorem~\ref{thm: Strichartz} from \textcite{T1975}. On the other hand, the tools developed here are effective when it comes to studying Theorem~\ref{thm: fractal L2} in general dimensions.}

For our purposes, the correct implementation of the broad-narrow decomposition for $n \geq 2$ turns out to be highly non-trivial and relies on the deep \textit{decoupling theory} of \textcite{BD2015}. Our first step is to generalise the simple pointwise broad-narrow decomposition from Lemma~\ref{lem: b/n dec n=1} to higher dimensions. This will in fact prove a misstep, and we shall go back and refine our estimates in the proof of Lemma~\ref{lem: b/n dec 2} below. Nevertheless, we consider the pointwise decomposition to gain an initial understanding of the problem. 

Recall in the proof of Lemma~\ref{lem: b/n dec n=1} we defined a collection of \textit{narrow caps}, which were caps with normals $G(\tau)$ aligned along some fixed $1$-dimensional subspace. More generally, given a $d$-dimensional linear subspace $V \subseteq \R^{n+1}$, we define
\begin{equation}\label{eq: V aligned caps}
    \cT_{K^{-1}}(V) := \big\{ \tau \in \cT_{K^{-1}} : |\sin \angle(G(\tau), V)| \leq C_n K^{-1} \big\}.
\end{equation}
Here $\angle(G(\tau), V)$ denotes the infimum of the angles $\angle(G(\xi), V)$ over all $\xi \in \tau$ and $C_n \geq 1$ is a dimensional constant, chosen large enough to satisfy the forthcoming requirements of the proof. With this definition, the general from of Lemma~\ref{lem: b/n dec n=1} reads thus. 

\begin{lemm}[Pointwise broad-narrow decomposition]\label{lem: b/n dec 1} For all $z = (x,t) \in \R^{n+1}$, we have 
\begin{equation}\label{eq: b/n dec}
|Uf(z)| \lesssim \max_{V \in \mathrm{Gr}(n,\R^{n+1})}\Big|\sum_{\tau \in \cT_{K^{-1}}(V)}Uf_{\tau}(z)\Big| + K^n \max_{(\tau_1, \dots, \tau_{n+1}) \in \cT_{K^{-1}}^{\, \mathrm{trans}} }  \prod_{j=1}^{n+1} |Uf_{\tau_j}(z)|^{1/(n+1)}.
\end{equation}
Here the left-hand maximum is taken over all $n$-dimensional linear subspaces in $\R^{n+1}$.
\end{lemm}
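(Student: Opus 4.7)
Fix $z \in \R^{n+1}$. The plan is, at such a point, to run a greedy selection algorithm on the caps $\tau \in \cT_{K^{-1}}$ which terminates in at most $n+1$ steps and produces either a subspace $V$ of dimension at most $n$ whose $K^{-1}$-neighbourhood captures the normals of every cap (the narrow regime) or an $(n+1)$-tuple $(\tau_1, \dots, \tau_{n+1})$ that is $K^{-1}$-transverse and for which $\tau_1$ is the global maximiser of $|Uf_\tau(z)|$ (the broad regime). This is the natural higher-dimensional analogue of the dichotomy behind Lemma~\ref{lem: b/n dec n=1}.

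Concretely, initialise $V_0 := \{0\}$. At step $k+1$, check whether every cap $\tau \in \cT_{K^{-1}}$ satisfies $|\sin \angle(G(\tau), V_k)| \leq C_n K^{-1}$; if so, halt, and otherwise let $\tau_{k+1}$ maximise $|Uf_\tau(z)|$ over the caps violating this inequality and set $V_{k+1} := V_k + \mathrm{span}\, G(\tau_{k+1})$. Since $V_n$ is $n$-dimensional, the algorithm halts after selecting at most $n+1$ caps.

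If it halts at some step $k \leq n$, extend $V_k$ to any $n$-dimensional subspace $V$; then every cap lies in $\cT_{K^{-1}}(V)$ and $Uf(z) = \sum_{\tau \in \cT_{K^{-1}}(V)} Uf_\tau(z)$, which gives the narrow bound directly. In the broad case the construction forces $|\sin \angle(G(\tau_k), V_{k-1})| > C_n K^{-1}$ for $k=1,\dots,n+1$; since $G$ takes values in the unit sphere, one can factor
\begin{equation*}
|G(\tau_1) \wedge \cdots \wedge G(\tau_{n+1})| = \prod_{k=1}^{n+1} \mathrm{dist}\bigl(G(\tau_k), V_{k-1}\bigr)
\end{equation*}
and bound this below by $(C_n K^{-1})^n \geq K^{-n}$, provided $C_n$ is chosen large enough to also absorb the $O(K^{-1})$ oscillation of $G$ across a single $K^{-1}$-cap (which enters through the infimum-over-$\xi\in\tau_j$ in the definition of $\cT_{K^{-1}}^{\, \mathrm{trans}}$). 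Since $\tau_1$ was the overall maximiser and $\#\cT_{K^{-1}} \lesssim K^n$, we then estimate
\begin{equation*}
|Uf(z)| \leq \sum_\tau |Uf_\tau(z)| \lesssim K^n |Uf_{\tau_1}(z)| \leq K^n \prod_{j=1}^{n+1} |Uf_{\tau_j}(z)|^{1/(n+1)},
\end{equation*}
using $|Uf_{\tau_j}(z)| \leq |Uf_{\tau_1}(z)|$ in the last step, which is the broad bound.

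The argument is essentially linear-algebraic and combinatorial, so I do not anticipate a genuine obstacle. The one step requiring care is the transversality bookkeeping: one must verify the identification of $|G(\tau_1) \wedge \cdots \wedge G(\tau_{n+1})|$ with the product of successive Euclidean distances, and fix $C_n$ large enough that the resulting lower bound survives both the infimum over $\xi \in \tau_j$ in the definition of transversality and the $O(K^{-1})$ wobble of the Gauss map across each cap.
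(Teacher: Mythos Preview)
Your greedy selection and the transversality bookkeeping are fine, but the final inequality in the broad case is reversed. You write
\[
K^n |Uf_{\tau_1}(z)| \leq K^n \prod_{j=1}^{n+1} |Uf_{\tau_j}(z)|^{1/(n+1)}
\]
invoking $|Uf_{\tau_j}(z)| \leq |Uf_{\tau_1}(z)|$. But that hypothesis gives the \emph{opposite} conclusion: the geometric mean of the $|Uf_{\tau_j}(z)|$ is at most their maximum $|Uf_{\tau_1}(z)|$, not at least. So as written the broad bound fails; the later $\tau_j$ could be tiny and the geometric mean useless.

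The fix is to use $\tau_{n+1}$ rather than $\tau_1$, and to retain the narrow term even in the broad case. Your greedy sequence satisfies $|Uf_{\tau_1}(z)| \geq |Uf_{\tau_2}(z)| \geq \cdots \geq |Uf_{\tau_{n+1}}(z)|$, since at each step you maximise over a shrinking set (caps not aligned with $V_{k-1} \subseteq V_k$). Now split at the $n$-dimensional space $V_n$: caps in $\cT_{K^{-1}}(V_n)$ go into the narrow sum, and each cap $\tau \notin \cT_{K^{-1}}(V_n)$ satisfies $|Uf_\tau(z)| \leq |Uf_{\tau_{n+1}}(z)|$ by the choice of $\tau_{n+1}$. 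Hence
\[
|Uf(z)| \leq \Big|\sum_{\tau \in \cT_{K^{-1}}(V_n)} Uf_\tau(z)\Big| + K^n |Uf_{\tau_{n+1}}(z)|,
\]
and now the monotonicity gives $|Uf_{\tau_{n+1}}(z)| \leq \prod_{j=1}^{n+1} |Uf_{\tau_j}(z)|^{1/(n+1)}$ in the correct direction. This repaired version is essentially equivalent to the paper's argument: the paper phrases the same idea via a min--max (defining $U_{\mathrm{Br}}f(z) := \min_V \max_{\tau \notin \cT_{K^{-1}}(V)} |Uf_\tau(z)|$ and showing the large narrow caps already span an $n$-plane transversally), but the underlying mechanism---produce $n$ large aligned caps plus one large non-aligned cap, then bound the residual by the smallest of these---is identical.
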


We immediate see that Lemma~\ref{lem: b/n dec 1} implies Lemma~\ref{lem: b/n dec n=1} in the $n = 1$ case. Comparing \eqref{eq: b/n dec} with \eqref{eq: n=1 b/n dec}, a significant additional complication in higher dimensions is the form of the narrow term. This involves the function
\begin{equation*}
    f_V := \sum_{\tau \in \cT_{K^{-1}}(V)}f_{\tau},
\end{equation*}
which is localised to a whole family of caps (aligned along a strip), rather than a single cap. For $n \geq 2$, the analysis of this term involves highly non-trivial tools from decoupling theory, described in \S\ref{sec: decoupling} below.

We remark that the precise form of Lemma~\ref{lem: b/n dec 1} is not used in our subsequent analysis; instead, we will rely on an $L^q$ variant introduced in Lemma~\ref{lem: b/n dec 2} below.\footnote{In contrast with the $n=1$ case, in higher dimensions our formulation of the $L^q$ broad-narrow decomposition does not directly follow from the pointwise decomposition.} Nevertheless, Lemma~\ref{lem: b/n dec 1} provides a useful conceptual stepping stone. 

\begin{proof}[Proof (of Lemma~\ref{lem: b/n dec 1})] The proof is an elaboration of the argument used to establish the $n=1$ case in Lemma~\ref{lem: b/n dec n=1}. The first step is to identify a codimension 1 subspace $V_z \in \mathrm{Gr}(n,\R^{n+1})$ which `captures' as many of the large $|Uf_{\tau}(z)|$ as possible. More precisely, define the \textit{broad part} of the operator
\begin{equation}\label{eq: gen b/n 1} 
    U_{\mathrm{Br}}f(z) := \min_{V \in \mathrm{Gr}(n,\R^{n+1})} \max_{\tau \notin \cT_{K^{-1}}(V)} |Uf_{\tau}(z)|
\end{equation}
and suppose $V_z \in \mathrm{Gr}(n,\R^{n+1})$ realises the minimum in \eqref{eq: gen b/n 1}. Then $V_z$ has the desired property, in the sense that the largest value of $|Uf_{\tau}(z)|$ for $\tau \notin \cT_{K^{-1}}(V)$ is minimised.

For slightly technical reasons, we also define 
\begin{equation*}
    \cT_{K^{-1}}(V, z) := \big\{ \tau \in \cT_{K^{-1}}(V) : |Uf_{\tau}(z)| \geq U_{\mathrm{Br}}f(z) \big\}
\end{equation*}
and further choose $V_z$ so that, of all spaces realising the minimum in \eqref{eq: gen b/n 1}, the space $V_z$ also maximises $\#\cT_{K^{-1}}(V, z)$. Once again, we can think of this condition as ensuring $V_z$ captures as many large $|Uf_{\tau}(z)|$ as possible.

From the definition, the caps $\tau \in \cT_{K^{-1}}(V_z, z)$ (or, more precisely, their normals) are aligned around the $n$-dimensional subspace $V_z$. However, they do not align around around any lower dimensional subspace. \medskip

\noindent \textbf{Claim.} There does not exist a subspace $W \subseteq \R^{n+1}$ of dimension $n-1$ such that $\cT_{K^{-1}}(V_z, z) \subseteq \cT_{K^{-1}}(W)$. \medskip

The idea is that if all the caps in $\cT_{K^{-1}}(V_z, z)$ are `captured' by a subspace~$W$ of dimension $n-1$, then we have the freedom to extend~$W$ to a subspace $V_z'$ of dimension~$n$ which captures even more large caps. But this contradicts the maximality of~$V_z$. We postpone the precise details of this argument until the end of the proof. \smallskip

Assuming the claim, it is a simple matter to conclude the proof of Lemma~\ref{lem: b/n dec 1}. In analogy with the proof of Lemma~\ref{lem: b/n dec n=1}, we define the collections of \textit{narrow} and \textit{broad} caps (for $Uf$ at $z$) by
\begin{equation*}
    \mathcal{N}_z := \cT_{K^{-1}}(V_z) \qquad \textrm{and} \qquad  \cB_z := \cT_{K^{-1}} \setminus \cT_{K^{-1}}(V_z),
\end{equation*}
respectively. By the triangle inequality and the defining properties of $V_z$, we have
\begin{align}
    \nonumber
    |Uf(z)| &\leq \big| \sum_{\tau \in \mathcal{N}_z} Uf_{\tau}(z) \big| + \sum_{\tau \in \cB_z} |Uf_{\tau}(z)|  \\
    \label{eq: gen b/n 2} 
    &\lesssim \max_{V \in \mathrm{Gr}(n,\R^{n+1})} \big| \sum_{\tau \in \cT_{K^{-1}}(V)} Uf_{\tau}(z) \big| + 
 K^n U_{\mathrm{Br}}f(z).   
\end{align}

In view of the claim, there exists an $n$-tuple of caps $(\tau_{z,1}, \dots, \tau_{z,n}) \in \cT_{K^{-1}}(V_z, z)^n$ which is $K^{-(n+1)}$-transverse in the sense that
\begin{equation}\label{eq: checking transverse}
        \big|G(\tau_{z,1}) \wedge \cdots \wedge G(\tau_{z, n}) \big| \geq K^{-(n-1)}. 
\end{equation}
To see this, simply take $(\tau_{z,1}, \dots, \tau_{z,n})$ to be a tuple which maximises the left-hand wedge product. If \eqref{eq: checking transverse} fails for this choice, then 
\begin{equation*}
    \big|G(\tau_{z,1}) \wedge \cdots \wedge G(\tau_{z, n-1}) \wedge G(\tau) \big| < K^{-(n-1)} \qquad \textrm{for all $\tau \in \cT_{K^{-1}}(V_z, z)$.}
\end{equation*}
Define $W := \mathrm{span}\{G(\xi_{\tau_{z,1}}), \dots, G(\xi_{\tau_{z, n-1}})\}$, where $\xi_{\tau_{z,j}}$ denotes the centre of $\tau_{z,j}$ for $1 \leq j \leq n-1$. If the constant $C_n \geq 1$ in \eqref{eq: V aligned caps} is chosen sufficiently large, depending only on $n$, then $W$ is an $(n-1)$-dimensional subspace satisfying $\cT_{K^{-1}}(V_z, z) \subseteq \cT_{K^{-1}}(W)$, contradicting the claim. Hence \eqref{eq: checking transverse} must hold. 

By the definition of the broad functional, there exists a cap $\tau_{z, n+1} \notin \cT_{K^{-1}}(V_z)$ such that 
\begin{equation*}
    |Uf_{\tau_{z, n+1}}(z)| = U_{\mathrm{Br}}f(z).
\end{equation*}
It follows that the $(n+1)$-tuple $(\tau_{z,1}, \dots, \tau_{z, n+1})$ is $K^{-n}$-transverse and satisfies
\begin{equation}\label{eq: gen b/n 3} 
    U_{\mathrm{Br}}f(z) \leq \prod_{j=1}^{n+1} |Uf_{\tau_{z,j}}(z)|^{1/(n+1)} \leq \max_{(\tau_1, \dots, \tau_{n+1}) \in \cT_{K^{-1}}^{\,\mathrm{trans}} }  \prod_{j=1}^{n+1} |Uf_{\tau_j}(z)|^{1/(n+1)}.
\end{equation}
The desired result now follows by combining \eqref{eq: gen b/n 3} and \eqref{eq: gen b/n 2}. \smallskip

It remains to prove the claim. Following the proof sketch, we argue by contradiction, assuming such a subspace $W$ exists. Let $\tau^* := \tau_{z, n+1}$ be as above, so that $\tau^* \notin \cT_{K^{-1}}(V_z)$ realises the maximum in the definition \eqref{eq: gen b/n 1} for $V = V_z$. Define $V_z'$ to be a subspace of dimension $n$ which contains $W$ and $G(\xi_{\tau^*})$, where $\xi_{\tau^*}$ is the centre of $\tau^*$.

First note that $V_z'$ realises the minimum in \eqref{eq: gen b/n 1}. Indeed, from the hypothesis on $W$ and the definition of $V_z'$ we have $\cT_{K^{-1}}(V_z) \subseteq \cT_{K^{-1}}(V_z')$ and so
\begin{equation*}
    U_{\mathrm{Br}}f(x) = \max_{\tau \notin \cT_{K^{-1}}(V_z)} |Uf_{\tau}(z)|\geq \max_{\tau \notin \cT_{K^{-1}}(V_z')} |Uf_{\tau}(z)| \geq U_{\mathrm{Br}}f(x). 
\end{equation*}

In light of the maximality of $V_z$, it follows that $\# \cT_{K^{-1}}(V_z', z) \leq \# \cT_{K^{-1}}(V_z,z)$. However, it is clear from the definitions that 
\begin{equation*}
    \cT_{K^{-1}}(V_z, z) \subseteq \cT_{K^{-1}}(V_z', z), \qquad \textrm{whilst} \quad \tau^* \in \cT_{K^{-1}}(V_z', z) \quad \textrm{and} \quad \tau^* \notin \cT_{K^{-1}}(V_z, z).
\end{equation*}
 Thus, $\# \cT_{K^{-1}}(V_z, z) < \# \cT_{K^{-1}}(V_z', z)$, which is a contradiction. 
\end{proof}

As in the $n=1$ case discussed in \S\ref{sec: b/n n=1}, the inequality \eqref{eq: b/n dec} is designed to access the multilinear theory from \S\ref{sec: mutlilinear harmonic analysis}. In particular, the multilinear Strichartz estimates can be used to control the `broad' contribution coming from the second term on the right-hand side of \eqref{eq: b/n dec}. What remains is to devise a method to analyse the `narrow' contribution, corresponding to the first term on the right-hand side of \eqref{eq: b/n dec}.

In the proof of 1-dimensional Strichartz estimate (Proposition~\ref{prop: 1d Strichartz}), we used a combination of parabolic rescaling and induction-on-scale to estimate the narrow contribution. This direct approach is particular to the 1-dimensional setting and significant complications arise when trying to extending these ideas to higher dimensions. 

To understand the difficulties, recall that the narrow term involves the operator $U$ applied to functions of the form
\begin{equation*}
    f_V := \sum_{\tau \in \cT_{K^{-1}}(V)} f_{\tau} \qquad \textrm{for $V \in \mathrm{Gr}(d,\R^{n+1})$.}
\end{equation*}
When $n = d = 1$, the function $f_V$ essentially corresponds to some $f_{\tau}$.\footnote{More precisely, $f_V$ is a sum of $f_{\tau}$ over a family of $O(1)$ adjacent caps, but this distinction is unimportant.} In this case, $Uf_V$ is Fourier supported on a cap on the parabola and we can exploit the scaling structure of the parabola in the guise of Lemma~\ref{lem: Lp rescaling}. However, for higher dimensions, the best we can say is that the function $Uf_V$ is supported on a parabolic strip. There is no viable scaling which maps a strip to the whole paraboloid.




\subsection{Analysis of the narrow term: decoupling}\label{sec: decoupling}

To analyse the narrow term in \eqref{eq: b/n dec}, we appeal to \textit{decoupling estimates} and the following celebrated theorem of Bourgain and Demeter. To introduce the result, let $\cQ_{K^2}$ denote the collection of space-time lattice $K^2$-cubes which intersect $B^{n+1}(0,R)$.

\begin{theo}[\cite{BD2015}]\label{thm: decoupling}  Let $1 \leq d \leq n+1$ and $2 \leq q \leq 2 \cdot \tfrac{d+1}{d-1}$. For all $\varepsilon > 0$, $K \geq 1$ we have 
\begin{equation}\label{eq: decoupling}
    \|Uf_V\|_{L^q(Q)} \lesssim_{\varepsilon} K^{\varepsilon}\Big(\sum_{\tau \in \cT_{K^{-1}}(V)} \|Uf_{\tau}\|_{L^q(w_Q)}^2 \Big)^{1/2}
\end{equation}
whenever $Q \in \cQ_{K^2}$ and $V \subseteq \R^{n+1}$ be a linear subspace of dimension $d$. 
\end{theo}

The inequality \eqref{eq: decoupling} is understood to hold for all $f \in L^2(\R^n)$ with $f_V$ and $f_{\tau}$ as defined above. The weight $w_Q$ is as in Definition~\ref{def: adapted weight}. A crucial feature of Theorem~\ref{thm: decoupling} is that the implied constant does not depend on the scale parameter $K$. 

\begin{rema} Theorem~\ref{thm: decoupling} is not explicitly stated in
  \textcite{BD2015}, but it can be easily be deduced as a consequence of Theorem~1.1 in
  \textcite{BD2015}: see the proof of Lemma~9.5 in \textcite{G2018}.    
\end{rema}

Theorem~\ref{thm: decoupling} provides an effective comparison between the function $f_V$ and its constituent parts $f_{\tau}$ for $\tau \in \cT_{K^{-1}}(V)$. Note that the right-hand side of \eqref{eq: decoupling} involves the norms $\|Uf_{\tau}\|_{L^q(w_Q)}$, which are amenable to parabolic rescaling. Thus, the Bourgain--Demeter theorem allows us to access in higher dimensions the parabolic rescaling and induction-on-scale arguments which are more readily available in the 1-dimensional case.

We will not present a proof of Theorem~\ref{thm: decoupling}; indeed, the argument of \textcite{BD2015} is lengthy and complex, incorporating all the techniques we have so far encountered (wave packet analysis, parabolic rescaling, multilinear Strichartz estimates, broad-narrow analysis, and so on). We will, however, make some elementary remarks to contextualise the result. 

We first note that there are elementary ways to compare $f_V$ with the $f_{\tau}$. One example is the triangle inequality, which can be combined with Cauchy--Schwarz to give 
\begin{equation}\label{eq: triangle ineq}
    \|Uf_V\|_{L^q(Q)} \leq \sum_{\tau \in \cT_{K^{-1}}(V)} \|Uf_{\tau}\|_{L^q(Q)} \lesssim K^{(d-1)/2} \Big(\sum_{\tau \in \cT_{K^{-1}}(V)} \|Uf_{\tau}\|_{L^q(Q)}^2 \Big)^{1/2}
\end{equation}
for all $1 \leq q \leq \infty$. However, (for large $K$) the $K^{(d-1)/2}$ factor on the right-hand side of \eqref{eq: triangle ineq} is much larger than corresponding factor on the right-hand side of \eqref{eq: decoupling}. The weakness in the triangle inequality is that it does not take into account cancellation between the terms $Uf_{\tau}$ in the sum defining $Uf_V = \sum_{\tau \in \cT_{K^{-1}}(V)} Uf_{\tau}$. Indeed, the $Uf_{\tau}$ oscillate with distinct frequencies (that is, they have disjoint Fourier support) and so it is natural to expect significant cancellation.

For $q = 2$, we can use Plancherel's theorem to exploit the disjoint frequency support and arrive at the substantially stronger estimate 
\begin{equation*}
    \|Uf_V\|_{L^2(Q)} \leq \Big(\sum_{\tau \in \cT_{K^{-1}}(V)} \|Uf_{\tau}\|_{L^2(w_Q)}^2\Big)^{1/2}.
\end{equation*}
This can then be interpolated\footnote{The decoupling inequalities are not norm inequalities for linear operators in the usual sense, so one has to be somewhat careful when it comes to interpolation. Nevertheless, it is possible to appeal to classical interpolation results by suitably interpreting the estimates.} with a trivial Cauchy--Schwarz estimate at $q = \infty$ (using the fact that $\# \cT_{K^{-1}}(V) \lesssim K^{d-1}$) to give 
\begin{equation}\label{eq: trivial dec}
    \|Uf_V\|_{L^q(Q)} \lesssim K^{(d-1)(1/2 - 1/q)} \Big(\sum_{\tau \in \cT_{K^{-1}}(V)} \|Uf_{\tau}\|_{L^q(w_Q)}^2 \Big)^{1/2}
\end{equation}
for $2 \leq q \leq \infty$. Although \eqref{eq: trivial dec} improves over \eqref{eq: triangle ineq} by taking into account orthogonality properties, it still falls far short of the decoupling estimate in \eqref{eq: decoupling}. To improve over, \eqref{eq: trivial dec} it is necessary to not only use the basic disjointness of the Fourier support, but also the specific parabolic geometry. Thus, all the techniques we have encountered in~\S\ref{sec: Basic tools} and~\S\ref{sec: mutlilinear harmonic analysis} are relevant to the proof of Theorem~\ref{thm: decoupling}.




\subsection{$L^p$ broad-narrow decomposition in higher dimensions}\label{sec: Lp b/n}

We wish to apply the decoupling estimate from Theorem~\ref{thm: decoupling} to bound the narrow term arising from our broad-narrow decomposition. However, the form of the pointwise broad-narrow decomposition from Lemma~\ref{lem: b/n dec 1} is not suited to this. The main problem is that the maximum over $V \in \mathrm{Gr}(n, \R^{n+1})$ is taken pointwise, and so the maximising subspace $V_z$ can vary from point to point. In order to apply Theorem~\ref{thm: decoupling}, we need to fix a \textit{single} subspace $V$ over a whole $K^2$-cube.

To get around this issue, we use the locally constant properties of the solution operator~$U$, dictated by the uncertainty principle. In particular, since $Uf$ (or $Uf_{\tau}$, or $Uf_V$) is frequency localised at unit scale, $|Uf|$ (or $|Uf_{\tau}|$, or $|Uf_V|$) should be locally constant at unit scale. Thus, we should be able to fix a single maximising subspace~$V$ over any given unit cube. 

Unit scale cubes are still too small to apply Theorem~\ref{thm: decoupling}, which requires cubes of side-length at least $K^2$. Nevertheless, we can adapt the above argument to work at the $K^2$ spatial scale. The idea is that $|Uf|$ will still be `locally constant up to $K^2$ factors' over $K^2$-cubes.\footnote{Indeed, on a heuristic level, this is just a consequence of the local constancy property at unit scale. To rigorously implement this principle, we shall apply Lemma~\ref{lem: s loc const} with $M=K^2$.} We then proceed as before, working with a fixed maximising subspace $V$ over any given $K^2$-cube, but including additional $K^2$ factors due to the lack of true local constancy at this scale. It is vital, however, that matters are arranged so that these additional powers of $K^2$ appear only in the broad term. Indeed, as we saw in the proof of Proposition~\ref{prop: 1d Strichartz}, we must carefully control the $K$ power in the narrow term in order for the decomposition to be effective.

Rigorous implementation of these ideas leads to the following bound. 

\begin{lemm}[$L^q$ broad-narrow decomposition]\label{lem: b/n dec 2} For all $1 \leq q \leq \infty$ and $Q \in \cQ_{K^2}$, we have 
\begin{equation}\label{eq: b/n norm}
    \|Uf\|_{L^q(Q)} \lesssim  \max_{V \in \mathrm{Gr}(n,\R^{n+1})} \|Uf_V\|_{L^q(Q)} 
     + K^E \max_{(\tau_1, \dots, \tau_{n+1}) \in \cT_{K^{-1}}^{\,\mathrm{trans}} }  \Big\| \prod_{j=1}^{n+1} |Uf_{\tau_j}|^{1/(n+1)} \Big\|_{L^{q,*}(Q)}.
\end{equation}
Here $E = E_n$ is a dimensional constant.
\end{lemm}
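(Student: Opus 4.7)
The plan is to refine the pointwise decomposition of Lemma~\ref{lem: b/n dec 1} --- in which the narrow subspace $V_z$ is allowed to vary pointwise --- so that a \emph{single} subspace $V_Q$ is selected for each $K^2$-cube $Q$. This cube-level selection is what will later enable the application of the Bourgain--Demeter decoupling theorem to the narrow term. As flagged in the discussion preceding the lemma, the cost of localising the choice of $V$ to the $K^2$-scale must be absorbed entirely into the broad term.

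To do this, I introduce the cube-level surrogates $a_{\tau,Q} := \sup_{z \in Q} |Uf_\tau(z)|$ and set
\begin{equation*}
  U_{\mathrm{Br},Q} := \min_{V \in \mathrm{Gr}(n,\R^{n+1})} \max_{\tau \notin \cT_{K^{-1}}(V)} a_{\tau,Q},
\end{equation*}
selecting $V_Q$ to realise this minimum and, among such minimisers, to maximise $\#\{\tau \in \cT_{K^{-1}}(V_Q) : a_{\tau,Q} \geq U_{\mathrm{Br},Q}\}$. Since $|Uf_\tau(z)| \leq a_{\tau,Q}$ for $z \in Q$ and $\#\cT_{K^{-1}} \lesssim K^n$, the triangle inequality gives the pointwise estimate $|Uf(z)| \leq |Uf_{V_Q}(z)| + C K^n U_{\mathrm{Br},Q}$ on $Q$. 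Taking $L^q(Q)$-norms and the trivial bound $\|Uf_{V_Q}\|_{L^q(Q)} \leq \max_V \|Uf_V\|_{L^q(Q)}$ recovers the narrow contribution of \eqref{eq: b/n norm} with no extraneous $K$-factor.

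The transversality argument from the proof of Lemma~\ref{lem: b/n dec 1} then transfers verbatim with $a_{\tau,Q}$ in place of $|Uf_\tau(z)|$: no $(n-1)$-dimensional subspace $W$ can contain $\{\tau \in \cT_{K^{-1}}(V_Q) : a_{\tau,Q} \geq U_{\mathrm{Br},Q}\}$, since otherwise $W$ could be enlarged by the normal of a maximising cap $\tau^* \notin \cT_{K^{-1}}(V_Q)$ to an $n$-dimensional subspace contradicting the maximality of $V_Q$. This yields $(\tau_1,\dots,\tau_{n+1}) \in \cT_{K^{-1}}^{\,\mathrm{trans}}$ with $U_{\mathrm{Br},Q} \leq \prod_{j=1}^{n+1} a_{\tau_j,Q}^{1/(n+1)}$.

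The main obstacle is then the final conversion
\begin{equation*}
  \prod_{j=1}^{n+1} a_{\tau_j,Q}^{1/(n+1)} \cdot |Q|^{1/q} \lesssim K^{E'} \Big\| \prod_{j=1}^{n+1} |Uf_{\tau_j}|^{1/(n+1)} \Big\|_{L^{q,*}(Q)}.
\end{equation*}
The delicate point is that each $|Uf_{\tau_j}|$ is locally constant only on a $K\times\cdots\times K \times K^2$ tube aligned with the distinct direction $G(\xi_{\tau_j})$, and these tubes are strictly narrower than $Q$ in the $n$ transverse directions, so one cannot simply replace a sup over $Q$ by an $L^q$-average. The proposal is to apply Lemma~\ref{lem: s loc const} with $M = K^2$ and exponent $s = 1/(n+1) < 1$ (for which the full strength of that lemma is needed) to the product $\prod_j |Uf_{\tau_j}|^{1/(n+1)}$, whose Fourier support lies in a fixed $O(1)$-neighbourhood of the unit ball, and then to absorb the resulting Schwartz-tail errors into the enlarged norm $L^{q,*}(Q)$ by way of a localised multilinear Bernstein estimate in the spirit of Corollary~\ref{cor: loc multi Bernstein}. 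Careful bookkeeping of the $K$-losses through this last step fixes the dimensional exponent $E = n + E'$.
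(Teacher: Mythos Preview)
Your approach is essentially the same as the paper's: it too selects $V_Q$ by minimising a cube-level broad functional, reruns the transversality argument from Lemma~\ref{lem: b/n dec 1} at the cube scale, and then converts the resulting product of sups into the $L^{q,*}$-norm via Lemma~\ref{lem: s loc const} with $M=K^2$ and $s=1/(n+1)$. The only cosmetic difference is that the paper uses $\|Uf_\tau\|_{L^q(Q)}$ rather than $a_{\tau,Q}=\sup_Q|Uf_\tau|$ to define the broad functional; your choice works equally well and in fact gives a slightly cleaner pointwise bound before taking $L^q$-norms.

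There is, however, a technical slip in your final step. You propose to apply Lemma~\ref{lem: s loc const} to the product $\prod_j |Uf_{\tau_j}|^{1/(n+1)}$, asserting that its Fourier support lies in an $O(1)$-ball. That assertion is false: absolute values and fractional powers destroy Fourier support. What the paper does---and what you should do---is first multiply each $Uf_{\tau_j}$ by a smooth bump $\tilde\chi(R^{-1}\,\cdot\,)$ to obtain Schwartz functions $F_j$ with $\supp\hat F_j$ in an $O(1)$-ball, then apply Lemma~\ref{lem: s loc const} to \emph{each} $F_j$ separately, obtaining $a_{j,Q}^{1/(n+1)}\lesssim K^{2(n+1)}\bigl(|F_j|^{1/(n+1)}*\eta_{K^2}\bigr)(z)$ for $z\in Q$. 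Multiplying these $n+1$ pointwise bounds and then applying Minkowski's inequality (not Corollary~\ref{cor: loc multi Bernstein}) converts the product of convolutions into the $L^{q,*}(Q)$-norm. With this correction your argument goes through.
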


Here the $L^{q,*}$ expression is a `fuzzy' variant of the usual $L^q$ norm, which plays a largely unimportant technical r\^ole. In particular, let $\eta_{K^2}$ be a continuous, $L^1$-normalised function which is concentrated in $[-K^2/2,K^2/2]^{n+1}$, as in the statement of Lemma~\ref{lem: s loc const}. For  
\begin{equation*}
   \vec{\eta}_{K^2}(\vec{w}) := \eta_{K^2}(w_1) \cdots \eta_{K^2}(w_{n+1}),  \qquad \vec{w} = (w_1, \dots, w_{n+1}) \in (\R^{n+1})^{n+1},
\end{equation*}
and any choice of measurable set $S \subseteq \R^{n+1}$, we then define
\begin{equation*}
   \Big\| \prod_{j=1}^{n+1} |Uf_{\tau_j}|^{1/(n+1)} \Big\|_{L^{q,*}(S)} :=   \int_{(\R^{n+1})^{n+1}} \Big\|\prod_{j=1}^{n+1} |Uf_{\tau_j}(\,\cdot\, - w_j) |^{1/(n+1)} \Big\|_{L^q(S)} \vec{\eta}_{K^2}(\vec{w}) \,\ud \vec{w}.
\end{equation*}
 Such expressions arise due to the appearance of the mollifier in the rigorous formulation of the locally constant property from Lemma~\ref{lem: loc const} and Lemma~\ref{lem: s loc const}. In view of the translation invariance, multilinear estimates such as \eqref{eq: BCT} automatically imply `fuzzy' variants, with the left-hand $L^p$ norm replaced with the corresponding $L^{p,*}$ norm.

Before presenting the proof of Lemma~\ref{lem: b/n dec 2}, we discuss an immediate consequence. By combining the $L^q$ broad-narrow decomposition with the decoupling inequality from Theorem~\ref{thm: decoupling}, we arrive at the following broad-narrow decomposition. 

\begin{prop}\label{prop: BG dec} For all $2 \leq q \leq q_n := 2 \cdot \tfrac{n+1}{n-1}$, $Q \in \cQ_{K^2}$ and all $\varepsilon > 0$, we have 
\begin{align}
    \nonumber
    \|Uf\|_{L^q(Q)} &\lesssim_{\varepsilon} K^{\varepsilon} \Big(\sum_{\tau \in \cT_{K^{-1}}} \|Uf_{\tau}\|_{L^q(w_Q)}^2 \Big)^{1/2} \\
    \label{eq: BG final}
     & \qquad + K^E \max_{(\tau_1, \dots, \tau_{n+1}) \in \cT_{K^{-1}}^{\mathrm{trans}} }  \Big\| \prod_{j=1}^{n+1} |Uf_{\tau_j}|^{1/(n+1)} \Big\|_{L^{q,*}(Q)}.
\end{align}
Here $E = E_n$ is a dimensional constant. 
\end{prop}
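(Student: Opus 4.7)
The proof should combine Lemma~\ref{lem: b/n dec 2} with Theorem~\ref{thm: decoupling} in a direct way: the broad-narrow decomposition already isolates the transverse product expression we want, so the only work is to further decompose the narrow term using decoupling.

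First I would apply Lemma~\ref{lem: b/n dec 2} to obtain
\begin{equation*}
    \|Uf\|_{L^q(Q)} \lesssim \max_{V \in \mathrm{Gr}(n,\R^{n+1})} \|Uf_V\|_{L^q(Q)} + K^E \max_{(\tau_1, \dots, \tau_{n+1}) \in \cT_{K^{-1}}^{\,\mathrm{trans}} }  \Big\| \prod_{j=1}^{n+1} |Uf_{\tau_j}|^{1/(n+1)} \Big\|_{L^{q,*}(Q)}.
\end{equation*}
The second term is exactly the broad contribution appearing on the right-hand side of \eqref{eq: BG final}, so the whole argument reduces to estimating $\max_V \|Uf_V\|_{L^q(Q)}$.

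Next, I would fix any $n$-dimensional subspace $V \in \mathrm{Gr}(n,\R^{n+1})$ and apply Theorem~\ref{thm: decoupling} with $d=n$. Since $2 \cdot \tfrac{d+1}{d-1} = 2 \cdot \tfrac{n+1}{n-1} = q_n$, the hypothesis $2 \leq q \leq q_n$ precisely matches the range for which \eqref{eq: decoupling} is available, yielding
\begin{equation*}
    \|Uf_V\|_{L^q(Q)} \lesssim_{\varepsilon} K^{\varepsilon}\Big(\sum_{\tau \in \cT_{K^{-1}}(V)} \|Uf_{\tau}\|_{L^q(w_Q)}^2 \Big)^{1/2}.
\end{equation*}
To eliminate the dependence on $V$, I would then simply enlarge the inner sum from $\cT_{K^{-1}}(V)$ to the full collection $\cT_{K^{-1}}$ by positivity, giving a bound uniform in $V$ and so controlling the maximum over $V$ by $K^{\varepsilon}\bigl(\sum_{\tau \in \cT_{K^{-1}}} \|Uf_{\tau}\|_{L^q(w_Q)}^2\bigr)^{1/2}$. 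Combining with the display from step one yields \eqref{eq: BG final}.

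There is no real obstacle here: the substantive work has already been absorbed into Lemma~\ref{lem: b/n dec 2} (with its delicate matching of the locally-constant heuristic at scale $K^2$ to the pointwise broad-narrow split) and into Theorem~\ref{thm: decoupling}. The only point that deserves a second glance is the choice $d=n$ in the decoupling theorem, which ensures both that the range $q \leq q_n$ saturates the Bourgain--Demeter estimate and that the codimension-$1$ strips produced by the narrow caps in \eqref{eq: V aligned caps} are exactly the objects decoupled by the theorem. The powers of $K$ also match the statement: the $K^{\varepsilon}$ in the narrow contribution comes solely from decoupling, while the $K^E$ absorbs any dimensional factors incurred by Lemma~\ref{lem: b/n dec 2}.
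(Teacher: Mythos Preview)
Your proposal is correct and matches the paper's approach exactly: the paper states that Proposition~\ref{prop: BG dec} follows by combining the $L^q$ broad-narrow decomposition of Lemma~\ref{lem: b/n dec 2} with the decoupling inequality of Theorem~\ref{thm: decoupling} applied at $d=n$, and you have spelled out precisely this combination. The only step you add explicitly---enlarging the sum over $\cT_{K^{-1}}(V)$ to all of $\cT_{K^{-1}}$ by positivity to remove the dependence on $V$---is the obvious one and is implicit in the paper's one-line justification.
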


\begin{rema}\label{rmk: K exp} By examining the proof below, it is possible to take $E :=  4n^2$ in both Lemma~\ref{lem: b/n dec 2} and Proposition~\ref{prop: BG dec}.
\end{rema}

 Proposition~\ref{prop: BG dec} plays a central r\^ole in the proof of Theorem~\ref{thm: fractal L2}. The narrow term is now of a similar form to that of the $n=1$ case in \eqref{eq: Strichartz 0}, and therefore amenable to parabolic rescaling and induction arguments. A crucial feature of  Proposition~\ref{prop: BG dec} is the large range  of exponents $2 \leq q \leq 2 \cdot \frac{n+1}{n-1}$ for which the estimate holds. One could attempt to dispense with the broad-narrow decomposition entirely and apply Theorem~\ref{thm: decoupling} directly with $d = n+1$. This leads to the bound 
\begin{equation*}
    \|Uf\|_{L^q(Q)} \lesssim_{\varepsilon} K^{\varepsilon} \Big(\sum_{\tau \in \cT_{K^{-1}}} \|Uf_{\tau}\|_{L^q(w_Q)}^2 \Big)^{1/2} \qquad \textrm{for $2 \leq q \leq 2 \cdot \tfrac{n+2}{n}$.}
\end{equation*}
 However, in a similar spirit to the observations of \S\ref{sec: strichartz}, the more restrictive range $2 \leq q \leq 2 \cdot \tfrac{n+2}{n}$ is insufficient for the purpose of proving Theorem~\ref{thm: fractal L2}. 
 
\begin{rema} Proposition~\ref{prop: BG dec} essentially appears in \textcite{BG2011} for the restricted range $2 \leq q \leq 2 \cdot \tfrac{n}{n+1}$. More precisely, the arguments of \textcite{BG2011} can be used to show that if the decoupling estimate \eqref{eq: decoupling} is valid for some $q\geq 2$ (and $d = n$), then \eqref{eq: BG final} holds for the same $q$. It also follows from the methods of \textcite{BG2011} that the decoupling estimate holds for $2 \leq q \leq 2 \cdot  \tfrac{n}{n+1}$ (see also \textcite{B2013a}, where the connection with decoupling is more explicit). The full range $2 \leq q \leq q_n$ for decoupling followed later in \textcite{BD2015}.
\end{rema}

\begin{proof}[Proof (of Lemma~\ref{lem: b/n dec 2})] Given a cube $Q \in \cQ_{K^2}$, we define the \textit{$L^q$ broad functional} 
\begin{equation}\label{eq: b/n p 1} 
    \|Uf\|_{{\mathrm{BL}^q(Q)}} := \min_{V \in \mathrm{Gr}(n,\R^{n+1})} \max_{\tau \notin \cT_{K^{-1}}(V)} \|Uf_{\tau}\|_{L^q(Q)}
\end{equation}
and let
\begin{equation*}
    \cT_{K^{-1}}(V,Q) := \big\{ \tau \in \cT_{K^{-1}}(V) : \|Uf_{\tau}\|_{L^q(Q)} \geq \|Uf\|_{{\mathrm{BL}^q(Q)}} \big\};
\end{equation*}
these definitions are the natural $L^q$ analogues of the pointwise definitions appearing in the proof of Lemma~\ref{lem: b/n dec 1}. 

From all spaces realising the minimum in \eqref{eq: b/n p 1}, choose $V_Q \in \mathrm{Gr}(n,\R^{n+1})$ so as to also maximise $\#\cT_{K^{-1}}(V,Q)$. Define the collections of \textit{narrow} and \textit{broad} caps (for~$Uf$ over~$Q$) by
\begin{equation*}
    \mathcal{N}_Q := \cT_{K^{-1}}(V_Q) \qquad \textrm{and} \qquad  \cB_Q := \cT_{K^{-1}} \setminus \cT_{K^{-1}}(V_Q).
\end{equation*}
Arguing as in the proof of Lemma~\ref{lem: b/n dec 1}, we then have
\begin{align}
    \nonumber
    \|Uf\|_{L^q(Q)} &\leq \big\| \sum_{\tau \in \mathcal{N}_Q} Uf_{\tau} \big\|_{L^q(Q)} + \sum_{\tau \in \cB_Q} \|Uf_{\tau}\|_{L^q(Q)}  \\
    \label{eq: b/n p 2} 
    &\lesssim \max_{V \in \mathrm{Gr}(n,\R^{n+1})} \|Uf_V\|_{L^q(Q)} + 
 K^n \|Uf\|_{{\mathrm{BL}^q(Q)}}.   
\end{align}
Furthermore,
\begin{equation}\label{eq: b/n p 3} 
    \|Uf\|_{{\mathrm{BL}^q(Q)}} \leq \max_{(\tau_1, \dots, \tau_{n+1}) \in \cT_{K^{-1}}^{\,\mathrm{trans}} }  \prod_{j=1}^{n+1} \|Uf_{\tau_j}\|_{L^q(Q)}^{1/(n+1)}.
\end{equation}

Comparing the `broad' term on the right-hand side of \eqref{eq: b/n p 3} with the corresponding term in \eqref{eq: b/n norm}, we can see that the order of the geometric mean and $L^q$ norms are interchanged. We can in fact bound the right-hand side of \eqref{eq: b/n norm} by the right-hand side of \eqref{eq: b/n p 3} simply by H\"older's inequality, but this estimate goes in the wrong direction. Thus, the problem is to prove a reverse H\"older inequality, which is achieved using the locally constant properties.

Let $\tilde{\chi} \in \cS(\R^{n+1})$ satisfy $|\tilde{\chi}(z)| \gtrsim 1$ for all $|z| \leq 2$ and $\supp \mathcal{F}\tilde{\chi} \subseteq B^{n+1}(0,1)$. Fix caps $\tau_j \in \cT_{K^{-1}}$ for $1 \leq j \leq n + 1$ and define the functions
\begin{equation*}
    F_j(z) := Uf_{\tau_j}(z) \cdot \tilde{\chi}(R^{-1}z), \quad \textrm{for $1 \leq j \leq n+1$.}
\end{equation*}
Since each cube $Q \in \cQ_{K^2}$ satisfies $Q \subseteq B^{n+1}(0,2R)$, it follows that 
\begin{equation*}
   \prod_{j=1}^{n+1} \|Uf_{\tau_j}\|_{L^q(Q)}^{1/(n+1)} \lesssim  \prod_{j=1}^{n+1} \|F_j\|_{L^q(Q)}^{1/(n+1)} \leq \Big[\prod_{j=1}^{n+1} |a_{j,Q}|^{1/(n+1)}\Big] |Q|^{1/q},
\end{equation*}
where $a_{j,Q}$ denotes the supremum of $|F_j(z)|$ over all $z \in Q$. Applying Lemma~\ref{lem: s loc const} to each function $F_j$ with exponent $s := 1/(n+1)$ and $M := K^2$, we deduce that 
\begin{align}
\nonumber
\prod_{j=1}^{n+1}\|Uf_{\tau_j}\|_{L^q(Q)}^{1/(n+1)} &\lesssim K^{2(n+1)} \Big\|\prod_{j=1}^{n+1} |F_j |^{1/(n+1)} \ast \eta_{K^2} \Big\|_{L^q(Q)} \\
\label{eq: b/n p 4} 
&\lesssim  K^{2(n+1)} \Big\| \prod_{j=1}^{n+1} |Uf_{\tau_j}|^{1/(n+1)} \Big\|_{L^{q,*}(Q)},
\end{align}
where the second step follows by Minkowski's inequality. Note that the additional factor of $K^{2(n+1)}$ arises since the $F_j$~are only Fourier localised to scale~$1$ (rather than scale~$K^{-2}$) and therefore only enjoy local constancy at unit scale. Combining~\eqref{eq: b/n p 4} with~\eqref{eq: b/n p 2} and~\eqref{eq: b/n p 3}, we deduce the desired bound. 
\end{proof}




\section{Proof of the fractal energy estimate}\label{sec: main proof}




\subsection{Recap and a final reduction}

We now combine all the tools introduced in the previous sections to prove the fractal energy estimate. For convenience, here we reproduce the statement.

\begin{theo}[\cite{DZ2019}, \textit{c.f.} Theorem~\ref{thm: fractal L2}]\label{thm: fractal L2 repeat} 
For all $\varepsilon >  0$ and all $R \geq 1$, $1 \leq \alpha \leq n+1$, the inequality
\begin{equation}\label{eq: fractal L2 repeat} 
\|Uf\|_{L^2(Z_{\cQ})} \lesssim_{\varepsilon} \Delta_{\alpha}(\cQ)^{1/(n+1)}R^{\alpha/(2(n+1)) + \varepsilon} \|f\|_{L^2(\R^n)} 
\end{equation}
holds whenever $f \in L^2(\R^n)$ and $\cQ$ is a family of lattice unit cubes in $B^{n+1}(0,R)$. 
\end{theo}

From \S\ref{sec: Standard reductions}, we know that Theorem~\ref{thm: fractal L2 repeat} implies the sharp $L^2$ maximal estimate in Theorem~\ref{thm: DZ max} and therefore also the pointwise convergence result for the Schr\"odinger equation in Theorem~\ref{thm: DZ pointwise}. 

The proof of Theorem~\ref{thm: fractal L2 repeat} hinges on the broad-narrow decomposition and induction-on-scale methods. Prior to \textcite{DZ2019}, these techniques were applied to bound the Schr\"odinger maximal function in \textcite{B2013}. The latter work establishes $H^s \to L^2$ bounds for the maximal function in the range $s > \tfrac{2n-1}{4n}$, which is more restrictive than the (essentially sharp) condition $s > \frac{n}{2(n+1)}$ from \textcite{DZ2019}. One of the main advantages of the approach of \textcite{DZ2019} is the novel form of the inductive statement, which allows a great deal of information to be translated between scales.

Rather than attempting to prove \eqref{eq: fractal L2 repeat} directly, we work with an auxiliary $L^2 \to L^{q_n}$ estimate, where $q_n := 2 \cdot \frac{n+1}{n-1}$. This is the exponent featured in the broad-narrow decomposition from Proposition~\ref{prop: BG dec} (which arises from the application of the lower-dimensional decoupling inequality from Theorem~\ref{thm: decoupling}).

\begin{prop}\label{prop: final reduction} Let $q_n := 2 \cdot \frac{n+1}{n-1}$. For all $0 < \varepsilon < 1$ and $R \geq 1$, defining $\delta := \varepsilon/100 n^2$ and $K := R^{\delta}$, the following holds. Suppose $f \in L^2(\R^n)$ and $\cQ$ is a family of lattice $K^2$-cubes contained in $B^{n+1}(0,R)$ such that\footnote{For the definition of dyadically constant, see \S\ref{sec: pigeonholing}.}
\begin{equation}\label{eq: dyadic const hyp}
    \|Uf\|_{L^{q_n}(Q)} \qquad \textrm{are dyadically constant over $Q \in \cQ$.}
\end{equation}
Then for all $1 \leq \alpha \leq n + 1$, we have
\begin{equation}\label{eq: key estimate}
\|Uf\|_{L^{q_n}(Z_{\cQ})} \lesssim_{\varepsilon}\Big[\frac{\Delta_{\alpha}(\cQ)}{\#\cQ}\Big]^{1/(n+1)} R^{\alpha/(2(n+1)) + \varepsilon} \|f\|_{L^2(\R^n)}.
\end{equation}
\end{prop}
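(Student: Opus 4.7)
The plan is to establish Proposition~\ref{prop: final reduction} by induction on the scale parameter $R$, combining the broad-narrow decomposition of Proposition~\ref{prop: BG dec} applied at the intermediate scale $K^2$ with parabolic rescaling on the narrow contribution and the multilinear fractal energy estimate of Corollary~\ref{cor: multi L2 fractal} on the broad contribution. We fix $\delta = \varepsilon/(100n^2)$, $K = R^\delta$, and a large constant $\bC$ to be determined; the inductive hypothesis asserts that \eqref{eq: key estimate} holds with implicit constant $\bC$ at all scales $\tilde R \leq R/2$ (with the appropriate cube collection at each scale), the base case $R \lesssim_\varepsilon 1$ being trivial. The first step is to apply Proposition~\ref{prop: BG dec} on each $Q \in \cQ$, giving
\[
\|Uf\|_{L^{q_n}(Q)} \lesssim_\varepsilon K^{\varepsilon/2}\Bigl(\sum_{\tau \in \cT_{K^{-1}}}\|Uf_\tau\|_{L^{q_n}(w_Q)}^2\Bigr)^{1/2} + K^E \max_{\mathrm{trans}}\Bigl\|\prod_{j=1}^{n+1}|Uf_{\tau_j}|^{1/(n+1)}\Bigr\|_{L^{q_n,*}(Q)}.
\]
Using the dyadic-constancy hypothesis~\eqref{eq: dyadic const hyp} together with Lemma~\ref{lem: pigeonholing}, we restrict to a subfamily $\cQ^\sharp \subseteq \cQ$ of comparable cardinality on which either the narrow or the broad term dominates uniformly; since $\Delta_\alpha(\cQ^\sharp) \leq \Delta_\alpha(\cQ)$ and $\#\cQ^\sharp \sim \#\cQ$, the ratio $\Delta_\alpha/\#\cQ$ is preserved.

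In the narrow case, we perform successive rounds of dyadic pigeonholing over the caps $\tau$ and over the strips $S \in \bbS_{\tau^\ast}[R]$ from the pseudo-local decomposition~\eqref{eq: pseudo loc 1} to isolate a single dominant cap $\tau^\ast$ and strip $S^\ast$. Parabolic rescaling (Lemma~\ref{lem: Lp rescaling}) then transports $Uf_{S^\ast}$ on $\bar S^\ast$ to $U\tilde f_{S^\ast}$ on $B^{n+1}(0, \tilde R)$ with $\tilde R = 20 R/K^2$, and crucially the exponent arithmetic $(n+2)/q_n - n/2 = -1/(n+1) < 0$ ensures the rescaling factor is bounded. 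Pushing $\cQ^\sharp$ forward under $\cA_{S^\ast}$, the $K^2$-cubes become $K \times \cdots \times K \times 1$ parallelepipeds which we cover by suitable cubes at the new scale; careful bookkeeping shows that the transformed fractal ratio is controlled by $[\Delta_\alpha(\cQ^\sharp)/\#\cQ^\sharp]\,K^{-\alpha}(\log R)^{O(1)}$. Invoking the inductive hypothesis at scale $\tilde R$ produces an overall gain of $\tilde R^\varepsilon/R^\varepsilon = K^{-2\varepsilon}$ which swallows the $K^{\varepsilon/2}$ and logarithmic factors, closing the induction provided $\bC$ is chosen large enough.

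In the broad case, a pigeonhole over the $O(K^{O(1)})$ transverse tuples fixes a single tuple $(\tau_1^\ast, \ldots, \tau_{n+1}^\ast)$. Applying the local multilinear Bernstein inequality (Corollary~\ref{cor: loc multi Bernstein}) on each $K^2$-cube yields $\|\prod_j|Uf_{\tau_j^\ast}|^{1/(n+1)}\|_{L^{q_n}(Q)} \lesssim \|\prod_j|Uf_{\tau_j^\ast}|^{1/(n+1)}\|_{L^2(w_Q)}$, and after a further dyadic pigeonhole in $Q$ and summation, the identity $1/2 - 1/q_n = 1/(n+1)$ gives
\[
\|Uf\|_{L^{q_n}(Z_{\cQ^\sharp})} \lesssim_\varepsilon K^{O(1)}(\#\cQ^\sharp)^{-1/(n+1)}\Bigl\|\prod_{j=1}^{n+1}|Uf_{\tau_j^\ast}|^{1/(n+1)}\Bigr\|_{L^2(Z_{\cQ^\sharp})}.
\]
Invoking Corollary~\ref{cor: multi L2 fractal} (after refining $\cQ^\sharp$ to its unit-cube constituents) together with the orthogonality from the wave packet decomposition (Lemma~\ref{lem: wp dec}), which gives $\prod\|f_{\tau_j^\ast}\|_{L^2}^{1/(n+1)} \lesssim \|f\|_{L^2(\R^n)}$, produces a factor $\Delta_\alpha^{1/(2(n+1))} R^{\alpha/(2(n+1))+\varepsilon/2}$. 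The apparent mismatch between the exponent $\Delta_\alpha^{1/(2(n+1))}$ and the target $\Delta_\alpha^{1/(n+1)}$ is absorbed using the elementary lower bound $\Delta_\alpha \geq K^{-2(n+1)}$, which yields $\Delta_\alpha^{-1/(2(n+1))} \leq K \leq R^\delta$, absorbed harmlessly into the $R^\varepsilon$ slack.

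The hard part will be the broad case, specifically reconciling the $\Delta_\alpha$-exponents between the multilinear fractal $L^2$-estimate (which outputs $\Delta_\alpha^{1/(2(n+1))}$) and the target~\eqref{eq: key estimate} (which requires $\Delta_\alpha^{1/(n+1)}$). This is precisely why the Du--Zhang inductive statement is formulated with the extra $(\#\cQ)^{-1/(n+1)}$ factor: it makes the apparent mismatch absorbable via the definitional inequality $\#\cQ \leq \Delta_\alpha R^\alpha$. A secondary obstacle will be the anisotropic behaviour of $K^2$-cubes under the rescaling $\cA_{S^\ast}$, necessitating additional covering and pigeonholing arguments whose logarithmic losses are absorbed throughout into the $R^\varepsilon$ slack.
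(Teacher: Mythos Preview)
Your broad case is essentially correct and matches the paper's approach (the paper goes from $L^{q_n}$ to $L^{p_n}$ via multilinear Bernstein and then applies Theorem~\ref{thm: BCT} directly rather than Corollary~\ref{cor: multi L2 fractal}, but the numerology is the same, and the absorption of the $\Delta_\alpha$-exponent mismatch via $\Delta_\alpha \gtrsim K^{-2\alpha}$ is exactly what the paper does).

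The narrow case, however, has a genuine gap. You propose to ``isolate a single dominant cap $\tau^\ast$ and strip $S^\ast$'' by pigeonholing. This cannot work: the narrow term is an $\ell^2$-sum over $\sim K^n$ caps, and passing to a single dominant term costs a factor $\sim K^{n/2}$, which is polynomial in $K$ and dwarfs the $K^{-2\varepsilon}$ gain from the induction hypothesis. (Dyadic pigeonholing only saves $\log$ factors when you keep \emph{all} terms of a dyadic block, not when you pass to one term.) The unsubstantiated claim that the pushed-forward fractal ratio is $\lesssim [\Delta_\alpha/\#\cQ]\,K^{-\alpha}$ for a single strip is where this loss is hidden.

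The paper's actual narrow argument---the main innovation of Du--Zhang---is far more delicate. One keeps \emph{all} strips $S \in \bbS$, applies H\"older on each cube $Q$ to pass from $\ell^2$ to $\ell^{q_n}$ at the cost of a factor $[\#\bbS(Q)]^{1/(n+1)}$, then sums in $\ell^{q_n}$ over $Q$. For each strip $S$ one covers $S \cap Z_{\cQ}$ by parallelepipeds $P$ (not pushing a single cube forward, but a whole family), rescales to a family $\tilde{\cQ}(S)$ of $\tilde K^2$-cubes at scale $\tilde R$, and invokes the induction hypothesis there. The crucial step is a combinatorial estimate (double-counting plus Lemma~\ref{lem: relating scales}) showing, after several pigeonholings in $P$, $S$, and $Q$, that
\[
K^{-1-\alpha}\Bigl[\max_{Q}\frac{\#\bbS(Q)}{\#\bbS}\Bigr]\Bigl[\max_{S}\frac{\Delta_\alpha(\tilde{\cQ}(S))}{\#\tilde{\cQ}(S)}\Bigr] \lessapprox \frac{\Delta_\alpha(\cQ)}{\#\cQ},
\]
and a reverse H\"older in $S$ handles the $[\#\bbS]^{1/(n+1)}$ against the $\ell^{q_n}$-sum of $\|f_S\|_{L^2}$. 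None of this structure survives if you collapse to one strip. Incidentally, you have the difficulty reversed: the narrow case is the hard part, not the broad case.
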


It is not difficult to show that Proposition~\ref{prop: final reduction} implies Theorem~\ref{thm: fractal L2 repeat}. This relies on a pigeonholing argument, used to pass to the dyadically constant setup, which we isolate in Lemma~\ref{lem: pigeonholing Q} below. Before giving the details of this reduction, we indicate why the formulation of Proposition~\ref{prop: final reduction} is useful. Two major ingredients in our analysis are the multilinear Strichartz estimates from Theorem~\ref{thm: BCT} and the decoupling inequality used in the broad-narrow decomposition in Proposition~\ref{prop: BG dec}. The critical exponent for the former is $p_n := 2 \cdot \tfrac{n+1}{n}$ and for the latter is~$q_n$. Roughly speaking, the setup in Proposition~\ref{prop: final reduction} allows us to apply both these estimates at their respective critical exponents. The proposition itself is stated for the exponent~$q_n$, whilst the dyadic constancy hypothesis~\eqref{eq: dyadic const hyp} allows one to efficiently pass to the exponent~$p_n$ using reverse H\"older-type arguments (see~\S\ref{sec: broad dom} below). Thus, the pigeonholing can be thought of roughly as a weak form of real interpolation, which allows us to reconcile two very different estimates at distinct Lebesgue exponents.

\begin{lemm}\label{lem: pigeonholing Q} Let $2 \leq q \leq \infty$ and $1 \leq M \leq R$. Suppose $f \in L^2(\R^n)$ and $\cQ_M$ is a collection of lattice $M$-cubes contained in $B^{n+1}(0,R)$. Then there exists a subcollection $\cQ_M' \subseteq \cQ_M$ such that
\begin{equation}\label{eq: pigeon reduction 1}
    \|Uf\|_{L^q(Q)} \qquad \textrm{are dyadically constant over $Q \in \cQ_M'$.}
\end{equation}
 and
\begin{equation}\label{eq: pigeon reduction 2}
    \|Uf\|_{L^2(Z_{\cQ_M})} \lesssim (\log R)^{1/2}\|Uf\|_{L^2(Z_{\cQ_M'})} + R^{-100n}\|f\|_{L^2(\R^n)}.
\end{equation}
\end{lemm}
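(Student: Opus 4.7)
The plan is a two-step dyadic pigeonholing: first partition $\cQ_M$ into subfamilies on which $\|Uf\|_{L^q(Q)}$ is dyadically constant, and then apply an additional pigeonhole over these families to retain one on which $\|Uf\|_{L^2(Z_{\cQ_M})}^2$ is almost fully concentrated. The first step of the argument is to bound the range of the values $\|Uf\|_{L^q(Q)}$. Recalling the defining integral $Uf(z) = (2\pi)^{-n}\int_{B^n(0,1)} e^{i\phi(z;\xi)}\hat{f}(\xi)\,\ud\xi$, Cauchy--Schwarz gives the crude pointwise bound $\|Uf\|_{L^\infty(\R^{n+1})} \lesssim \|f\|_{L^2(\R^n)}$, and hence $\|Uf\|_{L^q(Q)} \lesssim M^{(n+1)/q}\|f\|_{L^2(\R^n)} \leq R^{n+1}\|f\|_{L^2(\R^n)}$ for every $Q \in \cQ_M$.

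Next, I would split $\cQ_M = \cQ_M^{\mathrm{s}} \cup \cQ_M^{\mathrm{b}}$, where the \emph{small} cubes are those satisfying $\|Uf\|_{L^q(Q)} \leq R^{-A}\|f\|_{L^2(\R^n)}$ for some large constant $A = A(n)$ to be chosen. For each small cube, H\"older's inequality on $Q$ gives $\|Uf\|_{L^2(Q)} \leq M^{(n+1)(1/2-1/q)}\|Uf\|_{L^q(Q)}$, and combined with the trivial count $\#\cQ_M \lesssim (R/M)^{n+1}$, summing yields
\[
\sum_{Q \in \cQ_M^{\mathrm{s}}} \|Uf\|_{L^2(Q)}^2 \;\lesssim\; R^{n+1-2A}\|f\|_{L^2(\R^n)}^2,
\]
which is at most $R^{-200n}\|f\|_{L^2(\R^n)}^2$ provided $A$ is chosen sufficiently large in terms of $n$. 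This absorbs the contribution of the small cubes into the error term $R^{-100n}\|f\|_{L^2(\R^n)}$ appearing in \eqref{eq: pigeon reduction 2}.

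For the remaining cubes the values $\|Uf\|_{L^q(Q)}$ lie in the polynomial range $[R^{-A}\|f\|_{L^2(\R^n)},\, CR^{n+1}\|f\|_{L^2(\R^n)}]$, so the decomposition in \eqref{eq: pigeon decomp} partitions $\cQ_M^{\mathrm{b}}$ into $J \lesssim \log R$ subfamilies $\cQ_{M,1},\dots,\cQ_{M,J}$ on each of which $\|Uf\|_{L^q(Q)}$ is dyadically constant. Applying the pigeonhole principle over the $J$ levels to the non-negative quantities $\sum_{Q \in \cQ_{M,j}}\|Uf\|_{L^2(Q)}^2$, there exists some index $j^*$ with
\[
\sum_{Q \in \cQ_{M,j^*}}\|Uf\|_{L^2(Q)}^2 \;\geq\; \frac{1}{J}\sum_{Q \in \cQ_M^{\mathrm{b}}}\|Uf\|_{L^2(Q)}^2.
\]
Setting $\cQ_M' := \cQ_{M,j^*}$, the dyadic constancy \eqref{eq: pigeon reduction 1} holds by construction, while combining the pigeonhole output with the small-cube estimate gives
\[
\|Uf\|_{L^2(Z_{\cQ_M})}^2 \;\lesssim\; \log R\cdot\|Uf\|_{L^2(Z_{\cQ_M'})}^2 + R^{-200n}\|f\|_{L^2(\R^n)}^2,
\]
from which \eqref{eq: pigeon reduction 2} follows upon taking square roots. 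No genuine obstacle arises here, the argument being essentially a direct application of Lemma~\ref{lem: pigeonholing}; the only point requiring care is to choose the threshold exponent $A$ large enough (depending only on $n$) to overpower the $(R/M)^{n+1}$ factor from the crude count on $\#\cQ_M$.
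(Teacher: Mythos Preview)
Your proposal is correct and follows essentially the same approach as the paper: both split off the cubes with negligibly small $\|Uf\|_{L^q(Q)}$ into an error term, observe that the remaining values lie in a polynomial-width range, and then apply dyadic pigeonholing (Lemma~\ref{lem: pigeonholing} ii)) to the function $Q \mapsto \|Uf\|_{L^2(Q)}^2$ over those remaining cubes. The only cosmetic difference is that you leave the threshold exponent $A$ free and supply the explicit H\"older-plus-counting estimate for the small cubes, whereas the paper fixes the threshold at $R^{-200n}$ and simply declares that contribution negligible.
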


\begin{proof} Begin by bounding 
\begin{equation}\label{eq: pigeon reduction 3}
\|Uf\|_{L^2(Z_{\cQ_M})} \leq \Big(\sum_{Q \in \cQ_{M,0}} \|Uf\|_{L^2(Q)}^2 \Big)^{1/2} + \Big(\sum_{Q \in \cQ_{M,1}} \|Uf\|_{L^2(Q)}^2 \Big)^{1/2}
\end{equation}
where $\cQ_{M,0}$ and $\cQ_{M,1}$ are defined by
\begin{equation*}
    \cQ_{M,0} := \big\{ Q \in \cQ_M :  \|Uf\|_{L^q(Q)}  < R^{-200n}\|f\|_{L^2(\R^n)}\big\}, \quad \cQ_{M,1} := \cQ_M \setminus \cQ_{M,0}. 
\end{equation*}
The net contribution to \eqref{eq: pigeon reduction 3} arising from the cubes $Q \in \cQ_{M,0}$ is negligible, and can be bounded by the second term on the right-hand side of \eqref{eq: pigeon reduction 2}. 

By the definition of $\cQ_{M,0}$ and elementary estimates,
\begin{equation*}
    R^{-200n}\|f\|_{L^2(\R^n)} \leq \|Uf\|_{L^q(Q)} \lesssim R^{(n+1)/q}\|f\|_{L^2(\R^n)} \qquad \textrm{for all $Q \in \cQ_{M,1}$.}
\end{equation*}
Consequently, we may apply dyadic pigeonholing in the form of Lemma~\ref{lem: pigeonholing} ii) to find $\cQ_M' \subseteq \cQ_M$ satisfying \eqref{eq: pigeon reduction 1} and such that 
\begin{equation*}
 \Big(\sum_{Q \in \cQ_{M, 1}} \|Uf\|_{L^2(Q)}^2 \Big)^{1/2} \lesssim (\log R)^{1/2} \Big(\sum_{Q \in \cQ_M'} \|Uf\|_{L^2(Q)}^2 \Big)^{1/2},
\end{equation*}
which combines with \eqref{eq: pigeon reduction 3} and our earlier observation to give the desired bound. 
\end{proof}

\begin{proof}[Proof (Proposition~\ref{prop: final reduction} $\Rightarrow$ Theorem~\ref{thm: fractal L2 repeat})]

Let $\cQ$ be a family of lattice unit cubes in $B^{n+1}(0,R)$ and $\cQ_{K^2}$ the collection of lattice $K^2$-cubes which intersect $Z_{\cQ}$. Here $K := R^{\delta}$ is as in the statement of Proposition~\ref{prop: final reduction}.

By Lemma~\ref{lem: pigeonholing Q}, there exists a subcollection $\cQ_{K^2}' \subseteq \cQ_{K^2}$ such that 
\begin{equation*}
    \|Uf\|_{L^{q_n}(Q)} \qquad \textrm{are dyadically constant over $Q \in \cQ_{K^2}'$.}
\end{equation*}
     and
\begin{equation}\label{eq: final red 1}
   \|Uf\|_{L^2(Z_{\cQ})} \leq \|Uf\|_{L^2(Z_{\cQ_{K^2}})} \lesssim (\log R)^{1/2}\|Uf\|_{L^2(Z_{\cQ_{K^2}'})} + R^{-100n}\|f\|_{L^2(\R^n)}.
\end{equation}
We can therefore apply Proposition~\ref{prop: final reduction} to the family $\cQ_{K^2}'$ to deduce that
\begin{equation}\label{eq: final red 2}
\|Uf\|_{L^{q_n}(Z_{\cQ_{K^2}'})} \lesssim_{\varepsilon}\Big[\frac{\Delta_{\alpha}(\cQ_{K^2}')}{\#\cQ_{K^2}'}\Big]^{1/(n+1)}R^{\alpha/(2(n+1)) + \varepsilon/2} \|f\|_{L^2(\R^n)}.
\end{equation}

We now apply H\"older's inequality and combine \eqref{eq: final red 1} and \eqref{eq: final red 2} to deduce that
\begin{align*}
     \|Uf\|_{L^2(Z_{\cQ})} &\lesssim (\log R)^{1/2}|Z_{\cQ_{K^2}'}|^{1/(n+1)} \|Uf\|_{L^{q_n}(Z_{\cQ_{K^2}'})} + R^{-100n}\|f\|_{L^2(\R^n)} \\
      &\lesssim_{\varepsilon}R^{2\delta}(\log R)^{1/2} \Delta_{\alpha}(\cQ_{K^2}')^{1/(n+1)}R^{\alpha/(2(n+1)) + \varepsilon/2} \|f\|_{L^2(\R^n)},
\end{align*}
where we have used the elementary bound
\begin{equation*}
|Z_{\cQ_{K^2}'}| \leq K^{2(n+1)}[\# \cQ_{K^2}'] = R^{2(n+1)\delta}[\# \cQ_{K^2}']
\end{equation*}
Note that the factor $R^{2\delta}(\log R)^{1/2}$ can be bounded by $R^{\varepsilon/2}$.

 Finally, since each $Q_{K^2} \in \cQ_{K^2}$ has the property that $2 \cdot Q_{K^2}$ contains at least one cube from $\cQ$, it easily follows that 
$\Delta_{\alpha}(\cQ_{K^2}') \lesssim \Delta_{\alpha}(\cQ)$. Consequently, 
\begin{align*}
     \|Uf\|_{L^2(Z_{\cQ})} \lesssim_{\varepsilon}\Delta_{\alpha}(\cQ)^{1/(n+1)}R^{\alpha/(2(n+1)) + \varepsilon} \|f\|_{L^2(\R^n)},
\end{align*}
which is precisely the desired bound. 
\end{proof}




\subsection{Induction-on-scale}

The proof of Proposition~\ref{prop: final reduction} follows by induction on the scale parameter $R$. Here we introduce the underlying induction scheme.

Fix $\varepsilon > 0$, $1 \leq \alpha \leq n+1$ and set $\delta := \varepsilon/100 n^2$. We shall say a parameter is \textit{admissible} if it depends only on the dimension $n$ and $\varepsilon$. Recall from \eqref{eq: space-time energy est} that the basic energy estimate 
\begin{equation*}
    \|Uf\|_{L^2(Z_{\cQ})} \lesssim \Delta_{\alpha}(\cQ)^{1/(n+1)} R^{1/2} \|f\|_{L^2(\R^n)} 
\end{equation*}
always holds. Thus, Proposition~\ref{prop: final reduction} is trivial for small scales. In particular, let $R_0 \geq 1$ denote a fixed scale, depending only on admissible parameters $n$ and $\varepsilon$. For $\bC \geq 1$ a suitable choice of admissible constant,
\begin{equation}\label{eq: fractal Lqn} 
   \|Uf\|_{L^{q_n}(Z_{\cQ})} \leq \bC \Big[\frac{\Delta_{\alpha}(\cQ)}{\#\cQ}\Big]^{1/(n+1)}  R^{\alpha/(2(n+1)) + \varepsilon}\|f\|_{L^2(\R^n)},
\end{equation}
holds under the hypotheses of Proposition~\ref{prop: final reduction} whenever $1 \leq R \leq R_0$. This serves as the base case for our induction. 

Henceforth we assume $R_0$ and $\bC$ are fixed admissible constants, chosen so that the above base case holds and large enough to satisfy the forthcoming requirements of the proof. Let $R \geq R_0$ and define $K := R^{\delta}$. We shall work under the following induction hypothesis. 

\begin{induction} Let $1 \leq \tilde{R} \leq R/2$ and define $\tilde{K} := \tilde{R}^{\delta}$. The inequality 
\begin{equation*}
\|Ug\|_{L^{q_n}(Z_{\tilde{\cQ}})} \leq \bC \Big[\frac{\Delta_{\alpha}(\tilde{\cQ})}{\#\tilde{\cQ}}\Big]^{1/(n+1)}\tilde{R}^{\alpha/(2(n+1)) + \varepsilon} \|g\|_{L^2(\R^n)}.
\end{equation*}
holds whenever $g \in L^2(\R^n)$ and $\tilde{\cQ}$ is a non-empty family of lattice $\tilde{K}^2$-cubes contained in $B^{n+1}(0,R)$ such that
\begin{equation*}
    \|Ug\|_{L^{q_n}(\tilde{Q})}  \qquad \textrm{are dyadically constant over $\tilde{Q} \in \tilde{\cQ}$.}
\end{equation*}
\end{induction}

We now fix $f \in L^2(\R^n)$ and for $R \geq R_0$ as above let $\cQ$ be a family of $K^2$-cubes satisfying the hypotheses of Proposition~\ref{prop: final reduction}. The goal is to prove \eqref{eq: fractal Lqn}. Without loss of generality, we may assume $\supp \hat{f} \subseteq B^n(0,1/2)$.

\begin{rema}  The high-level structure of the proof is similar to the induction-on-scale argument used earlier to prove the 1-dimensional Strichartz estimate (Proposition~\ref{prop: 1d Strichartz}). There are, however, some notable differences:
\begin{itemize}
    \item The intermediate scale $K$ introduced in the statement of Proposition~\ref{prop: final reduction} plays the same r\^ole as the $K$ parameter in the proof of Proposition~\ref{prop: 1d Strichartz}. Here, however, $K = R^{\delta}$ depends on the inadmissible parameter $R$ whereas in the proof of Proposition~\ref{prop: 1d Strichartz} the parameter $K$ was chosen independently of $R$. This choice of $K$ allows us to compensate for small losses in the narrow term arising from the $K^{\varepsilon}$ factor in the decoupling inequality (Theorem~\ref{thm: decoupling}). 
    \item To close the induction, later in the argument we must ensure that $K$ is sufficiently large. Rather than fix a specific value of $K$, as in the proof of Proposition~\ref{prop: 1d Strichartz}, here we fix a lower bound for $K = R^{\delta}$ by introducing the parameter $R_0$. 
\end{itemize}
\end{rema}




\subsection{Broad / narrow dichotomy} The next step is to apply the broad-narrow decomposition described in \S\ref{sec: Lp b/n}. This will allow us to bring the powerful multilinear Strichartz estimates into play.

By Proposition~\ref{prop: BG dec}, there exists admissible constants $C_{\varepsilon}^{\mathrm{bn}}$, $E \geq 1$ such that
\begin{align}
\nonumber
    \|Uf\|_{L^{q_n}(Q)} &\leq C_{\varepsilon}^{\mathrm{bn}} K^{\varepsilon} \Big(\sum_{\tau \in \cT_{K^{-1}}} \|Uf_{\tau}\|_{L^{q_n}(w_Q)}^2 \Big)^{1/2} \\
    \label{eq: b/n dichot 1}
     & \qquad +C_{\varepsilon}^{\mathrm{bn}} K^E \max_{\tau \in \cT_{K^{-1}}^{\mathrm{trans}} }  \Big\| \prod_{j=1}^{n+1} |Uf_{\tau_j}|^{1/(n+1)} \Big\|_{L^{q_n,*}(Q)}.
\end{align}
holds for all $Q \in \cQ$. Here the maximum is taken over all transverse $(n+1)$-tuples $\tau = (\tau_1, \dots, \tau_{n+1}) \in \cT_{K^{-1}}^{\mathrm{trans}}$. By Remark~\ref{rmk: K exp} we may (and shall) take $E := 4n^2$.

 We say a cube $Q \in \cQ$ is \textit{narrow} if 
\begin{equation*}
      \|Uf\|_{L^{q_n}(Q)} \leq 2C_{\varepsilon}^{\mathrm{bn}} K^{\varepsilon} \Big(\sum_{\tau \in \cT_{K^{-1}}} \|Uf_{\tau}\|_{L^{q_n}(w_Q)}^2 \Big)^{1/2};
\end{equation*}
otherwise, we say $Q$ is \textit{broad}. As a consequence of \eqref{eq: b/n dichot 1}, the inequality 
\begin{equation}\label{eq: b/n dichot 2}
      \|Uf\|_{L^{q_n}(Q)} \leq 2C_{\varepsilon}^{\mathrm{bn}} K^E \max_{\tau \in \cT_{K^{-1}}^{\mathrm{trans}} }  \Big\| \prod_{j=1}^{n+1} |Uf_{\tau_j}|^{1/(n+1)} \Big\|_{L^{q_n,*}(Q)}
\end{equation}
holds whenever $Q \in \cQ$ is broad. 

We denote by $\cQ_{\mathrm{broad}}$ and $\cQ_{\mathrm{narrow}}$ the collections of broad and narrow cubes, respectively. The proof will split into two subcases, depending on whether the majority of the cubes are broad or narrow.




\subsection{Broad-dominant case}\label{sec: broad dom}

Suppose the majority of the cubes $Q \in \cQ$ are broad: that is, 
\begin{equation}\label{eq: broad case}
   \# \cQ_{\mathrm{broad}} \geq \# \cQ / 2 .
\end{equation}
We refer to this as the \textit{broad-dominant case}. Here we bound our operator by direct appeal to the multilinear Strichartz estimate from Theorem~\ref{thm: BCT}. This should come as no surprise, since we have already seen in Corollary~\ref{cor: multi L2 fractal} that Theorem~\ref{thm: BCT} implies multilinear fractal energy estimates. Due to the form of the desired estimate in \eqref{eq: key estimate}, we do not appeal directly to Corollary~\ref{cor: multi L2 fractal}, but the underlying idea is nevertheless the same.

Let $\lambda \geq 1$ and $\cQ' \subseteq \cQ_{\mathrm{broad}}$ be any collection of broad cubes which satisfies $\#\cQ' \geq \lambda^{-1} \#\cQ$. Then, by the dyadic constancy hypothesis \eqref{eq: dyadic const hyp}, we may bound
\begin{equation*}
    \|Uf\|_{L^{q_n}(Z_{\cQ})} \lesssim \lambda^{1/q_n} \|Uf\|_{L^{q_n}(Z_{\cQ'})} \leq \lambda \Big(\sum_{Q \in \cQ'}\|Uf\|_{L^{q_n}(Q)}^{q_n}\Big)^{1/q_n}.
\end{equation*}
Combining this with \eqref{eq: b/n dichot 2}, there exists an assignment of a transverse $(n+1)$-tuple of caps $\tau_Q = (\tau_{Q,1}, \dots, \tau_{Q,n+1}) \in \cT_{K^{-1}}^{\mathrm{trans}}$ to each $Q \in \cQ'$ such that
\begin{equation*}
  \|Uf\|_{L^{q_n}(Z_{\cQ})} \lesssim \lambda K^E  \Big(\sum_{Q \in \cQ'}  \Big\| \prod_{j=1}^{n+1} |Uf_{\tau_{Q,j}}|^{1/(n+1)} \Big\|_{L^{q_n,*}(Q)}^{q_n} \Big)^{1/q_n}.
\end{equation*}

Let $p_n := 2 \cdot \tfrac{n+1}{n}$ denote the critical exponent in the multilinear Strichartz estimate (Theorem~\ref{thm: BCT}). By the local multilinear Bernstein inequality from Corollary~\ref{cor: loc multi Bernstein}, we deduce that\footnote{The weighted $L^{p_n,*}$-norms are defined in the obvious manner; we omit the details.}
\begin{equation}\label{eq: broad dom 1}
    \|Uf\|_{L^{q_n}(Z_{\cQ})} \lesssim \lambda K^E  \Big(\sum_{Q \in \cQ'} \Big\| \prod_{j=1}^{n+1} |Uf_{\tau_{Q,j}}|^{1/(n+1)} \Big\|_{L^{p_n,*}(w_{Q})}^{q_n}\Big)^{1/q_n}.
\end{equation}

One way to estimate the $\ell^{q_n}$ sum on the right-hand side of \eqref{eq: broad dom 1} is to simply use the nesting of $\ell^p$-norms to deduce that
\begin{equation}\label{eq: broad dom 2}
\Big(\sum_{Q \in \cQ'} \Big\| \prod_{j=1}^{n+1} |Uf_{\tau_{Q,j}}|^{1/(n+1)} \Big\|_{L^{p_n,*}(w_{Q})}^{q_n}\Big)^{1/q_n} \lesssim K^E \max_{\tau \in \cT_{K^{-1}}^{\mathrm{trans}} }  \Big\| \prod_{j=1}^{n+1} |Uf_{\tau_j}|^{1/(n+1)} \Big\|_{L^{p_n,*}(w_{B_R})}.
\end{equation}
Here we have interchanged the $\ell^{p_n}$-norm and maximum in~$\tau$ at the expense of an additional factor of $K^E$. The multilinear expression can now be bounded using Theorem~\ref{thm: BCT}. However, the resulting estimates are insufficient for our purpose and we shall instead use the flexibility to choose~$\cQ'$ to improve~\eqref{eq: broad dom 2}. In particular, we choose~$\cQ'$ via a pigeonholing argument, along the lines of the reverse H\"older inequality from Lemma~\ref{lem: rev Holder}. 

Pigeonholing will repeatedly feature in the forthcoming arguments; it is convenient to introduce asymptotic notation to suppress the resulting logarithmic factors. 

\begin{notation} Let $A$, $B$ be non-negative real numbers. We write $A \lessapprox B$ or $B \gtrapprox A$ if for all $\eta > 0$ there exists a constant $C_{\varepsilon, \eta} \geq 1$, depending only on $\eta$ and the admissible parameters $n$ and $\varepsilon$, such that $A \leq C_{\varepsilon, \eta} R^{\eta} B$.
\end{notation}

Let $\cQ_{\mathrm{tiny}}$ denote the collection of all cubes $Q \in \cQ_{\mathrm{broad}}$ such that 
\begin{equation*}
    \Big\| \prod_{j=1}^{n+1} |Uf_{\tau_{Q,j}}|^{1/(n+1)} \Big\|_{L^{p_n,*}(w_{Q})} \leq R^{-100n}\|f\|_{L^2(\R^n)}.
\end{equation*}
If $\#\cQ_{\mathrm{tiny}} \geq \# \cQ_{\mathrm{broad}}/2$, then we may apply \eqref{eq: broad dom 1} with $\cQ' := \cQ_{\mathrm{tiny}}$ to obtain a very favourable estimate. Thus, we assume $\#\cQ_{\mathrm{tiny}} < \# \cQ_{\mathrm{broad}}/2$. 

Note that
\begin{equation*}
  R^{-100n}\|f\|_{L^2(\R^n)} <  \Big\| \prod_{j=1}^{n+1} |Uf_{\tau_{Q,j}}|^{1/(n+1)} \Big\|_{L^{p_n,*}(w_Q)} \lesssim R\|f\|_{L^2(\R^n)}
\end{equation*}
for all $Q \in \cQ_{\mathrm{broad}}\setminus \cQ_{\mathrm{tiny}}$, where the upper bound is a trivial consequence of the Cauchy--Schwarz inequality and Plancherel's theorem. Thus, by dyadic pigeonholing, there exists some $\cQ' \subseteq \cQ_{\mathrm{broad}}$ satisfying  $\# \cQ' \gtrapprox \# \cQ$ such that
\begin{equation*}
    \Big\| \prod_{j=1}^{n+1} |Uf_{\tau_{Q,j}}|^{1/(n+1)} \Big\|_{L^{p_n,*}(w_Q)} \qquad \textrm{are dyadically constant over $Q \in \cQ'$.} 
\end{equation*}
For this choice of set $\cQ'$, we can upgrade \eqref{eq: broad dom 2} to the reverse H\"older inequality
\begin{align}
\nonumber
\Big(\sum_{Q \in \cQ'} \Big\| \prod_{j=1}^{n+1} |Uf_{\tau_{Q,j}}|^{1/(n+1)} & \Big\|_{L^{p_n,*}(w_Q)}^{q_n}\Big)^{1/q_n} \\
\label{eq: broad dom 3}
&\lessapprox K^E [\#\cQ]^{-1/(2(n+1))}   \max_{\tau \in \cT_{K^{-1}}^{\mathrm{trans}} }\Big\| \prod_{j=1}^{n+1} |Uf_{\tau_j}|^{1/(n+1)} \Big\|_{L^{p_n,*}(w_{B_R})}.
\end{align}

 Combining \eqref{eq: broad dom 1} (for the choice of $\cQ'$ above) and \eqref{eq: broad dom 3}, we deduce that
 \begin{equation*}
     \|Uf\|_{L^{q_n}(Z_{\cQ})} \lessapprox K^{2E}    [\#\cQ]^{-1/(2(n+1))} \max_{\tau \in \cT_{K^{-1}}^{\mathrm{trans}}}  \Big\| \prod_{j=1}^{n+1} |Uf_{\tau_j}|^{1/(n+1)} \Big\|_{L^{p_n,*}(w_{B_R})}.
 \end{equation*}
Now applying the multilinear Strichartz estimate,\footnote{Here we are estimating a weighted norm $L^{p_n}(w_{B_R})$ rather than a $L^{p_n}(B_R)$-norm with sharp cut-off as in Theorem~\ref{thm: BCT}. However, Theorem~\ref{thm: BCT} automatically extends to the weighted case, using the translation invariance of the estimate and rapid decay of the weight.} 
\begin{equation*}
   \|Uf\|_{L^{q_n}(Z_{\cQ})} \lessapprox K^{3E}    [\#\cQ]^{-1/(n+1)} [\#\cQ]^{1/(2(n+1))} R^{\varepsilon/2}\|f\|_{L^2(\R^n)}. 
\end{equation*}
Recall that $\#\cQ \leq  \Delta_{\alpha}(\cQ)R^{\alpha}$, and therefore
\begin{equation*}
   \|Uf\|_{L^{q_n}(Z_{\cQ})} \lessapprox K^{3E}     [\#\cQ]^{-1/(n+1)}\Delta_{\alpha}(\cQ)^{1/(2(n+1))}  R^{\alpha/(2(n+1)) + \varepsilon/2}\|f\|_{L^2(\R^n)}. 
\end{equation*}
 Finally, since $\Delta_{\alpha}(\cQ) \gtrsim K^{-2\alpha}$ and $K = R^{\delta}$ where $\delta \leq \varepsilon/8E$, we conclude that  
\begin{equation}\label{eq: broad dom final est}
   \|Uf\|_{L^{q_n}(Z_{\cQ})} \lesssim_{\varepsilon} \Big[\frac{\Delta_{\alpha}(\cQ)}{\#\cQ}\Big]^{1/(n+1)}  R^{\alpha/(2(n+1)) + \varepsilon}\|f\|_{L^2(\R^n)}.
\end{equation}
 This provides a favourable estimate in the broad-dominant case. 




\subsection{Narrow-dominant case: introduction}

Now suppose \eqref{eq: broad case} fails, so that
\begin{equation*}
   \# \cQ_{\mathrm{narrow}} \geq \# \cQ / 2 
\end{equation*}
We refer to this as the \textit{narrow-dominant case}. Here we estimate our operator using a combination of parabolic rescaling and appeal to the induction hypothesis. 

The analysis of the narrow-dominant case is a major innovation of \textcite{DZ2019}. Indeed, as remarked earlier, \textcite{B2013} introduced the broad/narrow dichotomy to the study of the Schr\"odinger maximal operator. Many aspects of the arguments of \textcite{B2013,DZ2019} are similar (the use of induction-on-scale,\footnote{The argument of \textcite{B2013} is presented as a recursive process rather than an induction, but this is tantamount to the same thing.}  broad/narrow dichotomy, decoupling-type estimates, multilinear Strichartz); however, the novel form of the key estimate \eqref{eq: key estimate} of \textcite{DZ2019} allows information to be efficiently passed between scales and, consequently, leads to a much tighter bound in the narrow-dominant case.

We shall discuss the narrow-dominant case at length, and attempt to develop both a heuristic and technically detailed understanding of the argument. In particular, the following subsections are structured as follows:

\begin{itemize}
    \item In \S\ref{sec: narrow initial} we describe the initial steps of the analysis of the narrow-dominant case, setting the scene for the main argument. 
    \item In \S\ref{sec: narrow non-technical} we provide a non-technical overview of the main argument. For this overview, we make a number of simplifying assumptions.
    \item In \S\ref{sec: narrow technical} we provide a rigorous description of main argument. The goal here is primarily to remove the simplifying assumptions used in the previous subsection.     
\end{itemize}




\subsection{Narrow-dominant case: initial steps}\label{sec: narrow initial} 
The first few steps of the argument mirror those of the inductive proof of the 1-dimensional Strichartz estimate (Proposition~\ref{prop: 1d Strichartz}). For $\tau \in \cT_{K^{-1}}$ recall that, since $f_{\tau}$ has frequency localisation to the cube $\tau$, the wave $Uf_{\tau}$ behaves pseudo-locally. In particular, we may write 
\begin{equation*}
    f_{\tau} = \sum_{S \in \bbS_{\tau}[R]} f_S
\end{equation*}
as in \eqref{eq: pseudo loc 1} where, by Lemma~\ref{lem: pseudo loc}, the pointwise inequality
\begin{equation*}
    |Uf_{\tau}(z)| \lesssim \sum_{S \in \bbS_{\tau}[R]} |Uf_S(z)|\chi_{\bar{S}}(z) + R^{-10n} \|f\|_{L^2(\R^n)}
\end{equation*}
holds for all $z \in B^{n+1}(0,R)$.

Fix a narrow cube $Q \in \cQ_{\mathrm{narrow}}$. Since the sets $\bar{S}$ have bounded overlap,
\begin{equation}\label{eq: narrow 1}
    \|Uf_{\tau}\|_{L^{q_n}(w_Q)} \lesssim \Big(\sum_{S \in \bbS_{\tau}[R]} \|Uf_S\|_{L^{q_n}(w_Q)}^{q_n} \Big)^{1/q_n} + R^{-5n} \|f\|_{L^2(\R^n)}.
\end{equation}
Let $\bbS$ denote the (disjoint) union of the sets $\bbS_{\tau}[R]$ over all $\tau \in \cT_{K^{-1}}$. By \eqref{eq: narrow 1}, the definition of $\cQ_{\mathrm{narrow}}$ and the nesting of the $\ell^p$ spaces,
\begin{align}
\nonumber
 \|Uf\|_{L^{q_n}(Q)} &\lesssim_{\varepsilon} K^{\varepsilon}  \Big(\sum_{\tau \in \cT_{K^{-1}}} \|Uf_{\tau}\|_{L^{q_n}(w_Q)}^2\Big)^{1/2}
 \label{eq: narrow 2}
 \\ &\lesssim_{\varepsilon} K^{\varepsilon} \Big(\sum_{S \in \bbS} \|Uf_S\|_{L^{q_n}(w_Q)}^2 \Big)^{1/2} +  R^{-5n} \|f\|_{L^2(\R^n)}.
\end{align}

This situation looks very similar to the analysis of the narrow case in the proof of Proposition~\ref{prop: 1d Strichartz}. The key difference, however, is that we must now keep track of the localisation to the various cubes $Q \in \cQ$. 




\subsection{Analysis of the narrow case: non-technical overview}\label{sec: narrow non-technical} 

We now give a non-technical overview of the remainder of the proof. Any outstanding technical details are discussed in the following subsection. 

Since the weight $w_Q$ is rapidly decaying away from $Q$, the only strips $S$ which significantly contribute to the sum in \eqref{eq: narrow 2} are those belonging to 
\begin{equation*}
    \bbS(Q) := \{S \in \bbS : S \cap Q \neq \emptyset\}.
\end{equation*}
Thus, we essentially have\footnote{For the sake of this discussion, we will ignore rapidly decaying error terms in the estimates and assume we have sharp cutoffs rather than weighted $L^q$ norms. We address such technical points in the following subsection.}
\begin{equation*}
 \|Uf\|_{L^{q_n}(Q)} \lesssim_{\varepsilon} K^{\varepsilon} \Big(\sum_{S \in \bbS(Q)} \|Uf_S\|_{L^{q_n}(Q)}^2 \Big)^{1/2}.
\end{equation*}
We remark that, for any fixed $\tau \in \cT_{K^{-1}}$, the collection $\bbS(Q)$ contains only $O(1)$ strips lying in $\bbS_{\tau}[R]$. We may therefore think of $\bbS(Q)$ as a collection of strips passing through~$Q$ which are oriented in $K^{-1}$-separated directions. 

 We wish to sum the contributions over all $Q \in \cQ$. To do this effectively, we apply H\"older's inequality to convert the $\ell^2$ expression into an $\ell^q$ expression: 
\begin{equation*}
   \|Uf\|_{L^{q_n}(Q)} \lesssim_{\varepsilon} K^{\varepsilon} [\#\bbS(Q)]^{1/(n+1)} \Big(\sum_{S \in \bbS} \|Uf_S\|_{L^{q_n}(Q)}^{q_n} \Big)^{1/q_n}. 
\end{equation*}
 We may now take the $\ell^q$ sum over all $Q \in \cQ_{\mathrm{narrow}}$ to deduce that 
\begin{equation}\label{eq: narrow heuristic 1}
  \|Uf\|_{L^{q_n}(Z_{\cQ})} \lesssim_{\varepsilon} K^{\varepsilon} [\max_{Q \in \cQ} \#\bbS(Q)]^{1/(n+1)} \Big(\sum_{S \in \bbS} \|Uf_S\|_{L^{q_n}(Z_{\cQ})}^{q_n} \Big)^{1/q_n}.
\end{equation}




\subsubsection*{Parabolic rescaling and the induction hypothesis}

At this stage, we wish to apply parabolic rescaling and the induction hypothesis to bound each of the terms $\|Uf_S\|_{L^q(Z_{\cQ})}$. This is exactly as in the inductive proof of Proposition~\ref{prop: 1d Strichartz}. However, one major complication in the present setup is that our induction hypothesis involves some underlying family of $\tilde{K}^2$-cubes $\tilde{\cQ}$. We must therefore prepare the ground so that, after rescaling, a suitable family of $\tilde{K}^2$-cubes arises.

Fix $S \in \bbS$, so that
\begin{equation*}
    S = \big\{ (x,t) \in \R^{n+1} : |x - x(S) - t v(S)| \leq  R/K \textrm{ and } |t| \leq R \big\}
\end{equation*}
for some choice of initial position $x(S) \in B^n(0,R)$ and velocity $v(S) \in B^n(0,1)$. As in \S\ref{sec: pseudo-local}, let $\cA_S \colon \R^{n+1} \to \R^{n+1}$ denote the affine transformation
\begin{equation*}
    \cA_S \colon (x,t) \mapsto \big(K^{-1}(x - x(S) - tv(S)), K^{-2} t\big)
\end{equation*}
which maps bijectively between $S$ and $B^{n+1}(0,R / K^2)$. Define
\begin{equation}\label{eq: new scales}
    \tilde{R} := 20 R / K^2 \quad \textrm{and} \quad \tilde{K} := \tilde{R}^{\delta}.
\end{equation}

Let $\cP(S)$ denote a cover of the strip $S$ by parallelepipeds  aligned parallel to $S$ and of dimensions $K \tilde{K}^2 \times \cdots \times K \tilde{K}^2 \times K^2 \tilde{K}^2$. In particular, $\cP(S)$ consists of the sets
\begin{equation}\label{eq: parallelepipeds}
    P := \big\{ (x,t) \in \R^{n+1} : |x - x(P) - t v(S)|_{\infty} \leq K \tilde{K}^2/2 \textrm{ and } |t - t(P)| \leq K^2 \tilde{K}^2/2 \big\},
\end{equation}
where the centres $z(P) = (x(P), t(P))$ vary over the lattice points $K \tilde{K}^2 \Z^n \times K^2 \tilde{K}^2 \Z$. The dimensions of these sets are chosen so that the transformation $\cA_S$ maps each parallelepiped $P \in \cP(S)$ to a lattice $\tilde{K}^2$-cube
\begin{equation*}
    \cA_S(P) =  \{ (\tilde{x},\tilde{t}) \in \R^{n+1} : |\tilde{x} - K^{-1}(x(S) - x(P))|_{\infty} \leq \tilde{K}^2/2 \textrm{ and } |\tilde{t} - K^{-2}t(P)| \leq \tilde{K}^2/2 \big\};
\end{equation*}
see Figure~\ref{fig: Rescaling_parallelepipeds}.

\begin{figure}
     \centering
\includegraphics{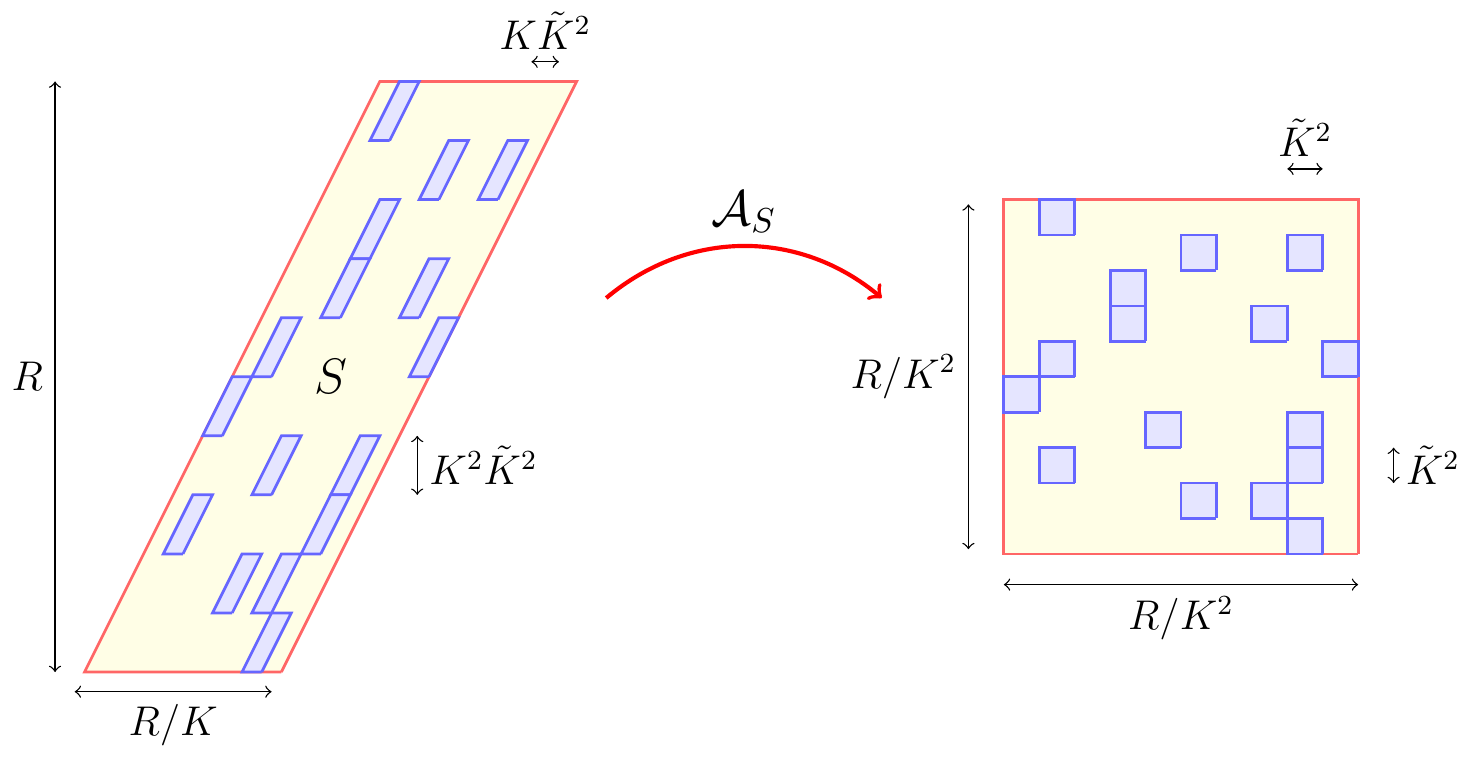}
        \caption{The map $\cA_S$ sends $S$ to $B^{n+1}(0, R/K^2)$ and each parallelepiped $P$ to a lattice $\tilde{K}^2$-cube. Note that the left and right-hand sides are drawn at different scales (the right-hand ball is in fact much smaller than the strip). }
    \label{fig: Rescaling_parallelepipeds}
\end{figure}

Since we are forming our $L^q$-norms over the set $Z_{\cQ}$, it suffices to only consider the subcollection $\cP(S;\cQ)$ of all parallelepipeds $P \in \cP(S)$ which intersect some cube $Q \in \cQ$. By the preceding discussion,
\begin{equation}\label{eq: resc cubes}
    \tilde{\cQ}(S) := \big\{ \cA_S(P) : P \in \cP(S;\cQ) \big\}  
\end{equation}
is a collection of lattice $\tilde{K}^2$-cubes in $B^{n+1}(0, \tilde{R})$.

Since the set $S \cap Z_{\cQ}$ is contained in \begin{equation*}
    Z_{\cP(S;\cQ)} := \bigcup_{P \in \cP(S;\cQ)} P,
\end{equation*}
and by Lemma~\ref{lem: pseudo loc}, the function $Uf_S$ is essentially supported on $S$, we may estimate
\begin{equation}\label{eq: narrow heuristic 2}
    \|Uf_S\|_{L^{q_n}(Z_{\cQ})} \lesssim \|Uf_S\|_{L^{q_n}(Z_{\cP(S;\cQ)})}.
\end{equation}
which combines with \eqref{eq: narrow heuristic 1} to give
\begin{equation}\label{eq: narrow heuristic 2.5}
  \|Uf\|_{L^{q_n}(Z_{\cQ})} \lesssim_{\varepsilon} K^{\varepsilon} [\max_{Q \in \cQ} \#\bbS(Q)]^{1/(n+1)} \Big(\sum_{S \in \bbS} \|Uf_S\|_{L^{q_n}(Z_{\cP(S;\cQ)})}^{q_n} \Big)^{1/q_n}.
\end{equation}

In order to apply the induction hypothesis later in the argument, it is useful to make the following assumption.\smallskip

\noindent{\textbf{Simplifying Assumption P1.}} For each $S \in \bbS$, we assume that 
\begin{equation}\label{eq: simplify 1}
    \|Uf_S\|_{L^{q_n}(P)} \qquad \textrm{are dyadically constant over $P \in \cP(S; \cQ)$.}
\end{equation}

For the purpose of this non-technical discussion, we shall make a number of such simplifying assumptions. Later, in \S\ref{sec: narrow non-technical}, we give a rigorous justification of these assumptions using pigeonholing. 

 We now rescale the norm on the right-hand side of \eqref{eq: narrow heuristic 2} using Lemma~\ref{lem: Lp rescaling}. In particular,
\begin{equation*}
    \|Uf_S\|_{L^{q_n}(Z_{\cP(S;\cQ)})} \leq K^{(n+2)/q_n-n/2} \|U \tilde{f}_S\|_{L^{q_n}(Z_{\tilde{\cQ}(S)})} = K^{-1/(n+1)} \|U \tilde{f}_S\|_{L^{q_n}(Z_{\tilde{\cQ}(S)})}
\end{equation*}
for some function $\tilde{f}_S \in L^2(\R^n)$ satisfying
\begin{equation*}
\|\tilde{f}_S \|_{L^2(\R^n)} = \|f_S \|_{L^2(\R^n)} \quad \textrm{and} \quad \supp \mathcal{F}(\tilde{f}_S) \subseteq B^n(0,1).
\end{equation*}

Each localised norm $\|U\tilde{f}_S\|_{L^{q_n}(P)}$ rescales to some $K^{-1/(n+1)}\|U\tilde{f}_S\|_{L^{q_n}(\tilde{Q})}$. Thus,  
\begin{equation*}
     \|U\tilde{f}_S\|_{L^{q_n}(\tilde{Q})} \qquad \textrm{are dyadically constant over $\tilde{Q} \in \tilde{\cQ}(S)$,}
\end{equation*}
as a consequence of \eqref{eq: simplify 1}. Therefore, we may apply the induction hypothesis to conclude that
\begin{equation*}
    \|Uf_S\|_{L^{q_n}(Z_{\cP(S;\cQ)})} \lesssim \bC K^{-1/(n+1)} \Big[  \frac{\Delta_{\alpha}(\tilde{\cQ}(S))}{\#\tilde{\cQ}(S)}\Big]^{1/(n+1)}\tilde{R}^{\alpha/(2(n+1)) + \varepsilon} \|f\|_{L^2(\R^n)}.
\end{equation*}
Since $\tilde{R} = 20 R/K^2$, this becomes
\begin{equation}\label{eq: narrow heuristic 3}
    \|Uf_S\|_{L^{q_n}(Z_{\cP(S;\cQ)})} \lesssim \bC K^{ - 2\varepsilon} \Big[  K^{-1 - \alpha} \frac{\Delta_{\alpha}(\tilde{\cQ}(S))}{\#\tilde{\cQ}(S)}\Big]^{1/(n+1)}R^{\alpha/(2(n+1)) + \varepsilon} \|f\|_{L^2(\R^n)}.
\end{equation} 
The decay factor of $K^{-2\varepsilon}$ arising from the application of the induction hypothesis is crucial for closing the argument. 




\subsubsection*{Summing the estimates} The next step is to sum the localised estimate \eqref{eq: narrow heuristic 3} over all choices of strip $S \in \bbS$. Substituting the estimate \eqref{eq: narrow heuristic 3} into \eqref{eq: narrow heuristic 2.5}, we deduce that
\begin{equation}\label{eq: narrow heuristic 4}
    \|Uf\|_{L^{q_n}(Z_{\cQ})} \lesssim \bC K^{-\varepsilon} M_K(\cQ)^{1/(n+1)} R^{\alpha/(2(n+1)) + \varepsilon} [\#\bbS]^{1/(n+1)}\Big(\sum_{S \in \bbS} \|f_S\|_{L^2(\R^n)}^{q_n}\Big)^{1/q_n}
\end{equation}
where
\begin{equation*}
  M_K(\cQ) :=  K^{-1 - \alpha} \Big[\max_{Q \in \cQ}\frac{\#\bbS(Q)}{\#\bbS} \Big] \Big[  \max_{S \in \bbS} \frac{\Delta_{\alpha}(\tilde{\cQ}(S))}{\#\tilde{\cQ}(S)}\Big].
\end{equation*}

One way to estimate the $\ell^q$ sum on the right-hand side of \eqref{eq: narrow heuristic 4} is to simply use the nesting of $\ell^q$-norms and orthogonality between the wave packets:
\begin{equation}\label{eq: narrow heuristic 5}
\Big(\sum_{S \in \bbS} \|f_S\|_{L^2(\R^n)}^{q_n}\Big)^{1/q_n} \leq \Big(\sum_{S \in \bbS} \|f_S\|_{L^2(\R^n)}^2\Big)^{1/2} \lesssim \|f_S\|_{L^2(\R^{n+1})}.
\end{equation}
However, we shall use a more nuanced bound, improving over the above. The idea is to use a reverse H\"older inequality, similar to that used in \S\ref{sec: broad dom}. Ideally,
\begin{equation}\label{eq: narrow heuristic 6}
    \Big(\sum_{S \in \bbS} \|f_S\|_{L^2(\R^n)}^{q_n}\Big)^{1/q_n} \lesssim [\# \bbS]^{-1/(n+1)} \Big(\sum_{S \in \bbS} \|f_S\|_{L^2(\R^n)}^2\Big)^{1/2} \lesssim [\# \bbS]^{-1/(n+1)} \|f\|_{L^2(\R^n)}.
\end{equation}
which gains an additional factor of $[\# \bbS]^{-1/(n+1)}$ over \eqref{eq: narrow heuristic 5}. This leads us to our next simplifying assumption.\smallskip

\noindent{\textbf{Simplifying Assumption S2.}} 
\begin{equation}\label{eq: simplify 3}
    \|f_S\|_{L^2(\R)} \qquad \textrm{are dyadically constant over $S \in \bbS$.}
\end{equation}
Under this assumption, the desired reverse H\"older inequality in \eqref{eq: narrow heuristic 5} holds.\smallskip

Combining \eqref{eq: narrow heuristic 4} and \eqref{eq: narrow heuristic 6}, we obtain
\begin{equation}\label{eq: narrow heuristic 6.5}
    \|Uf\|_{L^{q_n}(Z_{\cQ})} \lesssim \bC K^{-\varepsilon} M_K(\cQ)^{1/(n+1)} R^{\alpha/(2(n+1)) + \varepsilon} \|f\|_{L^2(\R^n)}.
\end{equation}
It remains to estimate the $M_K(\cQ)$ factor.




\subsubsection*{Closing the induction} In order to conclude the argument, we shall show
\begin{equation}\label{eq: narrow heuristic 7}
   M_K(\cQ)  \lesssim \frac{\Delta_{\alpha}(\cQ)}{\# \cQ}. 
\end{equation}
Indeed, once we have this bound, we can plug it into \eqref{eq: narrow heuristic 6.5} and then choose $\bC$ and $R_0$ appropriately to close the induction and complete the proof. 

The definition of $M_K(\cQ)$ involves two factors:
\begin{equation}\label{eq: narrow heuristic 8}
     \Big[ \max_{Q \in \cQ} \frac{\#\bbS(Q)}{\#\bbS} \Big] \qquad \textrm{and} \qquad K^{-1 - \alpha}  \Big[\max_{S \in \bbS} \frac{\Delta_{\alpha}(\tilde{\cQ}(S))}{\#\tilde{\cQ}(S)} \Big].
\end{equation}
We shall split the proof of \eqref{eq: narrow heuristic 7} into 3 steps: the first two steps shall treat the first factor in \eqref{eq: narrow heuristic 8} and the remaining step treats the remaining.\medskip

\noindent \textbf{1. Multiplicity bounds.} Recall from the definitions that $\#\bbS(Q)/\#\bbS$ is the proportion of all strips $S \in \bbS$ which pass through $Q$. Hence, we refer to this quantity as the \textit{multiplicity} of $Q$. On the other hand, if we define
\begin{equation*}
    \cQ(S) := \{Q \in \cQ : S \cap Q \neq \emptyset\}, \qquad S \in \bbS,
\end{equation*}
then $\#\cQ(S)/\#\cQ$ is the proportion of all cubes $Q \in \cQ$ which lie in a fixed strip $S$. We refer to this quantity as the \textit{multiplicity} of $S$. Ideally, we would like to show
\begin{equation}\label{eq: multiplicity comparison}
       \max_{Q \in \cQ} \frac{\#\bbS(Q)}{\#\bbS} \lesssim \max_{S \in \bbS} \frac{\#\cQ(S)}{\#\cQ};
\end{equation}
in other words, if there exists a high multiplicity cube, then there must exist a high multiplicity strip. 

\begin{exem}\label{ex: cube counting} Without further hypotheses, it is easy to see \eqref{eq: multiplicity comparison} may fail. For instance, suppose there are $M$ strips in $\bbS$, there are $M+1$ cubes in $\cQ$. We may arrange things so that:
\begin{itemize}
    \item There exists precisely one cube $Q_0 \in \cQ$ which lies in every strip: $\#\bbS(Q_0) = M$;
    \item Every other cube in $\cQ$ lies in precisely one strip: $\#\bbS(Q) = 1$ for all $Q \in \cQ \setminus\{Q_0\}$;
    \item Every strip contains $2$ cubes: $\#\cQ(S) = 2$ for all $S \in \bbS$. 
\end{itemize} 
Then 
\begin{equation*}
   \max_{Q \in \cQ} \frac{\#\bbS(Q)}{\#\bbS} = \frac{\#\bbS(Q_0)}{\#\bbS} = 1 \qquad \textrm{and} \qquad \max_{S \in \bbS} \frac{\#\cQ(S)}{\#\cQ} = \frac{2}{M+1}. 
\end{equation*}
Thus, if $M \gg 1$, then this violates \eqref{eq: multiplicity comparison}. The idea here, however, is that the violation arises from the single `outlier' cube $Q_0$; if we throw away such outliers, and focus only on `typical' cubes, then we can hope for \eqref{eq: multiplicity comparison} to hold. 
\end{exem}

By double-counting we always have a trivial inequality
\begin{equation}\label{eq: narrow heuristic 9}
    \min_{Q \in \cQ} \frac{\#\bbS(Q)}{\#\bbS} \leq \max_{S \in \bbS} \frac{\#\cQ(S)}{\#\cQ}.
\end{equation}
Indeed, \eqref{eq: narrow heuristic 9} immediately follows once we note
\begin{equation*}
 \sum_{Q \in \cQ} \#\bbS(Q) = \#\{ (S, Q) : S \in \bbS, Q \in \cQ(S)\} =  \sum_{S \in \bbS} \#\cQ(S).
\end{equation*}
To exploit the bound \eqref{eq: narrow heuristic 9}, we make a further assumption on the multiplicities of the cubes.\smallskip

\noindent{\textbf{Simplifying Assumption Q.}} 
\begin{equation}\label{eq: simplify 5}
    \#\bbS(Q) \qquad \textrm{are dyadically constant over $Q \in \cQ$.}
\end{equation}

This assumption ensures there are no `outlier' cubes as in Example~\ref{ex: cube counting}. In particular, using \eqref{eq: simplify 5}, we may upgrade \eqref{eq: narrow heuristic 9} to the desired bound \eqref{eq: multiplicity comparison}. 
 This helps us to control the first factor in \eqref{eq: narrow heuristic 8}. \medskip
 
 \noindent \textbf{2. From strips to parallelepipeds.} The next step is to relate $\#\cQ(S)$ to $\# \tilde{\cQ}(S)$. We begin by recalling some of the definitions. For each strip $S \in \bbS$ there is an associated family of parallelepipeds $\cP(S;\cQ)$. Furthermore, the family of cubes $\tilde{\cQ}(S)$ is obtained by transforming the $P \in \cP(S;\cQ)$ under $\cA_S$ (see \eqref{eq: resc cubes}). In particular,
 \begin{equation}\label{eq: same size}
  \#\cP(S;\cQ) =  \#\tilde{\cQ}(S).
 \end{equation}
 Thus, our task here is to relate $\#\cQ(S)$ to $\# \cP(S;\cQ)$.  

 Let $\cP(\cQ)$ denote the union of the $\cP(S;\cQ)$ over all $S \in \bbS$ and let
\begin{equation*}
\cQ(P) := \{Q \in \cQ : P \cap Q \neq \emptyset\} \qquad \textrm{for all $P \in \cP(\cQ)$.}
\end{equation*}
It follows from the definitions that
\begin{equation*}
    \cQ(S) = \bigcup_{P \in \cP(S;\cQ)} \cQ(P).
\end{equation*}
We can therefore compare the multiplicity of a strip to the multiplicities of the parallelepipeds via the simple inequality
\begin{equation*}
  \#\cQ(S) \leq \sum_{P \in \cP(S;\cQ)} \#\cQ(P) \leq  [\max_{P \in \cP(S;\cQ)} \#\cQ(P)] [\#\cP(S;\cQ)] .
\end{equation*}
Thus, in view of \eqref{eq: multiplicity comparison} and \eqref{eq: same size}, we have
\begin{equation*}
    \max_{Q \in \cQ} \frac{\#\bbS(Q)}{\#\bbS} \leq  [\max_{P \in \cP(\cQ)} \#\cQ(P)][ \max_{S \in \bbS} \#\tilde{\cQ}(S)].
\end{equation*}

The maxima on the right-hand side of the above inequality are awkward to bound. However, the situation is improved if we introduce the following additional assumptions.\smallskip

\noindent{\textbf{Simplifying Assumption P2.}} 
\begin{equation}\label{eq: simplify 2}
    \#\cQ(P) \qquad \textrm{are dyadically constant over all $P \in \cP(\cQ)$.}
\end{equation}

\noindent{\textbf{Simplifying Assumption S2.}}
\begin{equation*}
    \#\cP(S; \cQ) \qquad \textrm{are dyadically constant over $S \in \bbS$.}
\end{equation*}

These are (thankfully!) our final simplifying assumptions. By \eqref{eq: simplify 2} and \eqref{eq: simplify 3}, it follows that
\begin{equation}\label{eq: narrow heuristic 11}
    \Big[ \max_{Q \in \cQ} \frac{\#\bbS(Q)}{\#\bbS} \Big]   \lesssim \frac{1}{\#\cQ}[\min_{P \in\cP(\cQ)} \#\cQ(P)] [ \min_{S \in \bbS} \#\tilde{\cQ}(S)].
\end{equation}

\medskip
 
 \noindent \textbf{3. Comparing densities.} The final step is to relate the densities $\Delta_{\alpha}(\tilde{\cQ}(S))$ and $\Delta_{\alpha}(\cQ)$. This is achieved via the following simple lemma.

\begin{lemm}\label{lem: relating scales} With the above setup, for any $S \in \bbS$ we have
    \begin{equation}\label{eq: narrow heuristic 12}
    [\min_{P \in \cP(\cQ)} \#\cQ(P)] \Delta_{\alpha}(\tilde{\cQ}(S)) \lesssim K^{1+\alpha} \Delta_{\alpha}(\cQ).
\end{equation}
\end{lemm}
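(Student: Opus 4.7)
The plan is to prove the lemma by pulling back a ball witnessing the density of $\tilde{\cQ}(S)$ through the anisotropic rescaling $\cA_S$ and carefully exploiting the elongated geometry of the preimage. Recall that $\cA_S^{-1}(\tilde{x},\tilde{t}) = (K\tilde{x} + x(S) + K^2 \tilde{t}\, v(S),\, K^2 \tilde{t})$, so $\cA_S$ scales space-like directions by $K$ and the time direction by $K^2$; crucially, under our max-norm convention for space-time balls, the preimage $\cA_S^{-1}(\tilde{B})$ of a space-time ball $\tilde{B}$ of radius $\tilde{r}$ is a parallelepiped aligned with the strip $S$, of spatial diameter $O(K\tilde{r})$ in the moving frame and temporal extent $O(K^2 \tilde{r})$.

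The key steps I will carry out are as follows. First, pick $\tilde{B} = B^{n+1}(\tilde{z}, \tilde{r})$ with $\#\{\tilde{Q} \in \tilde{\cQ}(S) : \tilde{Q} \subset \tilde{B}\} \geq \tfrac{1}{2} \Delta_{\alpha}(\tilde{\cQ}(S))\, \tilde{r}^{\alpha}$. Each such $\tilde{Q}$ is the image of a unique $P \in \cP(S;\cQ)$ with $P \subset \Pi := \cA_S^{-1}(\tilde{B})$. Second, bound $\sum_{P \subset \Pi} \#\cQ(P)$ from below by $[\min_{P \in \cP(\cQ)} \#\cQ(P)] \cdot \#\{\tilde{Q} \in \tilde{\cQ}(S) : \tilde{Q} \subset \tilde{B}\}$. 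Third, bound the same sum from above by a multiple of $\#\{Q \in \cQ : Q \subset C \cdot \Pi\}$: since the parallelepipeds $P \in \cP(S)$ essentially tile space-time and each $K^2$-cube $Q$ has side-length no larger than the short sides of $P$, any $Q$ meeting some $P \subset \Pi$ is counted $O(1)$ times and lies in a mild enlargement $\Pi'$ of $\Pi$ (which retains the same dimensions $\sim K\tilde{r} \times \cdots \times K\tilde{r} \times K^2\tilde{r}$ up to constants). Fourth, and most importantly, cover $\Pi'$ by a collection of $O(K)$ space-time balls of radius $O(K\tilde{r})$ oriented along the long axis of $\Pi'$, so that the definition of $\Delta_{\alpha}(\cQ)$ gives $\#\{Q \subset \Pi'\} \lesssim K \cdot \Delta_{\alpha}(\cQ)(K\tilde{r})^{\alpha} = K^{1+\alpha} \tilde{r}^{\alpha}\Delta_{\alpha}(\cQ)$. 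Combining these inequalities yields \eqref{eq: narrow heuristic 12} after cancelling the common factor $\tilde{r}^{\alpha}$.

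The main obstacle is obtaining the sharp exponent $K^{1+\alpha}$ rather than $K^{2\alpha}$. A naive bound treats $\Pi$ as a single ball of radius $\sim K^2 \tilde{r}$ (the longest dimension), which immediately loses a factor of $K^{\alpha-1}$ in the final estimate; this is insufficient to close the induction later in the argument. The improvement comes from recognising that $\Pi$ is genuinely anisotropic: $n$ of its sides have length $O(K\tilde{r})$ and only one has length $O(K^2\tilde{r})$, so it is efficiently covered by a \emph{one-dimensional} chain of $O(K)$ balls of the smaller radius $K\tilde{r}$. This use of the fractal density at the smaller scale $K\tilde{r}$, rather than at the scale $K^2\tilde{r}$, is precisely where the anisotropy of the paraboloid's scaling enters the count and produces the crucial saving.
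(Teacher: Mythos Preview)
Your proposal is correct and follows essentially the same approach as the paper's proof: pull back a ball $\tilde{B}$ through $\cA_S^{-1}$ to obtain an anisotropic parallelepiped of dimensions $\sim K\tilde{r} \times \cdots \times K\tilde{r} \times K^2\tilde{r}$, relate the count of $\tilde{Q} \subset \tilde{B}$ to the count of $Q \in \cQ$ inside (an enlargement of) that parallelepiped via the factor $\min_P \#\cQ(P)$, and then cover the parallelepiped by $O(K)$ balls of radius $\sim K\tilde{r}$ along its long axis to invoke $\Delta_{\alpha}(\cQ)$ at the smaller scale. Your emphasis on why the one-dimensional chain of balls gives $K^{1+\alpha}$ rather than the naive $K^{2\alpha}$ is exactly the point the paper's argument hinges on.
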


\begin{proof} Let $\tilde{B} \subseteq B^{n-1}(0, \tilde{R})$ be a ball of radius $r := \rad(\tilde{B}) \geq \tilde{K}^2$. Note that
\begin{equation*}
     \#\{\tilde{Q} \in \tilde{\cQ}(S) : \tilde{Q} \subseteq \tilde{B} \} = \#\{P \in \cP(S; \cQ) : P \subseteq \cA^{-1}_S(\tilde{B}) \},
\end{equation*}
where we have used the definition of the family of cubes $\tilde{\cQ}(S)$ from \eqref{eq: resc cubes}. Letting $T := \cA^{-1}_S(\tilde{B})$, it follows that
\begin{equation*}\label{eq: comp scales 1}
  [\min_{P \in \cP(\cQ)} \#\cQ(P)]  [\#\{\tilde{Q} \in \tilde{\cQ}(S) : \tilde{Q} \subseteq \tilde{B} \}] \leq \#\{Q \in \cQ : Q \subseteq T \}.
\end{equation*}

Since $T$ is a strip of dimension $Kr \times \cdots \times K r \times K^2 r$, it can be covered by $O(K)$ balls $B \subseteq B^{n+1}(0,R)$ of radius $Kr$. Furthermore, this cover can be chosen such that if $Q \in \cQ$ satisfies $Q \subseteq T$, then $Q \subseteq B$ for some choice of ball in the cover. Thus, 
\begin{equation}\label{eq: comp scales 2}
  \#\{Q \in \cQ : Q \subseteq T \} \lesssim K [\max_{\substack{ B \subseteq B^{n+1}(0,R) \\ \rad(B) = Kr }} \#\{Q \in \cQ : Q \subseteq B \}] \leq K^{1+\alpha} r^{\alpha} \Delta_{\alpha}(\cQ). 
\end{equation}

Combining \eqref{eq: comp scales 1} and \eqref{eq: comp scales 2}, we deduce that 
\begin{equation*}
      [\min_{P \in \cP(\cQ)} \#\cQ(P)]  \Delta_{\alpha}(\tilde{\cQ}; \tilde{B}) \lesssim K^{1+\alpha}  \Delta_{\alpha}(\cQ)
\end{equation*}
and the desired result follows by taking a supremum over all choices of $\tilde{B}$. 
\end{proof}

Combining \eqref{eq: narrow heuristic 11} and \eqref{eq: narrow heuristic 12}, we therefore have
\begin{equation*}
\Big[ \max_{Q \in \cQ} \frac{\#\bbS(Q)}{\#\bbS} \Big]  \Big[ \max_{S \in \bbS} \frac{\Delta_{\alpha}(\tilde{\cQ}(S))}{\#\tilde{\cQ}(S)} \Big]  \lesssim K^{1+\alpha}\frac{\Delta_{\alpha}(\cQ)}{\#\cQ}.
\end{equation*}
Recalling the definition of $M_k(\cQ)$, this immediately implies the desired bound \eqref{eq: narrow heuristic 7}. Consequently, we have the narrow estimate
\begin{equation*}
    \|Uf\|_{L^{q_n}(Z_{\cQ})} \lesssim \bC K^{-\varepsilon} \Big[\frac{\Delta_{\alpha}(\cQ)}{\#\cQ}\Big]^{1/(n+1)} R^{\alpha/(2(n+1)) + \varepsilon} \|f\|_{L^2(\R^n)},
\end{equation*}
at least under the Simplifying Assumptions P1, P2, S1, S2 and Q introduced above.




\subsection{Analysis of the narrow case: technical details}\label{sec: narrow technical}

We now reexamine the argument from \S\ref{sec: narrow non-technical}, including technical details. The main additional ingredient is a sequence of pigeonholing steps used to rigorously justify the various simplifying assumptions used above. 

Define the scales $\tilde{R}$ and $\tilde{K}$ as in \eqref{eq: new scales}. For each $S \in \bbS$, we decompose each enlarged strip $\bar{S}$ into parallel parallelepipeds $\cP(S)$ as in \eqref{eq: parallelepipeds}. In particular, each $P \in \cP(S)$ is aligned parallel to $S$ and has dimensions $K\tilde{K}^2 \times \cdots \times K \tilde{K}^2 \times K^2 \tilde{K}^2$. By Lemma~\ref{lem: pseudo loc}, we have the pointwise bound
 \begin{equation}\label{eq: S pointwise dom}
     | Uf_S(z)| \lesssim \sum_{P \in \cP(S)} |Uf_S(z)| \chi_P(z) + R^{-100n} \|f\|_{L^2(\R^n)}.
 \end{equation}

We let $\cP_{\mathrm{all}}$ denote the disjoint union of the sets $\cP(S)$ over all $S \in \bbS$. Thus, given a parallelepiped $P \in \cP_{\mathrm{all}}$, there exists a unique element $S \in \bbS$ such that $P \in \cP(S)$, which we denote by $S(P)$.




\subsubsection*{Pigeonholing} We first pigeonhole in the parallelepipeds $P \in \cP_{\mathrm{all}}$. To this end, define 
\begin{equation*}
    \cP_{\mathrm{tiny}} := \big\{ P \in \cP : \|Uf_{S(P)}\|_{L^{q_n}(P)} \leq R^{-100n} \|f\|_{L^2(\R^n)} \big\}.
\end{equation*}
Parallelepipeds $P \in \cP_{\mathrm{tiny}}$ are negligible for our purpose: for a formal interpretation of this see \eqref{eq: P tiny neg} below. On the other hand, if $P \in \cP_{\mathrm{all}} \setminus \cP_{\mathrm{tiny}}$, then a trivial estimate using the Cauchy--Schwarz inequality and Plancherel's theorem shows that
\begin{equation*}
   R^{-100n}  \|f\|_{L^2(\R^n)} \leq \|Uf_{S(P)}\|_{L^{q_n}(P)} \lesssim |P|^{1/q} \|f\|_{L^2(\R^n)} \lesssim R \|f\|_{L^2(\R^n)}.
\end{equation*}
This will allow us to dyadically pigeonhole in $\|Uf_{S(P)}\|_{L^{q_n}(P)}$. We shall also pigeonhole in the cardinality of the sets 
\begin{equation*}
    \cQ(P) := \big\{Q \in \cQ : P \cap Q^{(\delta)} \neq \emptyset \big\}, \quad \textrm{where} \quad Q^{(\delta)} := K^{\delta} \cdot Q \quad \textrm{for $Q \in \cQ$;}
\end{equation*}
that is, the number of $K^2$-cubes $Q \in \cQ$ which lie in the vicinity of a given $P$.\medskip

\noindent{\textbf{Pigeonholing the parallelepipeds.}} The family of parallelepipeds $\cP_{\mathrm{all}} \setminus \cP_{\mathrm{tiny}}$ can be written as a disjoint union $\cP_{\mathrm{all}} \setminus \cP_{\mathrm{tiny}} = \cP_1 \cup \cdots \cup \cP_I$ where 
\begin{equation}\label{eq: dyadic const P1}
    \|Uf_{S(P)}\|_{L^{q_n}(P)} \qquad \textrm{are dyadically constant over $P \in \cP_i$}
\end{equation}
and 
\begin{equation*}
    \#\cQ(P) \qquad \textrm{are dyadically constant over $P \in \cP_i$}
\end{equation*}
for each $1 \leq i \leq I$ and $I \lessapprox 1$. This corresponds to Simplifying Assumptions P1 and P2 in \S\ref{sec: narrow non-technical}. \medskip

We now turn to pigeonholing the strips $S \in \bbS$. To this end, first define
\begin{equation*}
    \bbS_{\mathrm{tiny}} := \big\{ S \in \bbS : \|f_S\|_{L^2(\R^n)} \leq R^{-100n} \|f\|_{L^2(\R^n)} \big\}.
\end{equation*}
The strips $S \in \bbS_{\mathrm{tiny}}$ are negligible for our purpose. More precisely, let $\cQ_{\mathrm{tiny}}$ denote the set of cubes $Q \in \cQ$ such that 
\begin{equation*}
  \Big(\sum_{S \in \bbS} \|Uf_S\|_{L^{q_n}(w_Q)}^2\Big)^{1/2}  \leq 2 \Big(\sum_{S \in \bbS_{\mathrm{tiny}}} \|Uf_S\|_{L^{q_n}(w_Q)}^2\Big)^{1/2}. 
\end{equation*}
If $\#\cQ_{\mathrm{tiny}} \geq \# \cQ/2$, then \eqref{eq: narrow 2} and the dyadically constant hypothesis \eqref{eq: dyadic const hyp} immediately yield a very favourable estimate. Henceforth, we assume $\cQ_{\mathrm{main}} := \cQ \setminus \cQ_{\mathrm{tiny}}$ satisfies $\#\cQ_{\mathrm{main}} \geq \# \cQ/2$.

If $S \in \bbS \setminus \bbS_{\mathrm{tiny}}$, then the orthogonality properties of the wave packets (see Lemma~\ref{lem: wp dec}) imply that
\begin{equation*}
 R^{-100n} \|f\|_{L^2(\R^n)} \leq   \|f_S\|_{L^2(\R^n)} \lesssim \|f\|_{L^2(\R^n)}.
\end{equation*}
This will allow us to dyadically pigeonhole in $\|f_S\|_{L^2(\R^n)}$. We shall also pigeonhole in the cardinalities of the sets 
\begin{equation*}
  \cP_i(S) := \cP(S) \cap \cP_i \qquad \textrm{for $S \in \bbS$ and $1 \leq i \leq I$.}  
\end{equation*}

\noindent{\textbf{Pigeonholing the strips.}} For each $1 \leq i \leq I$, the family of strips $\bbS \setminus \bbS_{\mathrm{tiny}}$ can be written as a disjoint union $\bbS \setminus \bbS_{\mathrm{tiny}} = \bbS_{i,1} \cup \cdots \cup \bbS_{i, J_i}$ where
\begin{equation*}
    \|f_S\|_{L^2(\R^n)} \qquad \textrm{are dyadically constant over $S \in \bbS_{i,j}$}
\end{equation*}
and
\begin{equation*}
    \#\cP_i(S) \qquad \textrm{are dyadically constant over $S \in \bbS_{i,j}$}
\end{equation*}
and $J_i \lessapprox 1$ for all $1 \leq i \leq I$, $1 \leq j \leq J_i$. This corresponds to Simplifying Assumptions~S1 and~S2 in~\S\ref{sec: narrow non-technical}. \medskip

Finally, we pigeonhole in the cubes $Q \in \cQ$. This step is a little more involved and we require a number of preliminary observations. 

Given $Q \in \cQ$ and any set $\cP \subseteq \cP_{\mathrm{all}}$, define 
\begin{equation*}
    w_Q|_{\cP} := w_Q \cdot \sum_{P \in {\cP}}\chi_P.
\end{equation*}
Fix $Q \in \cQ_{\mathrm{main}}$. By the pointwise bound \eqref{eq: S pointwise dom} and nesting of $\ell^q$ norms, we have
 \begin{equation}\label{eq: P tiny neg}
     \|Uf_S\|_{L^{q_n}(w_Q)} \lesssim \Big(\sum_{i = 1}^I  \|Uf_S\|_{L^{q_n}(w_Q|_{\cP_i(S)})}^2 \Big)^{1/2} + R^{-10n} \|f\|_{L^2(\R^n)}. 
 \end{equation}
Here the contribution from the parallelepipeds  $P \in \cP_{\mathrm{tiny}}$ is absorbed into the rapidly decaying term. Taking the $\ell^2$-norm over $S \in \bbS$, we have
\begin{equation*}
    \Big(\sum_{S \in \bbS} \|Uf_S\|_{L^{q_n}(w_Q)}^2\Big)^{1/2} \lesssim \Big(\sum_{S \in \bbS\setminus \bbS_{\mathrm{tiny}}} \sum_{i = 1}^I  \|Uf_S\|_{L^{q_n}(w_Q|_{\cP_i(S)})}^2 \Big)^{1/2} + R^{-5n} \|f\|_{L^2(\R^n)};
\end{equation*}
note that, since $Q \in \cQ_{\mathrm{main}}$, the contribution from the strips $S \in \bbS_{\mathrm{tiny}}$ is negligible. By reordering the right-hand sum in the above expression, we have
\begin{equation*}
    \sum_{S \in \bbS\setminus \bbS_{\mathrm{tiny}}} \sum_{i = 1}^I \|Uf_S\|_{L^{q_n}(w_Q|_{\cP_i(S)})}^2 = \sum_{i = 1}^I \sum_{j=1}^{J_i}\sum_{S \in \bbS_{i,j}}  \|Uf_S\|_{L^{q_n}(w_Q|_{\cP_i(S)})}^2.
\end{equation*}
Finally, we combine the above observations with \eqref{eq: narrow 2} to deduce that
\begin{equation}\label{eq: prelim Q pigeonhole}
    \|Uf\|_{L^{q_n}(Q)} \lesssim_{\varepsilon} K^{\varepsilon} \Big(\sum_{i = 1}^I \sum_{j=1}^{J_i}\sum_{S \in \bbS_{i,j}}  \|Uf_S\|_{L^{q_n}(w_Q|_{\cP_i(S)})}^2\Big)^{1/2} +  R^{-5n} \|f\|_{L^2(\R^n)}.
\end{equation}

We now turn to pigeonholing the cubes $Q \in \cQ$, which involves a two-step process.\medskip

\noindent{\textbf{Pigeonholing the cubes.}} Let $Q \in \cQ_{\mathrm{main}}$. Applying pigeonholing to \eqref{eq: prelim Q pigeonhole}, there exists some index pair $(i_Q,j_Q)$ such that $\bbS_Q := \bbS_{i_Q, j_Q}$ and $\cP_Q(S) := \cP_{i_Q}(S)$ satisfy
\begin{equation*}
    \|Uf\|_{L^{q_n}(Q)}  \lessapprox  K^{\varepsilon} \Big(\sum_{S \in \bbS_Q}  \|Uf_S\|_{L^{q_n}(w_Q|_{\cP_Q(S)})}^2 \Big)^{1/2} +  R^{-5n} \|f\|_{L^2(\R^n)}.
\end{equation*}
Furthermore, by pigeonholing, there exists some refinement $\cQ_0' \subseteq \cQ_{\mathrm{main}}$ and index $(i_0, j_0)$ such that $\bbS' := \bbS_{i_0, j_0}$ and $\cP' := \cP_{i_0}$ satisfy 
\begin{equation*}
    \bbS_Q = \bbS' \quad \textrm{for all $Q \in \cQ_0'$} \quad \textrm{and} \quad \#\cQ_0' \gtrapprox \#\cQ 
\end{equation*}

Given $Q \in \cQ$, define 
\begin{equation*}
    \bbS'(Q) := \big\{ S \in \bbS' : \bar{S} \cap Q^{(\delta)} \neq \emptyset \big\}
\end{equation*}
where, as above, $Q^{(\delta)} := K^{\delta} \cdot Q$ for $Q \in \cQ$. By a second application of the pigeonhole principle, we can find a further refinement $\cQ' \subseteq \cQ_{0}'$ such that 
\begin{equation}\label{eq: dyadic const Q}
    \#\bbS'(Q) \quad \textrm{are dyadically constant over $Q \in \cQ'$.} 
\end{equation}
This corresponds to Simplifying Assumption Q in \S\ref{sec: narrow non-technical}. \medskip

If $Q \in \cQ'$, then it follows from the above construction and definitions that
\begin{equation*}
    \|Uf\|_{L^{q_n}(Q)}  \lessapprox K^{\varepsilon} \Big(\sum_{S \in \bbS'(Q)} \|Uf_S\|_{L^{q_n}(w_Q|_{\cP'(S)})}^2 \Big)^{1/2} +  R^{-5n} \|f\|_{L^2(\R^n)}.
\end{equation*}
In order to sum in $Q$, we apply H\"older's inequality to pass from an $\ell^2$ to an $\ell^q$ norm. Furthermore, since the weight $w_Q$ is rapidly decaying away from $Q$, we may pass from $w_Q|_{\cP'(S)}$ to the weight $w_Q|_{\cP'(S;\cQ')}$ supported on the parallelepipeds
\begin{equation*}
    \cP'(S;\cQ') := \big\{ P \in \cP'(S) : P \cap Q^{(\delta)} \neq \emptyset \textrm{ for some } Q \in \cQ' \big\}.
\end{equation*}
In particular,
\begin{equation*}
    \|Uf\|_{L^{q_n}(Q)}  \lessapprox K^{\varepsilon}
    [\#\bbS'(Q)]^{1/(n+1)} \Big(\sum_{S \in \bbS'(Q)} \|Uf_S\|_{L^{q_n}(w_Q|_{\cP'(S;\cQ')})}^{q_n} \Big)^{1/q_n} +  R^{-5n} \|f\|_{L^2(\R^n)}.
\end{equation*}
Taking $q_n$-powers and summing over all the cubes in the refined collection,
\begin{equation}\label{eq: narrow rigour 1}
    \|Uf\|_{L^{q_n}(Z_{\cQ})} \lessapprox K^{\varepsilon} [\min_{Q \in \cQ'} \#\bbS'(Q)]^{1/(n+1)} \Big(\sum_{S \in \bbS'} \|Uf_S\|_{L^{q_n}(Z_{\cP'(S;\cQ')})}^{q_n} \Big)^{1/q_n} +   \|f\|_{L^2(\R^n)}
\end{equation}
where $ Z_{\cP'(S;\cQ')}$ denotes the union of the $P \in \cP'(S;\cQ')$. Here we have used the dyadically constant hypothesis \eqref{eq: dyadic const hyp} and \eqref{eq: dyadic const Q}.




\subsubsection*{Parabolic rescaling and the induction hypothesis} Fix $S \in \bbS'$. We applying parabolic rescaling from Lemma~\ref{lem: Lp rescaling} to deduce that
\begin{equation*}
    \|Uf_S\|_{L^{q_n}(Z_{\cP'(S;\cQ')})} \leq K^{-1/(n+1)} \|U \tilde{f}_S\|_{L^{q_n}(Z_{\tilde{\cQ}(S)})}
\end{equation*}
where $\tilde{f}_S \in L^2(\R^n)$ satisfies 
\begin{equation*}
    \|\tilde{f}_S\|_{L^2(\R^n)} = \|f_S\|_{L^2(\R^n)}  \quad \textrm{and} \quad \supp \mathcal{F}(\tilde{f}_S) \subseteq B^n(0,1).
\end{equation*}
Here $\tilde{\cQ}(S)$ is a collection of lattice $\tilde{K}^2$-cubes contained in $B^{n+1}(0, \tilde{R})$. Furthermore, for each $\tilde{Q} \in \tilde{\cQ}(S)$ there exists some $P \in \cP'(S;\cQ')$ such that
\begin{equation*}
    \|Uf_S\|_{L^{q_n}(P)} = K^{-1/(n+1)} \|U \tilde{f}_S\|_{L^{q_n}(\tilde{Q})};
\end{equation*}
it therefore follows from \eqref{eq: dyadic const P1} that
\begin{equation*}
    \|U\tilde{f}_S\|_{L^{q_n}(\tilde{Q})} \qquad \textrm{are dyadically constant over $\tilde{Q} \in \tilde{\cQ}(S)$.}
\end{equation*}

In light of the above, we may apply the induction hypothesis to conclude that
\begin{equation}\label{eq: narrow rigour 2}
  \|Uf_S\|_{L^{q_n}(Z_{\cP'(S;\cQ')})} \lesssim \bC K^{- 2\varepsilon}\Big[K^{-1-\alpha}\frac{\Delta_{\alpha}(\tilde{\cQ}(S))}{\#\tilde{\cQ}(S)}\Big]^{1/(n+1)} R^{\alpha/(2(n+1)) + \varepsilon}\|f_S\|_{L^2(\R^n)}.
\end{equation}
This is, of course, the analogue of the estimate \eqref{eq: narrow heuristic 3} from the non-technical argument in \S\ref{sec: narrow non-technical}. The key difference here is that the family of cubes $\tilde{\cQ}(S)$ is formed from the refined set of parallelepipeds $\cP'(S;\cQ')$; this will allow us to (rigorously) exploit the various dyadically constancy properties.




\subsubsection*{Summing the estimates} As in \S\ref{sec: narrow non-technical}, the next step is to sum the localised estimate \eqref{eq: narrow rigour 2} over all choices of strip $S \in \bbS'$. Substituting \eqref{eq: narrow rigour 2} into \eqref{eq: narrow rigour 1}, we deduce that
\begin{equation}\label{eq: narrow rigour 3}
    \|Uf\|_{L^{q_n}(Z_{\cQ})} \lessapprox \bC K^{-\varepsilon} M_K'(\cQ')^{1/(n+1)} R^{\alpha/(2(n+1)) + \varepsilon} [\#\bbS'\,]^{1/(n+1)}\Big(\sum_{S \in \bbS'} \|f_S\|_{L^2(\R^n)}^{q_n}\Big)^{1/q_n},
\end{equation}
where 
\begin{equation}\label{eq: narrow rigour 4}
  M_K'(\cQ') :=  K^{-1 - \alpha} \Big[\max_{Q \in \cQ'}\frac{\#\bbS'(Q)}{\#\bbS'} \Big] \Big[  \max_{S \in \bbS'} \frac{\Delta_{\alpha}(\tilde{\cQ}(S))}{\#\tilde{\cQ}(S)}\Big].
\end{equation}
is defined in a similar manner to the corresponding quantity in \S\ref{sec: narrow non-technical}.

To estimate the $\ell^q$ sum on the right-hand side of \eqref{eq: narrow rigour 3}, we use a reverse H\"older inequality as in \eqref{eq: narrow heuristic 6}. Indeed, reverse H\"older can now be applied rigorously without further assumptions, since the norms $\|f_S\|_{L^2(\R^n)}$ for $S \in \bbS'$ are dyadically constant by construction. Consequently,
\begin{equation*}
    \|Uf\|_{L^{q_n}(Z_{\cQ})} \lesssim \bC K^{-\varepsilon} M_K'(\cQ')^{1/(n+1)} R^{\alpha/(2(n+1)) + \varepsilon} \|f\|_{L^2(\R^n)}.
\end{equation*}




\subsubsection*{Multiplicity bounds} The final step of the narrow analysis is to control the constant $M_K'(\cQ')$ and, in particular, show that 
\begin{equation}\label{eq: narrow rigour 5}
    M_K'(\cQ') \lessapprox  K^{O(\delta)} \frac{\Delta_{\alpha}(\cQ)}{\# \cQ}. 
\end{equation}
Recall from our initial pigeonholing:
\begin{center}
    \begin{tabular}{l c l}
 $\#\bbS'(Q)$ &  $\qquad$  & are dyadically constant over $Q \in \cQ'$; \\
 $\#\cP'(S; \cQ')$ &   & are dyadically constant over $S \in \bbS'$; \\
 $\#\cQ'(P)$ &    & are dyadically constant over $P \in \cP'$.
\end{tabular}
\end{center}

Arguing exactly as in \S\ref{sec: narrow non-technical}, we therefore deduce that
\begin{equation}\label{eq: narrow rigour 6}
    \Big[ \max_{Q \in \cQ'} \frac{\#\bbS'(Q)}{\#\bbS'} \Big]  \Big[ \max_{S \in \bbS'} \frac{\Delta_{\alpha}(\tilde{\cQ}(S))}{\#\tilde{\cQ}(S)} \Big]  \lesssim \frac{1}{\#\cQ'}[\min_{P \in\cP'(\cQ')} \#\cQ'(P)] [ \max_{S \in \bbS'} \Delta_{\alpha}(\tilde{\cQ}(S))].
\end{equation}
As before, we can use Lemma~\ref{lem: relating scales} to compare the densities, giving
\begin{equation}\label{eq: narrow rigour 7}
    [\min_{P \in \cP'(\cQ')} \#\cQ'(P)] [ \max_{S \in \bbS'} \Delta_{\alpha}(\tilde{\cQ}(S))] \lesssim K^{O(\delta)}K^{1+\alpha} \Delta_{\alpha}(\cQ').
\end{equation}
Note that here we lose a factor of $K^{O(\delta)}$ due to the fact that the set of parallelepipeds $\cP(S;\cQ)$ is defined with the enlarged cubes $Q^{(\delta)}$. 

Combining \eqref{eq: narrow rigour 6} and \eqref{eq: narrow rigour 7}, we have 
\begin{equation*}
    \Big[ \max_{Q \in \cQ'} \frac{\#\bbS'(Q)}{\#\bbS'} \Big]  \Big[ \max_{S \in \bbS'} \frac{\Delta_{\alpha}(\tilde{\cQ}(S))}{\#\tilde{\cQ}(S)} \Big]  \lessapprox K^{O(\delta)}K^{1+\alpha}\frac{ \Delta_{\alpha}(\cQ)}{\#\cQ};
\end{equation*}
here we have also used the fact that $\#\cQ' \gtrapprox \# \cQ$ and $\Delta_{\alpha}(\cQ') \leq \Delta_{\alpha}(\cQ)$. Recalling the definition of $M_K'(\cQ')$ from \eqref{eq: narrow rigour 4}, this immediately implies the desired bound \eqref{eq: narrow rigour 5}. Consequently, we have the narrow estimate
\begin{equation}\label{eq: narrow dom final est}
    \|Uf\|_{L^{q_n}(Z_{\cQ})} \lesssim_{\varepsilon} \bC K^{-\varepsilon/2} \Big[\frac{\Delta_{\alpha}(\cQ)}{\#\cQ}\Big]^{1/(n+1)} R^{\alpha/(2(n+1)) + \varepsilon} \|f\|_{L^2(\R^n)},
\end{equation}
which essentially agrees with the bound derived in \S\ref{sec: narrow non-technical}.




\subsection{Concluding the argument}

To conclude the proof, it remains to collect together the estimates proved above and show that they can be used to close the induction. Recall:
\begin{itemize}
    \item In \S\ref{sec: broad dom}, we showed that \eqref{eq: broad dom final est} holds in the broad-dominant case;
    \item In \S\S\ref{sec: narrow initial}-\ref{sec: narrow technical}, we showed that \eqref{eq: narrow dom final est} holds in the narrow-dominant case.
\end{itemize}

Combining \eqref{eq: broad dom final est} and \eqref{eq: narrow dom final est}, we see that there exists a constant $C_{\varepsilon} \geq 1$ such that the bound
\begin{equation}\label{eq: conclusion 1}
   \|Uf\|_{L^{q_n}(Z_{\cQ})} \leq C_{\varepsilon} \big( \bC K^{-\varepsilon/2}  +  1 \big) K^{-\varepsilon/2} \Big[\frac{\Delta_{\alpha}(\cQ)}{\#\cQ}\Big]^{1/(n+1)}  R^{\alpha/(2(n+1)) + \varepsilon}\|f\|_{L^2(\R^n)}.
\end{equation}
holds in either case. 

The estimate \eqref{eq: conclusion 1} involves two free parameters:
\begin{itemize}
    \item We are free to choose the cutoff $R_0$ for the base case;
    \item We are free to choose the constant $\bC$ appearing in the induction hypothesis, provided our choice is independent of $R$.\footnote{On a minor technical note, $\bC$ should be chosen large depending on $R_0$ to ensure the base case holds. In this sense, the two parameters are not entirely independent of one another.} 
\end{itemize}
At this point, we fine tune these parameters to ensure that the induction closes. Recalling that $K = R^{\delta}$, we choose $R_0$ large enough so that $C_{\varepsilon}K^{-\varepsilon/2} \leq 1/2$ whenever $R \geq R_0$ and take $\bC = 2 C_{\varepsilon}$. With these parameters, \eqref{eq: conclusion 1}  implies 
\begin{equation*}
   \|Uf\|_{L^{q_n}(Z_{\cQ})} \leq \bC \Big[\frac{\Delta_{\alpha}(\cQ)}{\#\cQ}\Big]^{1/(n+1)}  R^{\alpha/(2(n+1)) + \varepsilon}\|f\|_{L^2(\R^n)},
\end{equation*}
which is precisely what is required to close the induction.

\printbibliography

\end{document}